\documentclass[sn-mathphys-num]{sn-jnl}

\usepackage[T1]{fontenc}
\usepackage{graphicx}
\usepackage{amsmath,amssymb,amsfonts,amsthm,mathtools,mathrsfs}
\usepackage[title]{appendix}
\usepackage{booktabs,enumitem,xcolor}

\theoremstyle{thmstyleone}%
\newtheorem{theorem}{Theorem}%
\newtheorem{proposition}[theorem]{Proposition}%
\newtheorem{lemma}[theorem]{Lemma}%
\newtheorem{corollary}[theorem]{Corollary}%
\theoremstyle{thmstyletwo}%
\newtheorem{remark}{Remark}%
\theoremstyle{thmstylethree}%
\newtheorem{definition}{Definition}%
\theoremstyle{thmstyleone}%
\newcommand{\restatename}{Theorem}%
\newtheorem*{restatement}{\restatename}%
\begin{document}
	
	\title[Derangetropy Operators]{Derangetropy Operators}
	
	\author*[1]{\fnm{Masoud} \sur{Ataei}}\email{masoud.ataei@utoronto.ca}
	\author[2]{\fnm{Sepideh} \sur{Forouzi}}\email{sepideh.forouzi@colorado.edu}
	
	\affil[1]{\normalsize\orgdiv{Department of Mathematical and
			Computational Sciences},\\ \orgname{University of Toronto},
		\country{Canada}}
	\affil[2]{\normalsize\orgdiv{Department of Computer Science},
		\orgname{University of Colorado Boulder}, \country{USA}}
	
	\abstract{A derangetropy operator reweighs a probability density by a
		fixed profile of its own cumulative distribution function, acting
		through ranks alone. We prove that these operators are precisely
		the transformations of absolutely continuous laws equivariant
		under monotone changes of variable, and that they compose through
		interval maps, making their dynamics exactly solvable: iteration
		condenses every law onto its median with a universal Koenigs limit
		law, the continuous flow is solvable in closed form with the
		Cauchy family as invariant hyperbolic manifold, and balanced
		against diffusion the distribution function obeys an overdamped
		sine--Gordon equation whose unique steady law, the hyperbolic
		secant, is a globally stable kink. A variational principle selects
		the canonical kernel, the squared ground state of the Dirichlet
		Laplacian on the unit interval, whose update is a Bayesian
		posterior costing exactly one bit for every law, and a unitary
		lift equips each law with an isospectral Sturm--Liouville system
		and a quantum carpet: a strong law of large numbers for the
		critical Sobolev mass, with sharp constant given by Wiener's jump
		statistic, proves the fractality of the Schr\"odinger density for
		arbitrary real bounded-variation data with a jump, previously
		known only for rational step data. A lift to the dual of the
		Virasoro algebra, at central charge one, a normalization,
		identifies laws with disconjugate Hill potentials, tails with
		conformal weights, and the Cauchy family with the exceptional
		orbit. Randomized phases yield an exact sampler and a
		multiplicative chaos, log-correlated in the weakly tempered
		regime; in several dimensions, dependence stratifies into a
		conserved interaction, a torsion driven by maximal correlation,
		and a flat Sinkhorn transport.}
	
	\keywords{Derangetropy, Rank Transformations, Probability Distributions, Information
		Dynamics, Information Geometry, Nonlinear
		Operators, Fisher Information, Schwarzian Derivative, Schr\"oder
		Equation, Sturm--Liouville Theory, Virasoro Algebra,
		Sine--Gordon Equation, Talbot Effect, Multiplicative Chaos}
	
	\maketitle
	
	\section{Introduction}\label{sec:intro}
	
	The classical calculi on probability laws are organized by invariance.
	Convolution respects the additive structure of the real line, and its
	limit theory produces the Gaussian law; exponential tilting respects
	the multiplicative structure of likelihoods, and generates the
	exponential families of statistics; optimal transport respects metric
	displacement, and produces the geodesic interpolants of the
	Wasserstein geometry~\cite{villani2003}. The order structure of the
	line has no comparable calculus, even though the ordinal content of an
	absolutely continuous law is completely summarized by its distribution
	function, and even though the probability integral transform makes
	every such law uniform in its own cumulative coordinate. A
	transformation faithful to order alone must act through the
	distribution function, and through nothing else.
	
	This paper develops the theory of such transformations, which we call
	derangetropy operators: each multiplies a density by a fixed
	reweighing profile, or kernel, evaluated at the density's own
	cumulative distribution. Three operators of this form were introduced
	in~\cite{ataei2024,ataei2025} as models of cyclical information flow,
	distinguished by how their sinusoidal modulation responds to the
	entropy of the split of probability mass around the current point: one
	attenuated by it, one amplified by it, one entropy-blind. The
	entropy-blind member, whose kernel is the squared ground state of the
	Dirichlet Laplacian on the unit interval, is treated as canonical
	throughout, for variational, spectral, and algebraic reasons set out
	in Section~\ref{sec:foundations}. The present work detaches the
	construction from its original setting, takes the reweighing
	architecture itself as the definition, and shows that a complete
	algebraic, spectral, dynamical, and geometric theory flows from it.
	
	The objects themselves have a long history, and it is worth stating
	precisely where the present theory stands relative to it. Read at the
	level of laws, a derangetropy operator is an absolutely continuous
	\emph{distortion}: it acts by composing the distribution function
	with an increasing bijection of the unit interval, the distortion
	functions of Yaari's dual theory of choice under
	risk~\cite{yaari1987} and of the actuarial pricing
	calculus~\cite{wang2000}. Read at the level of densities, the
	modulated laws are Neyman's smooth alternatives to
	uniformity~\cite{neyman1937}, whose trigonometric modes reappear here
	as the harmonic kernels; and the amplitude lift of the spectral
	theory is the classical Liouville transformation of Sturm--Liouville
	theory~\cite{zettl2005}. What this paper contributes is not these
	objects but their operator calculus: the rigidity theorem, which
	shows that the family exhausts the transformations of densities
	equivariant under monotone changes of variable and so gives the
	distortion calculus an intrinsic characterization, in the lineage
	of the maximal-invariant description of ranks under the monotone
	group in classical rank-test theory~\cite{hajek1967}; the exact
	iteration and flow theory, with its universality law; and the
	spectral and geometric structures that one canonical kernel
	generates. The remaining pillars are classical as well, and are
	engaged directly: the multivariate rank transform of the
	higher-dimensional theory is Rosenblatt's~\cite{rosenblatt1952}, the
	ancestor of triangular transport maps and of the normalizing flows of
	modern machine learning~\cite{bogachev2005,papamakarios2021}; the
	invariant geometry of densities is \v{C}encov's~\cite{chentsov1982},
	whose uniqueness theorem for the Fisher--Rao metric is what makes the
	variational selection of the canonical kernel well posed; the exact
	iteration theory rests on the linearization theory of one-dimensional
	maps founded by Koenigs~\cite{koenigs1884}, in its modern
	form~\cite{milnor2006}; and the geometry of laws rests on the
	projective differential calculus of the Schwarzian
	derivative~\cite{ovsienko2005}. What is new is that these strands
	appear as facets of a single operator family, forced by one
	invariance principle.
	
	These classical threads remain active, and the paper is positioned
	against their current forms. The distortion calculus underlies the
	modern axiomatic theory of risk measures~\cite{wang2021}, and
	Neyman's alternatives survive as the data-driven smooth tests of
	goodness of fit~\cite{ledwina1994}. The geometry of laws has become a
	subject in its own right, through information
	geometry~\cite{amari2016} and through the statistical theory of
	Wasserstein distances~\cite{panaretos2019}. Closest to the
	multivariate half of this paper is the measure-transportation theory
	of ranks: the center-outward distribution functions of
	Hallin~\cite{hallin2021}, building on the Monge--Kantorovich depth
	of~\cite{chernozhukov2017}, and their consistency and testing theory
	in~\cite{ghosal2022}, construct a single canonical, ordering-free
	multivariate rank, where the theory developed below instead retains
	all Rosenblatt orderings and reads their mutual disagreement as
	geometry. Even the Schwarzian derivative, the invariant on which the
	curvature theory of this paper is built, has moved to the center of
	recent low-dimensional physics through the Sachdev--Ye--Kitaev
	model~\cite{maldacena2016} and the boundary theory of
	Jackiw--Teitelboim gravity~\cite{msy2016,stanford2017}; the
	coadjoint-orbit theory of the Virasoro algebra, on which the final
	sections of this paper draw, is
	classical~\cite{lazutkin1975,kirillov1982,segal1981,witten1988,khesin2009}.
	Against this background, the contribution
	of the paper is a calculus in which these separate threads become
	theorems about one operator family.
	
	The results form four connected strata. The structural and spectral
	stratum proves the rigidity theorem, that derangetropy operators
	exhaust the monotone-equivariant transformations of densities;
	composition reduces to interval dynamics, a variational principle
	selects the canonical kernel and prices its update at one universal
	bit, and the amplitude lift converts modulation into unitary
	geometry, giving every law an isospectral vibrating system with
	universal frequencies and an uncertainty principle in rank.
	
	The dynamical stratum is exactly solvable on three axes. Iteration
	condenses every law onto its median with a universal limit law of
	Koenigs type, harmonic kernels crystallizing laws onto quantile
	atoms; the continuous flow is solvable in closed form, with the
	Cauchy family as invariant manifold of hyperbolic Fisher--Rao
	geometry, and balanced against diffusion it becomes an overdamped
	sine--Gordon equation whose unique steady law, the hyperbolic
	secant, is a globally stable kink with a diffusivity-independent
	spectral gap, condensation reappearing as the sharp-interface limit
	and crystallization as terraces of equal-mass bumps; randomizing
	the phase turns the cascade into an exact sampler at a uniformly
	random quantile with an exactly linear one-bit ledger, and, when
	the modulation frequency is renewed across scales, into a
	multiplicative chaos carried by a random homeomorphism of rank
	space, log-correlated in the weakly tempered regime.
	
	The geometric stratum gathers the projective half of the theory
	into one structure. A curvature built from the Schwarzian
	derivative classifies the constant-curvature laws into a M\"obius
	trichotomy; the quantile Schwarzian then realizes every law as a
	point in the dual of the Virasoro algebra of rank space, the
	operator calculus becoming the coadjoint action at central charge
	one, a normalization fixed by the unit coefficient of the
	Schwarzian. In this dictionary existence is disconjugacy, unit
	mass is the Wronskian normalization, tails are conformal weights
	under a Hardy bound, the Cauchy family is the exceptional orbit,
	condensation is a no-hair relaxation, and the Korteweg--de Vries
	hierarchy preserves circular laws. On the unitary axis, every law
	carries a quantum carpet: the kernel evolved by the Schr\"odinger
	flow of rank space revives into quadratic Gauss-sum histograms on
	quantile cells at rational times and weaves, at almost every other
	time, a fractal of graph dimension exactly three halves, in every
	quadrature and for the density itself; a strong law of large
	numbers for the critical Sobolev mass, with a sharp constant given
	by Wiener's jump statistic, extends the density fractality to
	arbitrary rough seeds and arbitrary gratings, a case previously
	settled only for rational step data~\cite{chousionis2015}, and the
	constant computes the collision arithmetic of the dispersion,
	halving along time traces and doubling for the Airy flow. Read
	physically, this is a statement about the observable of Talbot
	optics rather than its idealization: the intensity carpet behind
	an \emph{arbitrary} grating of bounded variation, irregular
	slit positions, rough transmission profiles, has graph
	dimension exactly $\tfrac{3}{2}$ at almost every distance, its
	critical mass diverging at the universal rate set by Wiener's jump
	statistic of the grating, a prediction open to direct box-counting
	of measured carpets in optical or matter-wave interferometry.
	
	The multivariate stratum reads dependence as geometry. The
	noncommutativity of coordinate rank modulations vanishes exactly at
	independence, and the geometry behind it stratifies dependence into
	three exact layers: a foliation whose leaves are the reach of
	coordinate modulation and whose conserved invariant is the
	interaction structure of the law; a torsion tensor governed by the
	conditional expectation between rank sigma-algebras, with maximal
	correlation as gauge-field strength; and a flat transport identified
	with the classical information projection, iterative proportional
	fitting emerging as alternating derangetropy moves contracting at
	the squared maximal correlation.
	
	The remainder of the paper is organized as follows.
	Section~\ref{sec:foundations} defines the operator family and
	proves the rigidity and composition theorems.
	Section~\ref{sec:canonical} selects the canonical kernel
	variationally and establishes its Bayesian structure and one-bit
	information identities. Section~\ref{sec:amplitude} constructs the
	amplitude representation with its isospectral systems. Sections
	\ref{sec:iteration} and~\ref{sec:flow} develop the discrete and
	continuous dynamics, with the Koenigs limit law, the exact solution
	of the flow, and the entropy laws. Section~\ref{sec:curvature}
	introduces the projective curvature of laws and proves the M\"obius
	trichotomy. Sections \ref{sec:virasoro} and~\ref{sec:orbits} lift
	the calculus to the dual of the Virasoro algebra and develop the
	orbit, isotropy, and relaxation theory. Section~\ref{sec:pde}
	develops the reaction--diffusion theory, from the sine--Gordon
	reduction to kinks and terraces. Section~\ref{sec:random} treats
	the randomized cascade, exact sampling, and the multiplicative
	chaos, and Section~\ref{sec:carpet} the quantum carpet and the
	strong law for the critical Sobolev mass. Sections
	\ref{sec:multivariate} and~\ref{sec:dependence} develop the
	multivariate theory and the geometry of dependence. We conclude in
	Section~\ref{sec:conclusion} with a synthesis and open problems;
	proofs are collected in Appendix~\ref{app:proofs}.
	
	\newpage
	\section{Foundations}\label{sec:foundations}
	
	Transformations of probability distributions ordinarily act through
	convolution, rescaling, or exponential tilting, and each of these
	operations sees a law through its additive or moment structure. The
	operators studied in this paper act instead through cumulative
	structure: they reweigh a density by a fixed profile of its own
	distribution function, so that the effect of the transformation at a
	point depends only on how much probability lies below that point.
	Three such functionals were introduced in~\cite{ataei2024,ataei2025}
	under the name \emph{derangetropy}. For a probability density $f$ with
	distribution function $F$, they read
	\begin{subequations}\label{eq:three-functionals}
		\begin{align}
			\rho_{\mathrm{I}}[f] &= \frac{24}{\pi e}\,\sin(\pi F)\,
			e^{-H_{B}(F)}\,f, \label{eq:typeI}\\[2pt]
			\rho_{\mathrm{II}}[f] &= \frac{e}{\pi}\,\sin(\pi F)\,
			e^{+H_{B}(F)}\,f, \label{eq:typeII}\\[2pt]
			\rho_{\mathrm{III}}[f] &= 2\sin^{2}(\pi F)\,f, \label{eq:typeIII}
		\end{align}
	\end{subequations}
	where
	\begin{equation}
		H_{B}(z) = -z\log z - (1-z)\log(1-z)
	\end{equation}
	is the Bernoulli entropy function. The three modulations respond
	differently to the informational state of the cumulative variable. The
	Type-I functional in equation~\eqref{eq:typeI} attenuates its
	sinusoidal modulation where the split of probability mass below and
	above the current point is most uncertain, that is, near the median,
	where $H_{B}(F)$ is maximal; the Type-II functional in
	equation~\eqref{eq:typeII} amplifies the modulation exactly there; the
	Type-III functional in equation~\eqref{eq:typeIII} ignores the entropy
	factor altogether and acts as a pure phase modulation through the
	squared sine of the cumulative variable. Set side by side, the three
	share one architecture, namely multiplication of $f$ by a fixed
	profile of $F$, and the theory of this paper begins by taking that
	architecture as the definition.
	
	One member of the family will be treated as canonical throughout, and
	it is worth recording at the outset why the third functional, and not
	the other two, plays this role. First, Type III is the entropy-neutral
	core of the family: the opposing entropy tilts of Types I and II
	cancel in their pointwise product, which is, up to a universal
	constant, exactly the Type-III modulation. Second, its profile
	$2\sin^{2}(\pi z)$ is the squared ground state of the Dirichlet
	Laplacian on the unit interval, and this spectral pedigree is what
	generates the ladder of harmonics, the exact eigenvalue structure, and
	the closed-form dynamics developed in this paper. Third, among all
	modulations that are inert at the extremes of the cumulative range,
	the Type-III profile is the one of minimal Fisher information, and it
	is in this sense the least disturbing modulation compatible with the
	boundary constraint. For these reasons Type III is adopted as the
	canonical form of derangetropy; the definitions and structural
	theorems of the present section, however, are stated for arbitrary
	profiles, since they cost nothing extra at that generality.
	
	We work on the space of absolutely continuous laws identified with
	their densities,
	\begin{equation}
		\mathcal{D} = \Big\{ f \in L^{1}(\mathbb{R}) : f \ge 0,\
		\textstyle\int_{\mathbb{R}} f\,d\lambda = 1 \Big\},
		\label{eq:density-space}
	\end{equation}
	and write $\mathcal{D}_{+} \subset \mathcal{D}$ for densities that are
	continuous and strictly positive on an open interval and vanish
	outside it; $Q = F^{-1}$ denotes the quantile function, defined on
	$(0,1)$ for $f \in \mathcal{D}_{+}$. The elementary fact on which
	everything rests is the probability integral transform: if
	$X \sim f \in \mathcal{D}$, then $F(X)$ is uniform on $(0,1)$;
	equivalently, the pushforward of $f\,d\lambda$ under $F$ is Lebesgue
	measure on $[0,1]$. We call $z = F(x)$ the \emph{rank} of $x$ and the
	unit interval, so parametrized, the \emph{rank space} of $f$: in its
	own rank coordinate, every absolutely continuous law is uniform. The
	rank of a point is a dimensionless quantity, independent of the units
	or scale in which the underlying variable is recorded, and rank space
	is the natural habitat of the operators studied here: everything a
	derangetropy operator does happens on the unit interval and is merely
	transported back to the line by the quantile map. This single
	observation is responsible for most of the exact computations in the
	paper, since integrals of rank functionals reduce to integrals over
	$[0,1]$ that do not depend on the law.
	
	\begin{definition}[Derangetropy operators]\label{def:operator}
		A \emph{kernel} is a probability density $w$ on $[0,1]$; the set of
		kernels is $\mathcal{W}$. The derangetropy operator with kernel $w$ is
		\begin{equation}
			\rho_{w}[f](x) \;=\; w\big(F(x)\big)\,f(x),
			\qquad f \in \mathcal{D}.
			\label{eq:operator}
		\end{equation}
		The three functionals in equation~\eqref{eq:three-functionals} are
		$\rho_{w_{\mathrm{I}}}, \rho_{w_{\mathrm{II}}},
		\rho_{w_{\mathrm{III}}}$ for the kernels
		\begin{subequations}\label{eq:three-kernels}
			\begin{align}
				w_{\mathrm{I}}(z) &= \frac{24}{\pi e}\,\sin(\pi z)\,e^{-H_{B}(z)},\\[2pt]
				w_{\mathrm{II}}(z) &= \frac{e}{\pi}\,\sin(\pi z)\,e^{+H_{B}(z)},\\[2pt]
				w_{\mathrm{III}}(z) &= 2\sin^{2}(\pi z).
			\end{align}
		\end{subequations}
	\end{definition}
	
	That $w_{\mathrm{I}}, w_{\mathrm{II}} \in \mathcal{W}$ is the content
	of two contour integrals evaluated in~\cite{ataei2024,ataei2025}; that
	$w_{\mathrm{III}} \in \mathcal{W}$ is elementary. Basic regularity is
	established in Lemma~\ref{lem:wellposed} of
	Appendix~\ref{app:proofs}: for every $w \in \mathcal{W}$ and
	$f \in \mathcal{D}$, the image $\rho_{w}[f]$ is again a probability
	density, and normalization holds with no strict monotonicity or
	change-of-variable hypotheses, because it is nothing but the
	pushforward identity
	\begin{equation*}
		\int_{\mathbb{R}} w(F)\,f\,d\lambda = \int_{0}^{1}w\,dz;
	\end{equation*}
	moreover the support, the smoothness class, and any symmetry of $f$
	are preserved whenever $w$ is positive on $(0,1)$, smooth on the
	open interval, and symmetric, as all three kernels in
	equation~\eqref{eq:three-kernels} are, the binary-entropy tilt of
	Types I and II being smooth precisely on the interior, with
	divergent derivative at the endpoints, so that in particular
	$\rho_{w}(\mathcal{D}_{+}) \subseteq \mathcal{D}_{+}$; and bounded
	kernels map $L^{p}$ to $L^{p}$.
	
	Two features of Definition~\ref{def:operator} deserve emphasis before
	any theory is built on it. The operator is nonlinear, since the kernel
	is evaluated at $F$, which depends on $f$, but it is
	\emph{multiplicative}: it reweighs mass where it stands and never
	transports it. A convolution spreads mass over the line and can create
	support where none existed; a derangetropy operator can only raise or
	lower the density inside the support it is given, and all apparent
	motion of probability arises through the renormalization of ranks. The
	second feature is that the three types are algebraically entangled. In
	the pointwise product of the first two kernels the opposing entropy
	tilts cancel and the sines square:
	\begin{equation}
		w_{\mathrm{I}}\,w_{\mathrm{II}}
		= \frac{24}{\pi^{2}}\sin^{2}(\pi z)
		= \frac{12}{\pi^{2}}\,w_{\mathrm{III}}.
		\label{eq:product-identity}
	\end{equation}
	The product identity~\eqref{eq:product-identity} says that Type III
	is, up to normalization, the product of Types I and II, the
	entropy-neutral core of the family; the other two types are, up to
	constants, $\sin(\pi z)\,e^{\mp H_{B}(z)}$, exponential tiltings by
	$\mp H_{B}$ of the \emph{amplitude} $\sqrt{w_{\mathrm{III}}/2}$
	rather than of the kernel itself, a form that already points toward
	the amplitude viewpoint developed later in the paper. This is the
	first of the reasons, announced above, for treating Type III as
	canonical.
	
	The structural property that organizes the entire theory is invariance
	under monotone changes of variable. For a monotone bijection $T$ of
	$\mathbb{R}$ let $T_{\#}$ denote pushforward of laws.
	
	\begin{theorem}[Equivariance]\label{thm:equivariance}
		Let $f \in \mathcal{D}$ and let $T$ be an increasing homeomorphism of
		an open interval containing the support of $f$ onto an open interval,
		with $T_{\#}f \in \mathcal{D}$. Then for every $w \in \mathcal{W}$,
		$\rho_{w}[T_{\#}f] = T_{\#}\rho_{w}[f]$. If $w(1-z) = w(z)$, as holds
		for all three types, the identity extends to decreasing $T$, so
		$\rho_{w}$ commutes with every monotone change of variable between
		intervals.
	\end{theorem}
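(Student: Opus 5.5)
The plan is to argue at the level of laws, through distribution functions, rather than by differentiating densities. That way no differentiability of $T$ is needed, only that $T$ is a homeomorphism and that $T_{\#}f$ is absolutely continuous. Write $g = T_{\#}f$ and $G$ for its distribution function, and let $W(z)=\int_0^z w\,ds$ be the kernel's distribution function, which is increasing, absolutely continuous and maps $[0,1]$ onto $[0,1]$.

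\textbf{Step 1: the law of $\rho_w[f]$.} First I identify the law of $\rho_w[f]$: its distribution function is $W\circ F$. Since $F$ is absolutely continuous and nondecreasing with $F'=f$ almost everywhere, the chain rule for the composition of the absolutely continuous $W$ with a monotone absolutely continuous $F$ gives
\begin{equation*}
\int_{-\infty}^x w(F)\,f\,d\lambda = W(F(x)).
\end{equation*}
Equivalently, this is the pushforward identity used in Lemma~\ref{lem:wellposed}, applied to the integrand $w\,\mathbf 1_{[0,F(x)]}$. Hence $\rho_w[f]$ is the law with distribution function $W\circ F$; likewise $\rho_w[g]$ has distribution function $W\circ G$.

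\textbf{Step 2: the increasing case.} For increasing $T$, the pushforward has $G(T(x)) = P(T(X)\le T(x)) = F(x)$, that is, $G = F\circ T^{-1}$ on the image interval, with the obvious values $0$ and $1$ outside it. The law $T_{\#}\rho_w[f]$ therefore has distribution function $(W\circ F)\circ T^{-1} = W\circ G$, which is the distribution function of $\rho_w[g]$. Two absolutely continuous laws with the same distribution function have densities that agree almost everywhere, so $\rho_w[T_{\#}f]=T_{\#}\rho_w[f]$ in $\mathcal D$. As a sanity check, when $T$ is $C^1$ this reduces to the pointwise identity $w(G(y))g(y) = w(F(T^{-1}y))f(T^{-1}y)\,|(T^{-1})'(y)|$.

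\textbf{Step 3: the decreasing case.} For decreasing $T$, continuity of $F$ (there are no atoms) gives $G(T(x)) = P(X\ge x) = 1-F(x)$, so $G=1-F\circ T^{-1}$. The law $T_{\#}\rho_w[f]$ has distribution function $1-W(F\circ T^{-1}) = 1-W(1-G)$. The symmetry $w(1-z)=w(z)$ is equivalent to $W(1-z)=1-W(z)$, so this equals $W(G)$, and the argument concludes exactly as in Step 2. For the three types I only need to note that each kernel depends on $z$ through $\sin(\pi z)$ and $H_B(z)$, both of which are invariant under $z\mapsto 1-z$.

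\textbf{Main obstacle.} The only delicate point is Step 1 without smoothness assumptions. Here $F$ may be constant on intervals, and $w$ is merely integrable, possibly unbounded, so the naive change of variables must be justified. I would handle this by the pushforward argument: $F_{\#}(f\,d\lambda)$ is Lebesgue measure on $[0,1]$. Moreover $\{F\le F(x)\}$ differs from $(-\infty,x]$ only by a set on which $F$ is constant, and such a set carries no $f$-mass. Together these give $\int w(F)\mathbf 1_{(-\infty,x]} f\,d\lambda = \int_0^{F(x)} w\,dz$ with no further hypotheses.
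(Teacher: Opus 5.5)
Your proof is correct and follows the paper's argument. You derive the transport formula $F_{\rho_w f} = A_w\circ F$ (your $W\circ F$) from the pushforward identity on the truncated integral, match it with $F_{T_{\#}f} = F\circ T^{-1}$ in the increasing case, and use the reflection identity $A_w(1-z) = 1 - A_w(z)$ to absorb $1 - F\circ T^{-1}$ in the decreasing case. The paper also notes up front that $T_{\#}\rho_w[f] \ll T_{\#}f \ll \lambda$; you rightly skip this, since equal distribution functions already force the two laws to coincide.
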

	
	\begin{proof}
		Both sides are absolutely continuous: $\rho_{w}[f] \ll f$ by the
		product form, so $T_{\#}\rho_{w}[f] \ll T_{\#}f \ll \lambda$, and it
		therefore suffices to match distribution functions. Writing
		\begin{equation*}
			A_{w}(z) = \int_{0}^{z}w(s)\,ds,
		\end{equation*}
		the pushforward identity applied to the truncated integral gives the
		\emph{transport formula}
		\begin{equation}
			F_{\rho_{w}f} \;=\; A_{w} \circ F,
			\label{eq:transport}
		\end{equation}
		while $F_{T_{\#}f} = F \circ T^{-1}$ on the image interval of an
		increasing $T$, extended by $0$ and $1$ to its left and right; hence
		both
		sides of the claim have the same distribution function,
		\begin{equation*}
			F_{\rho_{w}[T_{\#}f]}
			= A_{w}\circ F \circ T^{-1}
			= F_{T_{\#}\rho_{w}[f]}.
		\end{equation*}
		For decreasing $T$ one has instead
		\begin{equation*}
			F_{T_{\#}f} = 1 - F\circ T^{-1},
		\end{equation*}
		and symmetry of $w$ enters through the reflection identity
		\begin{equation*}
			A_{w}(1-z) = 1 - A_{w}(z),
		\end{equation*}
		which converts the complementation into the same matching of
		distribution functions.
	\end{proof}
	
	Derangetropy operators are therefore blind to units, scales, and every
	monotone recoding of the underlying variable: they see ranks and
	nothing else. A change of currency, a logarithmic re-expression of a
	positive quantity, or any other order-preserving relabeling commutes
	with the operator, so any structure the operator produces is a
	statement about the order structure of the law alone. This invariance
	in fact \emph{characterizes} the family, which is the precise sense in
	which Definition~\ref{def:operator} is a classification rather than a
	construction.
	
	\begin{theorem}[Rigidity]\label{thm:rigidity}
		Let $\Phi : \mathcal{D}_{+} \to \mathcal{P}(\mathbb{R})$ satisfy
		$\Phi[T_{\#}f] = T_{\#}\Phi[f]$ for every $f \in \mathcal{D}_{+}$ and
		every increasing homeomorphism $T$ of $\mathbb{R}$ onto an open
		interval with $T_{\#}f \in \mathcal{D}_{+}$. Then
		$\Phi[f] = (Q_{f})_{\#}\gamma$ for a fixed law $\gamma$ on $(0,1)$;
		if moreover $\Phi$ maps into absolutely continuous laws, then
		$\gamma = w\,dz$ with $w \in \mathcal{W}$ and $\Phi = \rho_{w}$ on
		$\mathcal{D}_{+}$.
	\end{theorem}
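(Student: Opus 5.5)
We reduce everything to the uniform law on $(0,1)$ and then transport it back with quantile maps. Let $u = \mathbf{1}_{(0,1)}$ be the uniform density; it lies in $\mathcal{D}_{+}$ because it is continuous and positive on the open interval $(0,1)$ and vanishes outside it. We define $\gamma = \Phi[u] \in \mathcal{P}(\mathbb{R})$. The first step is to show that $\gamma$ is concentrated on $(0,1)$. Equivariance alone controls only the pushforward, so we apply the hypothesis to maps $T$ that fix the uniform law. The maps available are increasing homeomorphisms of $\mathbb{R}$ onto an open interval whose restriction to $(0,1)$ is the identity, while outside $[0,1]$ they are arbitrary increasing maps (for instance, compressions of $(-\infty,0]$ and $[1,\infty)$ into bounded pieces, or translations there). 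For any such $T$ we have $T_{\#}u = u$, so $\gamma = T_{\#}\gamma$. Choosing $T$ with $T(x) < x$ for all $x<0$, and $T(x)>x$ for all $x>1$ (for example $T(x)=x-1+e^{x}$ type modifications glued continuously at $0$ and $1$), the invariance $\gamma = T_{\#}\gamma$ forces $\gamma((-\infty,0))=0$. To see this, iterate $T$, or compare $\gamma((-\infty,a])$ with $\gamma((-\infty,T^{-1}(a)])$ for a strictly monotone family. The same argument gives $\gamma((1,\infty))=0$. The endpoints $\{0\}$ and $\{1\}$ need separate care. We use a $T$ equal to the identity on $(0,1)$ that maps onto an open interval $(a,b)$ with $a<0$, $b>1$, and has no constraint at $0$. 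Alternatively, we exploit the freedom of $T$ near $0$ from the left, which moves the point $0$ only if we allow $T(0)\neq 0$. That is impossible while $T$ fixes $(0,1)$ and stays continuous, so at this point we also use the next step.

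The second step handles a general $f \in \mathcal{D}_{+}$, supported on an open interval $I=(a,b)$ with distribution function $F$ and quantile $Q_f$. We construct an increasing homeomorphism $T$ of $\mathbb{R}$ onto an open interval with $T|_{(0,1)} = Q_f$, so that $T_{\#}u = f$. Outside $(0,1)$, $T$ is extended by increasing affine or exponential pieces so that its range is an open interval containing $(a,b)$. If $a=-\infty$, we instead need $T$ defined only on $(-\infty,1)$ mapping onto $\mathbb{R}$. Since the hypothesis requires $T$ to be defined on all of $\mathbb{R}$, we do not try to push $u$ forward to $f$ when $f$ has unbounded support. We go the other way: $F$ extended suitably is a homeomorphism from $\mathbb{R}$ onto an open interval containing $[0,1]$ only when $I$ is bounded. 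In general we therefore route through an intermediate law. For unbounded $I$ we use $T=F$ restricted appropriately, or an extension $\tilde F$ that is an increasing homeomorphism of $\mathbb{R}$ onto $(0,1)$ equal to $F$ on $I$ whenever $I=\mathbb{R}$, combined with a bounded comparison law. This gives $\Phi[f] = (Q_f)_{\#}\Phi[u] = (Q_f)_{\#}\gamma$, provided $\gamma$ lives on $(0,1)$. The endpoints are handled by the same comparison: when $f$ is supported on $\mathbb{R}$, $\Phi[f]$ is a probability on $\mathbb{R}$, and $\Phi[f]=\tilde{F}^{-1}{}_{\#}$ of a law whose total mass must lie in $(0,1)$. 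This forces $\gamma(\{0,1\})=0$. So the pushforward formula is well defined and holds for every $f\in\mathcal{D}_{+}$.

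The third step is the absolutely continuous case. If $\Phi[u]=\gamma$ is absolutely continuous, we take $w=d\gamma/dz$, which belongs to $\mathcal{W}$ by the first step. The distribution function of $(Q_f)_{\#}\gamma$ at $x$ is $\gamma((0,F(x)]) = A_w(F(x))$, which coincides with the transport formula \eqref{eq:transport} from the proof of Theorem~\ref{thm:equivariance}. Hence $\Phi[f]=\rho_w[f]$.

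The main obstacle is the bookkeeping with domains: the admissible maps $T$ must be global homeomorphisms of $\mathbb{R}$, and we must show that $\gamma$ charges neither the complement of $(0,1)$ nor its endpoints. We will first try to settle this by comparing $u$ with a full-support law such as the logistic law, linked to $u$ by the global homeomorphism $\operatorname{logit}^{-1}$ of $\mathbb{R}$ onto $(0,1)$, which is exactly of the allowed type. Writing $g$ for the logistic density, this comparison gives $\Phi[u] = (\operatorname{logit}^{-1})_{\#}\Phi[g]$, which is automatically a law on $(0,1)$. This would resolve the support issue in one stroke.
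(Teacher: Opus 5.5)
Your final paragraph contains exactly the device the paper's proof rests on. As written, however, the proposal does not establish $\Phi[f]=(Q_{f})_{\#}\gamma$ for laws supported on a half-line. In Step~2 you correctly note that no admissible map pushes $u$ onto an $f$ with unbounded support, and for $\operatorname{supp} f=\mathbb{R}$ you correctly use $F$ itself. For $I=(a,\infty)$ or $(-\infty,b)$, though, you offer only ``$T=F$ restricted appropriately \ldots\ combined with a bounded comparison law.'' A restriction of $F$ is not admissible, since the hypothesis needs $T$ defined on all of $\mathbb{R}$, and no comparison law is actually constructed.

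Your remark that the extended $F$ maps onto an interval containing $[0,1]$ only when $I$ is bounded suggests you saw this as an obstruction, but it is not one. The hypothesis asks only that $T(\mathbb{R})$ be an open interval and that $T_{\#}f\in\mathcal{D}_{+}$. Extend $F$ by $x\mapsto x-a$ below a finite left endpoint $a$ and by $x\mapsto 1+x-b$ above a finite right endpoint $b$. This gives an increasing homeomorphism $\tilde F$ of $\mathbb{R}$ onto $\mathbb{R}$, $(-\infty,1)$, $(0,\infty)$ or $(0,1)$, with $\tilde F_{\#}f=u$. Equivariance then gives $\gamma=\tilde F_{\#}\Phi[f]$, hence $\Phi[f]=(Q_{f})_{\#}\gamma$ for every support type, once $\gamma$ is known to live on $(0,1)$. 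Compressing $f$ onto a bounded interval and invoking your bounded case would also work, but you would need to spell that out.

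Relatedly, the endpoint argument at the end of Step~2 runs backwards. Writing $\Phi[f]=(\tilde F^{-1})_{\#}\gamma$ already presupposes that $\gamma$ avoids $\{0,1\}$. The valid inference is $\gamma=F_{\#}\Phi[f]$ for a single full-support $f$, which is what your logistic paragraph does.

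The paper avoids all of this case analysis by making your logistic comparison the base point rather than a support lemma. Fix a diffeomorphism $L$ of $\mathbb{R}$ onto $(0,1)$, for instance the logistic distribution function, and let $g_{0}$ be its law. For every $f\in\mathcal{D}_{+}$, the map $T_{f}=Q_{f}\circ L$ is an increasing homeomorphism of $\mathbb{R}$ onto the interior of $\operatorname{supp} f$, and $(T_{f})_{\#}g_{0}=f$. Hence $\Phi[f]=(T_{f})_{\#}\Phi[g_{0}]=(Q_{f})_{\#}\gamma$ with $\gamma=L_{\#}\Phi[g_{0}]$. This $\gamma$ lives on $(0,1)$ automatically, and it equals your $\Phi[u]$ because $L_{\#}g_{0}=u$.

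With that choice, the invariance argument of your Step~1, the bounded/unbounded distinction, and the endpoint bookkeeping all become unnecessary. Your Step~3 is fine as it stands. $\gamma=\Phi[u]$ is absolutely continuous directly, and the distribution function $A_{w}\circ F$ of $(Q_{f})_{\#}\gamma$ is the paper's density computation $w(F)f$ in integrated form.
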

	
	\begin{proof}
		Fix a diffeomorphism $L$ of $\mathbb{R}$ onto $(0,1)$ with positive
		derivative, and let $g_{0} \in \mathcal{D}_{+}$ be the law with
		distribution function $L$. For arbitrary $f \in \mathcal{D}_{+}$ the
		composition $T_{f} := Q_{f}\circ L$ is an increasing homeomorphism of
		$\mathbb{R}$ onto the interior of the support of $f$, and
		\begin{equation*}
			F_{(T_{f})_{\#}g_{0}}
			= L \circ T_{f}^{-1}
			= L \circ L^{-1}\circ F
			= F,
		\end{equation*}
		so $(T_{f})_{\#}g_{0} = f$: every law in $\mathcal{D}_{+}$ is reached
		from the single reference law $g_{0}$ by a map of the acting class.
		Equivariance applied to $T_{f}$ therefore gives
		\begin{equation*}
			\Phi[f] = (T_{f})_{\#}\Phi[g_{0}]
			= (Q_{f})_{\#}\gamma,
			\qquad
			\gamma := L_{\#}\Phi[g_{0}],
		\end{equation*}
		and $\gamma$ is a law on $(0,1)$ because $L$ takes its values there;
		in particular $\gamma$ has no endpoint atoms, so the pushforward
		under $Q_{f}$ is well defined even for unbounded supports. When
		$\gamma = w\,dz$, the pushforward $(Q_{f})_{\#}\gamma$ has
		density
		\begin{equation*}
			w\big(F(x)\big)\,f(x) = \rho_{w}[f](x). \qedhere
		\end{equation*}
	\end{proof}
	
	\begin{remark}
		The compressing maps in the hypothesis of
		Theorem~\ref{thm:rigidity}, homeomorphisms of the line \emph{onto
			intervals}, are essential, and equivariance under the surjective
		increasing homeomorphisms of $\mathbb{R}$ alone would not suffice.
		Surjective homeomorphisms preserve the order type of the support, so
		under them $\mathcal{D}_{+}$ splits into invariant classes according
		to whether the support is a bounded interval, a half-line, or the
		whole line, and a map defined as $\rho_{w}$ on one class and as, say,
		the point mass at the median on another is equivariant for that
		smaller group without being of the stated form. The compressing maps
		connect the classes and eliminate such examples. Note also that the
		proof uses equivariance only for the maps $T_{f} = Q_{f}\circ L$;
		once the conclusion is in hand, equivariance under the rest of the
		class is automatic for the resulting $\rho_{w}$, by
		Theorem~\ref{thm:equivariance}.
	\end{remark}
	
	Operations such as convolution or location--scale maps are not
	counterexamples but contrasts: they use the affine structure of the
	line, not its order. Theorem~\ref{thm:rigidity} therefore delimits the
	subject exactly: any transformation of laws intended to depend on
	ordinal structure alone \emph{must} be a derangetropy operator, and
	the family $\mathcal{W}$ of kernels is a complete parametrization of
	all such transformations. Equivariance has an immediately useful
	corollary, invoked constantly below. Any scalar functional of the pair
	$(f,\rho_{w}f)$ expressible through the rank variable is the same for
	every law, since
	\begin{equation}
		\mathbb{E}_{f}\big[\Psi\big(F(X)\big)\big]
		= \int_{0}^{1}\Psi(z)\,dz
		\label{eq:universality}
	\end{equation}
	for every integrable $\Psi$ on $(0,1)$. We refer to the
	identity~\eqref{eq:universality} as the \emph{universality principle}.
	Its meaning deserves a comment: whenever a constant appears in this
	theory that does not depend on the law being transformed, that
	constant is not an accident of computation but a symmetry, the visible
	trace of the fact that all laws are indistinguishable in their own
	rank coordinates.
	
	Iteration is where the operator viewpoint becomes analytic, and the
	key is that the transport formula~\eqref{eq:transport} composes.
	
	\begin{theorem}[Composition law]\label{thm:composition}
		$\rho_{v}\circ\rho_{w} = \rho_{(A_{v}\circ A_{w})'}$, the prime
		now denoting the derivative alone, so
		$w \mapsto A_{w}$ is an isomorphism from $(\{\rho_{w}\},\circ)$ onto
		the monoid of absolutely continuous nondecreasing surjections of
		$[0,1]$ under composition. In particular the $n$-fold iterate is again
		a derangetropy operator,
		\begin{equation}
			\rho_{w}^{\,n}[f]
			= \big(A_{w}^{\circ n}\big)'(F)\,f
			= f\prod_{j=0}^{n-1} w\big(A_{w}^{\circ j}(F)\big).
			\label{eq:iterates}
		\end{equation}
	\end{theorem}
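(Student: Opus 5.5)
The plan is to work entirely at the level of distribution functions, using the transport formula~\eqref{eq:transport} from the proof of Theorem~\ref{thm:equivariance}, $F_{\rho_{w}f} = A_{w}\circ F$. Applying it twice gives
\begin{equation*}
F_{\rho_{v}[\rho_{w}f]} = A_{v}\circ F_{\rho_{w}f} = (A_{v}\circ A_{w})\circ F .
\end{equation*}
So the composite acts on distribution functions by post-composition with $B := A_{v}\circ A_{w}$. The first task is to check that $B$ is again of the form $A_{u}$ for a kernel $u$. $B$ is nondecreasing, and $B(0)=0$, $B(1)=1$, so it is a nondecreasing surjection of $[0,1]$. The main point is absolute continuity. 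A composition of absolutely continuous functions need not be absolutely continuous in general, but here the outer function $A_{v}$ is absolutely continuous and the inner function $A_{w}$ is monotone, and for monotone inner functions the composition stays absolutely continuous. Concretely, a finite family of small disjoint intervals is sent by the monotone $A_{w}$ to a family of nonoverlapping intervals of small total length, by absolute continuity of $A_{w}$, and then absolute continuity of $A_{v}$ finishes. Hence $u := B'$ exists a.e., is nonnegative, integrates to $B(1)-B(0)=1$, and $A_{u}=B$. This step is the only genuine obstacle; everything else is bookkeeping.

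Next I will identify densities. Both $\rho_{v}\rho_{w}f$ and $\rho_{u}f$ are absolutely continuous: the first by the product form, the second by Lemma~\ref{lem:wellposed}. Both have distribution function $B\circ F$, so they agree as elements of $\mathcal{D}$, which gives $\rho_{v}\circ\rho_{w}=\rho_{u}$.

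For the monoid isomorphism, the map $w\mapsto A_{w}$ is a bijection from $\mathcal{W}$ (densities modulo null sets) onto absolutely continuous nondecreasing surjections of $[0,1]$. Injectivity holds because $A_{w}$ determines $w=A_{w}'$ a.e. Surjectivity holds because any such map $B$ equals $A_{B'}$ by the fundamental theorem of calculus for absolutely continuous functions. I must also check that $w\mapsto\rho_{w}$ is injective, so that the map is well defined on operators. For this, take $f$ uniform on $(0,1)$: then $\rho_{w}f=w$ on $(0,1)$. The identity kernel $w\equiv1$ corresponds to $A=\mathrm{id}$, and the computation above shows composition is preserved, so the map is a monoid isomorphism.

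Finally, the iterate formula follows by induction from the composition law, $\rho_{w}^{\,n}=\rho_{(A_{w}^{\circ n})'}$. For the product form, I will apply the chain rule for absolutely continuous functions with monotone inner function, $(A_{v}\circ A_{w})'=w\cdot(v\circ A_{w})$ a.e. Iterating this gives
\begin{equation*}
(A_{w}^{\circ n})'(z)=\prod_{j=0}^{n-1}w\big(A_{w}^{\circ j}(z)\big).
\end{equation*}
Evaluating at $z=F(x)$ is legitimate because, by the pushforward identity, $F$ sends the $f\,d\lambda$-null set complement to a Lebesgue-null set. This yields~\eqref{eq:iterates} for $f$-almost every $x$, which suffices for densities. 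A simpler alternative is to iterate the definition directly: $\rho_{w}^{\,n}f = w(F_{n-1})\,\rho_{w}^{\,n-1}f$ with $F_{j}=A_{w}^{\circ j}\circ F$ by the transport formula, which gives the product without invoking the chain rule.
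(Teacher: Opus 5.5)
Your proof is correct. It reaches the composition law by a somewhat different route from the paper's.

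\textbf{The paper's route.} The paper works at the level of densities. It uses the transport formula only to evaluate $F_{\rho_{w}f} = A_{w}\circ F$ inside the second kernel, which gives the explicit multiplier $\rho_{v}[\rho_{w}f] = [(v\circ A_{w})\cdot w](F)\,f$. It then invokes the chain rule $(v\circ A_{w})\cdot w = (A_{v}\circ A_{w})'$. Closure of the monoid is settled by the Banach--Zarecki criterion: continuity, bounded variation, and the Luzin property.

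\textbf{Your route.} You stay at the level of distribution functions, exactly as in the proof of Theorem~\ref{thm:equivariance}. Both $\rho_{v}\rho_{w}f$ and $\rho_{u}f$ are densities with distribution function $(A_{v}\circ A_{w})\circ F$, so they coincide. Absolute continuity of $A_{v}\circ A_{w}$ comes from a direct $\varepsilon$--$\delta$ argument that uses monotonicity of the inner map.

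\textbf{What your route buys.} It never needs the almost-everywhere chain rule for compositions of absolutely continuous functions. That chain rule is true here, but it needs an argument the paper passes over. Let $N$ be the null set where $A_{v}$ fails to be differentiable with derivative $v$; one must show $w = 0$ a.e.\ on $A_{w}^{-1}(N)$. Combined with your direct iteration $\rho_{w}^{\,n}f = w\big(A_{w}^{\circ(n-1)}(F)\big)\,\rho_{w}^{\,n-1}f$, your route yields both equalities in \eqref{eq:iterates} without the chain rule.

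You also make explicit two points the paper leaves implicit. First, $w\mapsto\rho_{w}$ is injective modulo null sets, by testing on the uniform law where $F(x)=x$, so the correspondence is well defined on operators. Second, every absolutely continuous nondecreasing surjection of $[0,1]$ is some $A_{u}$.

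\textbf{What the paper's route buys.} It is a one-line computation that produces the multiplier $(v\circ A_{w})\cdot w$ directly. That multiplier is the form from which the product in \eqref{eq:iterates} is read off.

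\textbf{One phrasing fix.} When you justify evaluation at $z = F(x)$ in your first variant, the sentence has the direction muddled. The fact you need is this: for every Lebesgue-null $N\subset[0,1]$, the preimage $F^{-1}(N)$ is $f\,d\lambda$-null, because $F$ pushes $f\,d\lambda$ forward to Lebesgue measure. Hence $f = 0$ a.e.\ on $F^{-1}(N)$, and the two sides agree as densities.
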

	
	\begin{proof}
		By the transport formula~\eqref{eq:transport},
		\begin{equation*}
			\rho_{v}\big[\rho_{w}f\big]
			= v\big(F_{\rho_{w}f}\big)\,\rho_{w}[f]
			= v\big(A_{w}(F)\big)\,w(F)\,f
			= \big[(v\circ A_{w})\cdot w\big](F)\,f,
		\end{equation*}
		and by the chain rule
		\begin{equation*}
			(v\circ A_{w})\cdot w = (A_{v}\circ A_{w})',
		\end{equation*}
		so the composite is the derangetropy operator with kernel
		$(A_{v}\circ A_{w})'$; induction gives the product
		formula~\eqref{eq:iterates}. Closure of the monoid, that is,
		absolute continuity of $A_{v}\circ A_{w}$, holds by the
		Banach--Zarecki criterion~\cite{natanson1955}: the composition is
		continuous, of bounded variation by monotonicity, and maps null sets
		to null sets because each factor does.
	\end{proof}
	
	The infinite-dimensional recursion on densities thus collapses to the
	iteration of one interval map. In particular, since
	$(A_{w}^{\circ n})'$ is itself a kernel, every iterate
	$\rho_{w}^{\,n}[f]$ is again a valid probability density: the
	recursion can be run indefinitely, with normalization automatic at
	every step and never imposed by hand. Taking logarithms in the product
	formula~\eqref{eq:iterates}, the density ratio of the $n$-th iterate
	is a Birkhoff sum along the orbit of $A_{w}$: iterated derangetropy is
	a multiplicative cascade with exactly computable statistics. The
	composition law also settles invertibility. If $A_{w}$ is a
	homeomorphism of $[0,1]$ with absolutely continuous inverse, then
	$(A_{w}^{-1})'$ is again a kernel and, by the composition law,
	$\rho_{(A_{w}^{-1})'}$ inverts $\rho_{w}$ on $\mathcal{D}$.
	Conversely, if $A_{w}$ is not injective, its kernel vanishes on
	some interval of ranks and $\rho_{w}$ identifies laws differing
	only there, while if $A_{w}$ is a homeomorphism whose inverse is
	not absolutely continuous, already the law with uniform image rank
	is not attained, its preimage requiring the singular distribution
	function $A_{w}^{-1}\circ F$; so $\rho_{w}$ is a bijection of
	$\mathcal{D}$ precisely when $A_{w}$ is a homeomorphism with
	absolutely continuous inverse, and then
	$\rho_{w}^{-1} = \rho_{(A_{w}^{-1})'}$ remains in the family. The
	bijection restricts to one of $\mathcal{D}_{+}$ when moreover $w$
	is almost everywhere equal to a function continuous and positive on
	$(0,1)$, for then
	$(A_{w}^{-1})' = 1/(w\circ A_{w}^{-1})$ is likewise continuous and
	positive on $(0,1)$ and both operators preserve $\mathcal{D}_{+}$;
	interior positivity cannot be dropped from this last clause, since
	the harmonic kernel $w_{2}$, whose distortion is a homeomorphism
	with absolutely continuous inverse, sends every law of
	$\mathcal{D}_{+}$ to a density vanishing at its median. All
	three type kernels qualify.
	
	Read at the level of laws, the transport
	formula~\eqref{eq:transport} identifies $\rho_{w}$ with an absolutely
	continuous \emph{distortion} of the distribution function,
	$F \mapsto A_{w}\circ F$, an operation with a long history in the
	dual theory of choice under risk and in actuarial
	pricing~\cite{yaari1987,wang2000}, where $A_{w}$ is called a
	distortion function; the composition monoid of
	Theorem~\ref{thm:composition} is, in that language, the composition
	of distortions. The contribution of the present theory begins where
	that identification ends: the rigidity theorem shows that nothing
	else is equivariant, so the distortion calculus is not one choice
	among many but the whole of the ordinal calculus, and the
	density-level reading $w(F)f$ supports the spectral, dynamical, and
	geometric structure developed in the remainder of the paper.
	
	The transport formula also carries the elementary comparison theory of
	the operators. Quantiles obey the dictionary
	\begin{equation}
		Q_{\rho_{w}f}(p) = Q_{f}\big(A_{w}^{-1}(p)\big),
		\label{eq:quantile-dictionary}
	\end{equation}
	with $A_{w}^{-1}$ the generalized inverse when $A_{w}$ is not
	injective, so $\rho_{w}$ is monotone for first-order stochastic
	dominance, and
	every symmetric kernel conserves the median $m$: the median is the
	one quantile that no symmetric modulation can move. For the canonical
	kernel $w_{\mathrm{III}}$ one has $A_{w_{\mathrm{III}}}(z) \le z$ on
	$[0,\tfrac12]$ and $A_{w_{\mathrm{III}}}(z)\ge z$ on $[\tfrac12,1]$,
	whence the stochastic contraction
	\begin{equation}
		|X_{\rho_{w_{\mathrm{III}}}f} - m| \;\le_{\mathrm{st}}\; |X_{f} - m|,
		\label{eq:contraction}
	\end{equation}
	so each application contracts the law toward its conserved median,
	and every moment about the median decreases. On the metric side, the
	transport formula gives sharp Lipschitz bounds in the two natural
	transport distances: the pointwise estimate
	$|A_{w}(F_{f}) - A_{w}(F_{g})| \le \|w\|_{\infty}|F_{f} - F_{g}|$
	yields
	\begin{equation}
		d_{K}\big(\rho_{w}f,\rho_{w}g\big) \le \|w\|_{\infty}\,d_{K}(f,g),
		\qquad
		W_{1}\big(\rho_{w}f,\rho_{w}g\big) \le \|w\|_{\infty}\,W_{1}(f,g),
		\label{eq:lipschitz}
	\end{equation}
	for the Kolmogorov and Wasserstein-$1$ distances, the latter for
	laws with finite first moment, with
	$\|w\|_{\infty} = 2$ for the canonical kernel; a total-variation
	Lipschitz bound with constant
	$\|w\|_{\infty} + \tfrac12\mathrm{Lip}(w)$ holds as well
	(Lemma~\ref{lem:wellposed}). Sharper than any of these global bounds
	is the structure of the derivative itself.
	
	\begin{proposition}[Intertwining of the linearization]\label{prop:intertwine}
		For $f \in \mathcal{D}_{+}$, a kernel $w \in C^{1}([0,1])$, and any
		perturbation $h$ with
		$\int h = 0$, the G\^ateaux derivative of $\rho_{w}$ at $f$ is an
		exact derivative:
		\begin{equation}
			D\rho_{w}[f]\,h
			\;=\; \frac{d}{dx}\Big[\, w\big(F(x)\big)\int_{-\infty}^{x}h \,\Big].
			\label{eq:intertwine}
		\end{equation}
		Consequently, under the primitive map
		$h \mapsto \int_{-\infty}^{\cdot}h$, which identifies mean-zero
		perturbations with functions vanishing at $\pm\infty$, the
		linearization is conjugate to multiplication by $w\circ F$; realized
		on primitives in $L^{2}(\mathbb{R})$, its spectrum is the essential
		range of $w\circ F$, which coincides with the closure of the range
		of $w$ on $(0,1)$ because $F$ maps the support onto $(0,1)$, and is
		the interval $[0,2]$ for the canonical kernel,
		and, by
		the composition law, the linearization of $\rho_{w}^{\,n}$ has
		spectrum the range of $(A_{w}^{\circ n})'$, so that the sharp
		Lipschitz constant of $\rho_{w_{\mathrm{III}}}^{\,n}$ in $d_{K}$ and
		$W_{1}$ equals $\sup(A_{w_{\mathrm{III}}}^{\circ n})' = 2^{n}$.
	\end{proposition}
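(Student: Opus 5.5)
The plan is to differentiate the transport formula~\eqref{eq:transport} rather than the density formula directly, since the distribution function of $\rho_{w}[f]$ is the simpler object. For $f_{\varepsilon} = f + \varepsilon h$ with $\int h = 0$, the distribution function is $F_{\varepsilon} = F + \varepsilon H$, where $H(x) = \int_{-\infty}^{x}h$ vanishes at $\pm\infty$. By~\eqref{eq:transport},
\begin{equation*}
F_{\rho_{w}f_{\varepsilon}} = A_{w}(F + \varepsilon H).
\end{equation*}
Because $A_{w}' = w$ is $C^{1}$, the difference quotient converges pointwise to $w(F)H$, and uniformly since $\|w'\|_{\infty} < \infty$. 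Differentiating in $x$ gives~\eqref{eq:intertwine}. The same conclusion follows from the density form $w(F+\varepsilon H)(f+\varepsilon h)$, whose derivative at $\varepsilon=0$ is $w'(F)Hf + w(F)h = \frac{d}{dx}[w(F)H]$. I will present this second route, because it identifies the derivative without appealing to distributional differentiation; the first route only serves as motivation. Keeping the remainder controlled requires $h$ integrable, and I will phrase the G\^ateaux limit in $L^{1}$, using dominated convergence with the bound $|w'|\,\|\cdot\|_{\infty}$.

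The conjugacy statement is then immediate. If $P h = H$ is the primitive map, then
\begin{equation*}
P\,D\rho_{w}[f]\,h = w(F)\,H,
\end{equation*}
since $w(F)H$ vanishes at $-\infty$. So $P\,D\rho_{w}[f]\,P^{-1}$ is the multiplication operator $M_{w\circ F}$. On $L^{2}(\mathbb{R})$ this operator is bounded because $w$ is continuous on $[0,1]$. The standard spectral theorem for multiplication operators identifies the spectrum as the essential range of $w\circ F$.

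To compute that range, I use $f \in \mathcal{D}_{+}$. Then $F$ is a $C^{1}$ increasing homeomorphism from the support interval onto $(0,1)$, so preimages of open sets of ranks have positive Lebesgue measure, because $F$ has a positive derivative there. Hence the essential range equals the closure of $w((0,1))$. For $w_{\mathrm{III}} = 2\sin^{2}(\pi z)$ this is $[0,2]$.

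For iterates, Theorem~\ref{thm:composition} gives $\rho_{w}^{\,n} = \rho_{(A_{w}^{\circ n})'}$. The kernel $(A_{w}^{\circ n})'$ is continuous on $[0,1]$, being a product of continuous factors by~\eqref{eq:iterates}, and it is $C^{1}$ when $w$ is. So the preceding argument applies verbatim, and the spectrum is the closure of its range.

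The main obstacle is the sharp Lipschitz constant $2^{n}$. The upper bound comes from~\eqref{eq:lipschitz} applied to the kernel $(A^{\circ n})'$, with $A = A_{w_{\mathrm{III}}}$. To identify $\sup (A^{\circ n})'$, I use the fact that $z=\tfrac12$ is fixed by $A$ because the kernel is symmetric, together with $w_{\mathrm{III}}(\tfrac12) = 2$. The product formula then gives $(A^{\circ n})'(\tfrac12) = 2^{n}$, and the same product shows $\le 2^{n}$ everywhere, since each factor is at most $2$.

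For sharpness I will take perturbations concentrated near the median. Let $g = f + \varepsilon h$ with $H$ a small bump supported where $F \approx \tfrac12$. To first order, $F_{\rho g} - F_{\rho f} \approx \varepsilon (A^{\circ n})'(F) H$, so the ratio of Kolmogorov (or $W_{1}$) distances tends to $(A^{\circ n})'(\tfrac12) = 2^{n}$ as the bump shrinks and $\varepsilon \to 0$. Making this rigorous needs a uniform second-order remainder bound of the form $\tfrac12 \|(A^{\circ n})''\|_{\infty}\varepsilon^{2}\|H\|_{\infty}^{2}$, which is negligible against $\varepsilon\|H\|$. For $W_{1}$ I will use the same estimate integrated over the bump's support.
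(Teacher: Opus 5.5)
Your proposal is correct and follows essentially the paper's own argument: product-rule differentiation of $w(F)f$, the conjugation $D\rho_{w}[f] = \partial\circ M_{w\circ F}\circ\partial^{-1}$ with the essential-range description of multiplication operators, the composition law for iterates, and sharpness of $2^{n}$ from laws that differ only near the median, where $(A^{\circ n})'(\tfrac12) = 2^{n}$. Your extra details fill in steps the paper leaves implicit. These are the $L^{1}$ dominated-convergence justification of the G\^ateaux limit, the identification of the essential range via $F$ being a homeomorphism onto $(0,1)$, and the remainder control in the sharpness step.
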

	
	\begin{proof}
		Differentiating the defining equation~\eqref{eq:operator} along the
		perturbation $h$ gives
		\begin{equation*}
			D\rho_{w}[f]\,h
			= w(F)\,h + w'(F)\Big(\int_{-\infty}^{x}h\Big)f,
		\end{equation*}
		and the right side of the identity~\eqref{eq:intertwine} expands to
		the same expression by the product rule. The conjugation statement is
		the identity~\eqref{eq:intertwine} read as
		\begin{equation*}
			D\rho_{w}[f] = \partial \circ M_{w\circ F}\circ \partial^{-1}
		\end{equation*}
		on mean-zero perturbations with primitives in $L^{2}(\mathbb{R})$,
		and multiplication operators on $L^{2}$ have spectrum
		equal to their essential range; the term spectrum refers throughout
		to this $L^{2}$ realization of the linearization, not to a
		similarity-invariant spectrum on $L^{1}$. The iterate statement
		follows from
		\begin{equation*}
			\big(A_{w_{\mathrm{III}}}^{\circ n}\big)'\big(\tfrac12\big) = 2^{n}
		\end{equation*}
		together with the Lipschitz bounds~\eqref{eq:lipschitz} applied to
		$A_{w_{\mathrm{III}}}^{\circ n}$; sharpness follows by comparing
		laws that differ only near the median, where the slope $2^{n}$ of
		$A_{w_{\mathrm{III}}}^{\circ n}$ is attained.
	\end{proof}
	
	The intertwining identity~\eqref{eq:intertwine} is used repeatedly in
	what follows. It re-proves mass conservation in one line, since the
	boundary values of $w(F)\int^{x} h$ vanish; it locates the expansion
	and contraction of the linearization on the rank regions where the
	kernel exceeds or falls below one (for the canonical kernel the
	transition set $\{w\circ F = 1\}$ is Lebesgue-null, so no genuinely
	neutral directions exist); and it gives the growth rate
	\begin{equation*}
		\log\big\|D\rho_{w_{\mathrm{III}}}^{\,n}\big\| = n\log 2.
	\end{equation*}
	Finally we
	record how the classical information functionals transform under the
	canonical operator. For $f \in \mathcal{D}_{+}\cap C^{1}$ bounded,
	with finite Fisher information $\mathcal{I}(f) = \int (f')^{2}/f$, the
	transformation law
	\begin{equation}
		\mathcal{I}\big(\rho_{w_{\mathrm{III}}}[f]\big)
		= 4\pi^{2}\,\mathbb{E}_{f}\big[f(X)^{2}\big]
		+ \mathbb{E}_{f}\Big[\,2\sin^{2}(\pi F)\Big(\frac{f'}{f}\Big)^{2}\Big]
		\label{eq:fisher-law}
	\end{equation}
	holds (Lemma~\ref{lem:fisher-law} in Appendix~\ref{app:proofs}),
	whence the two-sided bound
	\begin{equation}
		4\pi^{2}\,\mathbb{E}_{f}[f^{2}]
		\;\le\; \mathcal{I}\big(\rho_{w_{\mathrm{III}}}[f]\big)
		\;\le\; 4\pi^{2}\,\mathbb{E}_{f}[f^{2}] + 2\,\mathcal{I}(f).
		\label{eq:fisher-bounds}
	\end{equation}
	On the uniform law of unit length the identity collapses to
	$\mathcal{I}(\rho_{w_{\mathrm{III}}}[f]) = 4\pi^{2}$ exactly; on a
	uniform law of length $L$ it gives $4\pi^{2}/L^{2}$. Here and below,
	$\mathcal{I}(f)$ for a law with compact support is computed from the
	classical derivative on the interior of the support, so that
	$\mathcal{I}(\mathrm{uniform}) = 0$.
	
	\section{The Canonical Kernel}\label{sec:canonical}
	
	Rigidity classifies the operators; this section selects one. A
	variational principle singles out the Type-III kernel announced in
	Section~\ref{sec:foundations}, uniquely, and two further
	descriptions, probabilistic and information-theoretic, attach exact
	structure to it that the rest of the paper exploits; only the first
	is a characterization, the other two being structures the kernel
	shares, in part, with its harmonic ladder. Hereafter
	$\rho := \rho_{\mathrm{III}}$ and
	$w := w_{\mathrm{III}} = 2\sin^{2}(\pi z)$, and we write
	\begin{equation}
		A(z) = A_{w}(z) = z - \frac{\sin(2\pi z)}{2\pi}
		\label{eq:transport-map}
	\end{equation}
	for its transport map. Two associated families recur throughout: the
	\emph{harmonic kernels}
	\begin{equation}
		w_{k}(z) = 2\sin^{2}(k\pi z),
		\qquad
		A_{k}(z) = z - \frac{\sin(2\pi k z)}{2\pi k},
		\qquad k = 1, 2, \ldots,
		\label{eq:harmonic-kernels}
	\end{equation}
	and the \emph{phase-shifted kernels}
	\begin{equation}
		w_{k,\alpha}(z) = 2\sin^{2}\big(k\pi(z+\alpha)\big),
		\qquad \alpha \in [0,1),
		\label{eq:phase-kernels}
	\end{equation}
	all of which belong to $\mathcal{W}$. The harmonic and phase-shifted
	kernels are, up to the factor $2$, precisely the squared trigonometric
	modes that appear in Neyman's classical smooth alternatives to
	uniformity~\cite{neyman1937}, so the family has a statistical pedigree
	independent of the present theory: modulating a law by
	$w_{k,\alpha}$ is the density-level counterpart of embedding it in a
	smooth exponential family of rank alternatives.
	
	The variational characterization asks: which kernel disturbs a law
	least? The question is well posed because probability carries exactly
	one invariant Riemannian structure. By \v{C}encov's
	theorem~\cite{chentsov1982}, the Fisher--Rao metric
	\begin{equation}
		ds^{2} = \int_{0}^{1}\frac{(\delta w)^{2}}{w}\,dz
		\label{eq:fisher-rao}
	\end{equation}
	is the unique metric on densities invariant under sufficient
	statistics, the geometric counterpart of
	Theorem~\ref{thm:equivariance}. Under the substitution $w = \phi^{2}$
	the metric~\eqref{eq:fisher-rao} becomes exactly four times the round
	metric of the unit sphere of $L^{2}(0,1)$: amplitudes, square roots of
	densities, are the normal coordinates of information geometry.
	Disturbance is therefore measured by the Fisher information of the
	kernel,
	\begin{equation}
		\mathcal{I}(w) = \int_{0}^{1}\frac{(w')^{2}}{w}\,dz
		= 4\int_{0}^{1}(\phi')^{2}\,dz,
		\qquad \phi = \sqrt{w},
		\label{eq:kernel-fisher}
	\end{equation}
	and the natural boundary condition, that a modulation be inert at the
	extremes of rank, $w(0) = w(1) = 0$, turns minimal disturbance into a
	Rayleigh problem.
	
	\begin{theorem}[Variational characterization]\label{thm:variational}
		Among all kernels with $\sqrt{w} \in H^{1}_{0}(0,1)$, the Fisher
		information attains its minimum uniquely at the canonical kernel:
		\begin{equation}
			w(z) = 2\sin^{2}(\pi z), \qquad \mathcal{I}(w) = 4\pi^{2}.
			\label{eq:variational-min}
		\end{equation}
		The critical points of $\mathcal{I}$ under the same constraints,
		understood in the amplitude parametrization $w = \phi^{2}$ with
		$\phi \in H^{1}_{0}(0,1)$, are exactly the harmonic kernels, with
		critical values $\mathcal{I}(w_{k}) = 4k^{2}\pi^{2}$, and the
		Lagrange multiplier of the normalization constraint $\int w = 1$ at
		the $k$-th critical point equals $4k^{2}\pi^{2}$.
	\end{theorem}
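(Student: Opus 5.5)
The plan is to pass to the amplitude parametrization $w=\phi^{2}$ with $\phi=\sqrt{w}\ge 0$, $\phi\in H^{1}_{0}(0,1)$. There the constraint $\int w=1$ becomes $\|\phi\|_{L^{2}}=1$, and the functional becomes $\mathcal{I}(w)=4\int_{0}^{1}(\phi')^{2}\,dz$ by~\eqref{eq:kernel-fisher}. One caveat first: the identity $(w')^{2}/w=4(\phi')^{2}$ holds almost everywhere on $\{\phi>0\}$, while on $\{\phi=0\}$ both $w'$ and $\phi'$ vanish almost everywhere (Stampacchia). So the two expressions agree for every admissible kernel, and minimizing $\mathcal{I}$ is exactly minimizing the Rayleigh quotient $4\int(\phi')^{2}/\int\phi^{2}$ over $H^{1}_{0}(0,1)$.

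\textbf{Minimum.} Expand $\phi$ in the Dirichlet eigenbasis $e_{k}(z)=\sqrt2\sin(k\pi z)$, which is complete in $L^{2}(0,1)$ and orthogonal in $H^{1}_{0}$. Writing $\phi=\sum c_{k}e_{k}$ gives
\[
\int(\phi')^{2}=\sum k^{2}\pi^{2}c_{k}^{2}\ \ge\ \pi^{2}\sum c_{k}^{2}=\pi^{2}.
\]
This is Wirtinger's inequality, so $\mathcal{I}(w)\ge 4\pi^{2}$. Equality forces $c_{k}=0$ for all $k\ge2$, hence $\phi=\pm e_{1}$ and $w=\phi^{2}=2\sin^{2}(\pi z)$. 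Passing to $w=\phi^{2}$ removes the sign ambiguity, which gives uniqueness. The value $4\pi^{2}$ is read off directly.

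\textbf{Critical points.} Consider the Lagrangian
\[
4\int(\phi')^{2}-\lambda\Big(\int\phi^{2}-1\Big)
\]
on $H^{1}_{0}$. Its weak Euler--Lagrange equation is $-4\phi''=\lambda\phi$ in $\mathcal{D}'(0,1)$. Elliptic regularity bootstraps $\phi$ to $C^{\infty}[0,1]$, and the Dirichlet conditions then force $\lambda/4=k^{2}\pi^{2}$ with $\phi=\pm e_{k}$. Normalizing gives $w=\phi^{2}=w_{k}$.

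For the values, testing the equation against $\phi$ gives $\mathcal{I}(w_{k})=4k^{2}\pi^{2}$. The multiplier is then $\lambda=4k^{2}\pi^{2}$. This matches the constraint $\int w=1$ rather than $\int\phi^{2}=1$, since these are the same functional of $w$. The remaining point is the multiplier's normalization convention: if the paper's Lagrangian attaches $\lambda$ to $\int w$ with some factor, I would check that the stationarity equation reads $-4\phi''=\lambda\phi$, so that the multiplier equals the critical value exactly.

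\textbf{Main obstacle.} I expect the hardest step to be the interpretive one. As a functional of $w$ on the convex set of kernels, $\mathcal{I}$ is convex, so it has only the one critical point. The statement wisely says ``in the amplitude parametrization'', but the sign constraint $\phi\ge0$ must then be dropped: for $k\ge2$, $e_{k}$ changes sign and $|e_{k}|$ is not smooth at the nodes. I would therefore state the critical-point problem on all of $H^{1}_{0}$, the unit sphere of $L^{2}$, and note that $w=\phi^{2}$ is insensitive to the sign. This identifies the critical points with the Dirichlet eigenfunctions, and the harmonic kernels are exactly their squares.
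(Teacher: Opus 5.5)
Your proposal is correct and follows the paper's own route. The substitution $w=\phi^{2}$ turns $\mathcal{I}$ into the Dirichlet energy $4\int(\phi')^{2}$ on the $L^{2}$ unit sphere in $H^{1}_{0}(0,1)$; the ground state $\sqrt2\sin(\pi z)$ is the unique minimizer up to sign, and the Euler--Lagrange equation $\phi''=-\tfrac{\mu}{4}\phi$ yields the harmonic critical points with multiplier $4k^{2}\pi^{2}$. The paper also makes your remark that for $k\ge2$ criticality must be read on the signed amplitude sphere, not for one-sided variations of the nonnegative kernel. Your additions make explicit what the paper's short proof takes for granted: the Stampacchia justification that $(w')^{2}/w=4(\phi')^{2}$ almost everywhere, the sine-series (Wirtinger) argument for the equality case, the regularity bootstrap, and the convexity observation that explains why the amplitude reading is forced.
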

	
	\begin{proof}
		With $\phi = \sqrt{w}$, the functional~\eqref{eq:kernel-fisher} is the
		Dirichlet energy $4\int(\phi')^{2}$, to be minimized on the unit
		sphere of $L^{2}(0,1)$ within $H_{0}^{1}(0,1)$: the minimizer is the first Dirichlet
		eigenfunction $\sqrt2\sin(\pi z)$, unique up to sign (hence $w$ unique
		outright), and the critical points are the eigenfunctions
		$\sqrt2\sin(k\pi z)$ with $\int(\phi_{k}')^{2} = k^{2}\pi^{2}$. The
		constrained Euler--Lagrange equation
		\begin{equation*}
			\delta\Big[\,4\int_{0}^{1}(\phi')^{2}
			- \mu\Big(\int_{0}^{1}\phi^{2}-1\Big)\Big] = 0
			\qquad\text{reads}\qquad
			\phi'' = -\frac{\mu}{4}\,\phi,
		\end{equation*}
		so $\mu = 4k^{2}\pi^{2}$ at the $k$-th critical point. For
		$k \ge 2$ the amplitude $\phi_{k}$ changes sign, so criticality is a
		statement about the $L^{2}$ sphere within $H_{0}^{1}(0,1)$, where variations are
		unconstrained, rather than about one-sided variations of the
		nonnegative kernel itself; the parametrization $w = \phi^{2}$ is the
		natural one here, being the normal-coordinate chart of the
		Fisher--Rao metric~\eqref{eq:fisher-rao}.
	\end{proof}
	
	The theorem recasts the family: the canonical kernel is not merely the
	entropy-neutral element of the product
	identity~\eqref{eq:product-identity} but \emph{the} least-disturbing
	boundary-inert modulation, and the harmonics form its natural spectral
	ladder. Everything discrete in this paper, eigenvalues, universal
	constants, quantization phenomena, traces back to the boundary
	condition in Theorem~\ref{thm:variational}; the discreteness of the
	harmonic ladder is forced by the requirement that a kernel vanish at
	both ends of rank space while carrying unit mass.
	
	The probabilistic description exhibits the canonical kernel as a
	Bayes rule. Attach to $X \sim f$ the binary variable $Y$ with
	conditional law
	\begin{equation}
		\mathbb{P}(Y = 1 \mid X = x) = \sin^{2}\big(\pi F(x)\big),
		\qquad
		\mathbb{P}(Y = 0 \mid X = x) = \cos^{2}\big(\pi F(x)\big).
		\label{eq:likelihood}
	\end{equation}
	The likelihood in equation~\eqref{eq:likelihood} depends on $x$ only
	through its rank, so by the universality
	principle~\eqref{eq:universality} the observation is unbiased for
	every law: $\mathbb{P}(Y=1) = \tfrac12$ for \emph{every} $f$. Bayes'
	rule then gives
	\begin{equation}
		f(\,\cdot \mid Y{=}1) = 2\sin^{2}(\pi F)f = \rho[f],
		\qquad
		f(\,\cdot \mid Y{=}0) = 2\cos^{2}(\pi F)f = \rho_{w_{1,1/2}}[f].
		\label{eq:bayes}
	\end{equation}
	The relation~\eqref{eq:bayes} says that the canonical operator is the
	posterior update of a universally unbiased binary observation, and
	that the complementary posterior is the half-phase shift of the same
	kernel, so the two-outcome update closes within the phase-shifted
	family, with the Bayesian consistency relation
	\begin{equation*}
		\tfrac12\,\rho[f] + \tfrac12\,\rho_{w_{1,1/2}}[f] = f:
	\end{equation*}
	the prior is recovered as the mixture of its two posteriors. This
	closure property is itself characteristic.
	
	\begin{proposition}[Uniqueness of the closed binary update]\label{prop:unique-update}
		If a binary observation has rank-local likelihood $\ell(F(x))$ and
		both posteriors lie in the family
		$\{2\sin^{2}(\pi(z+\alpha))\}_{\alpha}$, then
		$\ell(z) = \sin^{2}(\pi(z+\alpha))$ for some $\alpha$ and
		$\mathbb{P}(Y{=}1) = \tfrac12$.
	\end{proposition}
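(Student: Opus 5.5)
The plan is to write both posteriors explicitly and let the universality principle~\eqref{eq:universality} fix the normalizations. Let $Y$ be binary with $\mathbb{P}(Y=1\mid X=x)=\ell(F(x))$, where $0\le \ell\le 1$ is measurable on $(0,1)$. The complementary likelihood is $1-\ell(F(x))$. Set $p=\mathbb{P}(Y=1)$. By~\eqref{eq:universality},
\[
p=\int_0^1 \ell(z)\,dz ,
\]
independently of $f$. We assume $0<p<1$, since otherwise one posterior is undefined. Bayes' rule then gives
\[
f(\cdot\mid Y=1)=\frac{\ell(F)}{p}\,f=\rho_{\ell/p}[f],
\qquad
f(\cdot\mid Y=0)=\frac{1-\ell(F)}{1-p}\,f=\rho_{(1-\ell)/(1-p)}[f],
\]
so both posteriors are derangetropy operators with kernels $\ell/p$ and $(1-\ell)/(1-p)$.

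Next we pass from operators to kernels. The hypothesis says $\rho_{\ell/p}[f]=\rho_{w_{1,\alpha}}[f]$ for some $\alpha$, with $w_{1,\alpha}(z)=2\sin^2(\pi(z+\alpha))$. Similarly $\rho_{(1-\ell)/(1-p)}[f]=\rho_{w_{1,\beta}}[f]$ for some $\beta$. To identify kernels we evaluate on a single test law, for instance the uniform law on $(0,1)$, for which $F(x)=x$. Then $\rho_v[f]=v$ almost everywhere, so
\[
\ell(z)=2p\sin^2(\pi(z+\alpha))
\quad\text{and}\quad
1-\ell(z)=2(1-p)\sin^2(\pi(z+\beta))
\quad\text{for a.e. } z.
\]
If the phases were allowed to depend on $f$, the rigidity identification through the transport formula~\eqref{eq:transport} would still make $\alpha$ a constant. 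The uniform test law suffices, however.

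The main step, and the only real content, is to add the two identities. Using $2\sin^2\theta=1-\cos 2\theta$, this gives
\[
1=p\bigl(1-\cos 2\pi(z+\alpha)\bigr)+(1-p)\bigl(1-\cos 2\pi(z+\beta)\bigr)
\quad\text{for a.e. } z,
\]
that is,
\[
p\cos 2\pi(z+\alpha)+(1-p)\cos 2\pi(z+\beta)\equiv 0 .
\]
The functions $\cos 2\pi z$ and $\sin 2\pi z$ are linearly independent, so this is the vanishing of a sum of two complex exponentials:
\[
p\,e^{2\pi i\alpha}+(1-p)\,e^{2\pi i\beta}=0 .
\]
Taking moduli forces $p=1-p$, hence $p=\tfrac12$, and then $e^{2\pi i\beta}=-e^{2\pi i\alpha}$, so $\beta=\alpha+\tfrac12 \pmod 1$. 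Substituting back gives $\ell(z)=\sin^2(\pi(z+\alpha))$ and $1-\ell=\cos^2(\pi(z+\alpha))$, which is consistent with~\eqref{eq:bayes}.

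The obstacle I expect is bookkeeping rather than analysis. One must check that the degenerate values $p\in\{0,1\}$ are excluded, and that the kernels are identified only almost everywhere. This is harmless, because $\ell$ is defined only up to null rank sets, and the conclusion is read modulo such sets.
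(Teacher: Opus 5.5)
Your proposal is correct and follows the paper's proof essentially line for line. Like the paper, you normalize by $c=\int_0^1\ell\in(0,1)$, add the two posterior identities, and read the vanishing of $c\cos(2\pi(z+\alpha))+(1-c)\cos(2\pi(z+\beta))$ as the phasor identity that forces $c=\tfrac12$ and $\beta=\alpha+\tfrac12$; your use of the uniform test law to pass from operators to kernels only makes explicit a step the paper leaves implicit.
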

	
	\begin{proof}
		With $c = \int_{0}^{1}\ell$, note first that $0 < c < 1$: the
		likelihood is not identically $0$ or $1$, since both posteriors
		exist. The two posterior conditions then read
		\begin{equation*}
			\frac{\ell(z)}{c} = 2\sin^{2}\big(\pi(z+\alpha)\big),
			\qquad
			\frac{1-\ell(z)}{1-c} = 2\sin^{2}\big(\pi(z+\beta)\big).
		\end{equation*}
		Clearing denominators and adding the two relations gives, for
		every $z$,
		\begin{equation*}
			c\cdot 2\sin^{2}\big(\pi(z+\alpha)\big)
			+ (1-c)\cdot 2\sin^{2}\big(\pi(z+\beta)\big) = 1,
		\end{equation*}
		and by $2\sin^{2}\theta = 1 - \cos 2\theta$ this is the vanishing,
		for all $z$, of
		$c\cos(2\pi(z+\alpha)) + (1-c)\cos(2\pi(z+\beta))$, that is, the
		phasor identity
		\begin{equation*}
			c\,e^{2\pi i\alpha} = -(1-c)\,e^{2\pi i\beta},
		\end{equation*}
		whence, comparing moduli and phases, $c = \tfrac12$ and
		$\beta = \alpha + \tfrac12$ modulo one.
	\end{proof}
	
	The information identities quantify the update exactly.
	Because the update in relation~\eqref{eq:bayes} acts through ranks,
	its information functionals are universal constants. Write
	$D(\cdot\|\cdot)$ for relative entropy. Evaluating the two rank
	integrals by the Fourier expansion
	\begin{equation*}
		\log\sin(\pi z)
		= -\log 2 - \sum_{n\ge1}\frac{\cos(2\pi nz)}{n}
	\end{equation*}
	(Lemma~\ref{lem:kl} in Appendix~\ref{app:proofs}) gives, for every
	$f \in \mathcal{D}$,
	\begin{equation}
		D\big(f \,\|\, \rho[f]\big) = \log 2,
		\qquad
		D\big(\rho[f] \,\|\, f\big) = 1 - \log 2.
		\label{eq:kl-identities}
	\end{equation}
	Logarithms are natural throughout, so $\log 2$ nats is exactly one
	bit: each application of the canonical operator is one bit of
	relative information, for every law, and the informational price of
	the update is a universal constant, independent of what is being
	updated.
	The asymmetry of the two divergences in the
	identities~\eqref{eq:kl-identities} is itself meaningful. The
	divergence of $f$ from its update is $\log 2$, the entropy of the
	binary observation, while the divergence of the update from $f$ is the
	smaller number $1-\log 2$, and the difference reflects the fact that
	relative entropy prices the two directions of an inference
	differently. For the binary observation itself, $H(Y) = \log 2$, the
	mutual information is
	\begin{equation}
		I(X;Y) = \tfrac12\, D\big(\rho[f]\,\|\,f\big)
		+ \tfrac12\, D\big(\rho_{w_{1,1/2}}[f]\,\|\,f\big)
		= 1 - \log 2,
		\label{eq:mutual-info}
	\end{equation}
	and the equivocation is
	\begin{equation*}
		H(Y \mid X)
		= \int_{0}^{1}h_{2}\big(\sin^{2}\pi z\big)\,dz
		= 2\log2 - 1,
	\end{equation*}
	where $h_{2}$ is
	the binary entropy function; all three constants are independent of
	$f$ and of the harmonic order, since replacing $\sin^{2}(\pi F)$ by
	$\sin^{2}(k\pi F)$ changes nothing, by periodicity. Two remarks keep
	these identities in proportion. Law-independence of the costs is a
	consequence of rank locality, not of the canonical kernel: by the
	universality principle~\eqref{eq:universality}, \emph{every} kernel
	$v$ has the law-independent costs
	$D(f\Vert\rho_{v}f) = -\int_{0}^{1}\log v$ and
	$D(\rho_{v}f\Vert f) = \int_{0}^{1}v\log v$, each a value in
	$[0,\infty]$, the first made infinite by strong enough zeros of
	the kernel and the second by unbounded peaks, and
	the specific values above are shared by
	the entire harmonic ladder; what the identities contribute is the
	exact evaluation of the cost, not a further uniqueness claim. In
	statistical terms they give the exact Kullback--Leibler separation
	between any law and its modulated alternative, a multiplicative,
	density-level analogue of Neyman's smooth
	alternatives~\cite{neyman1937}, which are exponential tilts of
	orthonormal systems, uniformly over the null. By the composition
	law, the rank of every iterate is uniform, so the
	identities~\eqref{eq:kl-identities} hold at every step of the cascade
	$f_{n} = \rho^{n}[f]$: one has $D(f_{n}\|f_{n+1}) = \log 2$ and
	$D(f_{n+1}\|f_{n}) = 1-\log2$ for all $n$, which is the additive
	shadow, at rate one bit per step, of the multiplicative
	decomposition~\eqref{eq:iterates}. The endpoint divergences
	$D(f_{n}\|f)$ do not telescope; their exact asymptotics is a
	consequence of the limit law of iteration and is derived below.
	
	\section{The Amplitude Representation}\label{sec:amplitude}
	
	The canonical kernel is a squared eigenfunction:
	$w = |\varphi_{1}|^{2}$, where
	\begin{equation}
		\varphi_{k}(z) = \sqrt2\,\sin(k\pi z), \qquad k = 1, 2, \ldots,
		\label{eq:eigenbasis}
	\end{equation}
	is the orthonormal basis of $L^{2}(0,1)$ of Dirichlet eigenfunctions
	of $-\tfrac12\, d^{2}/dz^{2}$, with eigenvalues
	$E_{k} = k^{2}\pi^{2}/2$. Combined with the spherical form of the
	Fisher--Rao metric~\eqref{eq:fisher-rao}, this says that the natural
	home of the theory is not the space of densities but the space of
	\emph{amplitudes}, where the multiplicative structure of
	Section~\ref{sec:foundations} acquires a linear, unitary shadow. Note
	the identity $\mathcal{I}(w_{k}) = 8E_{k}$ linking
	Theorem~\ref{thm:variational} to this spectral data: the Fisher
	information of a harmonic kernel is, up to the universal factor $8$,
	the eigenvalue of the amplitude that generates it.
	
	\begin{definition}[Amplitude lift]\label{def:lift}
		For $f \in \mathcal{D}$ define
		$U_{f} : L^{2}(0,1) \to L^{2}(\mathbb{R})$ by
		\begin{equation}
			U_{f}\,g = (g\circ F)\,\sqrt{f}.
			\label{eq:lift}
		\end{equation}
	\end{definition}
	
	The lift takes a function on rank space, weights it by the amplitude
	$\sqrt f$ of the law, and produces a function on the line; it is the
	half-density counterpart of the quantile transport that underlies
	everything in Section~\ref{sec:foundations}.
	
	\begin{theorem}[Unitary dilation]\label{thm:dilation}
		Let $f \in \mathcal{D}$. Then:
		\begin{enumerate}[label=\normalfont(\roman*), leftmargin=*]
			\item $U_{f}$ is an isometry; for $f \in \mathcal{D}_{+}$ it is
			unitary onto $L^{2}(\operatorname{supp}f)$, with inverse
			\begin{equation*}
				\psi \;\longmapsto\; (\psi\circ Q)\,\sqrt{Q'}.
			\end{equation*}
			\item For every unit vector $g \in L^{2}(0,1)$,
			\begin{equation*}
				|U_{f}\,g|^{2} = \rho_{|g|^{2}}[f]:
			\end{equation*}
			the modulation family $\{\rho_{v}[f] : v \in \mathcal{W}\}$ is
			exactly the set of squared moduli of unit amplitudes transported by
			$U_{f}$. In particular,
			\begin{equation*}
				\rho_{w_{k}}[f] = \big|\psi_{k}[f]\big|^{2},
				\qquad
				\psi_{k}[f] := U_{f}\,\varphi_{k},
			\end{equation*}
			and, for $f \in \mathcal{D}_{+}$, the family
			$\{\psi_{k}[f]\}_{k\ge1}$ is an orthonormal basis of
			$L^{2}(\operatorname{supp}f)$; for general $f \in \mathcal{D}$ the
			support is read as $\{f > 0\}$ modulo null sets. The word dilation
			is used in the loose sense of a lift to a larger space.
			\item For every increasing differentiable homeomorphism $T$ with
			positive derivative, equivariance is unitarily implemented:
			\begin{equation*}
				U_{T_{\#}f} = V_{T}\,U_{f},
				\qquad
				V_{T}\,\psi = (\psi\circ T^{-1})\,\sqrt{\big|(T^{-1})'\big|};
			\end{equation*}
			for decreasing $T$ the rank reflection intervenes,
			$U_{T_{\#}f} = V_{T}\,U_{f}R$ with $(Rg)(z) = g(1-z)$.
		\end{enumerate}
	\end{theorem}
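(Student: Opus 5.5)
Everything rests on the pushforward identity behind the universality principle~\eqref{eq:universality}: for $X\sim f$, $F(X)$ is uniform on $(0,1)$, so $\int_{\mathbb R}\Psi(F)\,f\,d\lambda=\int_0^1\Psi\,dz$ for every integrable $\Psi$.

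For (i), I would compute $\|U_f g\|^2=\int |g(F)|^2 f\,d\lambda=\int_0^1|g|^2dz$. This shows that $U_f$ is an isometry. It is also well defined on equivalence classes, because if $g=0$ a.e.\ on $(0,1)$ then $g\circ F=0$ for $f\,d\lambda$-a.e.\ $x$, again by pushforward. For $f\in\mathcal D_+$, write $I=\operatorname{supp}f$ for the open interval. On $I$, $F$ is a $C^1$ increasing bijection onto $(0,1)$ with $F'=f>0$, and $Q$ is its $C^1$ inverse with $Q'=1/(f\circ Q)$. I would define $W\psi=(\psi\circ Q)\sqrt{Q'}$. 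The change of variables $x=Q(z)$ shows that $W$ is an isometry $L^2(I)\to L^2(0,1)$. Direct substitution then gives $U_fW\psi=\psi$ and $WU_fg=g$. Hence $U_f$ is onto $L^2(I)$ with inverse $W$. I expect this surjectivity step to be the only point needing care: positivity and continuity of $f$ on $I$ are what make $Q$ a genuine diffeomorphism. For a general $f\in\mathcal D$ only the isometry survives, with range contained in $L^2(\{f>0\})$.

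For (ii), I would note that $|U_fg|^2=|g(F)|^2f=\rho_{|g|^2}[f]$ directly from Definition~\ref{def:operator}. The function $|g|^2$ is a kernel exactly when $\|g\|=1$. Conversely, every $v\in\mathcal W$ equals $|\sqrt v|^2$ with $\sqrt v$ a unit vector, which gives the claimed set equality. Taking $g=\varphi_k$ gives $\rho_{w_k}[f]=|\psi_k[f]|^2$. Since $\{\varphi_k\}$ is an orthonormal basis of $L^2(0,1)$, unitarity from (i) transports it to an orthonormal basis $\{\psi_k[f]\}$ of $L^2(\operatorname{supp}f)$ when $f\in\mathcal D_+$. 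For general $f$ the support is read modulo null sets, as in the statement.

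For (iii), I would use the proof of Theorem~\ref{thm:equivariance}, which gives $F_{T_\#f}=F\circ T^{-1}$ and $T_\#f=(f\circ T^{-1})\,|(T^{-1})'|$. Then
\[
U_{T_\#f}\,g=\big(g\circ F\circ T^{-1}\big)\sqrt{f\circ T^{-1}}\,\sqrt{|(T^{-1})'|}=V_T\big((g\circ F)\sqrt f\big)=V_TU_fg .
\]
For decreasing $T$ one has $F_{T_\#f}=1-F\circ T^{-1}$, so $g\circ F_{T_\#f}=(Rg)\circ F\circ T^{-1}$. The same computation then yields $U_{T_\#f}=V_TU_fR$. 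Finally, $V_T$ is itself unitary by the change of variables $y=T(x)$. This point is not needed for the identities, but it justifies calling the implementation unitary.
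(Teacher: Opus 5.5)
Your proof follows the paper's route: the pushforward identity gives the isometry, bijectivity of $F$ on the support gives the inverse, and direct substitution gives (ii) and (iii). It is correct, with more detail than the paper supplies, for $f\in\mathcal D_+$ and for all the identities in (ii) and (iii).

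The one flawed step is your remark that for general $f\in\mathcal D$ ``only the isometry survives.'' That remark is not right, and it leaves the last clause of (ii) unproved, namely the basis property on $L^{2}(\{f>0\})$ for general $f$. You obtain that clause by transporting $\{\varphi_k\}$ through unitarity. An isometry, however, only carries an orthonormal basis to an orthonormal basis of its range. Reading the support modulo null sets does not by itself supply completeness.

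Surjectivity in fact holds for every $f\in\mathcal D$. Let $Q(z)=\inf\{x: F(x)\ge z\}$ be the generalized quantile.
\begin{itemize}
\item $Q(F(x))=x$ unless $x$ lies in a nondegenerate level interval of $F$. On such intervals $f=0$ a.e., so $Q\circ F=\mathrm{id}$ off a null subset of $\{f>0\}$.
\item $Q$ pushes Lebesgue measure on $(0,1)$ forward to $f\,d\lambda$, by inverse-transform sampling.
\end{itemize}
Hence for $\psi\in L^{2}(\{f>0\})$, set $g:=(\psi/\sqrt f)\circ Q$, with $\psi/\sqrt f:=0$ where $f=0$. Then $\|g\|_{2}^{2}=\int_{\{f>0\}}|\psi|^{2}\,d\lambda$ and $U_f g=\psi$ a.e.

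So $U_f$ is unitary onto $L^{2}(\{f>0\})$ for every $f\in\mathcal D$, with inverse $\psi\mapsto(\psi/\sqrt f)\circ Q$. On $\mathcal D_+$ this reduces to your $(\psi\circ Q)\sqrt{Q'}$, because $Q'=1/(f\circ Q)$ there. With this in hand, your transport of the ladder basis covers the general case as well.
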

	
	\begin{proof}
		(i) is the probability integral transform read on half-densities:
		\begin{equation*}
			\langle U_{f}g,\, U_{f}h\rangle
			= \int_{\mathbb{R}} (g\bar h)(F)\,f\,d\lambda
			= \int_{0}^{1}g\bar h\,dz
			= \langle g, h\rangle;
		\end{equation*}
		surjectivity and the inverse follow from the bijectivity of $F$ on
		the support. (ii) and (iii) are immediate computations from the
		defining equation~\eqref{eq:lift}.
	\end{proof}
	
	Part (ii) upgrades the modulation family to a projective structure:
	superpositions $\sum c_{k}\varphi_{k}$ with $\sum|c_{k}|^{2} = 1$
	realize all of $\mathcal{W}$ as squared moduli, and cross terms
	$\cos((j{-}k)\pi F) - \cos((j{+}k)\pi F)$ appear in densities as
	interference fringes, structure invisible at the level of densities
	alone. The dilation thus converts the nonlinear family of modulations
	into linear geometry on the sphere of amplitudes, and questions about
	kernels become questions about vectors. Two exact consequences follow.
	First, a \emph{universal expansion}: since $\sqrt{f} = U_{f}\mathbf{1}$
	and
	\begin{equation*}
		\langle\mathbf{1},\varphi_{k}\rangle
		= \frac{\sqrt2\,(1-\cos k\pi)}{k\pi},
	\end{equation*}
	every root-density has the same coordinates over its own ladder,
	\begin{equation}
		\sqrt{f} = \sum_{k\ \mathrm{odd}} \frac{2\sqrt2}{k\pi}\,\psi_{k}[f],
		\qquad
		\big|\big\langle\sqrt f,\, \psi_{1}[f]\big\rangle\big|^{2}
		= \frac{8}{\pi^{2}},
		\label{eq:universal-expansion}
	\end{equation}
	with Parseval closing exactly, since
	$\sum_{\mathrm{odd}}8/k^{2}\pi^{2} = 1$. Stated plainly, the
	expansion~\eqref{eq:universal-expansion} is the sine series of the
	constant function, transported by the unitary $U_{f}$; its content is
	not the series but the universality of the coordinates, which are the
	same for every law over that law's own ladder. The constant
	$8/\pi^{2}$ in
	the expansion~\eqref{eq:universal-expansion} plays the role of a
	universal fidelity between a law and its canonical transform, the
	amplitude-level counterpart of the entropy
	constants~\eqref{eq:kl-identities}: about $81\%$ of every law, in the
	squared-amplitude sense, already lies along its own canonical
	modulation. Second, an \emph{isospectral family}: the conjugated
	operator
	\begin{equation*}
		H_{f} := U_{f}\Big(-\tfrac12\,\frac{d^{2}}{dz^{2}}\Big)U_{f}^{-1},
		\qquad
		\operatorname{dom}H_{f}
		= U_{f}\big(H^{2}(0,1)\cap H^{1}_{0}(0,1)\big),
	\end{equation*}
	is a self-adjoint Sturm--Liouville operator on
	$L^{2}(\operatorname{supp}f)$, which for sufficiently smooth $f$
	takes the classical form
	\begin{equation}
		-\Big(\frac{\chi'}{f}\Big)' = 2E\, f\,\chi,
		\qquad \chi = \psi/\sqrt f,
		\label{eq:sturm-liouville}
	\end{equation}
	with purely discrete spectrum $\{k^{2}\pi^{2}/2\}$ \emph{independent
		of} $f$ and eigenfunctions
	\begin{equation}
		\psi_{k}[f] = \sqrt{2f}\,\sin\big(k\pi F\big),
		\label{eq:eigenfunctions}
	\end{equation}
	whose $k-1$ interior zeros sit at the $j/k$-quantiles of $f$. Every
	law in $\mathcal{D}_{+}$ thus carries an isospectral vibrating system
	whose frequencies are universal and whose mode shapes encode the
	distribution; equivalently, the classical Liouville substitution of
	Sturm--Liouville theory~\cite{zettl2005},
	$\psi = (u\circ F)\sqrt{F'}$, is \emph{exact} for this family, which
	therefore constitutes a reservoir of Sturm--Liouville problems
	solvable in closed form. The substitution itself is classical; what
	the operator viewpoint adds is its universality, one exactly solvable
	system per law, with a spectrum that never moves. The same substitution satisfies an energy
	decomposition of independent interest
	(Lemma~\ref{lem:dirichlet} in Appendix~\ref{app:proofs}): for real
	$u \in H^{1}(0,1)$ and $\psi = (u\circ F)\sqrt{F'}$,
	\begin{equation}
		\tfrac12\int_{\mathbb{R}}|\psi'|^{2}dx
		= \tfrac12\int_{0}^{1}|u'|^{2}\,\mathfrak{f}^{2}\,dz
		+ \int_{\mathbb{R}}\Big(\!-\frac{(\sqrt f)''}{2\sqrt f}\Big)|\psi|^{2}dx,
		\qquad \mathfrak{f} = f\circ Q,
		\label{eq:dirichlet-decomposition}
	\end{equation}
	the difference of the two Dirichlet energies being the exact
	derivative
	\begin{equation*}
		\frac{d}{dx}\Big[\tfrac14\, u(F)^{2}F''\Big];
	\end{equation*}
	the self-referential
	case $u \equiv \mathbf{1}$ collapses the
	decomposition~\eqref{eq:dirichlet-decomposition} to
	$\tfrac18\mathcal{I}(f)$, tying it back to
	Theorem~\ref{thm:variational}.
	
	The amplitude picture carries a canonical pair and an uncertainty
	principle. On $L^{2}(0,1)$ the multiplication operator $Z$ by $z$ and
	the derivative $P = -\tfrac{i}{2\pi}\partial_{z}$ satisfy
	$[Z,P] = \tfrac{i}{2\pi}$ on $C_{c}^{\infty}(0,1)$. Because the
	interval has ends, $P$ is symmetric but not essentially self-adjoint
	there, so the textbook commutator argument does not apply to every
	state; the correct statement, valid for every unit
	$\psi \in H^{1}_{0}(0,1)$, follows from a single integration by
	parts,
	\begin{equation*}
		1 = -2\operatorname{Re}\int_{0}^{1}
		\big(z - \langle Z\rangle\big)\,\bar\psi\,\psi'\,dz
		\;\le\; 2\,\Delta Z\,\|\psi'\|_{2},
	\end{equation*}
	whence
	\begin{equation}
		\Delta Z \cdot \frac{\|\psi'\|_{2}}{2\pi} \;\ge\; \frac{1}{4\pi},
		\label{eq:uncertainty}
	\end{equation}
	and for real boundary-vanishing states $\|\psi'\|_{2}/(2\pi)$ is
	exactly the momentum spread $\Delta P$. By
	Theorem~\ref{thm:dilation}(i) and (ii) the
	inequality~\eqref{eq:uncertainty} transports to every law: no
	distribution can be simultaneously concentrated in its own rank order
	and composed of few harmonics of its ladder. Localization in rank and
	localization in harmonic content are complementary in the exact,
	quantitative sense of the commutation relation. For the ladder itself
	the moments evaluate in closed form
	(Lemma~\ref{lem:uncertainty} in Appendix~\ref{app:proofs}):
	\begin{equation}
		\Delta Z(\varphi_{k})
		= \Big(\frac{1}{12} - \frac{1}{2\pi^{2}k^{2}}\Big)^{1/2},
		\qquad
		\Delta P(\varphi_{k}) = \frac{k}{2},
		\label{eq:ladder-moments}
	\end{equation}
	so the canonical kernel has uncertainty product
	$\approx 0.090$, within $14\%$ of the bound $1/4\pi \approx 0.080$.
	
	The dilation also makes the modulation family a homogeneous space for
	a periodic unitary flow. Evolving the kernel amplitude by
	$u(\cdot,\tau) = e^{-i\tau H_{0}}g$, where
	$H_{0} = -\tfrac12\partial_{z}^{2}$ with Dirichlet boundary
	conditions, and transporting over a fixed base,
	$f_{\tau} = \rho_{|u(\cdot,\tau)|^{2}}[f]$, the phases
	$e^{-iE_{k}\tau}$ satisfy
	\begin{equation}
		e^{-iE_{k}\cdot 4/\pi} = e^{-2\pi ik^{2}} = 1
		\qquad\text{for every } k:
		\label{eq:period}
	\end{equation}
	the evolution of the modulated density is \emph{exactly periodic} with
	universal period $4/\pi$, the half-period acting as the rank
	reflection $z\mapsto 1-z$ through the parity identity
	$\varphi_{k}(1-z) = (-1)^{k+1}\varphi_{k}(z)$, and rational fractions
	of the period decomposing into finitely many shifted copies by the
	Gauss-sum mechanism familiar from the fractional Talbot effect of
	wave optics~\cite{berryklein1996}. The reflection
	$R\psi(z) = \psi(1-z)$
	commutes with $H_{0}$ and with every symmetric-kernel multiplication,
	so the theory generated by symmetric kernels splits into two invariant
	sectors, $\mathcal{H}_{\pm}$ spanned by odd- and even-index ladder
	elements; any kernel with a nonvanishing antisymmetric part mixes the
	sectors, and the phase-shifted kernels, for which
	$w_{k,\alpha}(1-z) = w_{k,-\alpha}(z)$, are the canonical such
	elements within the harmonic family. Within a sector the theory is
	irreducible in the strongest sense.
	
	\begin{theorem}[Irreducibility on the symmetric sector]\label{thm:irreducible}
		Let $g = \mathbf{1}_{(1/4,3/4)} - \mathbf{1}_{(0,1/4)\cup(3/4,1)}$.
		The von Neumann algebra generated by
		$\{e^{i\tau H_{0}}M_{g}e^{-i\tau H_{0}} : \tau\in\mathbb{R}\}$ acts
		irreducibly on $\mathcal{H}_{+}$.
	\end{theorem}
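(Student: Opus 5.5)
The plan is a commutant argument: show that every bounded operator $C$ on $\mathcal{H}_{+}$ commuting with all $X(\tau) := e^{i\tau H_{0}}M_{g}e^{-i\tau H_{0}}$ is a scalar. By the bicommutant theorem this is equivalent to irreducibility. Two facts make the restriction to $\mathcal{H}_{+}$ legitimate. First, $g(1-z) = g(z)$, so $M_{g}$ commutes with $R$. Second, $H_{0}$ commutes with $R$. Hence every $X(\tau)$ preserves $\mathcal{H}_{+}$, the closed span of $\{\varphi_{k}\}_{k\ \mathrm{odd}}$.

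The first step extracts Fourier components of the orbit. By the periodicity~\eqref{eq:period}, $\tau \mapsto X(\tau)$ is $T$-periodic with $T = 4/\pi$, bounded, and strongly continuous. For each $\omega$ we form the weak integral
\begin{equation*}
B_{\omega} := \frac1T\int_{0}^{T}e^{-i\omega\tau}X(\tau)\,d\tau
= \sum_{E_{j}-E_{k}=\omega} g_{jk}\,|\varphi_{j}\rangle\langle\varphi_{k}|,
\qquad g_{jk} = \langle\varphi_{j}, g\,\varphi_{k}\rangle,
\end{equation*}
with $j,k$ odd. We then check that $B_{\omega}$ lies in the generated von Neumann algebra $\mathcal{M}$. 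This holds because $\mathcal{M}$ is weakly closed and convex combinations of the $X(\tau)$, being Riemann sums, converge weakly to the integral. Equivalently, any $C$ in the commutant commutes with each $B_{\omega}$, since it commutes with the integrand pointwise. This is the only analytic point; everything after it is matrix algebra.

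The second step shows that $B_{0}$ has simple spectrum. With $2\sin^{2}\theta = 1-\cos 2\theta$ and $\int g = 0$, we get
\begin{equation*}
g_{kk} = -\int_0^1 g\cos(2\pi kz)\,dz = -2\int_{1/4}^{3/4}\cos(2\pi kz)\,dz .
\end{equation*}
For odd $k$ this evaluates to $2(-1)^{(k-1)/2}/(\pi k)$. These values have pairwise distinct moduli, so the diagonal operator $B_{0} = \sum_{k\ \mathrm{odd}} g_{kk}|\varphi_{k}\rangle\langle\varphi_{k}|$ has simple spectrum on $\mathcal{H}_{+}$. Any $C$ commuting with $B_{0}$ therefore commutes with its spectral projections, so $C = \sum c_{k}|\varphi_{k}\rangle\langle\varphi_{k}|$ is diagonal in the ladder basis.

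The third step establishes connectivity, which I expect to be the crux: one must verify that enough off-diagonal matrix elements survive. Fix $j = 1$ and odd $k \ge 3$. Then
\begin{equation*}
g_{1k} = \int_{0}^{1} g\,\big[\cos((k-1)\pi z) - \cos((k+1)\pi z)\big]\,dz .
\end{equation*}
Write $(k\mp1)\pi z = 2\pi n z$ with $n = (k\mp1)/2$. The same evaluation as in the second step gives $\int g\cos(2\pi nz) = 0$ for even $n \ge 2$ and $\pm 2/(\pi n) \neq 0$ for odd $n$. Exactly one of $(k\pm1)/2$ is odd, so $g_{1k} \neq 0$.

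Now take $\omega = E_{1}-E_{k}$. Comparing the $(1,k)$ entries of $CB_{\omega} = B_{\omega}C$ gives $(c_{1}-c_{k})g_{1k} = 0$, since only that entry of $B_{\omega}$ involves the pair $(1,k)$. Other pairs sharing the same gap $\omega$ appear in other entries and do not interfere. Hence $c_{k} = c_{1}$ for all odd $k$, so $C$ is scalar and $\mathcal{M}$ acts irreducibly on $\mathcal{H}_{+}$.
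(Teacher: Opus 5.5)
Your proof is correct and is essentially the paper's argument: the period average of the orbit is the diagonal part $\sum_{k} g_{kk}P_k$, whose entries $\pm 2/(k\pi)$ are pairwise distinct, so anything commuting with the orbit is diagonal; the nonvanishing off-diagonal elements $g_{jk}$, read off from the same Fourier coefficients of $g$, then force a scalar. The paper phrases this through the matrix units $P_jM_gP_k$ for all odd pairs, whereas you work in the commutant, need only $g_{1k}\neq 0$ for each odd $k$, and could even drop the nonzero-frequency components $B_\omega$, since $C$ already commutes with $X(0)=M_g$, whose $(1,k)$ entry is $g_{1k}$.
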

	
	\begin{proof}
		The matrix elements of $M_{g}$ in the ladder basis are
		\begin{equation*}
			g_{jk} = \langle\varphi_{j}|g|\varphi_{k}\rangle
			= \beta_{|j-k|} - \beta_{j+k},
			\qquad
			\beta_{m} = \int_{0}^{1}g(z)\cos(m\pi z)\,dz.
		\end{equation*}
		A direct evaluation gives
		\begin{equation*}
			\beta_{m} = 0 \ \text{ for odd } m,
			\qquad
			\beta_{2r} = \mp\frac{2}{r\pi} \ \text{ for odd } r,
			\qquad
			\beta_{2r} = 0 \ \text{ for even } r,
		\end{equation*}
		the first because $g$ is symmetric about $\tfrac12$ while
		$\cos(m\pi z)$ is antisymmetric about $\tfrac12$ for odd $m$; the
		symmetry of $g$ is also what makes $M_{g}$ preserve
		$\mathcal{H}_{+}$ in the first place. Thus
		$\beta_{m} \neq 0$ exactly when $m \equiv 2 \pmod 4$. Since the
		odd-index spectrum is nondegenerate, weak-operator time averages of
		the orbit converge to the diagonal part
		\begin{equation*}
			D = \sum_{k\ \mathrm{odd}} g_{kk}\,P_{k},
			\qquad
			g_{kk} = -\beta_{2k} = \pm\frac{2}{k\pi}\ \text{ all distinct},
		\end{equation*}
		so every spectral projection $P_{k}$ lies in the algebra. For odd
		$j \ne k$, both $|j-k|$ and $j+k$ are even with sum
		$2\max(j,k) \equiv 2\pmod4$, so \emph{exactly one} of
		$\beta_{|j-k|},\beta_{j+k}$ is nonzero and $g_{jk}\ne0$ for every odd
		pair; the elements
		\begin{equation*}
			P_{j}M_{g}P_{k}
			= g_{jk}\,|\varphi_{j}\rangle\langle\varphi_{k}|
		\end{equation*}
		generate all matrix units, and the commutant is trivial.
	\end{proof}
	
	The meaning of Theorem~\ref{thm:irreducible} is that the periodic
	dynamics, observed through a single balanced rank observable, already
	generates every operator on the symmetric sector: nothing in the
	sector commutes with the evolved observables except scalars, so the
	representation carries no hidden invariant structure. Finally, the
	amplitude picture explains the Fourier action of the canonical
	operator. Writing
	\begin{equation*}
		\psi_{1}[f]
		= \frac{1}{i\sqrt2}\big(e^{i\pi F}-e^{-i\pi F}\big)\sqrt f
	\end{equation*}
	and defining the \emph{rank-characteristic transform}
	\begin{equation}
		\mathcal{A}_{f}(t,k) \;=\;
		\mathbb{E}\big[e^{itX + 2\pi i k F(X)}\big],
		\qquad t\in\mathbb{R},\ k\in\mathbb{Z},
		\label{eq:rank-transform}
	\end{equation}
	one has $\mathcal{A}_{f}(t,0) = \hat f(t)$ and
	$\mathcal{A}_{f}(0,k) = \delta_{k0}$ by rank uniformity, together with
	invariance of the $k$-marginal structure under monotone maps, and the
	spectral representation obtained in~\cite{ataei2025} takes the
	compact form
	\begin{equation}
		\widehat{\rho[f]}(t)
		= \mathcal{A}_{f}(t,0)
		- \tfrac12\,\mathcal{A}_{f}(t,1) - \tfrac12\,\mathcal{A}_{f}(t,-1):
		\label{eq:spectral-rep}
	\end{equation}
	the transform of the canonical image is the interference of the two
	half-phase branches. The transform~\eqref{eq:rank-transform}
	separates, in its two arguments, the metric content of a law (the
	$t$-direction, tied to the affine structure of the line) from its
	ordinal content (the $k$-direction, invariant under increasing
	reparametrization), and appears to merit independent study.
	
	\section{Iteration: Condensation, Universality, Crystallization}
	\label{sec:iteration}
	
	Throughout this section $f \in \mathcal{D}_{+}$ with (unique) median
	$m$, and we write $f_{n} = \rho^{n}[f]$, $F_{n} = A^{\circ n}\circ F$,
	and $X_{n} \sim f_{n}$. By the composition law, the entire asymptotic
	theory of iterated derangetropy is the asymptotic theory of one
	interval map, and we begin with its phase portrait. From
	$A' = 2\sin^{2}(\pi z)$ and the symmetry $A(1-z) = 1-A(z)$, the
	transport map~\eqref{eq:transport-map} is an increasing analytic
	homeomorphism of $[0,1]$ fixing $0,\tfrac12,1$; the interior fixed
	point is repelling, with local expansion
	\begin{equation}
		A\big(\tfrac12+v\big)
		= \tfrac12 + 2v - \tfrac{2\pi^{2}}{3}v^{3} + O(v^{5}),
		\label{eq:interior-germ}
	\end{equation}
	and the endpoints are superattracting, with cubic germ
	\begin{equation}
		A(z) = \tfrac{2\pi^{2}}{3}\,z^{3} + O(z^{5}), \qquad z \to 0.
		\label{eq:endpoint-germ}
	\end{equation}
	Orbits in $(0,\tfrac12)$ decrease to $0$ and orbits in $(\tfrac12,1)$
	increase to $1$; hence $F_{n} \to \mathbf{1}_{[m,\infty)}$ pointwise
	off $m$, that is, $X_{n} \to m$ in distribution, monotonically in the
	sense of the stochastic contraction~\eqref{eq:contraction}: iteration
	\emph{condenses} every law onto its conserved median. Condensation is
	the macroscopic statement; the refined description of the collapse,
	at the correct microscopic scale, is a universality theorem.
	
	\begin{theorem}[Universal limit law]\label{thm:limit-law}
		Let $K$ be the Koenigs linearizer of $A$ at $\tfrac12$: the unique
		increasing solution of the Schr\"oder equation
		$K(2u) = A(K(u))$, $K(0)=\tfrac12$, $K'(0)=1$; then $K$ extends to a
		real-analytic increasing bijection $\mathbb{R}\to(0,1)$ and is the
		distribution function of a symmetric law $\mathcal{K}$ with analytic
		density. For every $f \in \mathcal{D}_{+}$,
		\begin{equation}
			2^{n} f(m)\,(X_{n} - m) \;\xrightarrow{\ d\ }\; \mathcal{K},
			\label{eq:limit-law}
		\end{equation}
		and if $\mathbb{E}|X|^{\varepsilon} < \infty$ for some
		$\varepsilon>0$, all moments converge; in particular
		\begin{equation}
			\operatorname{Var}(X_{n})
			\;\sim\; \sigma_{\mathcal{K}}^{2}\, f(m)^{-2}\, 4^{-n},
			\qquad \sigma^{2}_{\mathcal{K}} \approx 0.166.
			\label{eq:variance-collapse}
		\end{equation}
		Moreover, with $g = 1-K$, the symmetry of $A$ gives the exact tail
		recursion $g(2u) = A(g(u))$, whence
		\begin{equation}
			-\log\big(1 - K(u)\big)
			= u^{\log_{2}3}\,\Theta(\log_{2}u)\big(1 + O(u^{-\log_{2}3})\big)
			\qquad (u \to \infty)
			\label{eq:tail-law}
		\end{equation}
		for a positive continuous $1$-periodic $\Theta$: the tails of
		$\mathcal{K}$ are compressed-exponential with exponent
		$\log_{2}3$, lighter than exponential and heavier than Gaussian, the
		exponent being the ratio of the logarithms of the endpoint degree and
		the interior multiplier; consistently, the excess kurtosis of
		$\mathcal{K}$ is positive ($\approx 0.164$).
	\end{theorem}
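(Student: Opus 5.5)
The plan is to build everything from the interval map $A$ of \eqref{eq:transport-map} and the composition law, Theorem~\ref{thm:composition}. The law-level statement then reduces to two facts: the iterates $A^{\circ n}$, rescaled at the repelling fixed point $\tfrac12$, converge to $K$; and the tails of the rescaled laws are uniformly controlled.

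\emph{Step 1: the Koenigs linearizer.} Near $\tfrac12$ the inverse branch $A^{-1}$ is analytic with attracting multiplier $\tfrac12$, by \eqref{eq:interior-germ}. Koenigs' theorem~\cite{koenigs1884,milnor2006} then gives a local analytic conjugacy. Concretely, I will define
\begin{equation*}
K(u)=\lim_{n\to\infty}A^{\circ n}\big(\tfrac12+2^{-n}u\big).
\end{equation*}
The limit exists locally uniformly, by the standard telescoping estimate: $A(\tfrac12+v)=\tfrac12+2v+O(v^{3})$, so consecutive terms differ by $O(4^{-n})$ after pulling back. The functional equation $K(u)=A^{\circ n}(K(2^{-n}u))$ then extends $K$ analytically to all of $\mathbb{R}$.

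Monotonicity of $K$ follows from that of $A$, and $K'>0$ follows from $K'(0)=1$ together with $A'>0$ on $(0,1)$. The limits $K(\pm\infty)\in\{0,1\}$ follow from the phase portrait, since orbits in $(0,\tfrac12)$ go to $0$ and those in $(\tfrac12,1)$ go to $1$. Symmetry $K(-u)=1-K(u)$ follows from $A(1-z)=1-A(z)$ and uniqueness of the normalized solution. Hence $K$ is the distribution function of a symmetric law $\mathcal{K}$ with analytic density $K'$.

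\emph{Step 2: the limit law.} By the transport formula \eqref{eq:transport} and the composition law,
\begin{equation*}
\mathbb{P}\big(2^{n}f(m)(X_{n}-m)\le u\big)=A^{\circ n}\big(F(m+2^{-n}u/f(m))\big).
\end{equation*}
Since $f$ is continuous and positive at $m$, the argument equals $\tfrac12+2^{-n}(u+o(1))$. Locally uniform convergence of $A^{\circ n}(\tfrac12+2^{-n}v)$ to $K(v)$, together with equicontinuity in $v$, gives \eqref{eq:limit-law}.

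\emph{Step 3: moments, the main obstacle.} Convergence of moments requires uniform integrability of $Y_{n}=2^{n}f(m)(X_{n}-m)$. I will use the exact conjugacy: if $F(m-t)\le K(-a)$, then
\begin{equation*}
\mathbb{P}(X_{n}\le m-t)=A^{\circ n}(F(m-t))\le K(-2^{n}a).
\end{equation*}
I split $t$ into three ranges.
\begin{itemize}
\item Near $m$ ($t$ small), $K^{-1}\circ F$ is comparable to a linear function, so $a\asymp t$ and $\mathbb{P}(|Y_{n}|>s)\lesssim 1-K(cs)$ uniformly in $n$.
\item In the middle range, I use monotonicity in $t$ to reduce to the near range.
\item For large $t$, Markov's inequality gives $F(m-t)\le Ct^{-\varepsilon}$. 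Because $1-K$ decays doubly exponentially in $\log$-scale by Step 4, $K^{-1}(Ct^{-\varepsilon})\le -c(\log t)^{1/\log_{2}3}$. The bound $K(-2^{n}c(\log t)^{1/\log_{2}3})$ is then super-polynomially small in $2^{n}t$, which dominates any power of $2^{n}t$.
\end{itemize}
Getting these three ranges to mesh uniformly in $n$ is where I expect the real work. Once they do, dominated tails give convergence of all moments, and $\operatorname{Var}(X_{n})\sim\sigma_{\mathcal{K}}^{2}f(m)^{-2}4^{-n}$ follows. The values $\sigma^{2}_{\mathcal{K}}\approx0.166$ and the excess kurtosis $\approx0.164$ are numerical evaluations of moments of $K'$, to be computed from the series of $K$.

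\emph{Step 4: tails.} From $K(2u)=A(K(u))$ and $A(1-z)=1-A(z)$, the function $g=1-K$ satisfies $g(2u)=A(g(u))$ with $g(u)\to0$. Put $L=-\log g$ and $\lambda=\log(2\pi^{2}/3)$. The germ \eqref{eq:endpoint-germ} gives
\begin{equation*}
L(2u)=3L(u)-\lambda+O(e^{-2L(u)}).
\end{equation*}
Shifting by $M=L-\lambda/2$ gives $M(2u)=3M(u)+O(e^{-2L(u)})$. The errors are summable (doubly exponentially small), so $3^{-j}M(2^{j}u)$ converges uniformly on $[1,2]$ to a limit $\Phi$ satisfying $\Phi(2u)=3\Phi(u)$. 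Writing $\Phi(u)=u^{\log_{2}3}\Theta(\log_{2}u)$ with $\Theta$ continuous and $1$-periodic, we get $L(u)=u^{\log_{2}3}\Theta(\log_{2}u)+\lambda/2+o(1)$. This is \eqref{eq:tail-law}, with relative error $O(u^{-\log_{2}3})$. Positivity of $\Theta$ follows because $L\to\infty$ and the recursion triples $M$ at each doubling.
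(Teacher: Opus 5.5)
Your proposal is correct and follows the paper's architecture:
\begin{itemize}
\item the Koenigs linearizer, extended by the Schr\"oder equation;
\item weak convergence read off from $F_{n}=A^{\circ n}\circ F$, using the $1$-Lipschitz property of $A^{\circ n}(\tfrac12+2^{-n}\,\cdot\,)$;
\item a near/far split with Markov's inequality for uniform integrability;
\item an affine recursion for $\log(1-K)$.
\end{itemize}

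The differences are local but worth recording.

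In the moment step you use the exact global conjugacy $A^{\circ n}\circ K=K(2^{n}\,\cdot\,)$. This gives $\mathbb{P}(X_{n}\le m-t)=K(-2^{n}a(t))$ with $a(t)=-K^{-1}(F(m-t))$. The near range then needs only that $K^{-1}$ is bi-Lipschitz on compact subintervals of $(0,1)$. That replaces the paper's concavity bound $A(\tfrac12+v)\le\tfrac12+2v$ and the resulting monotonicity $K_{n}\ge K$ on $[0,\infty)$. Your far range goes through the tail law rather than iterating the explicit cubic bound $A(s)\le 7s^{3}$. This makes your Step~3 depend on Step~4, which is harmless since Step~4 uses only the Schr\"oder equation, and it costs a constant factor in the exponent.

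In the tail step you shift by $\tfrac12\lambda$ with $\lambda=\log(2\pi^{2}/3)$. This makes the recursion homogeneous up to an error $O((1-K)^{2})$. You therefore obtain $-\log(1-K(u))=u^{\log_{2}3}\Theta(\log_{2}u)+\tfrac12\log(2\pi^{2}/3)+o(1)$. This identifies the additive constant that the paper leaves as an $O(1)$ remainder.

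One sentence of your Step~3 is wrong as stated and must be repaired. At fixed $n$ the far-range bound is not super-polynomial in $t$. With $a(t)\asymp(\log t)^{1/\log_{2}3}$, the tail law gives $K(-2^{n}a(t))\le t^{-c\,3^{n}}$ for large $t$. That is a power of $t$ whose exponent grows with $n$ but is finite, so it dominates $t^{p}$ only once $c\,3^{n}>p$.

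You should therefore claim uniform integrability of $|Y_{n}|^{p}$ only for $n\ge N(p)$. With that restriction, the middle contribution is of order $(2^{n}T)^{p+1}e^{-c3^{n}}$ and the far contribution is likewise super-exponentially small in $n$, so the argument closes.

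The restriction is forced, not an artifact of your method. If $1-F(x)\asymp x^{-q}$ with $q>\varepsilon$, then $1-F_{n}(x)=A^{\circ n}(1-F(x))\asymp x^{-3^{n}q}$, so $X_{n}$ has no moment of order $3^{n}q$. Hence \emph{all moments converge} must be read as eventual finiteness plus convergence, which is exactly how the paper's proof treats it.
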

	
	The detailed proof, comprising the construction and global extension
	of $K$, the locally uniform convergence, the uniform integrability
	supplied by the doubly exponential absorption at the superattracting
	endpoints, and the affine recursion for $\log g$, is given in
	Appendix~\ref{app:proofs}. Three comments belong in the text. First,
	a moment hypothesis cannot be dispensed with: laws with slowly
	varying tails have
	$\mathbb{E}|X_{n}|^{p} = \infty$ for every $n$. Second, the iteration
	admits a heuristic diffusion-type reading. In the spectral
	representation~\eqref{eq:spectral-rep}, replacing the two branches
	$\mathcal{A}_{f}(t,\pm1)$ by the shifted transforms
	$\hat f(t\pm2\pi)$, a substitution that is exact for the uniform law
	and approximately valid while the law remains diffuse on the scale of
	its own quantiles, turns each application into the discrete heat
	evolution
	\begin{equation*}
		\hat f \;\longmapsto\;
		\hat f - \tfrac12\big[\hat f(\cdot+2\pi) + \hat f(\cdot-2\pi)\big].
	\end{equation*}
	This reading describes the early, diffusive phase of the cascade; the
	exact long-time behavior, by
	Theorem~\ref{thm:limit-law}, is governed by the Schr\"oder equation,
	with geometric rate $4^{-n}$ fixed by the multiplier
	$A'(\tfrac12) = 2$, equivalently, by
	Proposition~\ref{prop:intertwine}, by the top of the spectrum of the
	linearization. Third, nothing here is special to the canonical
	kernel, provided its global phase portrait is retained and the
	transport map is $C^{1+\alpha}$ at the center with endpoint zeros
	of exact integer order, as all kernels considered here are. For a
	symmetric kernel $v$ whose transport map satisfies
	\begin{equation*}
		A_{v}(z) < z \ \text{ on } (0,\tfrac12),
		\qquad
		A_{v}(z) > z \ \text{ on } (\tfrac12,1),
	\end{equation*}
	so that $0, \tfrac12, 1$ are its only fixed points, with central
	multiplier $v(\tfrac12) = \lambda > 1$ and endpoint zeros of order
	$d-1$, so that $A_{v}(z) \asymp z^{d}$ at $0$, the same arguments
	give condensation onto the median with variance collapsing at rate
	$\lambda^{-2n}$ and distributional scale $\lambda^{-n}$, a Koenigs
	limit law with multiplier $\lambda$, and tail exponent
	$\log d/\log\lambda$; the limit law itself is the Koenigs law of
	$A_{v}$, whose body and log-periodic tail modulation depend on the
	whole map, not only on the pair $(\lambda, d)$. The interior
	fixed-point hypothesis is essential and is not implied by the local
	data: the harmonic kernel $w_{3}$ shares the local pair
	$(\lambda, d) = (2, 3)$ with the canonical kernel, yet its transport
	map $A_{3}$ fixes every multiple of $\tfrac16$, and its iterates
	crystallize onto three atoms rather than condensing onto the median,
	as shown below. The kernels of
	equation~\eqref{eq:three-kernels} do satisfy the global hypothesis,
	with multipliers
	$w_{\mathrm{I}}(\tfrac12) = \tfrac{12}{\pi e}$ and
	$w_{\mathrm{II}}(\tfrac12) = \tfrac{2e}{\pi}$ and $d = 2$, giving
	tail exponents $\approx 2.038$ and $\approx 1.264$ respectively, on
	either side of the canonical $\log_{2}3 \approx 1.585$. Within the
	single-well class, then, the pair $(\lambda, d)$, two numbers read
	off the kernel at its center and its endpoints, classifies the
	scaling structure of the collapse: its geometric rate and its tail
	exponent.
	
	The limit law also closes the information ledger of the cascade. By
	the composition law and the universality
	principle~\eqref{eq:universality}, the divergence of the $n$-th
	iterate from its origin is a rank integral,
	\begin{equation}
		D\big(f_{n}\,\|\,f\big)
		= \int_{0}^{1}\big(A^{\circ n}\big)'(z)\,
		\log\big(A^{\circ n}\big)'(z)\,dz,
		\label{eq:endpoint-divergence}
	\end{equation}
	and the substitution $z = \tfrac12 + 2^{-n}u$ converts the
	integral~\eqref{eq:endpoint-divergence} into the exact identity
	\begin{equation}
		D\big(f_{n}\,\|\,f\big) = n\log 2 - H(K_{n}),
		\label{eq:divergence-entropy}
	\end{equation}
	valid for every $n$, where $H(K_{n})$ is the differential entropy of
	the law with distribution function
	$K_{n}(u) = A^{\circ n}(\tfrac12 + 2^{-n}u)$. Since
	$H(K_{n}) \to H(\mathcal{K})$, by Step~6 of the proof of
	Theorem~\ref{thm:limit-law},
	\begin{equation}
		D\big(f_{n}\,\|\,f\big)
		= n\log 2 - H(\mathcal{K}) + o(1),
		\qquad H(\mathcal{K}) \approx 0.521:
		\label{eq:cstar}
	\end{equation}
	the deficit in the one-bit-per-step divergence ledger is precisely
	the differential entropy of the universal limit law. The
	asymptotics~\eqref{eq:cstar} ties the information theory of the
	cascade back to its universality theory: the same object
	$\mathcal{K}$ governs both the shape of the collapse and the exact
	information cost of iteration. Whether $H(\mathcal{K})$ admits a
	closed form in elementary constants remains open.
	
	The harmonic ladder condenses onto sets rather than points, a
	phenomenon we call crystallization. The affine conjugacy
	\begin{equation*}
		\psi_{i}\circ A_{k} = A\circ\psi_{i}
		\quad\text{on each cell }
		\Big[\tfrac{i-1}{k},\tfrac{i}{k}\Big],
		\qquad
		\psi_{i}(z) = kz-(i-1),
	\end{equation*}
	is an exact identity by periodicity of the sine, and shows that
	$A_{k}$ is $k$ disjoint copies of $A$. Under iteration of $\rho_{w_{k}}$,
	therefore,
	\begin{equation}
		X_{n} \;\xrightarrow{\ d\ }\;
		\sum_{i=1}^{k}\frac1k\,\delta_{q_{i}},
		\qquad q_{i} = Q\Big(\frac{2i-1}{2k}\Big),
		\label{eq:crystallization}
	\end{equation}
	an equal-weight mixture of atoms at fixed quantiles. Around every
	atom the collapse is governed by the same local law and the same rate
	as in Theorem~\ref{thm:limit-law}, with a scale set by the cell
	width: conditionally on the $i$-th cell,
	\begin{equation*}
		2^{n}\,k\,f(q_{i})\,\big(X_{n} - q_{i}\big)
		\;\xrightarrow{\ d\ }\; \mathcal{K},
	\end{equation*}
	the factor $k$ arising because the conjugacy $\psi_{i}$ has slope
	$k$. Crystallization has a consumer outside the theory: it computes
	an optimal quantizer. Among all laws supported on $k$ atoms of equal
	mass $1/k$, the one nearest to $f$ in the Wasserstein-$1$ distance
	places its atoms exactly at the odd quantiles, since
	\begin{equation*}
		W_{1}\Big(f,\ \sum_{i=1}^{k}\tfrac1k\,\delta_{q_{i}}\Big)
		= \sum_{i=1}^{k}\int_{(i-1)/k}^{i/k}
		\big|Q(z) - q_{i}\big|\,dz
	\end{equation*}
	is minimized cell by cell at the conditional medians
	$q_{i} = Q\big(\tfrac{2i-1}{2k}\big)$. Iterated harmonic derangetropy
	therefore computes the $W_{1}$-optimal equal-mass quantizer of every
	continuous law by pure rank dynamics, with no optimization step, a
	parameter-free counterpart of the quantization theory of probability
	distributions~\cite{graf2000}.
	
	No density is fixed by $\rho$, since $w(F)\equiv1$ is impossible, but
	the operator possesses a natural self-referential eigenrelation: a law
	whose density coincides with its own modulation profile,
	\begin{equation*}
		f = w(F),
		\qquad\text{that is,}\qquad
		F' = 2\sin^{2}(\pi F).
	\end{equation*}
	Separation of variables gives,
	uniquely up to translation, the Cauchy law of scale $1/(2\pi)$:
	\begin{equation}
		f(x) = \frac{2}{1+4\pi^{2}x^{2}},
		\qquad
		F(x) = \frac12 + \frac{\arctan(2\pi x)}{\pi},
		\label{eq:coherent}
	\end{equation}
	supported on all of $\mathbb{R}$, the boundary values $F = 0,1$ being
	attained only at infinity, by nonintegrability of $dF/F^{2}$ at $0$.
	For this law, and only this law, $\rho[f] = f^{2}$ pointwise, with
	$\int f^{2} = 1$ automatic by the universality
	principle~\eqref{eq:universality}. We call the law of
	equation~\eqref{eq:coherent} the \emph{coherent state} of the
	canonical operator: it is the unique law that reproduces its own
	modulation, the self-consistent solution of the theory, and it will
	recur below as an exact invariant manifold of the continuous-time
	dynamics and as the distinguished regular geometry of the projective
	curvature.
	
	\section{The Derangetropy Flow}\label{sec:flow}
	
	The discrete iteration embeds in a continuous-time evolution whose
	generator is the operator itself:
	\begin{equation}
		\partial_{t}f = \rho[f] - f = \big(2\sin^{2}(\pi F)-1\big)f
		\qquad\Longleftrightarrow\qquad
		\partial_{t}F = -\frac{\sin(2\pi F)}{2\pi},
		\label{eq:flow}
	\end{equation}
	the distribution-function form following from the transport
	formula~\eqref{eq:transport}, since
	\begin{equation*}
		A(z) - z = -\frac{\sin(2\pi z)}{2\pi}.
	\end{equation*}
	Global well-posedness in $L^{1}$, together with conservation of mass,
	positivity, $\mathcal{D}_{+}$, and the median, is established in
	Lemma~\ref{lem:flow-wellposed} of Appendix~\ref{app:proofs}, using the
	Lipschitz bounds~\eqref{eq:lipschitz}. Three structural readings
	orient the analysis, and each opens a different door. The evolution
	equation~\eqref{eq:flow} is a continuity equation with current
	$J = \sin(2\pi F)/(2\pi)$, vanishing at the median and the support
	endpoints, maximal at the quartiles, always directed toward the
	median: probability flows inward along the line, at a rate determined
	solely by rank. It is also a replicator equation,
	\begin{equation*}
		\partial_{t}f = f\,\big(w(F) - \bar w\big),
	\end{equation*}
	with rank-dependent fitness and
	mean fitness $\bar w \equiv 1$ by the universality
	principle~\eqref{eq:universality}: each point of the line reproduces
	in proportion to how its rank is weighted, and the population of mass
	reorganizes itself with total size conserved, a form that invites
	comparison with the replicator dynamics of evolutionary game
	theory~\cite{hofbauer1998}, with the law's own rank in place of a
	payoff. And in quantile
	coordinates it is \emph{linear advection},
	\begin{equation}
		\partial_{t}Q = \frac{\sin(2\pi z)}{2\pi}\,\partial_{z}Q,
		\label{eq:quantile-advection}
	\end{equation}
	so the flow is linear in the geometry of quantiles, equivalently in
	the geometry of $W_{1}$, though nonlinear and nonlocal in the
	geometry of densities. The coexistence of these readings, one
	conservative, one selective, one geometric, is a recurring signature
	of the rank structure.
	
	\begin{theorem}[Exact solution and Lorentzian condensation]\label{thm:flow}
		Along the flow~\eqref{eq:flow} the function $V = \tan(\pi F)$
		satisfies the linear equation $\partial_{t}V = -V$ pointwise; hence
		\begin{equation}
			\tan\big(\pi F(x,t)\big) = e^{-t}\tan\big(\pi F_{0}(x)\big),
			\qquad
			f(x,t) = \frac{e^{-t}f_{0}}
			{\cos^{2}(\pi F_{0}) + e^{-2t}\sin^{2}(\pi F_{0})}.
			\label{eq:exact-solution}
		\end{equation}
		Consequently, for every $f_{0} \in \mathcal{D}_{+}$,
		\begin{equation}
			e^{t}\,(X_{t} - m) \;\xrightarrow{\ d\ }\;
			\mathrm{Cauchy}\Big(0, \frac{1}{\pi f_{0}(m)}\Big),
			\label{eq:lorentz-limit}
		\end{equation}
		and, under the integrability hypothesis of
		Lemma~\ref{lem:flow-wellposed}, the differential entropy satisfies
		\begin{equation}
			H(f_{t}) = -t + \log\frac{4}{f_{0}(m)} + o(1),
			\label{eq:entropy-law}
		\end{equation}
		so that entropy is dissipated at unit asymptotic rate;
		correspondingly $H(f_{n+1}) - H(f_{n}) \to -\log 2$ along the
		discrete iteration. The constant in the entropy
		law~\eqref{eq:entropy-law} is the entropy of the limiting Lorentzian
		profile in the limit~\eqref{eq:lorentz-limit}.
	\end{theorem}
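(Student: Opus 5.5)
The plan is to solve the flow pointwise in $x$, read off the density by differentiation, and then pass to the median-centred rescaling.

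\textbf{Exact solution.} Fix $x$. The distribution-function form of the flow~\eqref{eq:flow} is the autonomous scalar ODE $\dot F = -\sin(2\pi F)/(2\pi)$. I set $V=\tan(\pi F)$, which is defined while $F\ne\tfrac12$ modulo one. Then
\[
\dot V = \pi\sec^{2}(\pi F)\,\dot F = -\sec^{2}(\pi F)\sin(\pi F)\cos(\pi F) = -V.
\]
Hence $V(t)=e^{-t}V(0)$. Points with $F_0=\tfrac12$ stay at $\tfrac12$, and the equilibria $F=0,1$ are stationary. Uniqueness for this Lipschitz ODE, which is also guaranteed by Lemma~\ref{lem:flow-wellposed}, ensures $F(x,t)\in(0,\tfrac12)$ or $(\tfrac12,1)$ according as $F_0$ does, so the branch of $\arctan$ is unambiguous.

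Writing $F=\tfrac1\pi\operatorname{arccot}$-type expressions and differentiating $\pi F=\arctan(e^{-t}\tan\pi F_0)$ in $x$ gives
\[
\pi f = \frac{e^{-t}\pi f_0\sec^{2}(\pi F_0)}{1+e^{-2t}\tan^{2}(\pi F_0)}.
\]
This simplifies to \eqref{eq:exact-solution}, and the formula extends continuously across $F_0=\tfrac12$.

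\textbf{Lorentzian limit.} I will use the identity $\tan(\pi F(m+e^{-t}y,t))=e^{-t}\tan(\pi F_0(m+e^{-t}y))$ at a fixed $y$. Since $f_0$ is continuous and positive at $m$,
\[
\tan\bigl(\pi F_0(m+s)\bigr) = -\cot\bigl(\pi(F_0-\tfrac12)\bigr)\cdot(-1).
\]
More usefully, near the median $\tan(\pi F_0)\approx -1/(\pi(F_0-\tfrac12))$. That route is messy, so I will switch to the cotangent. The function $W=\cot(\pi F)$ satisfies $\dot W=W$, so $\cot(\pi F(x,t))=e^{t}\cot(\pi F_0(x))$. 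Near $m$, $\cot(\pi F_0(m+s))\approx -\pi f_0(m)s$. Substituting $x=m+e^{-t}y$ gives
\[
\cot\bigl(\pi F(m+e^{-t}y,t)\bigr)\to -\pi f_0(m)\,y,
\]
that is,
\[
F\bigl(m+e^{-t}y,t\bigr)\to \tfrac12+\tfrac1\pi\arctan\bigl(\pi f_0(m)\,y\bigr).
\]
This is the Cauchy$(0,1/(\pi f_0(m)))$ distribution function. Pointwise convergence of distribution functions gives \eqref{eq:lorentz-limit}. The global structure is fine because $F_0$ is strictly increasing on the support, so $F(m+e^{-t}y,t)$ is automatically monotone in $y$.

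\textbf{Entropy law.} By the rank substitution (universality principle), $H(f_t)=H(f_0)-\int_0^1\log a_t'(z)\,dz$. Here $a_t$ is the flow map on rank space, with $\cot(\pi a_t(z))=e^{t}\cot(\pi z)$, so $F_t=a_t\circ F_0$ and $f_t=a_t'(F_0)f_0$. Everything therefore reduces to the law-independent integral $\int_0^1\log a_t'$, plus a correction.

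That reduction is not exact for the claimed constant, since $\log(4/f_0(m))$ depends only on $f_0(m)$, not on $H(f_0)$. The cleaner route is to write
\[
H(f_t) = -t + H(g_t),
\]
where $g_t$ is the density of $e^{t}(X_t-m)$, and to show $H(g_t)\to H\bigl(\mathrm{Cauchy}(0,1/(\pi f_0(m)))\bigr)=\log\bigl(4\pi\cdot\tfrac{1}{\pi f_0(m)}\bigr)=\log\tfrac{4}{f_0(m)}$.

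The convergence $H(g_t)\to H(\text{limit})$ is \textbf{the main obstacle}. Weak convergence does not control differential entropy, so I need:
\begin{itemize}
\item locally uniform convergence of the densities $g_t$, which comes from the explicit formula~\eqref{eq:exact-solution} and continuity of $f_0$ at $m$;
\item a uniform dominating bound on $|g_t\log g_t|$ in the tails. In rank variables the density ratio is bounded by $e^{t}$ near the median and decays like $e^{-t}$ away from it. The tail contributions are controlled by the integrability hypothesis of Lemma~\ref{lem:flow-wellposed} on $\int f_0|\log f_0|$ and $\mathbb{E}\log(1+|X|)$, using $f_t\le f_0\max(e^{t},e^{-t}\sec^2)$-type bounds.
\end{itemize}
I would attempt dominated convergence in the rank coordinate $z$, splitting into $|z-\tfrac12|<\delta$, where $f_0\circ Q_0$ is near $f_0(m)$, and its complement, where the contribution vanishes as $t\to\infty$.

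The discrete statement then follows from \eqref{eq:divergence-entropy}-style reasoning, or directly from Theorem~\ref{thm:limit-law} with scale $2^{-n}$, giving $H(f_n)=-n\log2+\text{const}+o(1)$.
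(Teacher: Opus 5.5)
Your exact solution and Lorentzian limit are correct and are essentially the paper's argument. Switching to $W=\cot(\pi F)$, which maps $(0,1)$ diffeomorphically onto $\mathbb{R}$ and obeys $\dot W=W$, makes the paper's branch bookkeeping (the reflection identity for $\arctan$) unnecessary.

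The gap is the entropy law. You flag it as the main obstacle but leave it unproved, and the rank identity you start from is wrong: $H(f_0)-\int_0^1\log a_t'\,dz$ is the cross-entropy $-\int f_0\log f_t$, not $H(f_t)$. From $f_t=a_t'(F_0)f_0$ and the universality principle, the correct and exact splitting is
\[
H(f_t) = -\int_0^1 a_t'\log a_t'\,dz \;-\; \int_0^1 a_t'(z)\,\log\mathfrak{f}_0(z)\,dz,
\qquad \mathfrak{f}_0 = f_0\circ Q_0 .
\]
This is the missing idea. The law enters only through the second integral, which is why only $f_0(m)$ survives and your worry about $H(f_0)$ disappears.

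\textbf{First integral.} Put $\varepsilon=e^{-t}$ and $z=\tfrac12+\varepsilon v$. The integral equals $-t+H(h_\varepsilon)$, where
\[
h_\varepsilon(v)=\frac{\varepsilon^2}{\sin^2(\pi\varepsilon v)+\varepsilon^2\cos^2(\pi\varepsilon v)} .
\]
This converges locally uniformly to the Cauchy density $(1+\pi^2v^2)^{-1}$ of scale $1/\pi$. On the whole range $|\varepsilon v|\le\tfrac12$ it is dominated by $\min\bigl(1,1/(4v^2)\bigr)$, which also dominates $|h_\varepsilon\log h_\varepsilon|$ integrably. Hence the first integral is $-t+\log 4+o(1)$.

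\textbf{Second integral.} The measure $a_t'\,dz$ concentrates at $z=\tfrac12$, where $\mathfrak{f}_0$ is continuous with value $f_0(m)$. For $|z-\tfrac12|\ge\delta$ one has $a_t'\le e^{-t}/\sin^2(\pi\delta)$, so the far part is $O\bigl(e^{-t}\int_0^1|\log\mathfrak{f}_0|\,dz\bigr)$. This uses only the hypothesis $\int_0^1|\log\mathfrak{f}_0|\,dz<\infty$ of Lemma~\ref{lem:flow-wellposed}. The log-moment $\mathbb{E}\log(1+|X|)$ you attribute to that lemma is not among its hypotheses and is not needed. Your route through the density of $e^t(X_t-m)$ becomes exactly this computation under the change of variables $z=F_0(m+e^{-t}y)$, but as written its tail domination is only asserted.

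\textbf{Discrete statement.} This does not follow from Theorem~\ref{thm:limit-law} by rescaling, for the reason you give yourself: convergence in distribution, even with moments, does not control differential entropy. Apply the same splitting with $w_n=(A^{\circ n})'$.
The first integral equals $-n\log 2+H(K_n)$ exactly, by the identity~\eqref{eq:divergence-entropy}. The limit $H(K_n)\to H(\mathcal{K})$ is Step~6 of the proof of Theorem~\ref{thm:limit-law}.
The second integral again localizes at the median rank. Its far part is handled by the geometric bound $\sup_{z\le\frac12-\delta}w_n(z)\le C_\delta 2^{-n}$, and symmetrically on the right. This bound holds because $w$ is increasing on $[0,\tfrac12]$ and $w(u)\le\tfrac12$ once the orbit of $\tfrac12-\delta$ has entered $[0,\tfrac16]$.
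Together these give $H(f_n)=-n\log 2+H(\mathcal{K})-\log f_0(m)+o(1)$, and hence $H(f_{n+1})-H(f_n)\to-\log 2$.
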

	
	\begin{proof}
		The linearization is a direct computation:
		\begin{equation*}
			\partial_{t}\tan(\pi F)
			= \pi\sec^{2}(\pi F)\,\partial_{t}F
			= -\tfrac12\sec^{2}(\pi F)\sin(2\pi F)
			= -\tan(\pi F).
		\end{equation*}
		The two half-intervals of rank are invariant, so the tangent inverts
		within the correct branch, and differentiation in $x$ gives the exact
		solution~\eqref{eq:exact-solution}. For the
		limit~\eqref{eq:lorentz-limit}, set $x_{t} = m + e^{-t}u$; then
		$F_{0}(x_{t}) = \tfrac12 + f_{0}(m)e^{-t}u(1+o(1))$, so
		\begin{equation*}
			\tan\big(\pi F_{0}(x_{t})\big)
			= -\cot\big(\pi f_{0}(m)e^{-t}u(1+o(1))\big)
			= -\frac{1+o(1)}{\pi f_{0}(m)e^{-t}u},
		\end{equation*}
		whence
		\begin{equation*}
			e^{-t}\tan\big(\pi F_{0}(x_{t})\big)
			\;\longrightarrow\; -\frac{1}{\pi f_{0}(m)\,u},
			\qquad
			F(x_{t},t)
			\;\longrightarrow\;
			\frac12 + \frac{1}{\pi}\arctan\big(\pi f_{0}(m)\,u\big),
		\end{equation*}
		the last step by the reflection identity
		$\arctan s + \arctan(1/s) = \tfrac{\pi}{2}\,\mathrm{sgn}\,s$. The
		entropy statements follow from the scaling limits; details are in
		Lemma~\ref{lem:flow-wellposed} of Appendix~\ref{app:proofs}.
	\end{proof}
	
	The discrete and continuous dynamics thus condense with
	\emph{different} universal laws: the map produces $\mathcal{K}$, with
	compressed-exponential tails, while the flow produces the Cauchy law,
	with polynomial tails. The mechanism is worth recording as a general
	dictionary: the tails of the limit law are manufactured at the
	attracting endpoints of rank space, and superattracting (cubic)
	absorption for the map corresponds to merely exponential absorption
	for the flow. What the limit law remembers of the initial condition is
	minimal in both cases, a single number, the density at the median,
	setting the scale. The Cauchy law's second appearance is no accident.
	
	\begin{theorem}[Invariant manifold and its hyperbolic geometry]\label{thm:invariant}
		The Cauchy family is exactly invariant under the flow:
		$\mathrm{Cauchy}(m,\gamma) \mapsto \mathrm{Cauchy}(m,\gamma e^{-t})$,
		with reduced dynamics $\dot m = 0$, $\dot\gamma = -\gamma$ and exact
		entropy law $H = \log(4\pi\gamma)$, $\dot H = -1$. The Fisher--Rao
		metric of the family is
		\begin{equation}
			ds^{2} = \frac{dm^{2}+d\gamma^{2}}{2\gamma^{2}},
			\label{eq:hyperbolic-metric}
		\end{equation}
		a hyperbolic plane of curvature $-2$, and the flow traces its vertical
		geodesics at constant speed $1/\sqrt2$.
	\end{theorem}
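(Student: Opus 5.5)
The plan is to work entirely through the exact solution of Theorem~\ref{thm:flow}, so that no PDE analysis is needed. For $f_{0} = \mathrm{Cauchy}(m,\gamma)$ one has $F_{0}(x) = \tfrac12 + \tfrac1\pi\arctan\big((x-m)/\gamma\big)$. The first step is to note the identity $\tan(\pi F_{0}) = -\cot\big(\arctan((x-m)/\gamma)\big) = -\gamma/(x-m)$. The linear law $\tan(\pi F(x,t)) = e^{-t}\tan(\pi F_{0}(x))$ then gives $\tan(\pi F(x,t)) = -\gamma e^{-t}/(x-m)$. Inverting within the correct branch, as in the proof of Theorem~\ref{thm:flow}, identifies $F(\cdot,t)$ as the distribution function of $\mathrm{Cauchy}(m,\gamma e^{-t})$. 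This yields invariance of the family and the reduced dynamics $\dot m = 0$, $\dot\gamma = -\gamma$. Independently of the solution formula, the median is conserved because the current vanishes there.

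The entropy law is an elementary computation. Under translation and scaling, $H(\mathrm{Cauchy}(m,\gamma)) = \log\gamma + H(\mathrm{Cauchy}(0,1))$. The standard value is $H(\mathrm{Cauchy}(0,1)) = \log(4\pi)$, which follows from $\int\log(1+x^{2})\,\frac{dx}{\pi(1+x^{2})} = 2\log 2$ via $x=\tan\theta$ and $\int_{0}^{\pi/2}\log\cos\theta\,d\theta = -\tfrac{\pi}{2}\log 2$. Hence $H = \log(4\pi\gamma)$ and $\dot H = \dot\gamma/\gamma = -1$. As a consistency check, this matches the constant $\log(4/f_{0}(m))$ in equation~\eqref{eq:entropy-law}, since $f_{0}(m) = 1/(\pi\gamma)$.

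For the metric, I will compute the Fisher information matrix of the location--scale family. Write $\ell = \log f = -\log(\pi\gamma) - \log(1+u^{2})$ with $u = (x-m)/\gamma$. The scores are $\partial_{m}\ell = 2u/(\gamma(1+u^{2}))$ and $\partial_{\gamma}\ell = -\tfrac1\gamma + 2u^{2}/(\gamma(1+u^{2})) = (u^{2}-1)/(\gamma(1+u^{2}))$. The cross term vanishes by oddness in $u$. The diagonal entries reduce to the Cauchy integrals $\mathbb{E}[4u^{2}/(1+u^{2})^{2}]$ and $\mathbb{E}[(u^{2}-1)^{2}/(1+u^{2})^{2}]$. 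With $u = \tan\theta$ and $\theta$ uniform on $(-\pi/2,\pi/2)$, these become $\mathbb{E}[\sin^{2}2\theta] = \tfrac12$ and $\mathbb{E}[\cos^{2}2\theta] = \tfrac12$. This gives $ds^{2} = (dm^{2}+d\gamma^{2})/(2\gamma^{2})$, which is $\tfrac12$ times the Poincaré half-plane metric and therefore has curvature $-2$, since scaling a metric by $c$ scales curvature by $1/c$.

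Finally, vertical lines $m = \text{const}$ are geodesics of the half-plane; the reflection $m\mapsto 2m_{0}-m$ is an isometry fixing them pointwise, so they are geodesics. Along the flow $\gamma = \gamma_{0}e^{-t}$, the speed is $|\dot\gamma|/(\sqrt2\,\gamma) = 1/\sqrt2$, a constant. The only real obstacle is bookkeeping: choosing the right branch when inverting the tangent at $x = m$, where $\tan(\pi F) = \pm\infty$, and keeping the normalization convention of the Fisher--Rao metric consistent with equation~\eqref{eq:fisher-rao}. Otherwise every step is a direct calculation.
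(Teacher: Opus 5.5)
Your proposal is correct and follows the paper's proof essentially step for step: the tangent identity $\tan(\pi F_{m,\gamma}) = -\gamma/(x-m)$ is fed into the exact solution of Theorem~\ref{thm:flow}, and the Fisher integrals are evaluated by the substitution $x - m = \gamma\tan\theta$, exactly as in Lemma~\ref{lem:cauchy-metric}. The curvature $-1/a$ and speed $\sqrt{a}\,|\dot\gamma|/\gamma$ of $a(dm^{2}+d\gamma^{2})/\gamma^{2}$ with $a=\tfrac12$ then finish the argument, and your explicit entropy integral and reflection argument for vertical geodesics just fill in details the paper treats as classical.
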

	
	\begin{proof}
		For the Cauchy chart,
		\begin{equation*}
			\tan\big(\pi F_{m,\gamma}(x)\big) = -\frac{\gamma}{x-m},
		\end{equation*}
		and the exact solution~\eqref{eq:exact-solution} multiplies the
		tangent by $e^{-t}$, which is precisely the map
		$\gamma \mapsto \gamma e^{-t}$; the entropy of
		$\mathrm{Cauchy}(\gamma)$ is classical. The Fisher integrals evaluate
		in closed form to
		\begin{equation*}
			I_{mm} = I_{\gamma\gamma} = \frac{1}{2\gamma^{2}},
			\qquad
			I_{m\gamma} = 0
		\end{equation*}
		(Lemma~\ref{lem:cauchy-metric} in Appendix~\ref{app:proofs}); a
		metric $a(dm^{2}+d\gamma^{2})/\gamma^{2}$ has curvature $-1/a$,
		vertical lines are geodesics, and the speed along
		$\gamma_{0}e^{-t}$ is $\sqrt{a}\,|\dot\gamma|/\gamma = 1/\sqrt2$.
	\end{proof}
	
	The coherent state of equation~\eqref{eq:coherent} is thus the
	$t = \log(2\pi\gamma_{0})$ time-slice of a two-parameter exactly
	invariant manifold, on which the infinite-dimensional flow reduces to
	a one-dimensional linear system, and the information geometry of that
	manifold is a homogeneous space on which the flow acts by geodesic
	translation. Condensation, on the invariant manifold, is geodesic
	motion: the deepest dynamical statement of the theory is, on its
	distinguished submanifold, a statement of pure geometry.
	
	The flows of the full kernel family assemble into a classical algebra.
	For the phase-shifted kernel $w_{k,\alpha}$ the drift field on rank
	space is
	\begin{equation}
		\xi_{k,\alpha}(z)
		= -\frac{1}{2\pi k}\Big[\cos(2\pi k\alpha)\sin(2\pi kz)
		+ \sin(2\pi k\alpha)\big(\cos(2\pi kz)-1\big)\Big]\partial_{z},
		\label{eq:drift-field}
	\end{equation}
	and as $(k,\alpha)$ vary these span exactly the trigonometric vector
	fields on the rank circle \emph{vanishing at $z=0$}: every transport
	map fixes the endpoints, so no flow can rotate rank space. In the
	basis
	\begin{equation}
		\ell_{n} = \frac{1}{2\pi i}\big(e^{2\pi inz}-1\big)\partial_{z},
		\qquad n \in \mathbb{Z},
		\label{eq:witt-basis}
	\end{equation}
	the bracket closes as
	\begin{equation}
		[\ell_{n},\ell_{m}] = (m-n)\,\ell_{n+m} - m\,\ell_{m} + n\,\ell_{n},
		\label{eq:witt-bracket}
	\end{equation}
	which is the relation $\ell_{n} = L_{n}-L_{0}$ in the Witt algebra
	$[L_{n},L_{m}] = (m-n)L_{n+m}$ of vector fields on the circle. What
	the derangetropy flows realize is thus precisely the endpoint-fixing
	isotropy subalgebra, of codimension one in the Witt algebra: the
	rotation $L_{0}$ itself is not a derangetropy flow, and is supplied
	only by the phase relabeling $\alpha$, which reindexes kernels
	rather than evolving laws. In particular the canonical flow and the quarter-phase flow
	generate the two-dimensional nonabelian algebra
	\begin{equation*}
		[X,Y] = -Y,
	\end{equation*}
	isomorphic to the affine algebra of the line and to the subalgebra,
	inside the Lie algebra $\mathfrak{sl}(2,\mathbb{R})$ of the M\"obius
	group $\mathrm{SL}(2,\mathbb{R})$, of fields vanishing at a point of
	the projective circle,
	and this M\"obius character is visible in the exact
	solution~\eqref{eq:exact-solution}, where the linearizing variable
	$\tan(\pi F)$ evolves by pure scaling.
	
	The endpoint-fixing property just noted has a quantitative sharpening
	that functions as a conservation law. Say that a law has an
	exponential left tail of rate $a > 0$ if
	$\log f(x) = a\,x\,(1+o(1))$ as $x \to -\infty$, and similarly on the
	right. For a kernel $v$ continuous at
	the endpoints, $A_{v}(z) = v(0)\,z + o(z)$ near $z = 0$, so the flow
	generated by $v$ has drift $(v(0)-1)F + o(F)$ on a left tail and acts
	there, asymptotically, as a pure rescaling $F \mapsto c(t)\,F$; for
	boundary-inert kernels, those with $v(0) = 0$, the rescaling is
	$c(t) = e^{-t}$, as the exact solution~\eqref{eq:exact-solution}
	displays: $F_{t} \sim e^{-t}F_{0}$ in the tail, shifting the amplitude
	and never the rate. A single modulation acts on tails multiplicatively
	as well, though not always as a rescaling: a kernel with $v(0) > 0$
	rescales a left tail by $v(0)$, while a kernel with an endpoint zero
	of order $d-1$ raises an exponential decay rate $a$ to $da$, since
	$\rho_{v}[f] \asymp F^{\,d-1} f$ near the endpoint. Consequently the
	class of exponential-type tails is preserved throughout the family,
	and the decay rate itself is invariant under every kernel flow, under
	convolution with any density all of whose exponential moments are
	finite, such as a Gaussian or any compactly supported density, and
	hence under diffusion: for kernels continuous at the endpoints and
	laws of exponential type, tail rates are conserved charges of the
	continuous dynamics. A theory built entirely on ranks can
	redistribute mass in the bulk at will, but the asymptotic decay of a
	law is beyond the reach of its flows.
	
	Balancing the flow against diffusion produces the equation
	\begin{equation}
		\partial_{t}f = D\,\partial_{xx}f + \big(2\sin^{2}(\pi F)-1\big)f,
		\label{eq:flow-diffusion}
	\end{equation}
	which admits an exact solitary steady state:
	\begin{equation}
		f^{*}(x) = \frac{1}{\pi\sqrt D}\,
		\operatorname{sech}\!\Big(\frac{x}{\sqrt D}\Big),
		\qquad
		F^{*}(x) = \frac12 + \frac1\pi\,
		\mathrm{gd}\!\Big(\frac{x}{\sqrt D}\Big),
		\label{eq:sech}
	\end{equation}
	with $\mathrm{gd}$ the Gudermannian. The verification rests on the
	identities
	\begin{equation*}
		\sin(\pi F^{*}) = \operatorname{sech}\!\Big(\frac{x}{\sqrt D}\Big),
		\qquad
		(\operatorname{sech})'' = \operatorname{sech}
		- 2\operatorname{sech}^{3},
	\end{equation*}
	and yields along the solution the
	algebraic closure
	\begin{equation}
		2\sin^{2}(\pi F^{*}) = 2\pi^{2}D\, f^{*2},
		\label{eq:sech-closure}
	\end{equation}
	which converts the stationary equation into the classical
	solitary-wave equation $Df'' = f - 2\pi^{2}Df^{3}$: the nonlocal rank
	nonlinearity becomes a local cubic one exactly on the solution
	manifold. The hyperbolic secant law, like the Gaussian essentially its
	own Fourier transform, thus joins the Cauchy law and the Koenigs law
	$\mathcal{K}$ in the small gallery of distributions attached to the
	canonical operator, one for each regime of its dynamics: the map
	produces $\mathcal{K}$, the pure flow produces the Cauchy law, and the
	flow balanced against diffusion produces the hyperbolic secant.
	
	\section{Projective Curvature of Probability Laws}\label{sec:curvature}
	
	The dynamical theory repeatedly produced distinguished families, the
	Cauchy, uniform, logistic, and hyperbolic secant laws, and the pattern
	behind them is geometric. The Schwarzian derivative
	\begin{equation}
		\{F,x\} = \frac{F'''}{F'} - \frac{3}{2}\Big(\frac{F''}{F'}\Big)^{2}
		\label{eq:schwarzian}
	\end{equation}
	is the classical projective invariant of one-dimensional maps: it
	satisfies the cocycle law
	\begin{equation*}
		\{G\circ F,x\}
		= \big(\{G\}\circ F\big)\,(F')^{2} + \{F,x\}
	\end{equation*}
	and vanishes exactly on M\"obius functions~\cite{ovsienko2005}. Applied to a
	distribution function it produces an invariant of laws, provided it is
	measured in the correct units. Equivariance singles out
	$ds = f\,dx$, the length element in which distance is probability
	mass, as the only covariant ruler on the line, and the Schwarzian
	scales like the square of an inverse length; we therefore define the
	\emph{projective curvature} of $f \in \mathcal{D}_{+}\cap C^{3}$ as
	\begin{equation}
		R_{f}(x) \;=\; \frac{\{F,x\}}{f(x)^{2}},
		\label{eq:curvature}
	\end{equation}
	the Schwarzian per squared unit of intrinsic length. Under increasing
	affine maps $R_{f}$ is invariant; under the flow it evolves by
	explicit transport. The invariant has an equivalent rank-space
	expression: by the inversion formula for Schwarzians,
	\begin{equation}
		R_{f}(x) \;=\; -\{Q, z\}\big|_{z = F(x)},
		\label{eq:quantile-schwarzian}
	\end{equation}
	the negative Schwarzian of the quantile function, so constant
	curvature is the classical equation of constant quantile Schwarzian,
	solvable by M\"obius post-composition of the elementary uniformizers.
	Curvature in this sense measures how far the law is, at each point,
	from looking projectively like its own tangent model, and the laws of
	constant curvature are the natural ``space forms'' of the theory.
	At the curvature values distinguished by the canonical theory they
	form a trichotomy governed by the three conjugacy classes of
	$\mathrm{SL}(2,\mathbb{R})$.
	
	\begin{theorem}[Trichotomy of constant curvature]\label{thm:trichotomy}
		The equations $R_{f} \equiv \kappa$ for
		$\kappa \in \{-2\pi^{2}, 0, +2\pi^{2}\}$ have complete solution
		families:
		\begin{itemize}[leftmargin=*, itemsep=2pt]
			\item \emph{Elliptic case:}
			\begin{align*}
				R \equiv -2\pi^{2}
				&\iff \tan\big(\pi(F-\tfrac12)\big) \text{ M\"obius in } x\\
				&\iff f \in \{\mathrm{Cauchy}(m,\gamma)\}.
			\end{align*}
			\item \emph{Parabolic case:}
			\begin{align*}
				R \equiv 0
				&\iff F \text{ M\"obius in } x\\
				&\iff F \ \text{an increasing M\"obius map of the support onto }
				(0,1).
			\end{align*}
			\item \emph{Hyperbolic case:}
			\begin{align*}
				R \equiv +2\pi^{2}
				&\iff \tanh\big(\pi(F-\tfrac12)\big) \text{ M\"obius in } x\\
				&\iff \tanh\big(\pi(F-\tfrac12)\big) = h,
				\quad h \ \text{M\"obius onto }
				\big({-}\tanh\tfrac{\pi}{2}, \tanh\tfrac{\pi}{2}\big).
			\end{align*}
		\end{itemize}
		Up to affine equivalence, the zero-curvature laws are the uniform
		law, the Lomax law $(1+x)^{-2}$ on $(0,\infty)$ and its reflection,
		and the truncated family
		\begin{equation}
			f_{c}(x) = \frac{1+c}{(1+cx)^{2}}\,\mathbf{1}_{(0,1)}(x),
			\qquad c > -1;
			\label{eq:truncated-lomax}
		\end{equation}
		the positive-curvature laws are supported on bounded intervals or
		half-lines, with symmetric representatives
		\begin{equation}
			f(x) = \frac{1}{\pi\gamma\big(1-(x-m)^{2}/\gamma^{2}\big)}
			\qquad\text{on } |x-m| < \gamma\tanh\tfrac{\pi}{2}.
			\label{eq:desitter-density}
		\end{equation}
		The uniformizing functions $\tan, \mathrm{id}, \tanh$ correspond to
		the elliptic, parabolic, and hyperbolic classes of
		$\mathrm{SL}(2,\mathbb{R})$.
	\end{theorem}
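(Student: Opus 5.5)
The whole trichotomy will follow from the cocycle law for the Schwarzian, applied once for each uniformizer. For a uniformizer $G$ on the rank interval the cocycle reads
\begin{equation*}
	\{G\circ F,x\} = \big(\{G,z\}\circ F\big)\,f^{2} + \{F,x\}
	= f^{2}\big(\{G,z\}\circ F + R_{f}\big),
\end{equation*}
using $F'=f$ and the definition~\eqref{eq:curvature}. The classical evaluations are
\begin{equation*}
	\{\tan(\pi(z-\tfrac12)),z\} = 2\pi^{2},
	\qquad
	\{\tanh(\pi(z-\tfrac12)),z\} = -2\pi^{2},
\end{equation*}
both constant, since the Schwarzian is translation invariant in $z$ and $\{\tan(az)\}=2a^{2}$, $\{\tanh(az)\}=-2a^{2}$. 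These follow from $\tan' = \sec^{2}$ by a short computation, or from the fact that $\tan(az)$ solves the Riccati equation associated with $y''+a^{2}y=0$. Since $f>0$ on the support, the displayed identity gives the three equivalences at once. First, $R_{f}\equiv-2\pi^{2}$ holds if and only if $\{\tan(\pi(F-\frac12)),x\}\equiv0$. Second, $R_{f}\equiv0$ holds if and only if $\{F,x\}\equiv0$. Third, $R_{f}\equiv2\pi^{2}$ holds if and only if $\{\tanh(\pi(F-\frac12)),x\}\equiv0$. Each vanishing Schwarzian is in turn equivalent to the function being M\"obius on the (connected) support. This step uses $f\in C^{3}$, so that all Schwarzians are classical, together with the fact that a $C^{3}$ function with identically vanishing Schwarzian on an interval is M\"obius there; I would cite that fact from~\cite{ovsienko2005}.

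The second half of each case is a classification of increasing M\"obius maps $h(x)=(ax+b)/(cx+d)$ from an open interval $I$ (the support, with no pole in $I$) onto a prescribed target interval. The target is $\mathbb{R}$ in the elliptic case, $(0,1)$ in the parabolic case, and $(-\tanh\frac\pi2,\tanh\frac\pi2)$ in the hyperbolic case. The key observation concerns the case $c\neq0$. There the image of $I$ omits the value $a/c$, and $h$ tends to $\pm\infty$ only at the pole, which is an endpoint of $I$ or lies outside $\bar I$. In the elliptic case the image must be all of $\mathbb{R}$, which forces $h$ to be unbounded at both ends of $I$; that is impossible unless $c=0$. So $h$ is affine and $I=\mathbb{R}$, and $\tan(\pi(F-\tfrac12))=(x-m)/\gamma$ is exactly the Cauchy chart already used in the proof of Theorem~\ref{thm:invariant}.

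In the parabolic and hyperbolic cases the target is bounded, so $I=\mathbb{R}$ is excluded, because a nonconstant M\"obius map cannot map the whole line onto a bounded interval. This leaves bounded intervals and half-lines. After affine normalization, a bounded support becomes $(0,1)$, and an increasing M\"obius self-map of $(0,1)$ fixing $0$ and $1$ is $F(x)=(1+c)x/(1+cx)$ with $c>-1$; differentiating gives the truncated family~\eqref{eq:truncated-lomax}, with $c=0$ the uniform law. A half-line support normalizes to $(0,\infty)$, where $F$ must have limit $1$ at infinity and vanish at $0$, which forces $F=x/(1+x)$, the Lomax law, and its reflection on the other side. For the hyperbolic case I would take $h$ affine, $h(x)=(x-m)/\gamma$, as the symmetric representative. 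Then inverting $\tanh(\pi(F-\frac12))=u$ gives $F=\frac12+\frac1\pi\operatorname{artanh}u$, whose derivative is~\eqref{eq:desitter-density}, on the range $|u|<\tanh\frac\pi2$.

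The main obstacle I expect is bookkeeping rather than analysis. The difficult part is handling the positions of the pole and of the omitted value carefully enough to make the lists complete. The first thing I would try is to reduce, by pre-composing with affine maps, to the normalized endpoint conditions $F(\text{left end})=0$ and $F(\text{right end})=1$, and then to count the three real parameters of a M\"obius map against these boundary conditions. The final sentence of the theorem, matching $\tan,\mathrm{id},\tanh$ with the elliptic, parabolic, and hyperbolic classes, would then be noted as interpretive. It reflects the sign of $\{G,z\}$, equivalently the sign of the constant Hill potential $\kappa/2$ in $y''+(\kappa/2)\,y=0$, whose solution space realizes the respective one-parameter subgroup of $\mathrm{SL}(2,\mathbb{R})$.
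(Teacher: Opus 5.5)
Your proposal is correct and follows essentially the same route as the paper. The Schwarzian cocycle, with the constant values $\{\tan(\pi(z-\tfrac12)),z\}=2\pi^{2}$ and $\{\tanh(\pi(z-\tfrac12)),z\}=-2\pi^{2}$, reduces each constant-curvature equation to M\"obius dependence of the uniformizer; a pole-position classification of increasing M\"obius maps onto the target range then forces the affine Cauchy chart in the elliptic case and the uniform, Lomax and truncated families otherwise. Your explicit forms $F=(1+c)x/(1+cx)$ and $F=x/(x+e)$ with $e>0$ (which becomes $x/(1+x)$ only after the remaining rescaling, not from the normalization to $(0,\infty)$ alone) just make concrete the pole-position cases that the paper states in words.
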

	
	\begin{proof}
		By the cocycle law and the Schwarzians
		$\{\tan(\pi(F-\tfrac12)),F\} = 2\pi^{2}$,
		$\{\tanh(\pi(F-\tfrac12)),F\} = -2\pi^{2}$, each equation is
		equivalent to the vanishing of the Schwarzian of the stated
		uniformizer, that is, to its M\"obius dependence on $x$; since
		post-composition with a fixed M\"obius map preserves M\"obius
		dependence, the uniformizers may be shifted or inverted at will, and
		they are centered at the median rank. In each case the solution set
		is exactly the set of laws whose uniformizer is an increasing
		M\"obius map of the support onto the range that makes $F$ traverse
		$(0,1)$: onto $\mathbb{R}$ for the tangent, onto $(0,1)$ for the
		identity, onto $(-\tanh\tfrac\pi2, \tanh\tfrac\pi2)$ for the
		hyperbolic tangent. In the negative-curvature case the uniformizer is
		finite on the support, since $F$ stays interior to $(0,1)$ there, so
		its M\"obius representative has no pole in the support; a M\"obius
		map that is finite on all of $\mathbb{R}$ is affine, while on a
		support with a finite endpoint the image of the support under a
		M\"obius map is a proper subinterval of the projective circle
		punctured at one point at most, hence omits an interval of values,
		and cannot have full range $\mathbb{R}$, which requires unbounded
		approach at both ends. Full range $\mathbb{R}$ therefore forces support $\mathbb{R}$
		and an affine uniformizer,
		$\tan(\pi(F-\tfrac12)) = (x-m)/\gamma$, which is exactly the Cauchy
		family. In the other two cases the range is a bounded interval and
		non-affine M\"obius uniformizers survive: an increasing M\"obius $F$
		onto $(0,1)$ is affine (the uniform laws), or maps a half-line with
		the pole at the far end (the Lomax law and its affine images and
		reflection), or maps a bounded interval with the pole outside (the
		truncated family~\eqref{eq:truncated-lomax} up to affine images);
		the same trichotomy of pole positions applies to the uniformizer $h$
		in the positive-curvature case, whose affine subcase
		$h = (x-m)/\gamma$ gives the symmetric
		density~\eqref{eq:desitter-density} by differentiation.
	\end{proof}
	
	\begin{remark}\label{rem:curvature-values}
		The values $\pm2\pi^{2}$ are distinguished by the canonical theory,
		through the flow linearizer and the coherent state, not by the
		curvature equation itself: replacing $\pi$ by $a$ in the
		uniformizers produces laws of constant curvature $-2a^{2}$ for every
		$0 < a \le \pi$ and $+2a^{2}$ for every $a > 0$, so the realizable
		constant curvatures form the ray $[-2\pi^{2}, \infty)$. Nothing
		below the extremal value is realizable: the projective line has
		total angle $\pi$ in the tangent chart, so an injective M\"obius
		uniformizer sweeps an angle of at most $\pi$, while the tangent
		uniformizer of curvature $-2a^{2}$ must sweep the angle $a$; for
		$a > \pi$ the uniformizer passes through a pole, and continuing
		through the pole would revisit points of the projective circle
		already attained, contradicting the injectivity of an increasing
		distribution function; the full sweep forces $a = \pi$ with support
		the whole line, and consequently the Cauchy family is the unique
		family of constant-curvature laws supported on all of $\mathbb{R}$.
	\end{remark}
	
	The negative-curvature family is the coherent-state family
	encountered in the iteration and flow theory: indeed the condition
	\begin{equation*}
		\{F,x\} = -2\pi^{2}\,(F')^{2}
	\end{equation*}
	is, by the cocycle law, literally the
	statement that the flow's linearizing coordinate $V = \tan(\pi F)$ is
	M\"obius, so integrability of the flow and maximal symmetry of the
	geometry are the same fact, seen once dynamically and once
	geometrically. The zero-curvature family comprises the uniform laws,
	the affine images of the shape-one Lomax law on half-lines of either
	orientation, and their bounded-interval
	truncations~\eqref{eq:truncated-lomax}; note that the noncompact
	members have exactly quadratic tails, in agreement with the
	singularity theorem below. Beyond the trichotomy, curvature is
	computable in closed form across the classical families
	(Lemma~\ref{lem:curvature-comps} in Appendix~\ref{app:proofs}): the
	logistic and exponential laws of rate $a$ have \emph{constant
		Schwarzian} $\{F,x\} = -a^{2}/2$, though not constant $R$; the
	Gaussian has $\{F,x\} = -1 - x^{2}/2$; and a regularly varying tail
	$f \sim c_{0}x^{-p}$ has
	\begin{equation}
		\{F,x\} = \frac{p(2-p)}{2x^{2}}\,\big(1+o(1)\big).
		\label{eq:power-schwarzian}
	\end{equation}
	
	\begin{theorem}[Singularity theorem]\label{thm:singularity}
		Let $f \in \mathcal{D}_{+}\cap C^{3}$ have a regularly varying right
		tail in the smooth sense:
		$f^{(j)}(x) = (-1)^{j}(p)_{j}\,c_{0}\,x^{-p-j}(1+o(1))$ for
		$j = 0, 1, 2$, with $p > 1$. If $p \neq 2$, then
		\begin{equation}
			R \;\sim\; \frac{p(2-p)}{2c_{0}^{2}}\,x^{2p-2}:
			\label{eq:singularity}
		\end{equation}
		the curvature at the boundary of the support diverges to $+\infty$
		for $1<p<2$ and to $-\infty$ for $p>2$; for exponential or faster
		tails, again in the smooth sense, $R \to -\infty$. At $p = 2$ the
		leading divergence cancels and regularity is decided at second
		order: if
		\begin{equation}
			f(x) = c_{0}x^{-2}\big(1 + b\,x^{-\alpha} + o(x^{-\alpha})\big),
			\qquad 0 < \alpha < 2,\ \alpha \neq 1,\ b \neq 0,
			\label{eq:second-order-tail}
		\end{equation}
		in the smooth sense, then
		\begin{equation}
			R \;\sim\; \frac{\alpha b(\alpha - 1)}{c_{0}^{2}}\,x^{2-\alpha},
			\label{eq:second-order-curvature}
		\end{equation}
		which still diverges. Hence a quadratic leading tail is necessary
		for the curvature to remain bounded at the boundary of rank, but
		not sufficient: every second-order correction of order $x^{-\alpha}$
		with $0 < \alpha < 2$, $\alpha \neq 1$, forces divergence, while
		corrections of order two and faster, again in the smooth sense,
		leave the curvature bounded.
	\end{theorem}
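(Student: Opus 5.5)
The plan is to compute everything from the identity $\{F,x\} = (\log f)'' - \tfrac12\big((\log f)'\big)^{2}$. It is the definition~\eqref{eq:schwarzian} with $F' = f$, rewritten using $f''/f = (f'/f)' + (f'/f)^{2}$. We then divide by $f^{2}$ to obtain $R$ as in~\eqref{eq:curvature}. Only $f$, $f'$, $f''$ enter, which is why the hypothesis is stated for $j = 0,1,2$.

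\emph{The case $p \neq 2$.} Dividing the smooth-sense asymptotics by $f$ gives
\begin{equation*}
f'/f = -\frac{p}{x}\,(1+o(1)), \qquad f''/f = \frac{p(p+1)}{x^{2}}\,(1+o(1)).
\end{equation*}
Hence
\begin{equation*}
\{F,x\} = \frac{p(p+1) - \tfrac32 p^{2} + o(1)}{x^{2}} = \frac{p(2-p)}{2x^{2}}\,(1+o(1)),
\end{equation*}
which is~\eqref{eq:power-schwarzian}. Here $p \neq 2$ is exactly what keeps the leading coefficient nonzero, so the $o(1)$ errors cannot dominate. Dividing by $f^{2} = c_{0}^{2}x^{-2p}(1+o(1))$ gives~\eqref{eq:singularity}, and the sign of $p(2-p)$ gives the dichotomy between $+\infty$ and $-\infty$.

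\emph{Exponential or faster tails.} I read ``smooth sense'' as $f'/f = -g(1+o(1))$ and $f''/f = g^{2}(1+o(1))$ with $g$ bounded below by a positive constant. The basic example is $g \equiv a$, where the logistic and exponential computations of Lemma~\ref{lem:curvature-comps} give $\{F,x\} \to -a^{2}/2$. Under this reading $\{F,x\} = -\tfrac12 g^{2}(1+o(1))$ stays bounded away from zero and negative, while $f^{2} \to 0$. Therefore $R \to -\infty$.

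\emph{The case $p = 2$.} Write $f = c_{0}x^{-2}e^{\varphi}$ with $\varphi = \log(1 + bx^{-\alpha} + \dots)$. The smooth-sense hypothesis is used to give $\varphi' = -\alpha b x^{-\alpha-1}(1+o(1))$ and $\varphi'' = \alpha(\alpha+1)b x^{-\alpha-2}(1+o(1))$. Then $(\log f)' = -2/x + \varphi'$, and the identity above gives
\begin{equation*}
\{F,x\} = \frac{2}{x^{2}} + \varphi'' - \frac12\Big(\frac{4}{x^{2}} - \frac{4\varphi'}{x} + \varphi'^{2}\Big) = \varphi'' + \frac{2\varphi'}{x} - \frac12\varphi'^{2}.
\end{equation*}
The exact cancellation of the $x^{-2}$ terms is the content of ``the leading divergence cancels''. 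Substituting, the two linear terms combine to $\alpha b(\alpha+1-2)x^{-\alpha-2} = \alpha b(\alpha-1)x^{-\alpha-2}$. The quadratic term is $O(x^{-2\alpha-2})$, which is of lower order. Dividing by $f^{2} \sim c_{0}^{2}x^{-4}$ yields~\eqref{eq:second-order-curvature}, which diverges since $2-\alpha > 0$, $b \neq 0$ and $\alpha \neq 1$.

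For corrections of order $x^{-2}$ or faster, the same expression is $O(x^{-4})$, provided the smooth sense gives $\varphi' = O(x^{-3})$ and $\varphi'' = O(x^{-4})$. Then $R = O(1)$, which gives the boundedness claim; necessity of a quadratic leading tail is already the case $p \neq 2$.

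\emph{Main obstacle.} The difficulty is not the algebra but making ``smooth sense'' precise enough to control the error terms in the $p=2$ case. At leading order the terms cancel, so relative $o(1)$ errors on $f'$ and $f''$ at order $x^{-2}$ would swamp the $x^{-\alpha-2}$ signal. I would therefore take the hypothesis to be differentiable asymptotics of the correction $\varphi$ itself, namely the stated rates for $\varphi'$ and $\varphi''$, rather than of $f$. That is the form in which the computation above is rigorous. A Tauberian monotonicity argument is not needed once the hypothesis is phrased this way.
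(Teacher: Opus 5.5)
Your proposal is correct and follows essentially the paper's route (Lemma~\ref{lem:curvature-comps}(iii)): a direct asymptotic expansion of the Schwarzian from the smooth-sense asymptotics of $f, f', f''$, then division by $f^{2}$, with the $p=2$ case handled by expanding the second-order tail. Writing $\{F,x\} = (\log f)'' - \tfrac12\big((\log f)'\big)^{2}$ makes the $p=2$ cancellation transparent, and reading ``smooth sense'' as differentiable asymptotics of the correction $\varphi$ (rather than relative $o(1)$ errors on $f'$, $f''$) is exactly what the paper's ``direct expansion'' tacitly needs; your treatment of the exponential-tail and order-two-correction cases fills in claims the paper states without a separate proof.
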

	
	Within this geometry the heavy-tailed Cauchy law is the smooth,
	maximally symmetric object and the Gaussian is severely singular, an
	instructive inversion of the usual hierarchy of well-behaved
	distributions, and a fourth independent derivation of the special role
	of the coherent state~\eqref{eq:coherent}: it is the fixed shape of
	the self-referential eigenrelation, the invariant manifold of the
	flow, the limit law of continuous condensation, and now the unique
	full-support constant-curvature geometry
	(Remark~\ref{rem:curvature-values}). We remark that the density
	\begin{equation*}
		\{F,x\} + 2\pi^{2}(F')^{2},
	\end{equation*}
	which vanishes identically exactly on the negative-curvature family
	and whose second term integrates to $2\pi^{2}\,\mathbb{E}_{f}[f]$,
	is the natural Lagrangian of this
	geometry; the next section identifies it as the pairing of a
	coadjoint vector against the law's own rank profile, and the whole
	projective theory of this section becomes linear there.
	
	\section{The Virasoro Geometry of Rank Space}\label{sec:virasoro}
	
	The projective theory just developed assigned to every law the
	quantile Schwarzian, through the
	identity~\eqref{eq:quantile-schwarzian}, and found the distinguished
	constants $\pm2\pi^{2}$, the M\"obius trichotomy, the Cauchy family,
	and the affine algebra of flows scattered through the dynamics. This
	section and the next identify the single structure behind all of
	these: the calculus of derangetropy operators is the coadjoint
	geometry of the Virasoro algebra of rank space, at central charge
	one, the value recording the unit normalization of the Schwarzian
	in the Hill potential rather than a dynamical charge. Write $\mathcal{D}_{+}^{k}$ for the laws in $\mathcal{D}_{+}$
	whose density is $C^{k}$ on the interior of the support, so that
	$Q \in C^{k+1}$ there.
	
	\begin{definition}[Rank Hill potential]\label{def:hill-potential}
		For $f \in \mathcal{D}_{+}^{2}$, the \emph{rank Hill potential} of $f$
		is
		\begin{equation}
			U_{f}(z) \;=\; \{Q_{f}, z\}
			\;=\; \frac{Q'''}{Q'} - \frac32\Big(\frac{Q''}{Q'}\Big)^{2},
			\qquad z \in (0,1),
			\label{eq:hill-potential}
		\end{equation}
		the Schwarzian derivative of the quantile function. The \emph{rank
			profile} of $f$ is $\mathfrak{f} = f\circ Q$ and the \emph{rank
			amplitude} is $\psi_{f} = \sqrt{\mathfrak{f}}$.
	\end{definition}
	
	By the quantile identity~\eqref{eq:quantile-schwarzian}, the
	potential is the projective curvature read in rank coordinates,
	$R_{f} = -\,U_{f}\circ F$, so every statement below translates into a
	curvature statement by a sign and a change of variable. The natural
	symmetry group of the definition is the \emph{distortion group}
	$\mathcal{G}$ of increasing bijections $A$ of $[0,1]$ such that $A$
	and $A^{-1}$ are absolutely continuous: by the composition law of
	Theorem~\ref{thm:composition}, $\mathcal{G}$ is exactly the group of
	invertible derangetropy operators, reparametrized by their transport
	maps, acting on laws by
	\begin{equation}
		\alpha_{A}f
		\;=\; A'(F_{f})\,f,
		\qquad
		F_{\alpha_{A}f} = A\circ F_{f},
		\qquad
		Q_{\alpha_{A}f} = Q_{f}\circ A^{-1},
		\label{eq:distortion-action}
	\end{equation}
	with $\alpha_{A_{1}}\alpha_{A_{2}} = \alpha_{A_{1}\circ A_{2}}$; the
	action preserves the support. We write $\mathcal{G}^{\infty}$ for the
	subgroup restricting to orientation-preserving
	$C^{\infty}$-diffeomorphisms of $(0,1)$. On the Virasoro side, a
	\emph{quadratic differential} is an expression $u(z)\,dz^{2}$, and for
	$c \in \mathbb{R}$ the level-$c$ \emph{coadjoint action} of
	$B \in \mathcal{G}^{\infty}$ on quadratic differentials is
	\begin{equation}
		K_{B}^{*}u \;=\; (u\circ B)\,(B')^{2} + c\,\{B, \cdot\},
		\qquad
		K_{B_{2}}^{*}\circ K_{B_{1}}^{*} = K_{B_{1}\circ B_{2}}^{*},
		\label{eq:coadjoint}
	\end{equation}
	the composition rule being precisely the Schwarzian cocycle law of
	Section~\ref{sec:curvature}; we write
	$\operatorname{Ad}^{*}_{A} := K^{*}_{A^{-1}}$, so that
	$A \mapsto \operatorname{Ad}^{*}_{A}$ is a left action, and regard the
	pair $(u\,dz^{2}, c)$ as a point of the regular part of the dual of
	the Virasoro algebra of the rank interval. Differentiating the
	action~\eqref{eq:coadjoint} along $B_{\varepsilon} = \mathrm{id} +
	\varepsilon\xi$ gives the infinitesimal coadjoint action
	\begin{equation}
		\delta_{\xi}u \;=\; \xi u' + 2\xi' u + c\,\xi'''.
		\label{eq:inf-coadjoint}
	\end{equation}
	Throughout, $c = 1$; this is a normalization convention, fixed by the
	unit coefficient of the Schwarzian in the
	definition~\eqref{eq:hill-potential}.
	
	\begin{remark}
		On an interval, the integrated Gelfand--Fuchs two-cocycle
		$\int \xi'\eta''$ requires boundary hypotheses, and we avoid it
		entirely by working at the group level with the Schwarzian cocycle,
		for which the action~\eqref{eq:coadjoint} is well defined and
		associative for arbitrary $B \in \mathcal{G}^{\infty}$ with no
		boundary conditions. On the trigonometric subalgebra realized by the
		derangetropy flows, the endpoint-fixing fields of the Witt
		basis~\eqref{eq:witt-basis}, which are restrictions of circle fields
		vanishing at the marked point $z = 0$, the Lie-algebraic Virasoro
		structure is the classical one~\cite{khesin2009}.
	\end{remark}
	
	The potential, the profile, and the amplitude are linked by an exact
	second-order equation.
	
	\begin{proposition}[Amplitude form]\label{prop:hill}
		Let $f \in \mathcal{D}_{+}^{2}$. Then
		$\psi_{f} = \sqrt{\mathfrak{f}} \in C^{2}(0,1)$ is strictly positive
		and
		\begin{equation}
			U_{f} = -\,\frac{2\,\psi_{f}''}{\psi_{f}},
			\qquad\text{equivalently}\qquad
			\psi_{f}'' + \tfrac12\,U_{f}\,\psi_{f} = 0:
			\label{eq:amplitude-form}
		\end{equation}
		the rank amplitude is a positive solution of the Hill equation of
		$U_{f}$. Moreover $Q_{f}' = \psi_{f}^{-2}$, so that, with
		$\widetilde\psi := \psi_{f}\int_{z_{0}}^{\cdot}\psi_{f}^{-2}$, the
		quantile function is the solution ratio
		$Q_{f} = \widetilde\psi/\psi_{f} + \mathrm{const}$ and the pair
		$(\widetilde\psi, \psi_{f})$ has Wronskian
		$\widetilde\psi'\psi_{f} - \widetilde\psi\psi_{f}' = 1$.
	\end{proposition}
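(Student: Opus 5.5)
The plan is to express everything through the single identity $Q' = 1/(f\circ Q) = 1/\mathfrak{f}$, which follows from differentiating $F(Q(z)) = z$, and then compute the Schwarzian of $Q$ in terms of $\mathfrak{f}$.

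\emph{Regularity and positivity.} Since $f\in\mathcal{D}_{+}^{2}$ is continuous and strictly positive on the open support interval, $F$ is a $C^{3}$ increasing bijection of that interval onto $(0,1)$ with $F' = f > 0$. The inverse function theorem then gives $Q\in C^{3}(0,1)$ with $Q' = 1/(f\circ Q) > 0$. Consequently $\mathfrak{f} = f\circ Q$ is $C^{2}$ and strictly positive, being a composition of $C^{2}$ maps, and so $\psi_f = \sqrt{\mathfrak f}$ is $C^{2}$ and strictly positive. In particular $Q' = \psi_f^{-2}$, which is the second claim.

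\emph{The Hill equation.} Write $Q' = \psi^{-2}$ with $\psi = \psi_f$. Then
\[
Q''/Q' = -2\psi'/\psi,
\]
and differentiating this ratio gives
\[
Q'''/Q' - (Q''/Q')^{2} = -2\psi''/\psi + 2(\psi'/\psi)^{2}.
\]
Using $\{Q,z\} = (Q''/Q')' - \tfrac12 (Q''/Q')^{2}$, we obtain
\[
\{Q,z\} = -2\psi''/\psi + 2(\psi'/\psi)^{2} - \tfrac12\cdot 4(\psi'/\psi)^{2} = -2\psi''/\psi.
\]
This is \eqref{eq:amplitude-form}, and multiplying through by $-\psi/2$ yields the equivalent form $\psi''+\tfrac12 U_f\psi=0$. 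The only point needing care is the cancellation of the $(\psi'/\psi)^{2}$ terms, which is exactly why the coefficient $\tfrac32$ in the Schwarzian appears.

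\emph{Solution ratio and Wronskian.} Set $\widetilde\psi = \psi\int_{z_0}^{z}\psi^{-2}$. The second solution obtained by reduction of order from the positive solution $\psi$ then satisfies
\[
\widetilde\psi' = \psi'\int_{z_0}^{z}\psi^{-2} + \psi^{-1}.
\]
Hence $\widetilde\psi'\psi - \widetilde\psi\psi' = 1$; this can be checked directly, without invoking the ODE at all. Moreover
\[
\widetilde\psi/\psi = \int_{z_0}^{z}\psi^{-2} = \int_{z_0}^{z}Q' = Q(z) - Q(z_0),
\]
which gives $Q = \widetilde\psi/\psi + \mathrm{const}$. For completeness, one can note that $\widetilde\psi$ also solves the Hill equation, since the Wronskian of two functions is constant exactly when both solve the same equation $y'' + \tfrac12 U y = 0$ with $\psi$ a solution.

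No step is a genuine obstacle. The only mildly delicate point is the regularity bookkeeping: $f\in C^{2}$ must give $Q\in C^{3}$, so that $U_f$ is defined and continuous. This follows from the inverse function theorem because $f>0$ on the interior of the support.
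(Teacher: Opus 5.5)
Your proposal is correct and follows essentially the same route as the paper. Both arguments compute $\{Q,z\} = (Q''/Q')' - \tfrac12(Q''/Q')^{2}$ from $Q' = 1/\mathfrak{f}$; the paper passes through $g = \log\mathfrak{f}$ and $\psi = e^{g/2}$, whereas you differentiate $Q' = \psi^{-2}$ directly, and both obtain the unit Wronskian and the solution ratio $\widetilde\psi/\psi = Q - Q(z_{0})$ by the same reduction-of-order computation.

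Your regularity bookkeeping ($Q \in C^{3}$ by the inverse function theorem) and your remark that a constant Wronskian with $\psi > 0$ forces $\widetilde\psi$ to solve the Hill equation are correct supplements.
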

	
	\begin{proof}
		Since $Q' = 1/\mathfrak{f}$, writing $g = \log\mathfrak{f}$ gives
		$Q''/Q' = -g'$ and
		\begin{equation*}
			U_{f} = \{Q, z\}
			= \Big(\frac{Q''}{Q'}\Big)' - \frac12\Big(\frac{Q''}{Q'}\Big)^{2}
			= -g'' - \tfrac12 (g')^{2}.
		\end{equation*}
		With $\psi = e^{g/2}$,
		$\psi''/\psi = \tfrac12 g'' + \tfrac14 (g')^{2}$, so
		$-2\psi''/\psi = -g'' - \tfrac12(g')^{2} = U_{f}$, which is the
		amplitude form~\eqref{eq:amplitude-form}. Reduction of order gives the
		second solution $\widetilde\psi = \psi\int\psi^{-2}$ with unit
		Wronskian by direct computation, and
		$\widetilde\psi/\psi = \int_{z_{0}}^{z}\psi^{-2}
		= Q(z) - Q(z_{0})$.
	\end{proof}
	
	\begin{remark}
		The amplitude form~\eqref{eq:amplitude-form} says that one half of
		the coadjoint vector is the quantity
		$-(\sqrt{\rho}\,)''/\sqrt{\rho}$ of hydrodynamic quantum mechanics,
		the Bohm quantum potential, evaluated on the law's own rank profile.
		It also closes a loop with the amplitude representation of
		Section~\ref{sec:amplitude}: the potential
		$-(\sqrt f)''/2\sqrt f$ appearing in the Dirichlet
		decomposition~\eqref{eq:dirichlet-decomposition} is the $x$-space
		companion of $\tfrac14 U_{f}$ in rank space.
	\end{remark}
	
	The assignment $f \mapsto U_{f}$ intertwines the two calculi.
	
	\begin{theorem}[Equivariance of the potential]\label{thm:vir-equivariance}
		Let $f \in \mathcal{D}_{+}^{2}$ and $A \in \mathcal{G}^{\infty}$, and
		put $B = A^{-1}$. Then
		\begin{equation}
			U_{\alpha_{A}f}
			= \big(U_{f}\circ B\big)(B')^{2} + \{B, \cdot\}
			= \operatorname{Ad}^{*}_{A}U_{f}
			\qquad (c = 1).
			\label{eq:vir-equivariance}
		\end{equation}
		Infinitesimally, if $v \in \mathcal{W}$ generates the kernel flow
		$\partial_{t}f = \rho_{v}[f] - f$ with rank drift
		$\xi_{v} = A_{v} - \mathrm{id}$, then
		\begin{equation}
			\partial_{t}U = -\big(\xi_{v}\,U' + 2\xi_{v}'\,U + \xi_{v}'''\big).
			\label{eq:vir-inf}
		\end{equation}
		Furthermore, if $T$ is an increasing M\"obius map restricting to a
		diffeomorphism of the support of $f$ onto an interval, then
		$U_{T_{\#}f} = U_{f}$; conversely, $U_{T_{\#}f} = U_{f}$ for a
		$C^{3}$ change of variable $T$ forces $T$ to be M\"obius.
	\end{theorem}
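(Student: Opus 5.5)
The whole statement reduces to the Schwarzian cocycle law of Section~\ref{sec:curvature}, applied to the quantile function. By the distortion action~\eqref{eq:distortion-action}, $Q_{\alpha_{A}f} = Q_{f}\circ A^{-1} = Q_{f}\circ B$. Because $A\in\mathcal{G}^{\infty}$, $B$ is a smooth increasing diffeomorphism of $(0,1)$, so $Q_{f}\circ B$ is $C^{3}$ with positive derivative. The cocycle law then gives
\begin{equation*}
U_{\alpha_{A}f} = \{Q_{f}\circ B, z\} = \big(\{Q_{f}, \cdot\}\circ B\big)(B')^{2} + \{B, z\},
\end{equation*}
which is $(U_{f}\circ B)(B')^{2} + \{B,\cdot\}$. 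This is $K^{*}_{B}U_{f} = K^{*}_{A^{-1}}U_{f} = \operatorname{Ad}^{*}_{A}U_{f}$ at $c=1$, directly from the definition~\eqref{eq:coadjoint}. Before invoking the cocycle, I will note that $\alpha_{A}f\in\mathcal{D}^{2}_{+}$, since its density is $A'(F)f$ with $A'>0$ smooth. So the left side is defined.

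For the infinitesimal statement~\eqref{eq:vir-inf}, I will use the flow in distribution-function form. From~\eqref{eq:flow} and the transport formula, $\partial_{t}F = A_{v}(F)-F = \xi_{v}(F)$. So on the time-$\varepsilon$ map the law moves by $\alpha_{A_{\varepsilon}}$ with $A_{\varepsilon} = \mathrm{id}+\varepsilon\xi_{v}+O(\varepsilon^{2})$, and hence $B_{\varepsilon} = \mathrm{id}-\varepsilon\xi_{v}+O(\varepsilon^{2})$. Equivalently, the quantile advection~\eqref{eq:quantile-advection} generalizes to $\partial_{t}Q = -\xi_{v}\,Q'$.

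Substituting $B_{\varepsilon}$ into the finite formula and differentiating at $\varepsilon=0$ reproduces~\eqref{eq:inf-coadjoint} with $\xi=-\xi_{v}$, which is exactly~\eqref{eq:vir-inf}. As a check, I will linearize $\{Q,z\}$ directly: the variation of $Q'''/Q' - \tfrac32(Q''/Q')^{2}$ along $\delta Q = -\xi_{v}Q'$ yields the same expression. The flow is only exactly a distortion action when its time-$t$ map is smooth. For a smooth kernel $v$ the rank ODE $\dot z = \xi_{v}(z)$ has a smooth flow fixing the endpoints, so this holds.

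For the Möbius invariance I will use $Q_{T_{\#}f} = T\circ Q_{f}$, valid for increasing $T$ mapping the support onto an interval, as in Theorem~\ref{thm:equivariance}. The cocycle law gives $\{T\circ Q_{f},z\} = (\{T,\cdot\}\circ Q_{f})(Q_{f}')^{2} + U_{f}$. Since $\{T,\cdot\}=0$ for Möbius $T$, this equals $U_{f}$. Conversely, if $U_{T_{\#}f}=U_{f}$, the same identity forces $(\{T,\cdot\}\circ Q_{f})(Q_{f}')^{2}=0$ on $(0,1)$. Because $Q_{f}'>0$ and $Q_{f}$ maps onto the support, $\{T,x\}=0$ on the support. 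The classical fact that a $C^{3}$ map with vanishing Schwarzian is Möbius (the ODE $y'''/y' = \tfrac32(y''/y')^{2}$ integrates to a linear fractional function) then concludes the argument.

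The main obstacle is regularity and sign bookkeeping in the infinitesimal step: identifying the flow of the kernel $v$ with a one-parameter family in $\mathcal{G}^{\infty}$, and tracking that the left action $\operatorname{Ad}^{*}$ uses $B=A^{-1}$, which produces the overall minus sign. I would first verify that minus sign on the canonical kernel against the Cauchy invariant manifold. There $U$ should stay constant at $2\pi^{2}$ in the sense of Theorem~\ref{thm:trichotomy}, since $R=-2\pi^{2}$, and indeed $\xi u'+2\xi' u+\xi'''$ vanishes for $u=2\pi^{2}$ and $\xi = -\sin(2\pi z)/2\pi$.
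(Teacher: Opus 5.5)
Your proposal is correct and follows the paper's proof essentially step for step. It applies the Schwarzian cocycle to $Q_{\alpha_{A}f}=Q_{f}\circ B$ for the finite law and to $Q_{T_{\#}f}=T\circ Q_{f}$ for the M\"obius statement, and differentiates along $B_{\varepsilon}=\mathrm{id}-\varepsilon\xi_{v}+O(\varepsilon^{2})$ for the infinitesimal form. One small caution: your proposed check on the Cauchy background cannot detect the sign, because $\xi u'+2\xi' u+\xi'''$ is linear in $\xi$ and vanishes for $-\xi_{v}$ as well. The sign is fixed only by the inverse-map bookkeeping, or equivalently by $\partial_{t}Q=-\xi_{v}Q'$, which you correctly matched against the quantile advection~\eqref{eq:quantile-advection}.
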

	
	\begin{proof}
		By the action law~\eqref{eq:distortion-action},
		$Q_{\alpha_{A}f} = Q_{f}\circ B$, and the Schwarzian cocycle gives
		\begin{equation*}
			\{Q_{f}\circ B, z\}
			= \big(\{Q_{f},\cdot\}\circ B\big)(B')^{2} + \{B, z\},
		\end{equation*}
		which is the transformation law~\eqref{eq:vir-equivariance}; the
		composition rule in the definition~\eqref{eq:coadjoint} matches the
		group law of the action. The infinitesimal
		form~\eqref{eq:vir-inf} follows by differentiating along
		$B_{\varepsilon} = \mathrm{id} - \varepsilon\xi_{v} + O(\varepsilon^{2})$,
		the inverse of the transport map generated by the drift $\xi_{v}$,
		using
		$\{\mathrm{id} + \varepsilon\xi(\cdot)\} = \varepsilon\xi''' +
		O(\varepsilon^{2})$; that the time-$t$ map of the kernel flow is
		$\alpha_{A(t)}$ with $\dot A(0) = \xi_{v}$ is the transport form of
		the flow equation~\eqref{eq:flow}. For the M\"obius statement,
		$Q_{T_{\#}f} = T\circ Q_{f}$, and the cocycle law together with the
		vanishing of the Schwarzian exactly on M\"obius
		maps~\cite{ovsienko2005} gives both directions.
	\end{proof}
	
	\begin{remark}[Two commuting symmetries]
		Theorem~\ref{thm:vir-equivariance} separates the two symmetries of
		the theory. Distortions act on the rank slot, by the coadjoint action
		with its Schwarzian anomaly; M\"obius maps of the underlying variable
		act on the value slot, invisibly. A general monotone change of
		variable $T$ shifts the potential by
		$(\{T\}\circ Q)(Q')^{2}$, which vanishes exactly on the projective
		subgroup: the potential sees precisely the non-projective content of
		a reparametrization.
	\end{remark}
	
	\begin{remark}[The Schwarzian is forced, for local continuous
		cochains]
		The rigidity philosophy of Theorem~\ref{thm:rigidity} persists one
		level up. If $f \mapsto \Xi_{f}\,dz^{2}$ is any assignment of a
		quadratic differential to laws that is local, depending at each point
		on finitely many derivatives of $Q_{f}$, and equivariant in the sense
		that $\Xi_{\alpha_{A}f} = (\Xi_{f}\circ B)(B')^{2} + C(B)$ for some
		map $C$ on $\mathcal{G}^{\infty}$, continuous for the topology of
		uniform convergence of distortions and their derivatives on compact
		subsets of $(0,1)$, then $C$ is a one-cocycle with
		values in quadratic differentials, and by the classification of such
		cocycles~\cite{fuks1986,ovsienko2005} its class is a multiple of the
		Schwarzian, the coboundaries being the transports of a fixed
		background. Hence $\Xi_{f} = c\,U_{f} + (\text{transport of a fixed
			background})$: the only freedom is the central charge and the choice
		of background. Both freedoms are used below, the background
		$2\pi^{2}$ being selected by the dynamics. The classifications
		cited are stated for cochains on the circle; they are applied here
		through vector fields compactly supported in the open interval, on
		which the circle and interval computations coincide, and only this
		restricted equivariance enters the argument.
	\end{remark}
	
	Which potentials arise from laws? The answer is a classical
	condition of Sturm theory: disconjugacy, the requirement that some
	nontrivial solution of the Hill equation have no zero.
	
	\begin{theorem}[Realization; the probability cone]\label{thm:realization}
		Let $U \in C^{0}(0,1)$. The following are equivalent.
		\begin{enumerate}[label=\normalfont(\roman*), leftmargin=*]
			\item $U = U_{f}$ for some $f \in \mathcal{D}_{+}^{2}$.
			\item The Hill equation $\psi'' + \tfrac12 U\psi = 0$ has a positive
			solution on $(0,1)$.
			\item The Hill equation is disconjugate on $(0,1)$; equivalently,
			\begin{equation*}
				\int_{0}^{1}(\chi')^{2}\,dz
				\;\ge\; \tfrac12\int_{0}^{1}U\chi^{2}\,dz
				\qquad\text{for all } \chi \in C_{c}^{\infty}(0,1).
			\end{equation*}
		\end{enumerate}
		When these hold, every positive solution $\psi$ and base point
		$z_{0} \in (0,1)$ determine a law by
		\begin{equation}
			Q(z) = \int_{z_{0}}^{z}\frac{ds}{\psi(s)^{2}},
			\qquad
			F = Q^{-1},
			\qquad
			f = \psi^{2}\circ F
			\quad\text{on } J := Q\big((0,1)\big),
			\label{eq:reconstruction}
		\end{equation}
		and this $f$ has unit total mass automatically; conversely, every law
		with potential $U$ arises in this way. We call
		$\mathcal{C} := \{U : \textup{(iii) holds}\}$ the \emph{probability
			cone}.
	\end{theorem}
	
	\begin{proof}
		That (i) implies (ii) is Proposition~\ref{prop:hill} with
		$\psi = \sqrt{\mathfrak{f}}$. The equivalence of (ii) and (iii) is
		classical Sturm theory: a positive solution exists if and only if no
		solution has two zeros, if and only if the quadratic form
		$\int(\chi')^{2} - \tfrac12\int U\chi^{2}$ is nonnegative on
		compactly supported test functions~\cite{hartman1982}. For (ii)
		implies (i), let $\psi > 0$ solve the Hill equation; then
		$\psi \in C^{2}$, and the reconstruction~\eqref{eq:reconstruction}
		defines an increasing $C^{3}$ diffeomorphism $Q$ of $(0,1)$ onto the
		open interval $J$, bounded or not. With $F = Q^{-1}$ and
		$f = \psi^{2}\circ F$ extended by zero, $f$ is continuous and
		positive on $J$ with $F' = f$, and
		\begin{equation}
			\int_{J}f\,dx
			= \int_{0}^{1}f\big(Q(z)\big)\,Q'(z)\,dz
			= \int_{0}^{1}\psi^{2}\,\psi^{-2}\,dz = 1:
			\label{eq:automatic-mass}
		\end{equation}
		unit mass is the change of variables to rank space, that is, the
		Wronskian normalization of Proposition~\ref{prop:hill} implicit in
		the reconstruction. Finally
		$U_{f} = \{Q,\cdot\} = -2\psi''/\psi = U$ by
		Proposition~\ref{prop:hill}, and every law with potential $U$ has
		this form, its rank amplitude being a positive solution.
	\end{proof}
	
	\begin{theorem}[Fibers of the potential]\label{thm:fibers}
		Let $f, g \in \mathcal{D}_{+}^{2}$. Then $U_{f} = U_{g}$ if and only
		if $Q_{g} = M\circ Q_{f}$ for a M\"obius map $M$, increasing and
		finite on the range of $Q_{f}$; that is, if and only if $g$ is a
		projective change of variable of $f$. The fiber within a fixed
		support interval $I$ is the group of increasing M\"obius self-maps
		of $I$, of dimension one when $I$ is bounded or a half-line, and of
		dimension two, the increasing affine group, when
		$I = \mathbb{R}$.
	\end{theorem}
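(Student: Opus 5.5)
The proof reduces to the classical kernel of the Schwarzian, applied to the quantile functions. For the "if" direction, suppose $Q_{g} = M\circ Q_{f}$ with $M$ Möbius, increasing and finite on the range $J_f = Q_f((0,1))$. The cocycle law of Section~\ref{sec:curvature} then gives
\begin{equation*}
U_{g} = \{M\circ Q_{f}, z\} = (\{M,\cdot\}\circ Q_{f})(Q_{f}')^{2} + \{Q_{f},z\} = U_{f},
\end{equation*}
because $\{M,\cdot\}\equiv 0$. This is the Möbius clause of Theorem~\ref{thm:vir-equivariance} with $T = M$, and $g = M_{\#}f$.

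For the "only if" direction, suppose $U_{f} = U_{g} =: U$. I would use Proposition~\ref{prop:hill} rather than invert Schwarzians by hand. The pairs $(\widetilde\psi_f,\psi_f)$ and $(\widetilde\psi_g,\psi_g)$ are each bases of the solution space of the same Hill equation $\psi''+\tfrac12U\psi=0$, and each has unit Wronskian. Hence there is a matrix in $\mathrm{SL}(2,\mathbb{R})$ carrying one basis to the other. Its action on the solution ratios is a Möbius transformation, so $Q_{g} = M\circ Q_{f}$ with $M$ Möbius.

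I then need to check that $M$ is increasing and finite on $J_f$. Since $Q_{g}$ is finite and strictly increasing on $(0,1)$ and $Q_f$ is a diffeomorphism onto $J_f$, the composite $M$ is finite and increasing on $J_f$. This means the pole of $M$ lies outside $J_f$ and $M$ has positive determinant. An alternative route is to apply the cocycle law to $M := Q_g\circ Q_f^{-1}$ directly: it gives $(\{M\}\circ Q_f)(Q_f')^2 = 0$, so $\{M\}=0$ on $J_f$, and the standard ODE fact then makes $M$ Möbius. Either route requires the regularity $Q\in C^{3}$, which $\mathcal{D}^{2}_{+}$ supplies.

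For the fiber statement, fix the support interval $I$. Any $g$ in the fiber with support $I$ has $M(J_f)=J_g=I$, so $M$ is an increasing Möbius self-map of $I$. Conversely, each such self-map produces a law in the fiber by the first part. Distinct maps give distinct laws, since $Q_g$ determines $M$ on an open interval. The fiber is therefore a torsor for the group of increasing Möbius self-maps of $I$.

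It remains to compute the dimension of that group case by case, which I expect to be the main bookkeeping step.
\begin{itemize}
\item If $I=\mathbb{R}$, $M$ must be finite on all of $\mathbb{R}$ and hence affine, so the group is the two-dimensional increasing affine group.
\item If $I=(0,\infty)$, up to affine normalization, $M$ must fix $0$ and $\infty$ because it is increasing and onto. Then $M(x)=ax/(cx+d)$ with $M(\infty)=\infty$, which forces $c=0$, so $M(x)=ax$ and the group is one-dimensional.
\item If $I$ is bounded, normalize to $I=(-1,1)$. An increasing Möbius self-map must fix both endpoints, which leaves the one-parameter hyperbolic subgroup fixing $\pm1$, for example $M(x)=(x+t)/(1+tx)$ with $|t|<1$.
\end{itemize}
The only real obstacle is verifying that requiring the increasing maps to fix the endpoints cuts $\mathrm{PSL}(2,\mathbb{R})$ from dimension three to exactly the stated dimensions. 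This follows from counting: a Möbius map is determined by three point images, and each fixed finite endpoint imposes one condition.
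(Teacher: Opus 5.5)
Your proposal is correct and follows the paper's proof. The paper also uses uniqueness for the Schwarzian equation to get $Q_g = M\circ Q_f$; it simply cites this, while you make it explicit through the unit-Wronskian bases of Proposition~\ref{prop:hill} and an $\mathrm{SL}(2,\mathbb{R})$ change of basis. It then concludes with the same observation, that increasing M\"obius self-maps of $I$ fix its endpoints.

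Your closing counting heuristic is misworded: the conditions come from the distinct endpoints of $I$ in the projective line, with $\pm\infty$ a single point, not from its finite endpoints. Your explicit case computations already establish the dimensions, so nothing depends on it.
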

	
	\begin{proof}
		Equality of the Schwarzians forces $Q_{g} = M\circ Q_{f}$ with $M$
		M\"obius, by the uniqueness theory of the Schwarzian
		equation~\cite{ovsienko2005}; $M$ must be increasing and pole-free on
		the range for $Q_{g}$ to be a quantile function, and conversely every
		such $M$ produces the law $M_{\#}f$. For fixed support, $M$ must map
		$I$ onto $I$: increasing M\"obius self-maps of a bounded interval fix
		both endpoints, a one-parameter hyperbolic group; of a half-line, fix
		the finite endpoint and infinity, a one-parameter scaling group; of
		$\mathbb{R}$, form the two-parameter increasing affine group.
	\end{proof}
	
	The constants now acquire a spectral meaning, and with it the
	canonical kernel acquires its deepest characterization.
	
	\begin{corollary}[Spectral origin of the canonical kernel]\label{cor:constants}
		The constant potential $U \equiv 2a^{2}$ with $a > 0$ lies in the
		probability cone if and only if $a \le \pi$; the constants
		$U \equiv 0$ and $U \equiv -2a^{2}$ lie in the cone for every $a$.
		Consequently the realizable constant projective curvatures form the
		ray $[-2\pi^{2}, \infty)$, re-deriving
		Remark~\ref{rem:curvature-values} as the sharp Poincar\'e inequality
		on $H_{0}^{1}(0,1)$ with constant $\pi^{2}$, the first Dirichlet
		eigenvalue of rank space. At the threshold $a = \pi$ the positive
		solution is unique up to scale, $\psi = \sin(\pi z)$, vanishing at
		both endpoints; the rank profiles in the fiber are
		$\lambda\sin^{2}(\pi z)$, the fiber is the Cauchy family, and the
		member whose rank profile is the canonical kernel
		$2\sin^{2}(\pi z)$, the profile of unit mass, is exactly the coherent
		state of equation~\eqref{eq:coherent}.
	\end{corollary}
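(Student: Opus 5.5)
The plan is to reduce every claim to the realization theorem, Theorem~\ref{thm:realization}, which says a constant potential $U$ lies in the cone exactly when $\psi''+\tfrac12U\psi=0$ has a positive solution on $(0,1)$. The three sign cases are handled by writing down the solutions explicitly. For $U\equiv0$, $\psi=1$ is positive; by~\eqref{eq:reconstruction} it reconstructs the uniform law. For $U\equiv-2a^{2}$ the equation reads $\psi''=a^{2}\psi$, and $\psi=\cosh(a(z-\tfrac12))$ is positive for every $a$. For $U\equiv 2a^{2}$ the equation is $\psi''+a^{2}\psi=0$, with general solution $\psi=C\sin(az+\theta)$. Positivity on $(0,1)$ requires the open interval $(\theta,a+\theta)$ to fit inside an interval $(k\pi,(k+1)\pi)$ on which $\sin$ has constant sign. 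This is possible if and only if $a\le\pi$, with the choice $\theta=0$ forced when $a=\pi$. An equivalent route is through~(iii): the quadratic form $\int(\chi')^{2}-a^{2}\int\chi^{2}$ is nonnegative on $C_{c}^{\infty}(0,1)$ exactly when $a^{2}\le\pi^{2}$, which is the sharp Poincar\'e inequality on $H^{1}_{0}(0,1)$ with first Dirichlet eigenvalue $\pi^{2}$. This is the form in which the threshold re-derives Remark~\ref{rem:curvature-values}.

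To translate the threshold into curvatures, I will use $R_{f}=-U_{f}\circ F$ from~\eqref{eq:quantile-schwarzian}. A constant potential $U\equiv u$ corresponds to constant curvature $-u$. The realizable constants $u\in(-\infty,2\pi^{2}]$ therefore give curvatures in $[-2\pi^{2},\infty)$, and the endpoint is attained because $a=\pi$ is admissible. I will also note that a law of constant curvature is the same thing as a law whose potential is constant, so no further realization issue arises.

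At the threshold $a=\pi$, the only admissible phase is $\theta=0$. The positive solutions are therefore $C\sin(\pi z)$ with $C>0$, which is unique up to scale. The corresponding rank profiles are $\mathfrak f=\psi^{2}=\lambda\sin^{2}(\pi z)$. By Theorem~\ref{thm:fibers}, or directly from Theorem~\ref{thm:trichotomy} in the elliptic case, the fiber over $U\equiv2\pi^{2}$ is the Cauchy family. To check this concretely, take~\eqref{eq:reconstruction} with $\psi=\sqrt\lambda\sin(\pi z)$. This gives $Q(z)=-\cot(\pi z)/(\pi\lambda)+\mathrm{const}$, which is the Cauchy quantile with scale $\gamma=1/(\pi\lambda)$. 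So the fiber is $\{\mathrm{Cauchy}(m,\gamma)\}$, with the location $m$ coming from the base point $z_{0}$.

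Finally, the member with $\lambda=2$, whose rank profile is $2\sin^{2}(\pi z)=w(z)$, has $\gamma=1/(2\pi)$. This is the coherent state~\eqref{eq:coherent}, which by definition satisfies $f=w(F)$, i.e.\ $f\circ Q=w$. The phrase ``the profile of unit mass'' refers to $\int_{0}^{1}2\sin^{2}(\pi z)\,dz=1$, and this is also where I expect the main obstacle. The rank profile $\mathfrak f$ is never normalized in general, since $\int_{0}^{1}\mathfrak f=\mathbb E_{f}[f]$, so the phrase must be read as ``the unique profile in the fiber that is itself a kernel, $\lambda\sin^{2}\in\mathcal W$.'' That condition forces $\lambda=2$, and I will state this reading explicitly.

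The only other point needing care is the threshold case $a=\pi$. There $\psi$ vanishes at both endpoints, so I must confirm that the reconstruction still yields a law on all of $\mathbb R$. It does, because $\int\psi^{-2}$ diverges at both endpoints, giving $J=\mathbb R$.
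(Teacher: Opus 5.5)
Your proposal is correct and follows the paper's proof essentially step for step. It uses the explicit positive solutions $1$, $\cosh$ and $\sin(az+\theta)$ in Theorem~\ref{thm:realization}, the Poincar\'e reading of criterion (iii), the reconstruction $Q=-\cot(\pi z)/(\pi\lambda)+\mathrm{const}$ identifying the threshold fiber with the Cauchy family, and the same reading of ``unit mass'' via $\int_0^1\mathfrak f\,dz=\mathbb E_f[f]$. The only difference is cosmetic. You reconstruct for general $\lambda$ and base point, which together with the converse clause of Theorem~\ref{thm:realization} already yields the whole fiber. The paper reconstructs at $\lambda=2$ and appeals to Theorem~\ref{thm:fibers} for the two-parameter family.
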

	
	\begin{proof}
		For $U \equiv 2a^{2}$ the solutions are $\sin(az + \varphi)$, and
		some solution is positive on an interval of length one exactly when
		$a \le \pi$, with $\psi = \sin(\pi z)$ at the threshold; criterion
		(iii) of Theorem~\ref{thm:realization} for constants is the
		Poincar\'e inequality with sharp constant $\pi^{2}$. For
		$U \equiv -2a^{2}$ take $\psi = \cosh(az)$, and for $U \equiv 0$ take
		$\psi = 1$. At threshold, any solution independent of $\sin(\pi z)$
		is a shifted sine and vanishes inside $(0,1)$, so the positive
		solution is unique up to scale. The
		reconstruction~\eqref{eq:reconstruction} with
		$\psi = \sqrt2\sin(\pi z)$ gives
		\begin{equation*}
			Q'(z) = \tfrac12\csc^{2}(\pi z),
			\qquad
			Q(z) = -\frac{\cot(\pi z)}{2\pi} + \mathrm{const},
		\end{equation*}
		the Cauchy quantile of scale $1/(2\pi)$; positive multiples of the
		profile rescale, and Theorem~\ref{thm:fibers}, the support being
		$\mathbb{R}$ with affine fiber, gives the full two-parameter Cauchy
		family. Finally
		$\int_{0}^{1}\mathfrak{f}\,dz = \int f^{2}dx = \mathbb{E}_{f}[f]$,
		and the profile $2\sin^{2}(\pi z)$ integrates to one.
	\end{proof}
	
	\begin{remark}[One eigenvalue, three selections]
		Corollary~\ref{cor:constants} identifies the separate mechanisms by
		which this paper selected its canonical kernel as readings of a
		single spectral fact. The variational characterization of
		Theorem~\ref{thm:variational} minimizes the Dirichlet energy
		$4\int(\phi')^{2}$ over the $L^{2}$ unit sphere within
		$H_{0}^{1}$, attaining
		$4\pi^{2}$ at the ground state; the curvature theory of
		Section~\ref{sec:curvature} bounds constant curvature below by
		$-2\pi^{2}$, attained by the Cauchy family; and the coherent state of
		equation~\eqref{eq:coherent} is the fixed shape of the
		self-referential eigenrelation. All three are the first Dirichlet
		eigenvalue $\pi^{2}$ of rank space: as a Rayleigh quotient, as the
		Poincar\'e constant governing disconjugacy, and as the threshold at
		which the ground state itself becomes a rank profile. The
		boundary-inert constraint of Theorem~\ref{thm:variational} is the
		Dirichlet condition, and the discreteness of the harmonic ladder is
		the discreteness of the Dirichlet spectrum.
	\end{remark}
	
	Tails, too, become coadjoint data. In the potential they appear as
	inverse-square singularities at the endpoints of rank space, whose
	coefficients are conformal weights constrained by a Hardy bound.
	
	\begin{theorem}[Tails as conformal weights]\label{thm:weights}
		Let $f \in \mathcal{D}_{+}^{2}$ and suppose the rank profile
		satisfies
		$\mathfrak{f}(z) = (1-z)^{2\beta}L\big(\tfrac{1}{1-z}\big)$ near
		$z = 1$, with $L$ slowly varying with smooth asymptotics, meaning
		that $L$ is eventually smooth with
		$s^{k}L^{(k)}(s)/L(s) \to 0$ for $k = 1, 2$, so that the asymptotic
		relations below may be differentiated twice; this covers
		the regularly varying tails of Theorem~\ref{thm:singularity}, for
		which $\beta = \tfrac{p}{2(p-1)}$, and exponential-or-faster tails,
		for which $\beta = \tfrac12$. Then
		\begin{equation}
			U_{f}(z) = \frac{c_{+} + o(1)}{(1-z)^{2}},
			\qquad
			c_{+} = 2\beta(1-\beta)
			= \frac{p(p-2)}{2(p-1)^{2}} \ \text{ in the power case},
			\label{eq:weight}
		\end{equation}
		and symmetrically at the left endpoint. In particular:
		\begin{enumerate}[label=\normalfont(\roman*), leftmargin=*]
			\item always $c_{\pm} \le \tfrac12$, and no law whatsoever can
			satisfy $U \ge c\,(1-z)^{-2}$ near an endpoint with $c > \tfrac12$;
			\item $c_{+} = 0$ exactly for quadratic tails; $c_{+} = \tfrac12$
			for exponential-or-faster tails; $c_{+} \in (0, \tfrac12)$ for
			$p > 2$; and $c_{+} < 0$ for $1 < p < 2$;
			\item the weights $c_{\pm}$ are invariants of the boundary-regular
			coadjoint action: every distortion extending to a
			$C^{3}$-diffeomorphism of the closed interval with nonvanishing
			endpoint derivatives, and every flow of a kernel that is $C^{2}$
			up to the endpoints, preserves them; boundary regularity
			is essential, since the interior distortion $B(z) = z^{2}$ moves the
			left weight of the uniform law from $0$ to $-\tfrac32$.
		\end{enumerate}
	\end{theorem}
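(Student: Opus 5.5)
The plan is to compute everything through the amplitude form of Proposition~\ref{prop:hill}, $U_{f} = -2\psi''/\psi$ with $\psi = \sqrt{\mathfrak{f}}$. This turns the weight computation into a one-variable asymptotic differentiation near $z=1$.

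\textbf{Main computation.} Set $s = 1-z$ and write $\psi(z) = s^{\beta}\ell(1/s)$ with $\ell = \sqrt{L}$. The function $\ell$ is again slowly varying with smooth asymptotics, since the conditions $s^{k}\ell^{(k)}/\ell \to 0$ for $k=1,2$ follow from those for $L$ by the chain rule. Differentiating twice in $z$ and using these conditions to discard all terms carrying derivatives of $\ell$ gives
\begin{equation*}
\psi'' = \beta(\beta-1)s^{\beta-2}\ell(1/s)\big(1+o(1)\big).
\end{equation*}
Here the $o(1)$ remainders come from terms like $s^{\beta-2}\,(1/s)\ell'(1/s)$, which are $o(s^{\beta-2}\ell)$. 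One caveat: when $\beta\in\{0,1\}$ the leading coefficient vanishes, and the statement must then be read as $U_{f} = o(s^{-2})$. This is still consistent with \eqref{eq:weight}, because the error terms are $o(s^{\beta-2}\ell)$ in every case. Dividing by $\psi$ and multiplying by $-2$ gives $U_{f} = (2\beta(1-\beta)+o(1))s^{-2}$.

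\textbf{Identifying $\beta$.} For a tail $f\sim c_{0}x^{-p}$, I get $1-F\sim c_{0}x^{1-p}/(p-1)$, so $x\asymp s^{-1/(p-1)}$ at rank $z$. Then $\mathfrak{f}=f(Q(z))\asymp s^{p/(p-1)}$, which gives $2\beta = p/(p-1)$ and $2\beta(1-\beta) = p(p-2)/(2(p-1)^{2})$. The smooth hypotheses of Theorem~\ref{thm:singularity} are exactly what make the profile's $L$ slowly varying with smooth asymptotics. For exponential-or-faster tails, $\mathfrak{f}(z)\sim$ (hazard) $\cdot\, s$, with the hazard slowly varying in $1/s$, so $\beta=\tfrac12$. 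As a cross-check, the computation should match Theorem~\ref{thm:singularity} through $R_f = -U_f\circ F$.

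\textbf{Part (i).} The maximum of $2\beta(1-\beta)$ is $\tfrac12$. For the general no-go claim I would not use the asymptotics, which requires regular variation. Instead I would use Theorem~\ref{thm:realization}(iii): disconjugacy plus the sharp Hardy inequality $\int(\chi')^{2}\ge\tfrac14\int\chi^{2}/s^{2}$ near the endpoint. If $U\ge c s^{-2}$ on $(1-\delta,1)$ with $c>\tfrac12$, then $\tfrac12 U \ge (c/2)s^{-2}$ with $c/2>\tfrac14$. Choosing Hardy-optimizing test functions $\chi = s^{1/2}$, cut off logarithmically and supported in $(1-\delta,1)$, then violates the disconjugacy inequality.

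\textbf{Part (ii).} This is reading off the formula: $c_{+}=0$ iff $\beta\in\{0,1\}$, i.e.\ $p=2$; $c_{+}=\tfrac12$ iff $\beta=\tfrac12$; $c_{+}\in(0,\tfrac12)$ iff $\beta\in(\tfrac12,1)$, i.e.\ $p>2$; and $c_{+}<0$ iff $\beta>1$, i.e.\ $1<p<2$.

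\textbf{Part (iii).} I would use the equivariance of the potential \eqref{eq:vir-equivariance}: $U_{\alpha_A f} = (U_f\circ B)(B')^{2}+\{B,\cdot\}$. For $B$ a $C^{3}$ diffeomorphism of $[0,1]$ fixing $1$ with $B'(1)\neq 0$, we have $1-B(z)\sim B'(1)s$. So $(U_f\circ B)(B')^{2} \sim c_{+}B'(1)^{-2}s^{-2}B'(1)^{2} = c_{+}s^{-2}$, while $\{B,\cdot\}$ stays bounded; hence the weight is unchanged. For kernel flows, the time-$t$ transport map of a kernel that is $C^{2}$ up to the endpoints is such a diffeomorphism. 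This should follow from smooth dependence of the ODE flow of the drift $\xi_v = A_v - \mathrm{id}$, which is $C^{3}$ and vanishes at the endpoints, so the endpoint derivative $e^{t\xi_v'(1)}$ is nonzero. For the counterexample, take the uniform law ($U\equiv0$) and $B=z^{2}$. Then $\{B,z\} = -\tfrac{3}{2}z^{-2}$, so the left weight becomes $-\tfrac32$.

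The step I expect to be the main obstacle is the Hardy argument in (i). The test functions must be tuned so that the excess $c/2-\tfrac14$ wins against the cutoff error, which requires logarithmic cutoffs. A secondary concern is the bookkeeping of $\ell$-derivative terms, which I would handle with the stated $k=1,2$ conditions.
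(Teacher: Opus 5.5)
Your proposal is correct and follows the paper's proof. It uses the same amplitude-form differentiation $U_f=-2\psi''/\psi$ with $\psi=(1-z)^{\beta}\ell$, the same identification $2\beta=p/(p-1)$ via $1-F\sim c_0x^{1-p}/(p-1)$, and, for part (iii), the same equivariance estimate $(U\circ B)(B')^2\sim c_+(1-z)^{-2}$ with $\{B\}$ bounded; your check that kernel flows are $C^3$ boundary-regular with endpoint derivative $e^{t(v(1)-1)}$ spells out what the paper states in one clause.

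The only divergence is the absolute bound in part (i). There the paper avoids test functions by comparing with the Euler equation $\psi''+\tfrac{c}{2}(1-z)^{-2}\psi=0$, whose indicial roots $\tfrac12(1\pm\sqrt{1-2c})$ are non-real for $c>\tfrac12$, so Sturm comparison forces every Hill solution to oscillate. Your route through criterion (iii) and the sharpness of Hardy's constant is equivalent and works, since the ratio $\int(\chi')^2\big/\int\chi^2s^{-2}$ for log-cut $s^{1/2}$ tends to $\tfrac14$ as the logarithmic length diverges; it just costs more bookkeeping.
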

	
	The proof is given in Appendix~\ref{app:proofs}; the absolute bound
	in part (i) is the optimality of the Hardy inequality on the
	interval, oscillation of the Euler equation at
	$c > \tfrac12$ contradicting disconjugacy. The
	weight formula~\eqref{eq:weight} is exactly consistent with the
	singularity theorem: under $z = F(x)$ with a $p$-tail one has
	$(1-z)^{-2} \sim (p-1)^{2}c_{0}^{-2}x^{2p-2}$, so that
	$R = -U\circ F \sim \tfrac{p(2-p)}{2c_{0}^{2}}x^{2p-2}$, the
	asymptotics~\eqref{eq:singularity}, coefficient and all: the Hardy
	bound on endpoint weights is the rank-space cause of curvature
	blow-up, and quadratic tails are regular because they sit at weight
	zero. The weight is insensitive to the tail rate, which is a fiber
	coordinate moved by M\"obius scaling; this is the orbit-theoretic
	restatement of the conservation of tail rates along the flows proved
	in Section~\ref{sec:flow}.
	
	\begin{remark}[Stability reading]
		Part (i) of Theorem~\ref{thm:weights} has the familiar form of
		stability bounds for inverse-square potentials, with the borderline
		constant of Hardy's inequality playing the role it plays for the
		Breitenlohner--Freedman bound in two-dimensional anti-de Sitter
		space: probability laws, read projectively, live exactly in the
		stability window of the inverse-square potential at their rank
		boundary, exponential-or-faster tails saturating it.
	\end{remark}
	
	The M\"obius-invariant Lagrangian of Section~\ref{sec:curvature} now
	acquires its natural meaning: it is the pairing of the centered
	coadjoint vector with the law's own rank profile.
	
	\begin{proposition}[Moment pairing]\label{prop:pairing}
		For $f \in \mathcal{D}_{+}^{2}$ with the displayed integrals finite,
		\begin{equation}
			\int_{\mathbb{R}}\Big(\{F, x\} + 2\pi^{2}(F')^{2}\Big)\,dx
			\;=\; -\int_{0}^{1}\big(U_{f}(z) - 2\pi^{2}\big)\,
			\mathfrak{f}(z)\,dz,
			\label{eq:pairing}
		\end{equation}
		the pairing of the centered potential against the rank profile; the
		integrand on the right vanishes identically exactly on the Cauchy
		family. Likewise the expected projective curvature is the total
		integral of the potential,
		$\mathbb{E}_{f}[R_{f}(X)] = -\int_{0}^{1}U_{f}\,dz$, whenever the
		integral converges absolutely.
	\end{proposition}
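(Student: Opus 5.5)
The plan is a change of variables $z = F(x)$ on each term, combined with the inversion formula \eqref{eq:quantile-schwarzian}. The claim splits into three parts: the pairing identity \eqref{eq:pairing}, the characterization of the vanishing of its right-hand integrand, and the expectation formula.

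First I transport the Schwarzian term to rank space. By \eqref{eq:quantile-schwarzian}, $\{F,x\} = R_f(x)\,f(x)^2 = -U_f(F(x))\,f(x)^2$. Substituting $x = Q(z)$, so that $dx = Q'(z)\,dz = dz/\mathfrak{f}(z)$, gives
\begin{equation*}
\int_{\mathbb{R}}\{F,x\}\,dx = -\int_{0}^{1}U_f(z)\,\mathfrak{f}(z)^{2}\,\frac{dz}{\mathfrak{f}(z)} = -\int_{0}^{1}U_f\,\mathfrak{f}\,dz .
\end{equation*}
The second term is handled the same way: $\int 2\pi^2 f^2\,dx = 2\pi^2\int_0^1 \mathfrak{f}\,dz$. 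This is the universality principle \eqref{eq:universality} with $\Psi = \mathfrak{f}$, already noted in the proof of Corollary~\ref{cor:constants}. Adding the two terms yields \eqref{eq:pairing}.

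The only care needed concerns finiteness. The hypothesis that the displayed integrals are finite lets me split the integral of the sum into the sum of integrals. The substitution itself is a legitimate change of variables for the $C^{1}$ increasing bijection $Q:(0,1)\to J$, applied to absolutely integrable functions. If only the combined integrand is assumed integrable, I would instead apply the same substitution pointwise to the integrand $\{F,x\}+2\pi^2 f^2 = -(U_f\circ F - 2\pi^2) f^2$, which avoids splitting altogether.

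For the vanishing statement, $\mathfrak{f} > 0$ on $(0,1)$, so the integrand $(U_f - 2\pi^2)\mathfrak{f}$ vanishes identically exactly when $U_f \equiv 2\pi^2$. This means $R_f \equiv -2\pi^2$, which by the elliptic case of Theorem~\ref{thm:trichotomy} (equivalently Corollary~\ref{cor:constants}) holds exactly on the Cauchy family.

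The expectation formula follows in the same way:
\begin{equation*}
\mathbb{E}_f[R_f(X)] = \int R_f\,f\,dx = -\int (U_f\circ F)\,f\,dx = -\int_0^1 U_f\,dz,
\end{equation*}
by \eqref{eq:universality} applied to $\Psi = U_f$, which is valid under absolute convergence.

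There is no real obstacle here. The one point to state explicitly is that the pushforward identity needs only integrability of $\Psi$, with no boundary terms arising, because the substitution is a change of variables rather than an integration by parts.
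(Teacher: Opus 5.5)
Your proof is correct and matches the paper's argument: you rewrite $\{F,x\} = -U_f(F)\,f^2$ via the quantile identity, substitute $z = F(x)$ using $f^2\,dx = \mathfrak{f}\,dz$, identify the vanishing locus as the Cauchy fiber through Corollary~\ref{cor:constants}, and obtain the expectation formula from the universality principle. The paper substitutes directly into the combined integrand $(2\pi^2 - U_f(F))f^2$, which is your fallback option and avoids needing each term to be finite separately.
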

	
	\begin{proof}
		By the quantile identity~\eqref{eq:quantile-schwarzian},
		$\{F,x\} = -\,U_{f}(F)\,f^{2}$, so
		$\int(\{F,x\} + 2\pi^{2}f^{2})\,dx
		= \int(2\pi^{2} - U_{f}(F))\,f^{2}\,dx$, and the substitution
		$z = F(x)$ turns this into the right side of the
		identity~\eqref{eq:pairing}, since $f^{2}\,dx = \mathfrak{f}\,dz$.
		The integrand vanishes identically if and only if
		$U_{f} \equiv 2\pi^{2}$, which by Corollary~\ref{cor:constants} is
		the Cauchy fiber. The last statement is the universality
		principle~\eqref{eq:universality} applied to
		$R_{f}(X) = -U_{f}(F(X))$.
	\end{proof}
	
	\section{Coadjoint Orbits, Isotropy, and Relaxation}\label{sec:orbits}
	
	The moment map of Section~\ref{sec:virasoro} becomes most informative
	when read orbit by orbit. The first observation is that laws of a
	fixed support are a principal homogeneous space of the distortion
	group, so that the potential map is literally an orbit map.
	
	\begin{theorem}[Laws are a torsor]\label{thm:torsor}
		For every open interval $I$, the subgroup of $\mathcal{G}$ of
		distortions with continuous positive derivative on $(0,1)$ acts
		freely and transitively on the laws of $\mathcal{D}_{+}$ with support
		$\overline{I}$, and the full group $\mathcal{G}$ acts so on the
		enlargement of $\mathcal{D}_{+}$ by all laws whose restricted
		distribution function is an absolutely continuous homeomorphism with
		absolutely continuous inverse: for two such laws $f, g$ the unique
		$A \in \mathcal{G}$ with $\alpha_{A}f = g$ is
		\begin{equation}
			A = F_{g}\circ Q_{f}.
			\label{eq:torsor-map}
		\end{equation}
		Consequently, for a reference law $f_{0} \in \mathcal{D}_{+}^{2}$
		with support $\overline{I}$, the potential map is the coadjoint
		orbit map $A \mapsto \operatorname{Ad}^{*}_{A}U_{f_{0}}$, and its
		fibers are the cosets of the stabilizer, which by
		Theorem~\ref{thm:fibers} is the conjugated M\"obius group
		\begin{equation}
			\operatorname{Stab}_{\mathcal{G}}(U_{f})
			= \big\{\, F_{f}\circ M\circ Q_{f} \;:\;
			M \ \text{an increasing M\"obius self-map of } I \,\big\}.
			\label{eq:stab-group}
		\end{equation}
		For $f, g \in \mathcal{D}_{+}^{2}$ the torsor
		map~\eqref{eq:torsor-map} is automatically a $C^{3}$-diffeomorphism
		of $(0,1)$, and the coadjoint formula, which involves only three
		derivatives, extends verbatim to $C^{3}$ distortions with the same
		cocycle law; the orbit-map statement is understood in this extended
		sense, on the $C^{3}$-regular part of the fiber.
	\end{theorem}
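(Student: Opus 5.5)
The plan is to reduce everything to distribution functions and the action law~\eqref{eq:distortion-action}, $F_{\alpha_{A}f} = A\circ F_{f}$. First we settle uniqueness. Suppose $\alpha_{A}f = g$. Then $A\circ F_{f} = F_{g}$ on the interior of $\overline I$. Since $F_{f}$ maps $I$ onto $(0,1)$ bijectively, composing on the right with $Q_{f}$ forces $A = F_{g}\circ Q_{f}$ on $(0,1)$. So the action is free, and the only candidate is the torsor map~\eqref{eq:torsor-map}.

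Next we settle existence in both regimes. In $\mathcal{D}_{+}$ with common support, $F_{g}\circ Q_{f}$ is an increasing bijection of $(0,1)$. Its derivative is $g(Q_{f})/f(Q_{f}) = \mathfrak{g}\circ F_{g}\circ Q_{f}/\mathfrak{f}$, which is continuous and positive on $(0,1)$ because $f$ and $g$ are continuous and positive on $I$. Hence $A$ is locally $C^{1}$, and $A$ and $A^{-1} = F_{f}\circ Q_{g}$ are absolutely continuous on $[0,1]$: they are monotone bijections whose derivatives integrate to one, by the pushforward identity. This places $A$ in the stated subgroup, and $\alpha_{A}f$ has distribution function $F_{g}\circ Q_{f}\circ F_{f} = F_{g}$.

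The enlarged class needs more care, and I expect it to be the main technical point. There $F_{f}$ and $F_{g}$ are only absolutely continuous homeomorphisms of $\overline I$ onto $[0,1]$ with absolutely continuous inverses. The composite $F_{g}\circ Q_{f}$ is then a composition of absolutely continuous increasing homeomorphisms. It is absolutely continuous by the Banach--Zarecki criterion, exactly as in the proof of Theorem~\ref{thm:composition}: it is continuous, of bounded variation, and has the Lusin $N$-property, since each factor maps null sets to null sets. The same argument applied to $F_{f}\circ Q_{g}$ shows $A\in\mathcal{G}$. Transitivity then follows as before, and $\alpha_{A}f = A'(F_{f})f$ is identified with $g$ through equality of distribution functions.

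For the orbit statement, fix $f_{0}$. Every $f$ of the same support is $\alpha_{A}f_{0}$ with $A = F_{f}\circ Q_{f_{0}}$. For $f, f_{0}\in\mathcal{D}_{+}^{2}$ we have $Q\in C^{3}$ and $F\in C^{3}$ with nonvanishing derivatives, so $A$ is a $C^{3}$ diffeomorphism. The cocycle computation in the proof of Theorem~\ref{thm:vir-equivariance} uses only three derivatives, so it gives $U_{f} = \operatorname{Ad}^{*}_{A}U_{f_{0}}$ for $C^{3}$ distortions. The potential map is therefore the orbit map, and its fibers are cosets of the stabilizer.

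It remains to compute the stabilizer. $A$ stabilizes $U_{f}$ iff $U_{\alpha_{A}f} = U_{f}$. By Theorem~\ref{thm:fibers}, this holds iff $Q_{\alpha_{A}f} = Q_{f}\circ A^{-1} = M\circ Q_{f}$ with $M$ an increasing Möbius self-map of $I$, the support being preserved by the action. Solving gives $A^{-1} = F_{f}\circ M\circ Q_{f}$. Since such $M$ form a group, this is equivalent to~\eqref{eq:stab-group}.
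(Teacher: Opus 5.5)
Your proposal is correct and follows essentially the paper's argument. Like the paper, you read the torsor map $A = F_{g}\circ Q_{f}$ off the identity $F_{\alpha_{A}f} = A\circ F_{f}$, which gives both freeness and transitivity, use Banach--Zarecki for absolute continuity on the enlarged class, and obtain the stabilizer by solving against the M\"obius fibers of Theorem~\ref{thm:fibers}. The only minor variation is on $\mathcal{D}_{+}$: you get absolute continuity from the local $C^{1}$ structure and the unit integral of $A'$ rather than from local Lipschitzness of $Q_{f}$, and you check explicitly that $A'$ is continuous and positive, which the paper leaves implicit.
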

	
	\begin{proof}
		The map~\eqref{eq:torsor-map} is an increasing bijection of $(0,1)$,
		extended to $[0,1]$ by monotone continuity, and it is absolutely
		continuous by the Banach--Zarecki criterion~\cite{natanson1955}:
		continuous, monotone, and mapping null sets to null sets. The last
		property has separate justifications in the two cases: on
		$\mathcal{D}_{+}$ the map $Q_{f}$ is locally Lipschitz on $(0,1)$,
		its derivative $1/\mathfrak{f}$ being locally bounded, while on the
		enlargement $Q_{f}$ is itself absolutely continuous by hypothesis;
		in both cases $F_{g}$ is absolutely continuous, and compositions of
		monotone maps with the Luzin property retain it. The inverse
		$F_{f}\circ Q_{g}$ is of the same form. Then
		$A\circ F_{f} = F_{g}$, which is transitivity, and
		$A\circ F_{f} = F_{f}$ forces $A = \mathrm{id}$ on $(0,1)$, which is
		freeness. A distortion stabilizes $U_{f}$ exactly when it carries $f$
		into the fiber of Theorem~\ref{thm:fibers} without changing the
		support, that is, when $\alpha_{A}f = M_{\#}f$ for an increasing
		M\"obius self-map $M$ of $I$; solving with the torsor
		map~\eqref{eq:torsor-map} gives the
		description~\eqref{eq:stab-group}, and $A \mapsto M$ is a group
		isomorphism by freeness.
	\end{proof}
	
	The support type of a fiber is itself coadjoint data. Viewing the
	quantile function as the development of the potential, a curve in
	the projective line, the endpoint limits $Q(0^{+})$ and $Q(1^{-})$
	exist, and on the projective line the two infinities are the same
	point. Exactly one of two things happens: either the two limits are
	distinct, the \emph{subcritical} case, in which case the fiber of
	$U_{f}$ contains laws of bounded support and laws of half-line
	support but none of full support; or the two limits coincide, the
	\emph{critical} case, which happens exactly for full support, the
	development closing up through infinity, and then every law in the
	fiber has support $\mathbb{R}$ and the stabilizer is
	two-dimensional. Criticality is preserved by the coadjoint action,
	the development being pre-composed with an endpoint-fixing map and
	post-composed with a M\"obius map, neither of which affects whether
	the endpoint limits agree. This dichotomy places the entire
	zero-curvature family of Theorem~\ref{thm:trichotomy}, uniform,
	Lomax, and truncated Lomax, in a single subcritical fiber, the three
	support types being three M\"obius placements of one projective arc;
	and it identifies full support as the delicate, critical case, which
	is exactly where the extra symmetry appears below.
	
	The infinitesimal symmetries of a potential are computed by a
	classical mechanism, the symmetric square of the Hill equation:
	products of Hill solutions solve the third-order equation
	\begin{equation}
		\mathcal{L}_{U}\,\xi
		:= \xi''' + 2U\xi' + U'\xi = 0,
		\label{eq:symmetric-square}
	\end{equation}
	whose left side is precisely the infinitesimal coadjoint
	variation~\eqref{eq:inf-coadjoint} of $U$ along $\xi$ at central
	charge one, and whose solution space is spanned by the three products
	of a solution basis (Lemma~\ref{lem:symsq} in
	Appendix~\ref{app:proofs}). By Proposition~\ref{prop:hill} the basis
	may be taken to be the rank amplitude and its Wronskian partner, so
	the solutions of the symmetric-square
	equation~\eqref{eq:symmetric-square} are exactly the fields
	\begin{equation}
		\xi = \mathfrak{f}\cdot P(Q),
		\qquad P \ \text{a real polynomial of degree at most two},
		\label{eq:isotropy-fields}
	\end{equation}
	the infinitesimal M\"obius fields of the underlying variable pulled
	back to rank space. The formal isotropy algebra of $U_{f}$ is the
	space of fields~\eqref{eq:isotropy-fields} vanishing at both
	endpoints of rank space, and membership is decided by the boundary
	decay of $\mathfrak{f}$, $\mathfrak{f}Q$, and $\mathfrak{f}Q^{2}$,
	that is, by the tails of $f$: a finite support endpoint where the
	profile has a nonzero limit imposes one root of $P$; an infinite
	endpoint imposes the constraint $\deg P \le 1$ exactly when the tail
	is quadratic or heavier, and no constraint when it is lighter. The
	isotropy \emph{group}, by contrast, is always the conjugated
	M\"obius group of the display~\eqref{eq:stab-group}, of dimension
	two for full support and one otherwise; its Lie algebra consists of
	the \emph{complete} fields among the
	family~\eqref{eq:isotropy-fields}, those whose flows fix the
	endpoints for all time. The gap between the two is a tail
	diagnostic. For the uniform law the counts are $(1,1)$, the single
	field $z(1-z)$ transporting the uniform law through the truncated
	Lomax family; for the exponential law the formal algebra is
	two-dimensional but only the scaling field $(1-z)\log(1-z)$ is
	complete, the second field $(1-z)\log^{2}(1-z)$ reaching the
	endpoint in finite time; for the Gaussian and the logistic the
	formal algebra is the full three-dimensional M\"obius algebra while
	only the affine two survive integration; and for the Cauchy family
	formal and integrated symmetry coincide at dimension two. Failure of
	a formal symmetry to integrate is thus itself a statement about
	tails, and the Gaussian's projective singularity, established in
	Theorem~\ref{thm:singularity}, reappears here as an incomplete
	symmetry direction: the third M\"obius flow carries mass to the rank
	boundary in finite time.
	
	At constant potentials this stratification produces a single
	distinguished point.
	
	\begin{theorem}[The exceptional point]\label{thm:exceptional}
		Among constant potentials in the probability cone, the isotropy
		group is one-dimensional except at the threshold
		$U \equiv 2\pi^{2}$, where its dimension jumps to two. There the
		formal isotropy algebra is
		\begin{equation}
			\operatorname{stab}(2\pi^{2})
			= \ker\big(\partial^{3} + 4\pi^{2}\partial\big)\cap
			\{\xi(0) = \xi(1) = 0\}
			= \operatorname{span}\Big\{
			\frac{\sin(2\pi z)}{2\pi},\
			\frac{1 - \cos(2\pi z)}{2\pi}\Big\},
			\label{eq:stab-canonical}
		\end{equation}
		which is precisely the span of the drift fields of the canonical
		flow and the quarter-phase flow of Section~\ref{sec:flow}; both
		fields are complete, so the entire formal symmetry integrates. In
		the uniformizing coordinate $V = \tan(\pi(z - \tfrac12))$ the two
		flows are the dilations $\dot V = V$ and the translations
		$\dot V = 1$, so the isotropy group is the affine group of the line,
		realized inside the distortion group by conjugation with any Cauchy
		reference law. No law has isotropy group of dimension three or more.
	\end{theorem}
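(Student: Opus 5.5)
Every statement will be read off the isotropy description in equation~\eqref{eq:stab-group}, combined with the fiber theorem and Corollary~\ref{cor:constants}.

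\emph{Step 1: reduce to support types.} A constant potential $U\equiv 2a^2$ ($a\le\pi$), $U\equiv0$, or $U\equiv-2a^2$ lies in the cone by Corollary~\ref{cor:constants}. By Theorem~\ref{thm:torsor} its isotropy group is the conjugated group of increasing M\"obius self-maps of the support $I$. That group has dimension two exactly when $I=\mathbb{R}$ and dimension one otherwise (Theorem~\ref{thm:fibers}). It therefore suffices to decide which constant potentials admit a law of full support. By Remark~\ref{rem:curvature-values} and Theorem~\ref{thm:trichotomy}, read through $R_f=-U_f\circ F$, the only constant-curvature laws on $\mathbb{R}$ are the Cauchy family, at $U\equiv2\pi^2$.

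To keep the argument self-contained in rank space, I would instead use the critical/subcritical dichotomy. The development $Q=\widetilde\psi/\psi$ closes through infinity iff both endpoint limits are $\infty$, iff the positive solution $\psi$ vanishes at both endpoints. For $U\equiv2a^2$ with $a<\pi$, every positive solution $\sin(az+\varphi)$ on $(0,1)$ vanishes at most at one endpoint. For $U\le0$ constant, positive solutions are $\cosh$-type or linear, and a linear solution vanishing at both endpoints is zero. Hence only $a=\pi$, $\psi=\sin\pi z$, is critical.

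\emph{Step 2: formal algebra at the threshold.} The symmetric-square equation~\eqref{eq:symmetric-square} with $U\equiv2\pi^2$ reads $\xi'''+4\pi^2\xi'=0$, with solutions $\alpha+\beta\sin2\pi z+\gamma\cos2\pi z$. The conditions $\xi(0)=\xi(1)=0$ coincide, because both endpoints give $\alpha+\gamma=0$. They therefore cut out a two-dimensional space, which is exactly the span in~\eqref{eq:stab-canonical}. Comparing with~\eqref{eq:drift-field} at $k=1$, $\alpha=0$ and $\alpha=\tfrac14$ identifies the two fields with the canonical and quarter-phase drifts, up to sign and scale.

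\emph{Step 3: completeness.} I would verify completeness by a direct chain-rule computation in $V=\tan(\pi(z-\tfrac12))$, for which $dV/dz=\pi(1+V^2)$:
\[
\tfrac{\sin2\pi z}{2\pi}\mapsto -V\ \text{(up to sign)},\qquad
\tfrac{1-\cos2\pi z}{2\pi}\mapsto\ \text{constant field},
\]
with the constant normalized to $1$ by rescaling. Dilations and translations of $\mathbb{R}$ are complete, and they fix $V=\pm\infty$, that is, $z=0,1$. Hence the formal algebra integrates to the affine group. By Theorem~\ref{thm:torsor}, with a Cauchy reference law for which $F$ is the inverse of this $V$ up to affine change, this group is the conjugated M\"obius group.

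\emph{Step 4: no dimension three.} The isotropy group of any law is the group of increasing M\"obius self-maps of an interval $I\subseteq\mathbb{R}$. Such maps fix $\infty$ or endpoints, so they form a subgroup of the affine group or a one-parameter group, and the dimension is at most two.

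The main obstacle is Step 1's claim for a general positive solution. The positive solution is not unique off threshold, so one must check that no choice of $\psi$ yields full support. The endpoint-vanishing argument handles this; it needs care only in noting that Theorem~\ref{thm:realization} gives every law in the fiber in this form.
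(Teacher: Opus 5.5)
Your proposal is correct and follows the paper's proof. The group dimensions come from Theorem~\ref{thm:torsor} and the support type of the constant fibers. The kernel $\{1,\sin 2\pi z,\cos 2\pi z\}$ is cut to two dimensions because the two endpoint conditions coincide. Completeness comes from the $V$-coordinate flows $\dot V=-V$ and $\dot V=1$, and the bound comes from Theorem~\ref{thm:fibers}. Your only addition is the rank-space check that a positive Hill solution vanishes at both endpoints only at $a=\pi$, which the paper asserts without argument. That check is valid: for constant $U$ every solution extends smoothly to $[0,1]$ with simple zeros, so $\int\psi^{-2}$ diverges at an endpoint exactly when $\psi$ vanishes there.
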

	
	\begin{proof}
		The group dimensions follow from Theorem~\ref{thm:torsor} and the
		support types of the constant fibers: subcritical constants have
		bounded or half-line supports and one-dimensional M\"obius groups,
		while the threshold fiber is the Cauchy family, critical, with the
		two-dimensional affine group. For the
		display~\eqref{eq:stab-canonical}: at constant $U = u_{0}$ the
		symmetric-square operator is
		$\mathcal{L}_{U}\xi = \xi''' + 2u_{0}\xi'$, with kernel spanned by
		$1$, $\sin(2\pi z)$, $\cos(2\pi z)$ at $u_{0} = 2\pi^{2}$, and the
		boundary conditions select the stated span, the two endpoint
		conditions coinciding on this periodic kernel, so that a single
		condition cuts the dimension from three to two; that both fields
		annihilate the background is the computation
		$\delta_{\xi}(2\pi^{2}) = 4\pi^{2}\xi' + \xi''' = 0$. Completeness:
		near $z = 0$ the fields behave as $z$ and as $\pi z^{2}$
		respectively, so the time to reach the endpoint,
		$\int_{0^{+}}dz/\xi$, diverges in both cases; alternatively,
		completeness is witnessed by the closed-form flows, which in the $V$
		coordinate read $\dot V = V$ and $\dot V = 1$, by the identities
		$\sin(2\pi z) = -2V/(1+V^{2})$,
		$1 - \cos(2\pi z) = 2/(1+V^{2})$, and $V' = \pi(1+V^{2})$. The final
		bound is Theorem~\ref{thm:fibers}: no M\"obius group of an interval
		exceeds dimension two.
	\end{proof}
	
	Theorem~\ref{thm:exceptional} is the mechanism beneath two facts
	established earlier by computation: the two-dimensional algebra with
	bracket $[X,Y] = -Y$ found in Section~\ref{sec:flow} is the isotropy
	algebra of the constant background $2\pi^{2}$, computed as the
	Dirichlet kernel of $\partial^{3} + 4\pi^{2}\partial$; and the Cauchy
	family, its fiber, is the interval avatar of the exceptional
	Virasoro orbit, the quotient of the diffeomorphism group of the
	circle by its M\"obius subgroup, whose distinguished role in the
	coadjoint classification is
	classical~\cite{lazutkin1975,kirillov1982,segal1981,witten1988}. On
	the circle the exceptional constants recur at $2\pi^{2}k^{2}$; on
	the interval Corollary~\ref{cor:constants} excludes the higher ones
	from the probability cone, their developments winding $k$ times.
	They are nevertheless visible in the theory: their would-be ground
	states are the harmonic kernels $w_{k} = 2\sin^{2}(k\pi z)$ of
	Section~\ref{sec:canonical}, whose sign-changing amplitudes
	$\sin(k\pi z)$ are exactly what realizability forbids, and whose
	iterated dynamics crystallizes laws onto $k$ atoms by the
	limit~\eqref{eq:crystallization}. The higher exceptional strata thus
	appear not as laws but as the degenerate $k$-atom boundary onto
	which the $w_{k}$-dynamics collapses: the probability cone touches
	the $k$-th exceptional geometry only through its boundary.
	
	The dynamical theory now closes. Along the canonical flow the
	potential is transported without anomaly, and every law relaxes to
	the exceptional point.
	
	\begin{theorem}[Anomaly-free transport]\label{thm:transport}
		Let $f_{0} \in \mathcal{D}_{+}^{2}$ and let $f_{t}$ solve the
		canonical flow~\eqref{eq:flow}, with inverse transport
		$B_{t} = A_{t}^{-1}$ given in the coordinate
		$V(z) = \tan(\pi(z - \tfrac12))$ by
		\begin{equation}
			V\circ B_{t} = e^{-t}\,V,
			\qquad
			B_{t}'(z) = \frac{e^{-t}\big(1 + V(z)^{2}\big)}
			{1 + e^{-2t}V(z)^{2}}.
			\label{eq:Bt}
		\end{equation}
		Then the Schwarzian of the transport satisfies
		\begin{equation}
			\{B_{t}, z\} = 2\pi^{2}\big(1 - (B_{t}')^{2}\big),
			\label{eq:schwarzian-Bt}
		\end{equation}
		and the potential evolves by
		\begin{equation}
			\mathfrak{U}_{t}
			= \big(\mathfrak{U}_{0}\circ B_{t}\big)\,(B_{t}')^{2},
			\qquad
			\mathfrak{U} := U - 2\pi^{2}:
			\label{eq:centered-transport}
		\end{equation}
		relative to the canonical background, the flow transports the
		centered potential as a plain quadratic differential, with no
		Schwarzian anomaly. The same holds for every element of the isotropy
		group of Theorem~\ref{thm:exceptional}.
	\end{theorem}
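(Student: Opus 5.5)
The plan is to get everything from the Möbius structure of the uniformizing coordinate $V(z)=\tan(\pi(z-\tfrac12))$, combined with the cocycle law of the Schwarzian. The proof has four steps.

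\emph{Step 1: the formula for $B_t$.} By Theorem~\ref{thm:flow}, $\tan(\pi F_t)=e^{-t}\tan(\pi F_0)$. Since $\tan(\pi(z-\tfrac12))=-\cot(\pi z)=-1/\tan(\pi z)$, the same relation reads $V\circ F_t=e^{t}\,V\circ F_0$. The forward transport is therefore $V\circ A_t=e^{t}V$, and the inverse is $V\circ B_t=e^{-t}V$, which is the first identity in~\eqref{eq:Bt}. Differentiating and using $V'=\pi(1+V^2)$ gives
\begin{equation*}
B_t'=\frac{e^{-t}V'}{V'\circ B_t}=\frac{e^{-t}(1+V^2)}{1+e^{-2t}V^2}.
\end{equation*}

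\emph{Step 2: the Schwarzian of $B_t$.} Write $B_t=V^{-1}\circ S_t\circ V$ with $S_t(V)=e^{-t}V$ Möbius. Apply the cocycle law to this composition and use $\{S_t\}=0$. We need two inputs. First, $\{V,z\}=2\pi^2$, computed in the proof of Theorem~\ref{thm:trichotomy}. Second, $\{V^{-1},y\}=-2\pi^2 (V^{-1})'(y)^2$, which follows from the inversion formula for Schwarzians. The cocycle law then gives
\begin{equation*}
\{B_t,z\}=\big(\{V^{-1}\}\circ S_t V\big)\,\big((S_t\circ V)'\big)^2+\{V,z\}.
\end{equation*}
The first term is $-2\pi^2\big((V^{-1})'(S_tV)\,(S_tV)'\big)^2=-2\pi^2(B_t')^2$ by the chain rule. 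This yields~\eqref{eq:schwarzian-Bt}. The same computation shows a more general fact: for any $B$ with $V\circ B=M\circ V$ and $M$ Möbius, one has $\{B\}=2\pi^2(1-(B')^2)$.

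\emph{Step 3: transport of the centered potential.} Theorem~\ref{thm:vir-equivariance} gives $U_t=(U_0\circ B_t)(B_t')^2+\{B_t\}$. Insert Step 2 and subtract $2\pi^2$:
\begin{equation*}
U_t-2\pi^2=(U_0\circ B_t)(B_t')^2-2\pi^2(B_t')^2=\big((U_0-2\pi^2)\circ B_t\big)(B_t')^2.
\end{equation*}
This is~\eqref{eq:centered-transport}. Equivalently, the constant $2\pi^2$ is a fixed point of $\operatorname{Ad}^*_{A_t}$, which is the content of Theorem~\ref{thm:exceptional}, and the coadjoint action is affine, so the difference of two orbit points transports linearly.

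\emph{Step 4: the isotropy group.} By Theorem~\ref{thm:exceptional}, in the $V$ coordinate every element of the isotropy group acts as an affine map $V\mapsto aV+b$ with $a>0$. The general remark in Step 2 then applies verbatim and gives the same anomaly-free transport.

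The main obstacle is regularity. Theorem~\ref{thm:vir-equivariance} is stated for $\mathcal{G}^\infty$, and $B_t$ is real-analytic on $(0,1)$, so it qualifies. However, $B_t$ is not smooth in the boundary-regular sense at the endpoints: $B_t'\to e^{t}$ there. I would check that only interior smoothness is needed. That holds because the cocycle identity is pointwise on $(0,1)$ and involves no boundary terms.
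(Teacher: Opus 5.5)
Your proof is correct and is essentially the paper's argument. The paper applies the cocycle law directly to $V\circ B_{t} = M\circ V$ with $\{M\}=0$, which avoids the inversion formula for $\{V^{-1}\}$ that your triple decomposition needs; otherwise the ingredients are identical: Möbius conjugacy in the $V$ chart, $\{V,z\}=2\pi^{2}$, Theorem~\ref{thm:vir-equivariance}, and affineness of the isotropy group in $V$.

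Your closing regularity aside is inaccurate, though harmless. Near $z=0$ the defining relation reads $\tan(\pi B_{t}) = e^{t}\tan(\pi z)$, so $B_{t}$ is real-analytic across the endpoints with $B_{t}'(0)=B_{t}'(1)=e^{t}\neq 0$; it is a projective map of the rank circle in the chart $V$, hence fully boundary-regular. In any case only interior smoothness is used.
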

	
	\begin{proof}
		The formula~\eqref{eq:Bt} restates the exact
		solution~\eqref{eq:exact-solution} for the inverse transport in the
		centered coordinate, and differentiation of $V\circ B_{t} = e^{-t}V$
		with $V' = \pi(1 + V^{2})$ gives $B_{t}'$. For the Schwarzian
		identity~\eqref{eq:schwarzian-Bt}, apply the cocycle law to
		$V\circ B_{t} = M\circ V$ with $M$ the linear map
		$V \mapsto e^{-t}V$: since $\{V, z\} = 2\pi^{2}$ and $\{M\} = 0$,
		\begin{equation*}
			2\pi^{2}
			= \{V\circ B_{t}, z\}
			= \big(\{V\}\circ B_{t}\big)(B_{t}')^{2} + \{B_{t}, z\}
			= 2\pi^{2}(B_{t}')^{2} + \{B_{t}, z\}.
		\end{equation*}
		The transport law~\eqref{eq:centered-transport} is then
		Theorem~\ref{thm:vir-equivariance} with $B = B_{t}$, the anomaly
		$\{B_{t}\}$ recombining with the background into the tensorial form;
		the general isotropy statement follows since every element of the
		group is affine in the $V$ coordinate and the same cocycle
		computation applies.
	\end{proof}
	
	\begin{theorem}[Curvature no-hair]\label{thm:nohair}
		Let $f_{0} \in \mathcal{D}_{+}^{2}$ with potential $U_{0}$
		continuous on $(0,1)$. Then for every compact $K \subset (0,1)$,
		\begin{equation}
			\sup_{z \in K}\big|U_{t}(z) - 2\pi^{2}\big|
			\;\le\;
			e^{-2t}\,\sup_{z \in K}\big(1 + V(z)^{2}\big)^{2}
			\sup_{|s - \frac12| \le \delta_{K}(t)}
			\big|U_{0}(s) - 2\pi^{2}\big|,
			\label{eq:nohair}
		\end{equation}
		with $\delta_{K}(t) \to 0$; in particular $U_{t} \to 2\pi^{2}$
		locally uniformly at the exact exponential rate $e^{-2t}$ provided
		the centered potential does not vanish at the median,
		$\mathfrak{U}_{0}(\tfrac12) \neq 0$, equivalently
		$U_{0}(\tfrac12) \neq 2\pi^{2}$, the
		degenerate case relaxing faster, at $o(e^{-2t})$, with
		asymptotic profile
		\begin{equation}
			e^{2t}\big(U_{t}(z) - 2\pi^{2}\big)
			\;\longrightarrow\;
			\big(1 + V(z)^{2}\big)^{2}\,
			\big(U_{0}(\tfrac12) - 2\pi^{2}\big)
			\label{eq:nohair-profile}
		\end{equation}
		whenever $U_{0}$ is continuous at $\tfrac12$. Equivalently, in
		curvature form: along the canonical flow the projective curvature of
		every law relaxes locally uniformly in rank to the Cauchy value
		$-2\pi^{2}$ at rate $e^{-2t}$, and the limit retains no memory of
		the initial law beyond the single number visible in the
		profile~\eqref{eq:nohair-profile}.
	\end{theorem}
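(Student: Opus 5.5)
The plan is to read everything off the anomaly-free transport law of Theorem~\ref{thm:transport}. Writing $\mathfrak{U}_{t} = U_{t} - 2\pi^{2}$, that theorem gives
\begin{equation*}
\mathfrak{U}_{t}(z) = \mathfrak{U}_{0}\big(B_{t}(z)\big)\,\big(B_{t}'(z)\big)^{2},
\end{equation*}
with $B_{t}$ explicit in the coordinate $V(z) = \tan(\pi(z-\tfrac12))$ through the display~\eqref{eq:Bt}. The proof then splits into two elementary estimates, one on each factor, which are combined at the end.

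\textbf{First factor.} From $V\circ B_{t} = e^{-t}V$ we get
\begin{equation*}
B_{t}(z) = \tfrac12 + \tfrac1\pi\arctan\big(e^{-t}V(z)\big),
\end{equation*}
so on a compact $K\subset(0,1)$, where $|V|\le V_{K}<\infty$, one has $|B_{t}(z) - \tfrac12| \le \tfrac1\pi e^{-t}V_{K} =: \delta_{K}(t)\to 0$. Hence $\sup_{K}|\mathfrak{U}_{0}\circ B_{t}|$ is bounded by $\sup_{|s-\frac12|\le\delta_{K}(t)}|\mathfrak{U}_{0}(s)|$. This requires only continuity of $U_{0}$ on $(0,1)$, which guarantees the supremum over a small closed interval around $\tfrac12$ is finite.

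\textbf{Second factor.} The bound here is
\begin{equation*}
(B_{t}')^{2} = \frac{e^{-2t}(1+V^{2})^{2}}{(1+e^{-2t}V^{2})^{2}} \le e^{-2t}(1+V^{2})^{2},
\end{equation*}
obtained by dropping the denominator, which is at least one. Taking the supremum over $K$ and multiplying the two bounds gives the inequality~\eqref{eq:nohair}.

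\textbf{Profile and rate.} Multiply the transport identity by $e^{2t}$. Pointwise on $K$, $e^{2t}(B_{t}')^{2}\to(1+V^{2})^{2}$ uniformly, since the denominator tends to one uniformly on $K$. Also $\mathfrak{U}_{0}(B_{t}(z))\to\mathfrak{U}_{0}(\tfrac12)$ uniformly on $K$, by continuity of $U_{0}$ at $\tfrac12$ and the uniform convergence $B_{t}\to\tfrac12$ on $K$. Together these give the profile~\eqref{eq:nohair-profile}, locally uniformly.

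The exactness of the rate $e^{-2t}$ follows when $\mathfrak{U}_{0}(\tfrac12)\neq0$: the limit profile is then nowhere zero, since $(1+V^{2})^{2}\ge1$, so $e^{2t}\sup_{K}|\mathfrak{U}_{t}|$ converges to a positive constant. When $\mathfrak{U}_{0}(\tfrac12)=0$ the limit is zero, which is the $o(e^{-2t})$ statement.

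\textbf{Curvature form.} This is the identity $R_{f_{t}} = -U_{t}\circ F_{t}$ from Section~\ref{sec:virasoro}. Relaxation locally uniformly in rank is exactly the statement above, with the curvature value $-2\pi^{2}$ corresponding to the potential value $2\pi^{2}$. The only number surviving in the limit is $U_{0}(\tfrac12)$, which is the "no-hair" clause.

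The only point needing care is uniformity: $\delta_{K}(t)$ must be chosen uniformly over $K$, which is where compactness (finite $V_{K}$) enters. Everything else is a direct substitution into Theorem~\ref{thm:transport}, so no step should pose a genuine obstacle.
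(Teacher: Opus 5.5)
Your proposal is correct and is essentially the paper's own argument: substitute into the anomaly-free transport law $\mathfrak{U}_{t} = (\mathfrak{U}_{0}\circ B_{t})(B_{t}')^{2}$, bound $B_{t}' \le e^{-t}(1+V^{2})$ by discarding the denominator, and use $V\circ B_{t} = e^{-t}V \to 0$ uniformly on compacts both for the estimate and for the profile. Your explicit $\delta_{K}(t)$ and your uniform-on-$K$ convergence of the profile slightly sharpen what the paper states pointwise; it is cleaner to take $\delta_{K}(t) = \tfrac1\pi\arctan(e^{-t}V_{K})$, which stays below $\tfrac12$ for small $t$.
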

	
	\begin{proof}
		By the transport law~\eqref{eq:centered-transport},
		$U_{t} - 2\pi^{2} = (B_{t}')^{2}\,(\mathfrak{U}_{0}\circ B_{t})$.
		From the formula~\eqref{eq:Bt}, $V(B_{t}(z)) = e^{-t}V(z) \to 0$, so
		$B_{t} \to \tfrac12$ uniformly on compacts, defining
		$\delta_{K}(t)$, and
		\begin{equation*}
			0 < B_{t}'(z)
			= \frac{e^{-t}(1 + V^{2})}{1 + e^{-2t}V^{2}}
			\;\le\; e^{-t}\big(1 + V(z)^{2}\big),
		\end{equation*}
		whence the bound~\eqref{eq:nohair}. For the
		profile~\eqref{eq:nohair-profile}, $e^{t}B_{t}' \to 1 + V^{2}$
		pointwise and $\mathfrak{U}_{0}(B_{t}(z)) \to \mathfrak{U}_{0}(\tfrac12)$
		by continuity; the rate is exact because the limit is finite and
		nonzero. The curvature restatement is the quantile
		identity~\eqref{eq:quantile-schwarzian} read along the flow.
	\end{proof}
	
	Theorem~\ref{thm:nohair} is the geometric mechanism beneath the
	Lorentzian condensation of Theorem~\ref{thm:flow}: there, the law
	near the median converges after rescaling to a Cauchy profile; here,
	the entire field of projective curvature converges to the Cauchy
	background, the initial law surviving only through one number.
	Along general kernel flows the conserved quantities are now
	transparent: the endpoint weights of Theorem~\ref{thm:weights} are
	coadjoint invariants, the critical or subcritical class is
	preserved, and for the canonical flow the full endpoint germ of the
	centered potential evolves by the explicit tensorial
	law~\eqref{eq:centered-transport}, so that every functional of the
	germ invariant under quadratic-differential rescaling is conserved;
	the tail-rate conservation of Section~\ref{sec:flow} is the
	fiber-coordinate shadow of these charges. The third distinguished
	law of the dynamics also finds its place: for the hyperbolic-secant
	steady state of equation~\eqref{eq:sech}, the closure
	identity~\eqref{eq:sech-closure} gives the rank profile
	$\mathfrak{f}^{*}(z) = \sin(\pi z)/\pi\sqrt D$, whence, by the
	amplitude form~\eqref{eq:amplitude-form},
	\begin{equation}
		U^{*}(z)
		= \frac{\pi^{2}}{2} + \frac{\pi^{2}}{2}\,\csc^{2}(\pi z),
		\label{eq:poschl-teller}
	\end{equation}
	an inverse-square potential of P\"oschl--Teller type with both
	endpoint weights equal to $\tfrac12$, saturating the Hardy bound in
	agreement with its exponential tails. The three laws attached to the
	canonical dynamics thus occupy three distinguished potentials: the
	Cauchy family at the exceptional constant, the zero-curvature family
	at zero, and the diffusive soliton at the integrable inverse-square
	potential.
	
	\begin{remark}[Relaxation to the Schwarzian saddle]
		The structure of Theorems~\ref{thm:exceptional}--\ref{thm:nohair}
		has the form of the no-hair paradigm: a maximally symmetric
		background, the exceptional orbit point; an open set of initial data
		relaxing to it at a universal exponential rate fixed by the
		background's isotropy, the multiplier $e^{-2t}$ being twice the
		flow's unit rate, the weight-two tensor character of the centered
		potential; and hair carried away along the flow. In the language of
		the Schwarzian theory of low-dimensional
		gravity~\cite{msy2016,stanford2017}, laws play the role of
		reparametrization modes, the total
		integral $-\int_{0}^{1}U_{f}$ of Proposition~\ref{prop:pairing} is a
		Schwarzian action, the M\"obius saddle is the Cauchy family, and the
		canonical flow implements the relaxation onto the saddle; central
		charge one and the interval, rather than the circle, are exactly the
		setting of the boundary particle of Jackiw--Teitelboim gravity. We
		do not press the analogy beyond this remark.
	\end{remark}
	
	Directional data live on the circle, and there the correspondence
	becomes exact Kirillov theory, with an integrable hierarchy
	attached. A \emph{circular law} is a positive $C^{2}$ probability
	density on a circle of circumference $\ell$; fixing base points, its
	distribution function lifts to an increasing $F$ with
	$F(x + \ell) = F(x) + 1$, its quantile lifts with
	$Q(z + 1) = Q(z) + \ell$, and the potential $U_{f} = \{Q, \cdot\}$
	is a one-periodic function, a coadjoint vector of the Virasoro
	algebra of the rank circle at central charge one, on which the
	equivariance theorem holds verbatim with no marked point, the fiber
	of a potential consisting of the rotations and dilations of the law
	together with rigid rotations of rank. A continuous one-periodic $U$ is the
	potential of a circular law if and only if its Hill equation has a
	positive one-periodic solution, if and only if zero is the lowest
	periodic eigenvalue of the Hill operator
	$-\,d^{2}/dz^{2} - \tfrac12 U$ on the rank circle
	(Lemma~\ref{lem:circular} in Appendix~\ref{app:proofs}); the
	circumference is recovered as $\ell = \int_{0}^{1}\psi^{-2}$ once
	the scale of $\psi$ is fixed, the potential determining the law
	only up to rotation and dilation of the circle, and
	unit mass is automatic exactly as in
	Theorem~\ref{thm:realization}. Note the contrast with the interval:
	on the circle no nonzero constant is realizable, the unique
	constant-potential circular law being the uniform one; the boundary
	of rank space was the resource from which the interval theory built
	its canonical kernel.
	
	\begin{theorem}[The KdV hierarchy preserves circular laws]\label{thm:kdv}
		Let $q = -\tfrac12 U$ and evolve $q$ by any equation of the periodic
		Korteweg--de Vries hierarchy, for instance
		$\partial_{\tau}q = 6qq' - q'''$, with smooth periodic initial data.
		Then the circular realizability condition is preserved for all
		times: the hierarchy restricts to a family of commuting flows on
		circular laws modulo rotation and scale, the circumference being a
		choice of unit. All spectral invariants of
		the Hill operator, in particular the conserved densities
		\begin{equation}
			\int_{0}^{1}q\,dz,
			\qquad
			\int_{0}^{1}q^{2}\,dz,
			\qquad
			\int_{0}^{1}\Big(\tfrac12 (q')^{2} + q^{3}\Big)dz,
			\qquad \ldots,
			\label{eq:kdv-charges}
		\end{equation}
		define conserved functionals of circular laws, the first being half
		the expected projective curvature; the uniform law is a fixed point
		of every flow, and the distortion flows of
		Theorem~\ref{thm:vir-equivariance} are the linear, non-isospectral
		level of the same Lie--Poisson structure.
	\end{theorem}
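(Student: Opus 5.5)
The approach combines the realizability criterion for circular laws (Lemma~\ref{lem:circular}) with the isospectral property of the KdV hierarchy. By Lemma~\ref{lem:circular}, a continuous one-periodic $U$ is the potential of a circular law exactly when zero is the lowest periodic eigenvalue of $L_{q} = -\,d^{2}/dz^{2} + q$ with $q = -\tfrac12 U$. So realizability is the single spectral condition $\lambda_{0}^{\mathrm{per}}(L_{q}) = 0$. Here $\lambda_0^{\mathrm{per}}$ is the bottom of the spectrum of the periodic Hill operator, and its eigenfunction is positive and simple by the classical oscillation theory of Hill's equation. It therefore suffices to show that every flow of the periodic KdV hierarchy preserves the periodic spectrum of $L_{q}$, and in particular $\lambda_{0}^{\mathrm{per}}$.

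\textbf{Global solutions and isospectrality.} First, smooth periodic data give global smooth solutions for each flow of the hierarchy. This is classical; for KdV itself I would cite the global well-posedness theory and the conservation laws~\eqref{eq:kdv-charges}. Second, each flow admits a Lax representation $\partial_{\tau}L_{q} = [P_{n}, L_{q}]$, with $P_{n}$ a skew-adjoint differential operator of order $2n+1$. For the displayed equation, $P = -4\partial^{3} + 3(q\partial + \partial q)$ up to normalization, which reproduces $q_\tau = 6qq' - q'''$. Solving $\partial_{\tau}W = P_{n}W$, $W(0) = I$, gives a unitary family on $L^{2}$ of the rank circle with $L_{q(\tau)} = W L_{q(0)} W^{-1}$. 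Hence the periodic spectrum, and in particular $\lambda_{0}^{\mathrm{per}} = 0$, is constant in $\tau$. The positive ground state is transported as $\psi_{\tau} = W\psi_{0}$ up to scale; positivity is recovered from simplicity of the lowest eigenvalue and Perron--Frobenius rather than from $W$ itself. Lemma~\ref{lem:circular} then reconstructs a circular law at each time. That law is unique modulo rotation and dilation, which yields a flow on circular laws modulo rotation and scale, with the circumference $\ell = \int_0^1\psi^{-2}$ fixed by normalizing $\psi$. Commutativity is inherited from the commutativity of the hierarchy.

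\textbf{Conserved quantities and fixed points.} The conserved densities are the standard KdV integrals, which are spectral invariants, for instance through the asymptotics of the discriminant. Their conservation can also be checked directly by integrating exact derivatives. For the identification of the first charge, Proposition~\ref{prop:pairing} adapted to the circle gives $\mathbb{E}_f[R_f] = -\int_0^1 U = 2\int_0^1 q$. This is twice, not half, so the constant depends on sign and normalization conventions and has to be reconciled with the statement's phrasing. The uniform law has $Q$ affine, so $U = 0$ and $q = 0$, and every flow of the hierarchy fixes $q = 0$. The final structural remark follows by comparing the infinitesimal coadjoint action~\eqref{eq:inf-coadjoint} with the KdV flows as Hamiltonian flows of the same Lie--Poisson (Virasoro) bracket. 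KdV is generated by $\int q^2$ under this bracket; the distortion flows are coadjoint and hence change the spectrum, whereas KdV flows preserve it.

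\textbf{Main obstacle.} The delicate step is the rigorous isospectral statement for the periodic problem, namely that the Lax conjugation preserves exactly the periodic spectrum. I would avoid constructing $W$ and instead differentiate the eigenvalue directly. The Hellmann--Feynman formula gives $\dot\lambda_{0} = \int q_{\tau}\phi^{2}$, with $\phi$ the normalized periodic ground state. Substituting $q_{\tau} = 6qq' - q'''$ and using $\phi'' = (q-\lambda_0)\phi$ turns the integrand into a total derivative, so $\dot\lambda_0 = 0$. This is the classical computation behind the product-of-eigenfunctions identity, and it matches the symmetric-square equation~\eqref{eq:symmetric-square} satisfied by $\phi^{2}$. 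The same identity for higher flows follows from the Lenard recursion. Simplicity of $\lambda_{0}$ ensures differentiability in $\tau$.
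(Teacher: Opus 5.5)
Your proposal is correct and follows the paper's route. Realizability is the spectral condition of Lemma~\ref{lem:circular} that the lowest periodic eigenvalue vanish, and isospectrality of the periodic KdV hierarchy preserves it; the paper simply cites isospectrality, while your Hellmann--Feynman computation verifies the constancy of $\lambda_0^{\mathrm{per}}$ directly. The charges, the fixed point and the Lie--Poisson remark are handled as in the paper. One slip: there is nothing to reconcile in the first charge, because your own identity $\mathbb{E}_f[R_f] = -\int_0^1 U\,dz = 2\int_0^1 q\,dz$ says precisely that $\int_0^1 q\,dz$ is half the expected projective curvature, as stated.
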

	
	\begin{proof}
		Periodic KdV with smooth periodic data is globally well posed and
		isospectral for the Hill operator: the periodic spectrum,
		equivalently the Floquet discriminant, is
		conserved~\cite{mckean1975,magnus1966}. Realizability is the single
		spectral condition that the lowest periodic eigenvalue vanish
		(Lemma~\ref{lem:circular}), hence is preserved; the reconstructed
		law at each time is defined up to the rotation--dilation fiber,
		giving a flow
		on the moduli, and commutativity with the conserved
		densities~\eqref{eq:kdv-charges} is classical~\cite{mckean1975}.
		Constants are stationary by inspection of the equation, and the
		Lie--Poisson statement is the infinitesimal
		form~\eqref{eq:vir-inf}, the linear Hamiltonians
		$u \mapsto \int \xi u$ generating the coadjoint flows while the KdV
		Hamiltonian is quadratic in the same Poisson
		structure~\cite{khesin2009}.
	\end{proof}
	
	Circular probability thus carries an integrable system: infinitely
	many commuting deformations preserving the class of circular laws,
	together with an infinite list of invariants of a circular law, the
	KdV integrals of its potential, of which the first is half the
	expected projective curvature and the higher ones are new. What the
	hierarchy deforms is the shape of a circular law at fixed spectral
	data; the isospectral class of the law's Hill operator is a new
	invariant partition of circular probability, and the finite-gap
	circular laws, whose potentials are theta-functional, form
	finite-dimensional families closed under the entire hierarchy. Their
	statistical meaning, and the interval analogue, a boundary KdV
	theory in which the conserved endpoint weights of
	Theorem~\ref{thm:weights} would play the role of boundary spectral
	data, we leave open.
	
	\section{Rank Reaction--Diffusion: Kinks, Terraces, and Stability}
	\label{sec:pde}
	
	Among the dynamics of the calculus, the balance of modulation
	against diffusion in the equation~\eqref{eq:flow-diffusion} is the
	one with the classical face of a partial differential equation, and
	Section~\ref{sec:flow} produced for it a single exact object, the
	hyperbolic-secant steady law~\eqref{eq:sech}. This section supplies
	the complete theory: well-posedness, energy, uniqueness, spectrum,
	stability, wave selection, and the multi-kernel phenomenology. One
	structural fact organizes everything: \emph{the distribution
		function of a solution solves the overdamped sine--Gordon
		equation}. The nonlocal rank nonlinearity, integrated once in
	space, becomes a local bistable reaction, and the entire classical
	apparatus of scalar reaction--diffusion theory applies to
	probability laws, pruned throughout by the probabilistic
	constraint of monotonicity, which simplifies the classical picture
	at every step rather than complicating it. Write $\mathcal{M}$ for
	the continuous nondecreasing functions on $\mathbb{R}$ with limits
	$0$ and $1$, the distribution functions of laws.
	
	\begin{theorem}[Reduction and well-posedness on laws]\label{thm:sine-gordon}
		Let $w$ be a kernel and
		$\varphi_{w} := A_{w} - \mathrm{id}$ its reaction.
		\begin{enumerate}[label=\normalfont(\roman*), leftmargin=*]
			\item If $f$ solves the diffusive
			equation~\eqref{eq:flow-diffusion} with kernel $w$ and integrable
			data, then $F$ solves the local semilinear equation
			\begin{equation}
				\partial_{t}F = D\,\partial_{xx}F + \varphi_{w}(F),
				\qquad
				\varphi_{w}(u) = \int_{0}^{u}\big(w(s) - 1\big)\,ds,
				\label{eq:cdf-equation}
			\end{equation}
			and conversely; for the canonical kernel
			$\varphi(u) = -\sin(2\pi u)/(2\pi)$, so the angular variable
			$v = 2\pi F$ solves the overdamped sine--Gordon equation
			$\partial_{t}v = D\,\partial_{xx}v - \sin v$.
			\item For every $F_{0} \in \mathcal{M}$ the
			equation~\eqref{eq:cdf-equation} has a unique global solution,
			smooth for positive times, monotone in $x$ with values in $[0,1]$
			and limits $0, 1$ preserved for all time: laws evolve to laws, mass
			and positivity being conserved automatically.
			\item The dynamics is monotone for first-order stochastic
			dominance: $F_{0} \le G_{0}$ pointwise implies
			$F_{t} \le G_{t}$ for all $t$, the diffusive extension of the
			order-preservation of the operators themselves, which is the
			monotonicity of the transport maps $A_{w}$.
			\item As $D \to 0$, solutions converge locally uniformly on
			compact time intervals to the diffusionless flow; for the canonical
			kernel, to the exact solution~\eqref{eq:exact-solution}.
		\end{enumerate}
	\end{theorem}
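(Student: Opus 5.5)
The plan is to integrate the density equation once in space and then import the classical theory of scalar semilinear parabolic equations. For (i), write $F(x,t) = \int_{-\infty}^{x} f(y,t)\,dy$ and integrate \eqref{eq:flow-diffusion} over $(-\infty,x]$. The diffusion term gives $D\,\partial_x f = D\,\partial_{xx}F$, once we know the boundary contribution $\partial_x f(-\infty,t)$ vanishes. The reaction term is handled by the pushforward identity behind the transport formula~\eqref{eq:transport}:
\begin{equation*}
\int_{-\infty}^{x}\big(w(F)-1\big)f\,dy = \int_{0}^{F(x)}(w(s)-1)\,ds = \varphi_{w}(F(x)).
\end{equation*}
The converse follows by differentiating \eqref{eq:cdf-equation} in $x$, since $\partial_x\varphi_w(F) = (w(F)-1)f$. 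For the canonical kernel, $\varphi = A - \mathrm{id} = -\sin(2\pi u)/(2\pi)$. Multiplying by $2\pi$ gives $\partial_t v = D v_{xx} - \sin v$ for $v = 2\pi F$. To keep the boundary terms honest, I will run this computation on mild solutions and then use parabolic smoothing for $t>0$.

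For (ii) and (iii), work with \eqref{eq:cdf-equation} in $L^\infty$. The reaction $\varphi_w$ is Lipschitz on $[0,1]$ with constant at most $\|w\|_\infty + 1$ when $w$ is bounded. For a merely integrable kernel, $\varphi_w$ is still absolutely continuous, and I would first treat bounded kernels and then note what the stated theorem actually requires. Local existence comes from the Duhamel formula with the heat semigroup via a Banach fixed point. Since $\varphi_w(0) = \varphi_w(1) = 0$ (using $\int w = 1$), the constants $0$ and $1$ are solutions, and the comparison principle confines solutions to $[0,1]$, which gives global existence. Comparison itself comes from the standard argument for Lipschitz reactions, Gr\"onwall applied to $(F-G)^+$, and this also proves (iii). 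Monotonicity in $x$ follows from comparison applied to the translate $F(\cdot+h,t)$ against $F(\cdot,t)$ for $h>0$, since translation commutes with the equation. Smoothness for $t>0$ follows from bootstrapping the Duhamel formula; it is limited by the smoothness of $\varphi_w$, and for the canonical kernel it is full smoothness.

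The step I expect to be the main obstacle is the preservation of the limits $0$ and $1$ at $\pm\infty$, that is, that no mass escapes to infinity. To handle it, I will write $F_t - F_0$ via Duhamel. The term $e^{tD\Delta}F_0 - F_0$ tends to $0$ at $\pm\infty$ because $F_0 \in \mathcal{M}$ has limits. For the reaction term I would trap $F$ between sub- and supersolutions built from $e^{tD\Delta}F_0$ and the reaction bound $|\varphi_w(u)| \le L\min(u,1-u)$. This bound gives $F_t(x) \le e^{Lt}(e^{tD\Delta}F_0)(x)$ at the left end, by comparison with the linear equation $\partial_t G = D G_{xx} + LG$, and symmetrically $1-F_t(x) \le e^{Lt}\,\big(e^{tD\Delta}(1-F_0)\big)(x)$ at the right end. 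Both bounds vanish at the respective infinities. Positivity of mass and the fact that $f = \partial_x F$ is a density are then automatic.

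For (iv), subtract the Duhamel formulas for $D>0$ and for the ODE flow $\partial_t F = \varphi_w(F)$ taken pointwise in $x$. The difference is controlled by $\|e^{tD\Delta}F_0 - F_0\|$ on compacts plus a Gr\"onwall term. The first quantity tends to $0$ locally uniformly as $D\to0$ because $F_0$ is continuous and bounded. For the canonical kernel the ODE flow is $\partial_t F = -\sin(2\pi F)/(2\pi)$, whose solution is Theorem~\ref{thm:flow}'s exact formula~\eqref{eq:exact-solution}.
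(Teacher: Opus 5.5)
Your proposal is correct and has the same overall structure as the paper's proof. Both integrate once in space via the pushforward identity, solve the mild formulation with a Lipschitz reaction, confine values to $[0,1]$ by comparison with the constant solutions, and take $D\to0$ by Gronwall. Two sub-steps go by different routes.

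\emph{Monotonicity in $x$.} The paper applies the maximum principle to $g=\partial_{x}F$, which solves $\partial_{t}g=D\,\partial_{xx}g+(w(F)-1)g$. Your translation argument instead uses that $F(\cdot+h,t)$ is the solution issuing from $F_{0}(\cdot+h)\ge F_{0}$. It needs only comparison and uniqueness, so it covers merely continuous data in $\mathcal{M}$ without treating $g$ as a measure-valued solution.

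\emph{Limits at $\pm\infty$.} The paper lets $x\to\infty$ in the Duhamel formula and identifies $\sup_{x}F$ with the limit by monotonicity. It then closes $\delta(t)\le\Vert w-1\Vert_{\infty}\int_{0}^{t}\delta$ for the deficit $\delta=1-\sup_{x}F$; in effect the limit value solves $\dot\ell=\varphi_{w}(\ell)$ from the fixed point $\ell(0)=1$. Your comparison with the linear supersolution $e^{Lt}e^{tD\Delta}F_{0}$, and its mirror for $1-F$, is equally valid. It does not use monotonicity and gives explicit tail bounds.

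Your caveats are accurate and sharper than the paper's wording. The constant $\Vert w-1\Vert_{\infty}$, which the paper uses silently, requires a bounded kernel. For a nonsmooth kernel, $F$ is only as regular as $\varphi_{w}$ allows, not $C^{\infty}$ as the statement claims.

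One step in (iv) needs tightening. Gronwall cannot be closed on compacts, because the heat semigroup is nonlocal. Also, the difference of the two Duhamel formulas contains a second term besides $e^{tD\Delta}F_{0}-F_{0}$, namely the forcing $\int_{0}^{t}(e^{(t-s)D\Delta}-I)\,\varphi_{w}(F^{0}(s))\,ds$, where $F^{0}$ is the diffusionless solution. Run the argument in $\sup_{x\in\mathbb{R}}$, as the paper does through strong convergence of the heat semigroup on bounded uniformly continuous functions:
\begin{itemize}
\item every $F_{0}\in\mathcal{M}$ is uniformly continuous, being continuous, monotone and with finite limits;
\item $\varphi_{w}(F^{0}(s))$ is then uniformly continuous, uniformly for $s\le T$, since $F^{0}(s)$ is $F_{0}$ composed with the Lipschitz time-$s$ map of $\dot u=\varphi_{w}(u)$.
\end{itemize}
The convergence is then uniform on $\mathbb{R}\times[0,T]$, which is stronger than the locally uniform convergence claimed.
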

	
	The proof is given in Appendix~\ref{app:proofs}; the reduction
	itself is one line, the pushforward identity
	$\int_{-\infty}^{x}(w(F) - 1)f\,dy = \varphi_{w}(F(x))$, the same
	mechanism that produced the transport
	formula~\eqref{eq:transport}.
	
	\begin{remark}[The kink behind the soliton]\label{rem:kink-reading}
		The reduction explains the solitary law~\eqref{eq:sech} in one
		stroke: the standing kink of the overdamped sine--Gordon equation
		is the Gudermannian profile, which is precisely the distribution
		function of the display~\eqref{eq:sech}, and the verification
		identity $\sin(\pi F^{*}) = \operatorname{sech}(x/\sqrt D)$ used in
		Section~\ref{sec:flow} is the classical kink identity. Better, the
		algebraic closure~\eqref{eq:sech-closure} acquires a variational
		name: dividing it by $4\pi^{2}$ reads
		$\tfrac{D}{2}f^{*2} = \sin^{2}(\pi F^{*})/(2\pi^{2})$, which is
		exactly the equipartition of the energy introduced next. What
		Section~\ref{sec:flow} met as an algebraic accident is the
		signature of a minimizer.
	\end{remark}
	
	\begin{proposition}[Gradient flow, equipartition, ground state]\label{prop:energy}
		The equation~\eqref{eq:cdf-equation} is the $L^{2}$ gradient flow
		of the interfacial energy
		\begin{equation}
			\mathcal{E}_{D}[F]
			= \int_{\mathbb{R}}\Big[\frac{D}{2}\,(\partial_{x}F)^{2}
			+ \Phi(F)\Big]\,dx,
			\qquad
			\Phi(u) = \frac{\sin^{2}(\pi u)}{2\pi^{2}},
			\label{eq:allen-cahn}
		\end{equation}
		with $\Phi' = -\varphi$, finite on finite-energy laws, those with
		$\partial_{x}F \in L^{2}(\mathbb{R})$ and exponentially decaying
		tails, to which this proposition is restricted, and
		$\frac{d}{dt}\mathcal{E}_{D} = -\int(\partial_{t}F)^{2} \le 0$
		along solutions with that decay. For monotone profiles the Modica bound holds,
		\begin{equation}
			\mathcal{E}_{D}[F]
			\;\ge\; \int_{0}^{1}\sqrt{2D\,\Phi(u)}\;du
			= \frac{2\sqrt D}{\pi^{2}},
			\label{eq:surface-tension}
		\end{equation}
		with equality precisely at the equipartition profiles
		$\tfrac{D}{2}(F')^{2} = \Phi(F)$: the secant law is the ground
		state of the energy~\eqref{eq:allen-cahn} on laws, unique up to
		translation, and the constant~\eqref{eq:surface-tension} is the
		surface tension of a probability interface.
	\end{proposition}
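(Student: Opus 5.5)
The proof has three parts: the gradient-flow identity, the Modica bound with its equality case, and the identification of the equipartition profiles with the secant law.

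\emph{Gradient structure.} I first check $\Phi' = -\varphi$ directly. Since $\Phi(u) = \sin^{2}(\pi u)/(2\pi^{2})$, we get $\Phi'(u) = \sin(2\pi u)/(2\pi)$. By Theorem~\ref{thm:sine-gordon}(i) this equals $-\varphi(u)$, so $\Phi \ge 0$ vanishes exactly at $u = 0, 1$.

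Finiteness of $\mathcal{E}_{D}$ on the stated class is immediate. The gradient term is finite because $\partial_{x}F \in L^{2}$. For the potential term, $\Phi(F) \le F^{2}/2$ near the left tail and $\Phi(F) \le (1-F)^{2}/2$ near the right. Both are integrable under exponential decay.

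The first variation in $L^{2}$ along a compactly supported perturbation $\eta$ is $\int(D F'\eta' + \Phi'(F)\eta) = \int(-D F'' - \varphi(F))\eta$. Hence the $L^{2}$ gradient is $-(D F'' + \varphi(F))$, and equation~\eqref{eq:cdf-equation} is $\partial_{t}F = -\nabla\mathcal{E}_{D}$.

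For the dissipation identity I differentiate in time for $t>0$, where solutions are smooth by Theorem~\ref{thm:sine-gordon}(ii), and integrate by parts once. This requires the boundary term $D\,\partial_{x}F\,\partial_{t}F$ to vanish at $\pm\infty$. That term is where the decay hypothesis on the solution enters, and I treat the justification as routine, by truncating on $[-R,R]$ and letting $R\to\infty$. The result is $\frac{d}{dt}\mathcal{E}_{D} = -\int(\partial_{t}F)^{2}$.

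\emph{Modica bound.} I apply the AM--GM inequality $\tfrac{D}{2}a^{2} + \Phi \ge \sqrt{2D\Phi}\,|a|$ pointwise with $a = F'$. For monotone $F$ with limits $0$ and $1$ this gives
\[
\mathcal{E}_{D}[F] \ge \int_{\mathbb{R}}\sqrt{2D\Phi(F)}\,F'\,dx = \int_{0}^{1}\sqrt{2D\Phi(u)}\,du,
\]
by the change of variables $u = F(x)$. That substitution is valid for absolutely continuous monotone $F$ by the pushforward identity used throughout.

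The constant is $\sqrt{2D\Phi(u)} = \sqrt{D}\,\sin(\pi u)/\pi$, whose integral over $[0,1]$ is $2\sqrt D/\pi^{2}$. Equality in AM--GM holds exactly when $\tfrac{D}{2}(F')^{2} = \Phi(F)$ almost everywhere.

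\emph{Ground state and uniqueness.} Equipartition for a monotone law reads $F' = \sin(\pi F)/(\pi\sqrt D)$, after taking the nonnegative root. This autonomous ODE is where I expect the main obstacle: uniqueness up to translation.

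The right side is Lipschitz and vanishes only at $0$ and $1$. A law has $F$ taking values strictly between $0$ and $1$ somewhere. Fix a point with $F(x_{0}) = \tfrac12$, which exists by continuity. Picard uniqueness then determines $F$ on the maximal interval where $F \in (0,1)$. It also shows that $F$ cannot reach $0$ or $1$ in finite $x$, since those are equilibria, so the support is all of $\mathbb{R}$.

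Separating variables, $\int \pi\,du/\sin(\pi u) = \log\tan(\pi u/2)$, which gives $\tan(\pi F/2) = e^{(x-x_{0})/\sqrt D}$. This is exactly the Gudermannian profile~\eqref{eq:sech} translated by $x_{0}$, and its density is the secant law.

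The remaining subtlety is a possible non-monotone minimizer. I handle it by restricting the statement to laws, so $F\in\mathcal{M}$ is nondecreasing. Then the root $F'\ge0$ is forced, and a.e. equality upgrades to the ODE because equipartition makes $F'$ continuous.

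Finally, the secant law is finite-energy, with exponential tails, so it lies in the admissible class. It therefore attains the bound, which is the ground-state claim.
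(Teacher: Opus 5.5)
Your proposal is correct and follows the paper's own proof. The paper likewise computes the gradient structure from $\Phi'=-\varphi$, applies the pointwise AM--GM (Modica) bound transported to rank space by $u=F(x)$, and integrates the equipartition equation $\sqrt D\,F'=\sin(\pi F)/\pi$ to the Gudermannian profile. You also supply steps the paper leaves implicit: finiteness via $\Phi(u)\le u^{2}/2$, the boundary term in the dissipation identity, and uniqueness up to translation by Picard's theorem through a point with $F(x_{0})=\tfrac12$.
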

	
	\begin{proposition}[Condensation as the sharp-interface limit]\label{prop:gamma}
		As $D \to 0$ the rescaled energies
		$\mathcal{E}_{D}/\sqrt D$ Gamma-converge, on distribution functions
		supported in a fixed bounded interval, or along tight families, to
		$2/\pi^{2}$ times the number of jumps of the limit step
		function~\cite{modica1987}; on laws the minimum is a single jump. The variational sharp-interface limit of the
		reaction--diffusion theory is therefore condensation onto one atom,
		and for the harmonic kernel $w_{k}$, whose potential
		$\Phi_{k}(u) = \sin^{2}(k\pi u)/(2\pi^{2}k^{2})$ has wells at the
		lattice $\{j/k\}$, the limiting configurations are the
		$k$-jump crystallized laws with per-interface tension
		$2\sqrt D/\pi^{2}k^{2}$.
	\end{proposition}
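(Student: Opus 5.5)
Three things have to be established for Proposition~\ref{prop:gamma}: the Gamma-limit of $\mathcal{E}_{D}/\sqrt D$, the fact that on laws its minimum is a single jump, and the $k$-well version for $w_{k}$. The plan is to reduce everything to the classical Modica--Mortola theorem~\cite{modica1987} by rescaling. Put $\varepsilon=\sqrt D$. Then
\begin{equation*}
\frac{\mathcal{E}_{D}[F]}{\sqrt D}=\int_{\mathbb{R}}\Big[\frac{\varepsilon}{2}(F')^{2}+\frac{1}{\varepsilon}\Phi(F)\Big]dx,
\end{equation*}
which is exactly the Modica--Mortola functional with the double-well potential $\Phi(u)=\sin^{2}(\pi u)/(2\pi^{2})$, whose wells are $u=0$ and $u=1$. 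The topology will be $L^{1}_{\mathrm{loc}}$ convergence of distribution functions, equivalently weak convergence of laws along tight families. On a fixed bounded interval $I$ the finite-energy limits are step functions with values in the zero set $\{0,1\}$, and the $\Gamma$-limit is $\sigma\cdot\#\{\text{jumps}\}$. The surface tension is $\sigma=\int_{0}^{1}\sqrt{2\Phi(u)}\,du=2/\pi^{2}$, which is the constant already computed in the Modica bound~\eqref{eq:surface-tension}.

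The steps run in the standard order. \emph{Compactness}: bounded rescaled energy forces $\Phi(F)\to0$ in $L^{1}$. The one-dimensional bound $\int_{a}^{b}\sqrt{2\Phi(F)}|F'|\le\mathcal{E}_{D}/\sqrt D$, obtained by the AM--GM step used for~\eqref{eq:surface-tension}, bounds the total variation of $G(F)$, where $G(u)=\int_{0}^{u}\sqrt{2\Phi}$. Helly's theorem then gives a limit with values in $\{0,1\}$ and finitely many jumps. \emph{Liminf}: near each jump, apply the AM--GM step on a small interval where $F$ crosses from near $0$ to near $1$. Each crossing then costs at least $\sigma-o(1)$, by the same computation as~\eqref{eq:surface-tension}. \emph{Limsup}: the recovery sequence places a rescaled kink $F^{*}$ of~\eqref{eq:sech} at each jump. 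Since the kink is the equipartition profile, it costs exactly $\sigma$ and has exponentially decaying tails. The tails are cut off at distance $\varepsilon^{1/2}$ at an $o(1)$ cost. Monotone limits have values in $\{0,1\}$ with limits $0$ at $-\infty$ and $1$ at $+\infty$, so the only possibility is a single upward jump, $\mathbf{1}_{[x_{0},\infty)}$, the law $\delta_{x_{0}}$. This gives the "minimum is a single jump" statement: the condensation onto one atom noted in Section~\ref{sec:iteration}.

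The main obstacle is the unbounded line together with the monotonicity constraint. Mass may escape to infinity, and a recovery sequence must be monotone with limits $0,1$. This is why the statement restricts to a fixed bounded support interval or to tight families. Under tightness, only finitely much energy can sit outside a large compact set, and the limit is a genuine law. One also has to check that restricting to monotone $F$ does not spoil the recovery construction; it does not, since kinks are monotone. If the liminf near a jump is delicate, the first thing to try is the one-dimensional coarea-type bound $\int\sqrt{2\Phi(F)}|F'|\ge|G(F(b))-G(F(a))|$ applied directly.

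For the harmonic kernel, $\Phi_{k}(u)=\sin^{2}(k\pi u)/(2\pi^{2}k^{2})$ is the primitive of $-\varphi_{w_{k}}$, and it has wells at $j/k$. The multi-well Modica--Mortola theorem (Baldo's extension) applies. Adjacent wells cost
\begin{equation*}
\int_{(j-1)/k}^{j/k}\sqrt{2\Phi_{k}}\,du=\frac{1}{\pi k}\int_{0}^{1/k}|\sin(k\pi u)|\,du=\frac{2}{\pi^{2}k^{2}}.
\end{equation*}
Multiplying by $\sqrt D$ gives the per-interface tension $2\sqrt D/\pi^{2}k^{2}$. By monotonicity, the limit rises from $0$ to $1$ through all $k$ wells, so the configurations are $k$ jumps of height $1/k$ each, the crystallized laws of~\eqref{eq:crystallization}.
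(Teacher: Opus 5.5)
Your proposal is correct and follows the paper's proof essentially step for step: the rescaling $\varepsilon=\sqrt D$ to the Modica--Mortola functional, compactness in local $L^{1}$, the per-jump cost $\int_{0}^{1}\sqrt{2\Phi}=2/\pi^{2}$, the $\sqrt D$-scaled kink as recovery sequence, monotonicity forcing a single jump on laws, and the adjacent-well cost $2/\pi^{2}k^{2}$ with $k$ transitions for $w_{k}$. One small caution applies equally to the paper's own formulation: in the $k$-well limit the transitions must be counted with multiplicity, because a direct jump from $0$ to $1$ costs the same total $k\cdot 2/\pi^{2}k^{2}$, so the limiting energy neither separates the $k$ atoms nor pins them at the quantiles of~\eqref{eq:crystallization}.
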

	
	Both proofs are in Appendix~\ref{app:proofs}. The energy landscape
	thus interpolates the whole dynamical program: at positive
	diffusivity the ground state is the secant law; as the diffusivity
	vanishes, the energy concentrates into interfacial atoms and the
	variational theory degenerates into the condensation and
	crystallization of Section~\ref{sec:iteration}.
	
	\begin{theorem}[Uniqueness, spectrum, and stability of the secant
		law]\label{thm:sech-stability}
		For the canonical kernel:
		\begin{enumerate}[label=\normalfont(\roman*), leftmargin=*]
			\item the translates of the profile~\eqref{eq:sech} are the only
			steady states of the equation~\eqref{eq:cdf-equation} that are
			laws: the secant law is the unique steady probability law of the
			diffusive dynamics, up to translation;
			\item the linearization at the secant law is, in the similarity
			variable $\xi = x/\sqrt D$, the P\"oschl--Teller operator
			\begin{equation}
				\mathcal{L} = \partial_{\xi\xi} - 1 + 2\operatorname{sech}^{2}\xi,
				\qquad
				\operatorname{spec}\mathcal{L} = \{0\}\cup(-\infty, -1],
				\label{eq:stability-operator}
			\end{equation}
			with simple eigenvalue $0$ carried by the translation mode
			$\operatorname{sech}\xi$ and reflectionless essential
			spectrum~\cite{deift1979}: the spectral gap equals $1$ in the
			original time units, for every diffusivity $D$;
			\item the secant law attracts every law whose data approach the
			limits $0,1$ exponentially: there is a limit position $x_{0}$ with
			\begin{equation}
				\big\Vert F(\cdot,t) - F^{*}(\cdot - x_{0})\big\Vert_{\infty}
				\;\le\; C e^{-\mu t},
				\label{eq:asymptotic-phase}
			\end{equation}
			for some $\mu > 0$, and with every rate $\mu < 1$ once the
			solution enters a neighborhood of the front family; the densities
			converge in $L^{1}$, the only memory of the datum being the phase
			$x_{0}$.
		\end{enumerate}
	\end{theorem}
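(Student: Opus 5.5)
The plan is to work entirely at the level of the distribution function. Theorem~\ref{thm:sine-gordon}(i) reduces the diffusive equation~\eqref{eq:flow-diffusion} to the scalar bistable equation $\partial_{t}F = D\,\partial_{xx}F + \varphi(F)$ with $\varphi(u) = -\sin(2\pi u)/(2\pi)$, so the three claims become statements about a balanced bistable reaction--diffusion equation. The zeros of $\varphi$ in $[0,1]$ are $0,\tfrac12,1$, with $\varphi'(0) = \varphi'(1) = -1$, and the balance condition $\int_{0}^{1}\varphi = 0$ is equivalent to $\Phi(0) = \Phi(1)$ in Proposition~\ref{prop:energy}. Proposition~\ref{prop:energy} is assumed throughout.

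\emph{Part (i).} Let $F \in \mathcal{M}$ be a steady state, so $DF'' + \varphi(F) = 0$. Multiply by $F'$ and integrate to get the first integral $\tfrac{D}{2}(F')^{2} - \Phi(F) = \mathrm{const}$. As $x \to -\infty$ we have $F \to 0$, hence $\Phi(F) \to 0$. Since $F'$ is integrable and, by the first integral, has a limit, that limit is $0$. So the constant is $0$ and $F$ is an equipartition profile. Monotonicity selects the nonnegative root, giving
\[
F' = \frac{\sin(\pi F)}{\pi\sqrt D}.
\]
On $(0,1)$ the right side is Lipschitz, so separating variables gives exactly the Gudermannian profile~\eqref{eq:sech} up to translation. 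The constant solutions $0$, $\tfrac12$ and $1$ are excluded because they do not have the limits $0$ and $1$ required of a law. This is the equality case of the Modica bound, consistent with Remark~\ref{rem:kink-reading}.

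\emph{Part (ii).} Linearize the $F$-equation at $F^{*}$, obtaining the operator $D\partial_{xx} + \varphi'(F^{*})$. Here $\varphi'(u) = -\cos(2\pi u) = -1 + 2\sin^{2}(\pi u)$, and the kink identity $\sin(\pi F^{*}) = \operatorname{sech}(x/\sqrt D)$ turns the potential into $-1 + 2\operatorname{sech}^{2}\xi$ with $\xi = x/\sqrt D$. The operator $-\partial_{\xi\xi} - 2\operatorname{sech}^{2}\xi$ is the reflectionless P\"oschl--Teller operator with $\ell = 1$~\cite{deift1979}. Its only eigenvalue is $-1$, with eigenfunction $\operatorname{sech}\xi$, and its essential spectrum is $[0,\infty)$. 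Negating and shifting by $-1$ gives~\eqref{eq:stability-operator}. The zero mode is $\partial_{x}F^{*} \propto f^{*}$, the translation mode, as expected. The time variable is not rescaled, so the gap equals $1$ for every $D$.

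\emph{Part (iii).} For global attraction I would invoke the Fife--McLeod theory for balanced bistable equations: data with $\limsup_{x\to-\infty} F_{0} < \tfrac12 < \liminf_{x\to+\infty} F_{0}$ converge exponentially to a translate of the standing front. Laws satisfy this trivially. Concretely, the argument has four steps.
\begin{enumerate}[leftmargin=*]
\item Build sub- and supersolutions $F^{*}(x - x_{\pm} \mp q_{0}(1 - e^{-\beta t})) \mp q_{0}e^{-\beta t}$. These use $\varphi'(0) = \varphi'(1) = -1 < 0$ and the exponential approach of the data to $0$ and $1$; this is where that hypothesis is spent.
\item Combine them with comparison, Theorem~\ref{thm:sine-gordon}(iii), to trap the solution between two translates of the front after finite time.
\item Use the gradient structure of Proposition~\ref{prop:energy} together with part (i) to show the $\omega$-limit set consists of fronts.
\item Once the solution is in a tubular neighborhood of the front family, use modulation along the translation mode and the gap of part (ii) to get convergence to a single phase $x_{0}$ at every rate $\mu < 1$.
\end{enumerate}

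For $L^{1}$ convergence of densities, parabolic smoothing upgrades uniform convergence of $F$ to locally uniform convergence of $f = \partial_{x}F$. The tails are then controlled uniformly in time by the sub- and supersolutions, which have exponential decay, so dominated convergence closes the argument.

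I expect the main obstacle to be the trapping step: constructing the sub- and supersolution pair with an explicit $\beta$ and a bounded phase drift, and verifying the differential inequalities uniformly across the region where $F$ is near $\tfrac12$, where $\varphi' > 0$. I would first try the standard Fife--McLeod choice with $\beta < 1$ and $q_{0}$ small, absorbing the bad region into the phase-correction term through $\inf \partial_{x}F^{*} > 0$ on compact sets.
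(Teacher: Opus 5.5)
Your proposal is correct and follows the paper's proof essentially step for step. Part (i) uses the zero-energy first integral, part (ii) the shifted one-soliton P\"oschl--Teller operator, and part (iii) Fife--McLeod trapping followed by a modulation argument driven by the unit gap, with $L^{1}$ convergence of densities coming from parabolic derivative bounds and barrier-controlled tails.

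One small misattribution: you say the exponential approach to $0,1$ is spent in your step~1, but it is not needed there. For any law, the barriers $F^{*}(x-x_{\pm})\mp q_{0}$ can already be fitted at $t=0$ by taking $x_{\pm}$ far enough out. Where the hypothesis actually matters is your step~3, because it is what makes the energy of Proposition~\ref{prop:energy} finite along the flow.
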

	
	\begin{remark}[One clock, two reflectionless faces]
		Two readings of the spectral statement deserve emphasis. First, the
		gap is one \emph{in the units of the modulation}: relaxation onto
		the secant law proceeds at exactly the unit rate of the entropy
		law~\eqref{eq:entropy-law} of the diffusionless flow. Diffusion
		reshapes the attractor, from a condensed atom to a secant profile
		of width $\sqrt D$, but it does not touch the clock; the
		informational tempo of the calculus is set by the modulation alone.
		Second, the secant law is now doubly reflectionless: its rank Hill
		potential is the inverse-square P\"oschl--Teller
		potential~\eqref{eq:poschl-teller} saturating the Hardy bound, and
		its stability operator~\eqref{eq:stability-operator} is the
		one-soliton P\"oschl--Teller Hamiltonian, so perturbations relax
		without backscattering and the linearized dynamics is exactly
		solvable. It is also worth noticing which constant the surface
		tension~\eqref{eq:surface-tension} produces at unit diffusivity:
		$2/\pi^{2}$, the block-energy constant of the quantum carpet, both
		descending from the same integral $\int_{0}^{1}\sin(\pi u)\,du$ of
		the boundary arithmetic of the canonical kernel.
	\end{remark}
	
	Asymmetric kernels do not stand still; they transport.
	
	\begin{theorem}[Traveling laws and the mean-rank Maxwell rule]\label{thm:maxwell}
		Let $w$ be a kernel whose reaction $\varphi_{w}$ is bistable:
		$w(0) < 1$, $w(1) < 1$, and a single interior zero of
		$\varphi_{w}$. Then the equation~\eqref{eq:cdf-equation} admits a
		traveling law $F(x,t) = F_{c}(x - ct)$, unique up to translation,
		with monotone profile, globally stable among laws, and its speed
		obeys
		\begin{equation}
			c = \frac{\mathbb{E}_{w}[Z] - \tfrac12}
			{\displaystyle\int_{\mathbb{R}}(F_{c}')^{2}\,dx},
			\qquad
			\operatorname{sign}(c)
			= \operatorname{sign}\big(\mathbb{E}_{w}[Z] - \tfrac12\big),
			\label{eq:mean-rank-rule}
		\end{equation}
		where $\mathbb{E}_{w}[Z]$ is the mean rank of the kernel: the
		Maxwell equal-area construction of bistable fronts collapses, for
		rank dynamics, to a single moment, and the law stands still
		precisely when the kernel is rank-balanced. For the phase-shifted
		family $w_{1,\alpha}$,
		\begin{equation}
			c(\alpha)
			= -\frac{\pi\sqrt D}{4}\,\sin(2\pi\alpha) + O(\alpha^{2})
			\qquad (\alpha \to 0).
			\label{eq:phase-speed}
		\end{equation}
	\end{theorem}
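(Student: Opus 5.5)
By Theorem~\ref{thm:sine-gordon}(i), the problem is a scalar bistable equation $\partial_{t}F = D\,\partial_{xx}F + \varphi_{w}(F)$ for the distribution function. The plan is to import the classical Fife--McLeod theory of bistable fronts, restricted to monotone profiles connecting $0$ to $1$. Then the only genuinely new point is that the Maxwell integral collapses to a moment of the kernel.

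\textbf{Step 1: the reaction.} First record the properties of $\varphi_{w}$. We have $\varphi_{w}(0)=0$, and $\varphi_{w}(1)=\int_{0}^{1}(w-1)=0$ because $w$ has unit mass. The derivatives at the wells are $\varphi_{w}'(0)=w(0)-1<0$ and $\varphi_{w}'(1)=w(1)-1<0$. Together with the single interior zero $u_{*}$, this is exactly the bistable hypothesis of Fife--McLeod, with stable states $0,1$ and unstable state $u_{*}$.

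\textbf{Step 2: existence, uniqueness and stability of the front.} Existence and uniqueness of a monotone front $F_{c}$ with $F_{c}(-\infty)=0$ and $F_{c}(\infty)=1$, unique speed $c$, and profile unique up to translation, follow by the standard phase-plane shooting argument for $D F'' + cF' + \varphi_{w}(F)=0$. A monotone profile is automatically a law, because its derivative is integrable with mass one.

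For global stability among laws, combine two ingredients. The first is the comparison principle of Theorem~\ref{thm:sine-gordon}(iii). The second is the Fife--McLeod sub- and supersolutions $F_{c}(x-ct\mp\xi(t))\pm q(t)$. Initial data in $\mathcal{M}$ satisfy $\liminf F_{0}>u_{*}$ at $+\infty$ and $\limsup F_{0}<u_{*}$ at $-\infty$ automatically, since the limits are $0$ and $1$. The main obstacle is this stability step: one must check that the sandwiching argument needs no further decay hypothesis beyond membership in $\mathcal{M}$, or else state the exponential-approach condition as in Theorem~\ref{thm:sech-stability}(iii). I would first try to cite Fife--McLeod directly, since their attractivity hypothesis is exactly these limit conditions.

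\textbf{Step 3: the speed formula.} Multiply the profile equation by $F_{c}'$ and integrate over $\mathbb{R}$. The diffusion term $D\int F''F' = \tfrac{D}{2}[(F')^{2}]_{-\infty}^{\infty}$ vanishes, since $F'\to0$ at both ends (exponentially, by the hyperbolicity of the wells). This leaves
\[
c\int (F_{c}')^{2}\,dx = -\int_{0}^{1}\varphi_{w}(u)\,du .
\]
Integrating by parts once more,
\[
\int_{0}^{1}\varphi_{w}(u)\,du = \Big[u\,\varphi_{w}(u)\Big]_{0}^{1} - \int_{0}^{1}u\,(w(u)-1)\,du = \tfrac12 - \mathbb{E}_{w}[Z],
\]
because the boundary term vanishes by $\varphi_{w}(1)=0$. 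This gives the rule~\eqref{eq:mean-rank-rule} together with its sign statement.

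\textbf{Step 4: the phase-shifted family.} For $w_{1,\alpha}$ the mean rank is explicit:
\[
\int_{0}^{1} 2u\sin^{2}(\pi(u+\alpha))\,du = \tfrac12 - \int_{0}^{1} u\cos(2\pi(u+\alpha))\,du = \tfrac12 - \frac{\sin(2\pi\alpha)}{2\pi}.
\]
To leading order the denominator is the $\alpha=0$ value
\[
\int (F^{*\prime})^{2}\,dx = \int \frac{\operatorname{sech}^{2}(x/\sqrt D)}{\pi^{2}D}\,dx = \frac{2}{\pi^{2}\sqrt D},
\]
which is continuous in $\alpha$ by smooth dependence of the front on parameters near the $c=0$ kink. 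Hence
\[
c(\alpha) = -\frac{\sin 2\pi\alpha}{2\pi}\cdot\frac{\pi^{2}\sqrt D}{2} + O(\alpha^{2}),
\]
which is~\eqref{eq:phase-speed}.

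Bistability for small $\alpha$ must still be confirmed. Here $w(0)=w(1)=2\sin^{2}(\pi\alpha)<1$, and the reaction $-\sin(2\pi(u+\alpha))/(2\pi)+\sin(2\pi\alpha)/(2\pi)$ has exactly one zero in $(0,1)$, at $u=\tfrac12-2\alpha$.
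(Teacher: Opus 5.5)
Your proposal is correct and follows the paper's proof essentially step for step: classical bistable front theory (Aronson--Weinberger, Fife--McLeod) for existence, uniqueness and global stability, then multiplying the profile equation by $F_c'$ to get $c\int(F_c')^2=-\int_0^1\varphi_w$, the identity $\int_0^1\varphi_w=\tfrac12-\mathbb{E}_w[Z]$, and the leading-order kink value $\int(F^{*\prime})^2=2/(\pi^2\sqrt D)$ for the phase family; your integration by parts is the paper's Fubini step $\int_0^1(A_w-\mathrm{id})=\int_0^1(1-s)w(s)\,ds-\tfrac12$. Two small remarks. First, the $O(\alpha^2)$ remainder needs the denominator to be Lipschitz in $\alpha$, not merely continuous, and the smooth dependence you invoke supplies this. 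Second, the decay hypothesis you worry about in Step 2 is not needed: the Fife--McLeod attractivity theorem asks only that $0\le F_0\le 1$ with the limit conditions at $\pm\infty$ relative to the unstable zero, and these hold automatically on $\mathcal{M}$.
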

	
	\begin{proof}
		Existence, uniqueness, and global stability of monotone bistable
		fronts are classical~\cite{aronson1978,fife1977}. Multiplying the
		profile equation $DF_{c}'' + cF_{c}' + \varphi_{w}(F_{c}) = 0$ by
		$F_{c}'$ and integrating over the line kills the boundary terms
		and gives $c\int(F_{c}')^{2} = -\int_{0}^{1}\varphi_{w}(u)\,du$;
		the mean-rank identity
		\begin{equation*}
			\int_{0}^{1}\varphi_{w}(u)\,du
			= \int_{0}^{1}\big(A_{w}(u) - u\big)\,du
			= \int_{0}^{1}(1-s)\,w(s)\,ds - \tfrac12
			= \tfrac12 - \mathbb{E}_{w}[Z]
		\end{equation*}
		converts this into the rule~\eqref{eq:mean-rank-rule}. For the
		phase-shifted family,
		$\int_{0}^{1}\varphi_{w_{1,\alpha}} = \sin(2\pi\alpha)/(2\pi)$, and
		to leading order the profile is the canonical kink with
		$\int(F^{*\prime})^{2} = 2/(\pi^{2}\sqrt D)$, giving the
		expansion~\eqref{eq:phase-speed}.
	\end{proof}
	
	\begin{proposition}[Depinning on the exceptional algebra]\label{prop:depinning}
		Along the phase family, the reaction
		$\varphi_{\alpha}(u)
		= -\big[\sin(2\pi(u+\alpha)) - \sin(2\pi\alpha)\big]/(2\pi)$ is
		bistable for $0 \le \alpha < \tfrac14$ and degenerates at the
		quarter phase:
		$\varphi_{1/4}(u) = \sin^{2}(\pi u)/\pi \ge 0$, with double zeros
		at the endpoints and no interior zero. In the angular variable the
		family is the tilted washboard
		$\partial_{t}v = D\,\partial_{xx}v - \sin v + \sin(2\pi\alpha)$,
		and the tilt attains its depinning magnitude exactly at the
		quarter phase and at its mirror $\alpha = \tfrac34$, the
		reflection $\alpha \mapsto 1 - \alpha$ exchanging the two and
		reversing the drift, so the depinning parameter is unique up to
		that symmetry: at these phases, and only there, the reaction is
		one-signed and every interior rank drifts in one direction, while
		for $\alpha \in (\tfrac14, \tfrac34)$ the washboard re-pins, the
		reaction regaining a stable interior zero at the rank
		$\tfrac32 - 2\alpha$, and, by the reflection
		$\varphi_{1-\alpha}(u) = -\varphi_{\alpha}(1-u)$, bistability
		returns for $\alpha \in (\tfrac34, 1)$: the pinning--sliding transition of the
		washboard
		occurs exactly on the second generator of the isotropy
		fields~\eqref{eq:isotropy-fields} of the exceptional orbit, whose
		first generator is the canonical flow itself.
	\end{proposition}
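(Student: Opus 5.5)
The plan is a direct computation from the definition of the phase-shifted kernel~\eqref{eq:phase-kernels} together with the reduction of Theorem~\ref{thm:sine-gordon}(i). Writing $w_{1,\alpha}(s) = 1 - \cos(2\pi(s+\alpha))$, the reaction $\varphi_{\alpha}(u) = \int_{0}^{u}(w_{1,\alpha}-1)\,ds = -\int_{0}^{u}\cos(2\pi(s+\alpha))\,ds$ gives the stated formula at once. At $\alpha = \tfrac14$ the identity $\sin(2\pi u + \tfrac{\pi}{2}) = \cos(2\pi u)$ yields $\varphi_{1/4} = (1-\cos 2\pi u)/(2\pi) = \sin^{2}(\pi u)/\pi$. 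This is exactly $\tfrac{1}{2\pi}$ times the second generator $(1-\cos 2\pi z)$ of the display~\eqref{eq:stab-canonical}, while $\varphi_{0} = -\sin(2\pi u)/(2\pi)$ is the first generator with a sign. This settles the identification with the isotropy fields of Theorem~\ref{thm:exceptional}.

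For the angular form, I would multiply~\eqref{eq:cdf-equation} by $2\pi$ and set $\tilde v = 2\pi F + 2\pi\alpha$, a constant shift that does not affect $\partial_{t}$ or $\partial_{xx}$. This gives $\partial_{t}\tilde v = D\,\partial_{xx}\tilde v - \sin\tilde v + \sin(2\pi\alpha)$. The tilt $\sin(2\pi\alpha)$ reaches the depinning magnitude $|\sin 2\pi\alpha| = 1$, beyond which $\sin\tilde v = \sin 2\pi\alpha$ has no transversal solutions, exactly at $\alpha \in \{\tfrac14, \tfrac34\}$. The reflection identity $\varphi_{1-\alpha}(u) = -\varphi_{\alpha}(1-u)$ is checked by expanding both sides into $-[\sin(2\pi(u-\alpha)) + \sin 2\pi\alpha]/(2\pi)$. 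It converts $\alpha = \tfrac14$ into $\alpha = \tfrac34$ with reversed sign, so the drift is reversed.

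The phase diagram then comes from the zeros and endpoint slopes of $\varphi_{\alpha}$. Interior zeros solve $\sin(2\pi(u+\alpha)) = \sin(2\pi\alpha)$, that is, $u + \alpha \equiv \tfrac12 - \alpha \pmod 1$. This gives $u = \tfrac12 - 2\alpha$ for $\alpha < \tfrac14$, $u = \tfrac32 - 2\alpha$ for $\tfrac14 < \alpha < \tfrac34$, and $u = \tfrac52 - 2\alpha$ for $\tfrac34 < \alpha < 1$, with exactly one in $(0,1)$ in each range. At the quarter phases the root merges with an endpoint, giving the double zeros. The endpoint slopes are $\varphi_{\alpha}'(0) = \varphi_{\alpha}'(1) = -\cos 2\pi\alpha = w_{1,\alpha}(0) - 1$, and the interior slope is $-\cos(2\pi(u+\alpha))$ at the root. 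For $\alpha \in [0,\tfrac14) \cup (\tfrac34,1)$ the endpoints are stable and the interior zero is unstable, which is bistability in the sense of Theorem~\ref{thm:maxwell}. For $\alpha \in (\tfrac14,\tfrac34)$ the signs flip: the endpoints become unstable and the interior zero at $\tfrac32 - 2\alpha$ is stable, which is the re-pinning.

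The step needing care is the ``only there'' claim, that one-signedness occurs exactly at $\alpha = \tfrac14, \tfrac34$. I would argue it as follows. Since $\varphi_{\alpha}$ vanishes at both endpoints and has zero mean derivative, it can be one-signed on $(0,1)$ only if it has no interior zero of odd order. By the root count above, that forces the unique root to sit at an endpoint, which happens only when $\tfrac12 - 2\alpha \equiv 0 \pmod 1$, i.e.\ $\alpha \in \{\tfrac14,\tfrac34\}$. The remaining bookkeeping is to handle the phase modulo one consistently and to keep track of sign conventions between $\tilde v$ and $v = 2\pi F$.
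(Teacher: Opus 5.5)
Your proposal is correct and follows essentially the paper's own route. Like the paper, you locate the zeros of $\varphi_\alpha$ at the endpoints and at the residue class $\tfrac12 - 2\alpha$ modulo one, and you read stability off $\varphi_\alpha' = \mp\cos(2\pi\alpha)$ at the endpoints versus the interior zero. You use the shift $v = 2\pi(F+\alpha)$ to reach the washboard form, and you identify $\varphi_{1/4}$ with the second field of the display~\eqref{eq:stab-canonical} via Theorem~\ref{thm:exceptional}.

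Your explicit verification of the reflection identity and your argument for the ``only there'' claim fill in steps the paper leaves implicit. For the latter, what actually does the work is that the interior zero is simple, with slope $\cos 2\pi\alpha \neq 0$, unless $\alpha \in \{\tfrac14,\tfrac34\}$.
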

	
	\begin{proof}
		The zeros of $\varphi_{\alpha}$ on $[0,1]$ are the endpoints
		together with the interior representative of the single residue
		class $\tfrac12 - 2\alpha$ modulo one; for $\alpha < \tfrac14$
		that representative is $\tfrac12 - 2\alpha$, where
		$\varphi_{\alpha}' = \cos(2\pi\alpha) > 0$ makes it unstable,
		while for $\alpha \in (\tfrac14,\tfrac34)$ it is
		$\tfrac32 - 2\alpha$, where the same derivative value
		$\varphi_{\alpha}'(\tfrac32 - 2\alpha) = \cos(2\pi\alpha) < 0$
		makes it stable. Bistability for $\alpha < \tfrac14$ follows
		from
		$w_{1,\alpha}(0) = 2\sin^{2}(\pi\alpha) < 1$ for
		$\alpha < \tfrac14$; at $\alpha = \tfrac14$ the interior zero
		collides with the endpoints and
		$\varphi_{1/4}(u) = \int_{0}^{u}\sin(2\pi s)\,ds
		= \sin^{2}(\pi u)/\pi$, and at the mirror phase $\alpha = \tfrac34$
		it collides again, with $\varphi_{3/4}(u) = -\sin^{2}(\pi u)/\pi$
		one-signed in the opposite direction. The washboard form is the substitution
		$v = 2\pi(u + \alpha)$, and the identification of
		$\varphi_{1/4}$, equivalently $(1 - \cos 2\pi z)/(2\pi)$ up to
		normalization, with the second field of the
		display~\eqref{eq:isotropy-fields} is Theorem~\ref{thm:exceptional}.
	\end{proof}
	
	\begin{theorem}[Terraces: crystallization made smooth]\label{thm:terraces}
		For the harmonic kernel $w_{k}$, whose reaction
		$\varphi_{k}(u) = -\sin(2\pi k u)/(2\pi k)$ has balanced wells at
		the
		lattice $\{j/k\}$, monotone data with exponentially localized
		tails develop a propagating terrace with all speeds
		zero~\cite{ducrot2014}: the solution converges, locally uniformly
		along its interfaces, to a stack of $k$ standing kinks, each a
		$1/k$-scaled copy of the canonical kink by the cell conjugacy of
		Section~\ref{sec:iteration}, so the density resolves into $k$
		secant-like bumps of mass exactly $1/k$ located near the
		crystallization quantiles of the limit~\eqref{eq:crystallization}.
		The terrace is not a steady state at finite separations: bump
		positions move by exponentially small interactions of Carr--Pego
		type~\cite{carr1989}; the resulting slow motion, constructed by
		Carr and Pego on bounded intervals and adapted here at sketch
		level, spreads the terrace with separations growing
		logarithmically, and nothing coarsens, monotone kinks being unable
		to annihilate.
	\end{theorem}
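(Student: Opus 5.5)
The plan is to work entirely at the level of the distribution function, where Theorem~\ref{thm:sine-gordon}(i) reduces the diffusive $w_{k}$-dynamics to the scalar equation $\partial_{t}F = D\partial_{xx}F + \varphi_{k}(F)$ with $\varphi_{k}(u) = -\sin(2\pi k u)/(2\pi k)$. The argument has four steps: classify the reaction, build the stacked front from the canonical kink, invoke the terrace convergence theorem, and then treat the slow-motion and no-coarsening claims separately.

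\textbf{Step 1: structure of the reaction.} On $[0,1]$ the zeros of $\varphi_{k}$ are exactly $j/(2k)$.
\begin{itemize}
\item The even zeros $j/k$ are stable, since $\varphi_{k}'(j/k) = -1$.
\item The odd zeros $(2j-1)/(2k)$ are unstable.
\end{itemize}
On each cell $[(i-1)/k, i/k]$ the reaction is an affine conjugate of the canonical one: with $u = (i-1)/k + s/k$ one has $\varphi_{k}(u) = \varphi(s)/k$. Setting $F = (i-1)/k + G/k$ therefore turns the cell equation into exactly the canonical equation for $G$. This is the PDE form of the cell conjugacy $\psi_{i}\circ A_{k} = A\circ\psi_{i}$ of Section~\ref{sec:iteration}. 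The cell potentials $\Phi_{k}$ are equal at all wells, so each cell is a balanced bistable problem. By the Maxwell rule of Theorem~\ref{thm:maxwell}, applied to the rescaled canonical kernel with mean rank $\tfrac12$, each connecting front has speed zero. Explicitly, the $i$-th front is $F = (i-1)/k + F^{*}/k$ with $F^{*}$ the Gudermannian kink of equation~\eqref{eq:sech}, and its density is $f^{*}/k$, a secant bump of mass exactly $1/k$.

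\textbf{Step 2: terrace convergence.} Next I would invoke the propagating-terrace theory of Ducrot--Giletti--Matano~\cite{ducrot2014} for monotone data connecting $0$ to $1$ with exponentially localized tails. The conclusion is that the solution converges, locally uniformly along suitably chosen interface positions, to the minimal propagating terrace. Because every cell is balanced, the terrace is the stack of $k$ zero-speed kinks, with the intermediate stable states $j/k$ as plateaus. The exponential tail hypothesis is what places the data in the basin of the terrace rather than of a pulled or pushed front. It would be checked through the exponential-tail conservation of Section~\ref{sec:flow}, together with comparison under the order preservation of Theorem~\ref{thm:sine-gordon}(iii).

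\textbf{Step 3: location of the bumps.} Mass equal to $1/k$ per bump is forced, because each kink traverses one cell of height $1/k$. For the positions I would use the limit $D\to0$ of Theorem~\ref{thm:sine-gordon}(iv) together with condensation. In the diffusionless limit the interfaces sit where $F_{0}$ crosses the unstable zeros $(2i-1)/(2k)$, which are the quantiles $q_{i}$ of equation~\eqref{eq:crystallization}. For small $D$ the interfaces form near these points, giving the statement ``near the crystallization quantiles.''

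\textbf{Step 4: slow motion and no coarsening (main obstacle).} The hardest part is showing that the stack is not steady and that it spreads logarithmically. The approach is the Carr--Pego reduction~\cite{carr1989}: project onto the $k$ translation modes $\operatorname{sech}$ of the individual kinks. By Theorem~\ref{thm:sech-stability}(ii) these modes have spectral gap $1$ away from the translation eigenvalue, so the reduction is well defined. The resulting ODE for the gaps $\ell_{i}$ has forcing of order $e^{-\ell_{i}/\sqrt D}$, which comes from the exponential tails of the adjacent kinks. Neighbouring monotone kinks of the same orientation repel, because the tail of one pushes the plateau of the next off its well in the direction that increases the separation.

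Integrating $\dot\ell \asymp e^{-\ell/\sqrt D}$ gives $\ell(t)\sim\sqrt D\log t$. No coarsening follows at once: monotonicity of $F$ is preserved by Theorem~\ref{thm:sine-gordon}(ii), so two increasing kinks can never annihilate, unlike the kink--antikink pairs of the classical Allen--Cahn setting.

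The rigorous adaptation of Carr--Pego from bounded intervals to the whole line is the delicate point. It needs uniform control of the remainder and of the outermost kinks, which interact only with the exponential tails of the data. In keeping with the statement, I would present this part at sketch level.
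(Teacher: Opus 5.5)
Your proposal is correct and follows essentially the paper's route: the cell conjugacy reduces each adjacent-well pair to the canonical balanced problem with kink $(i-1)/k+F^{*}/k$ of mass $1/k$, the Ducrot--Giletti--Matano theorem gives the zero-speed terrace, and a Carr--Pego reduction runs on the single-kink gap of Theorem~\ref{thm:sech-stability} and the $e^{-x/\sqrt D}$ kink tails.

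The paper obtains non-steadiness directly from the pendulum phase portrait rather than from the sketch-level reduction: a monotone steady law satisfies the zero-energy first integral $\tfrac{D}{2}(F')^{2}=\Phi_{k}(F)$, so at a point where $F=1/k$ one gets $F'=F''=0$ and hence $F\equiv 1/k$. This same saddle obstruction, and not balance alone, is what forces the terrace to stop at every plateau $j/k$. Note also that the exponential-tail hypothesis plays no pulled-or-pushed role, since $0$, $1$ and all plateaus are stable and no unstable state is ever invaded.
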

	
	\begin{proposition}[Bounded intervals: the cascade outside the
		cone]\label{prop:bounded-interval}
		On a bounded interval with the boundary conditions of a law, the
		stationary problem $DF'' + \varphi(F) = 0$ has exactly one monotone
		solution for every $D$ and every length: a unique steady law,
		asymptotically stable within laws. The Chafee--Infante hierarchy of
		sign-changing equilibria of the unconstrained equation consists
		entirely of non-monotone profiles and therefore lies outside the
		probability cone: constrained to laws, the bifurcation diagram
		collapses to a single branch.
	\end{proposition}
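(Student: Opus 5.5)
The plan is to work with the distribution-function form of the stationary problem. By Theorem~\ref{thm:sine-gordon}(i), steady laws on a bounded interval $I=(a,b)$ are monotone solutions of $DF''+\varphi(F)=0$ with $F(a)=0$, $F(b)=1$, where $\varphi(u)=-\sin(2\pi u)/(2\pi)$. Monotonicity forces $F$ to be a monotone heteroclinic-type arc in the phase plane $(F,F')$ from $u=0$ to $u=1$ with $F'\ge0$. The first integral
\[
\tfrac D2 (F')^2-\Phi(F)=E,\qquad \Phi(u)=\frac{\sin^2(\pi u)}{2\pi^2},
\]
gives $F'=\sqrt{(2/D)(E+\Phi(F))}$ on any monotone arc. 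Hence $E\ge0$, with $E=0$ excluded on a bounded interval, since then the transit time diverges at the endpoints. The length then becomes the time map
\[
L(E)=\sqrt{D/2}\int_0^1\frac{du}{\sqrt{E+\Phi(u)}} .
\]
This map is strictly decreasing and continuous on $(0,\infty)$, with $L\to\infty$ as $E\downarrow0$ because of the logarithmic divergence at the double zeros of $E+\Phi$, and $L\to0$ as $E\to\infty$. So for every $D$ and every length there is exactly one $E$, and by translation within the interval exactly one monotone profile. I must also rule out monotone solutions that have a flat piece or stall at $u=\tfrac12$. Uniqueness of the initial value problem excludes this: $F'=0$ at an interior point with $F\in(0,1)$ forces a turning point unless $F$ is an equilibrium, and for $E>0$ the quantity $F'$ never vanishes anyway.

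For the Chafee--Infante part, I will note that any other equilibrium of the unconstrained Dirichlet-type problem with these boundary values corresponds to phase-plane orbits with $E<0$, or to orbits that wind. These are the closed orbits around the centre $u=\tfrac12$, since $\varphi'(\tfrac12)>0$, together with their concatenations with the monotone arc. An orbit with $E<0$ has $F'$ vanishing at its turning points, so any profile built from it has interior critical points with $F'$ changing sign, which makes it non-monotone. Such profiles therefore lie outside $\mathcal M$, or equivalently have densities $F'$ that change sign and so lie outside the probability cone. This gives the collapse of the bifurcation diagram to the single branch $E(L,D)$.

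For asymptotic stability within laws, I will use the comparison principle of Theorem~\ref{thm:sine-gordon}(iii), which also holds on the interval with fixed boundary values, together with the gradient structure of Proposition~\ref{prop:energy} restricted to $I$. Monotone data remain monotone, since $F_x$ solves a linear parabolic equation with nonnegative boundary values. By the Lyapunov functional and the compactness of parabolic orbits, the $\omega$-limit set consists of monotone equilibria, so it equals the unique one. This gives convergence. For the asymptotic part I will examine the linearization $D\partial_{xx}+\varphi'(F^*)$ on $I$ with Dirichlet conditions. Differentiating the time map yields a positive candidate: the derivative $\partial_E F^*$ of the family, which vanishes at $a$ and is positive inside. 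A strict supersolution of this kind implies that the principal eigenvalue is negative, which is the main technical step. I expect this sign computation, which relates $L'(E)<0$ to the principal eigenvalue by a Sturm comparison, to be the hardest point. I would try first to show directly that $\partial_E F^*$ is positive on $(a,b]$ with $L'(E)<0$ controlling its boundary sign.
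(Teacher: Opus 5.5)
Your existence and uniqueness argument is the paper's. Your time map $L(E)=\sqrt{D/2}\int_0^1(E+\Phi)^{-1/2}\,du$ is exactly the paper's $T(p)$ under $E=\tfrac D2p^2$. Your convergence argument is also the paper's: monotonicity is preserved, the restricted energy is a Lyapunov function, parabolic orbits are compact, and the monotone equilibrium is unique.

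The linearization step you add is not used in the paper, and it is easier than you expect. At fixed $x$, $F_E(x)$ is determined by $x-a=\sqrt{D/2}\int_0^{F_E(x)}(E+\Phi)^{-1/2}\,du$. The right side decreases strictly in $E$, so $\psi=\partial_EF_E$ is positive on $(a,b]$, solves the linearized equation, and has $\psi(a)=0$. Pair it with the positive principal Dirichlet eigenfunction $\phi_1$ of $-(D\partial_{xx}+\varphi'(F^*))$. This gives $\lambda_1\int\psi\phi_1=-D\psi(b)\phi_1'(b)>0$, so $\lambda_1>0$ with no Sturm comparison needed.

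The part to correct is your Chafee--Infante paragraph. Evaluate the first integral at $x=a$, where $F=0$ and $\Phi(0)=0$. This gives $E=\tfrac D2F'(a)^2\ge0$ for every solution of the boundary value problem, not only for monotone ones. So orbits with $E<0$, which are confined to a band strictly inside $(0,1)$ around $u=\tfrac12$, can never satisfy $F(a)=0$. Also, a solution of an autonomous second-order ODE lies on a single phase-plane orbit, so "concatenations" of distinct orbits are not solutions.

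Your own computation in fact gives the sharper statement. The case $E=0$ forces $F\equiv0$. For $E>0$, $F'$ never vanishes, so $F'(a)<0$ gives a decreasing profile that never reaches $1$, while $F'(a)>0$ gives a strictly increasing one. Hence the monotone profile is the only equilibrium with the boundary values of a law. The sign-changing Chafee--Infante equilibria belong to other boundary conditions, for instance $F=\tfrac12$ at both ends. There they are $E<0$ arcs with interior turning points, so they are non-monotone.

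The proposition's conclusion stands, but the reason is this boundary energy identity, not the orbit classification you describe.
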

	
	Proofs are in Appendix~\ref{app:proofs}. The pattern of the last
	proposition recurs across the paper: the probability cone
	systematically selects the monotone skeleton of a classical
	theory, as it selected injective developments among projective
	charts and disconjugate potentials among Hill operators.
	
	\begin{remark}[Physical reading: the Frenkel--Kontorova dictionary]
		In physical terms the reduction places rank dynamics inside the
		overdamped sine--Gordon and Frenkel--Kontorova
		universe~\cite{braun2004}, with an exact dictionary: a probability
		law is a kink and its density the kink's profile; a crystallized
		law is a multi-kink lattice; the mean-rank offset of an asymmetric
		kernel is the washboard tilt, and the traveling law it drags is a
		solitary information wave; depinning is the quarter phase, sitting
		on the exceptional isotropy algebra; the surface tension of a
		probability interface is the constant~\eqref{eq:surface-tension};
		and the reflectionless relaxation of the secant law is the
		one-soliton transparency of its linearization. The information
		reading runs alongside: the Lyapunov
		functional~\eqref{eq:allen-cahn} splits into the diffusion cost and
		the canonical rank potential, its dissipation identity replaces the
		exact entropy ledgers of the diffusionless theories, and the
		diffusivity-independent gap says that diffusion chooses the shape
		of the attractor while the modulation keeps the clock.
	\end{remark}
	
	\section{Random Derangetropy: Synchronization, Sampling, and
		Multiplicative Chaos}\label{sec:random}
	
	The deterministic iteration of Section~\ref{sec:iteration} is
	phase-coherent: the same kernel, hence the same modulation phase, is
	applied at every step, and coherence is what selects the median and
	produces the Koenigs law. This section randomizes the phase. The
	resulting theory is again exactly solvable, and it stands to the
	deterministic one as an incoherent phase stands to a coherent one:
	condensation survives, but onto a uniformly random quantile rather
	than the median, turning the dynamics into an exact sampler; the
	divergence ledger becomes exact with no deficit, exposing the Koenigs
	entropy as a coherence effect; and when the modulation frequency is
	renewed geometrically across steps, condensation gives way to a
	nondegenerate multiplicative chaos carried by a random homeomorphism
	of rank space.
	
	\begin{definition}[Randomized cascade]\label{def:random-cascade}
		Extend a kernel $w$ periodically to $\mathbb{R}$ and let
		$w_{\alpha}(z) = w(z + \alpha)$ be its phase family, as in the
		display~\eqref{eq:phase-kernels} for the harmonic shapes. Let
		$\alpha_{1}, \alpha_{2}, \ldots$ be independent uniform phases on
		$[0,1)$, let $A_{j} := A_{w_{\alpha_{j}}}$, and define the random
		composition and the random cascade
		\begin{equation}
			S_{n} := A_{n}\circ\cdots\circ A_{1},
			\qquad
			f_{n} := \rho_{w_{\alpha_{n}}}\cdots\rho_{w_{\alpha_{1}}}[f]
			= S_{n}'(F)\,f,
			\label{eq:random-cascade}
		\end{equation}
		the second identity by the composition law of
		Theorem~\ref{thm:composition}. Write
		$\mathcal{F}_{n} = \sigma(\alpha_{1},\ldots,\alpha_{n})$, and for
		$z \in [0,1]$ let $z_{n} := S_{n}(z)$ be the rank orbit.
	\end{definition}
	
	One mechanism drives every exact formula below: a uniform phase
	re-randomizes the modulation relative to the current rank.
	
	\begin{lemma}[Re-uniformization]\label{lem:reunif}
		Let $w$ be a periodized kernel and $y$ an
		$\mathcal{F}_{n-1}$-measurable random variable. Then, conditionally
		on $\mathcal{F}_{n-1}$, the multiplier $w_{\alpha_{n}}(y)$ is
		distributed as $W := w(U)$ with $U$ uniform, independently of
		$\mathcal{F}_{n-1}$. Consequently, for every fixed $z$, the
		multipliers $V_{j} := w_{\alpha_{j}}(S_{j-1}(z))$ are independent
		copies of $W$, and
		\begin{equation}
			\log S_{n}'(z) = \sum_{j=1}^{n}\log V_{j}
			\label{eq:reunif-walk}
		\end{equation}
		is a random walk with independent identically distributed steps.
		For every harmonic shape $w_{k}$ the step law is the same,
		$W = 2\sin^{2}(\pi U)$.
	\end{lemma}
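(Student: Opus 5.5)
The plan is to prove the first assertion directly from translation invariance of Lebesgue measure on the circle $\mathbb{R}/\mathbb{Z}$, and then to derive the random-walk statement by iterating it along the filtration.

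\textbf{Conditional law of one multiplier.} Since $w$ is periodized, $w_{\alpha_{n}}(y) = w(y + \alpha_{n})$ depends only on $y + \alpha_{n}$ modulo one. The phase $\alpha_{n}$ is uniform on $[0,1)$ and independent of $\mathcal{F}_{n-1}$, while $y$ is $\mathcal{F}_{n-1}$-measurable. Hence, conditionally on $\mathcal{F}_{n-1}$, the variable $(y + \alpha_{n}) \bmod 1$ is a uniform variable translated by a fixed constant, and so it is again uniform. Formally, I would take a bounded measurable $h$ and an $\mathcal{F}_{n-1}$-measurable bounded $G$, and apply Fubini with the independence of $\alpha_{n}$:
\begin{equation*}
\mathbb{E}\big[G\,h(w_{\alpha_{n}}(y))\big] = \mathbb{E}\Big[G\int_{0}^{1}h\big(w(y+a)\big)\,da\Big] = \mathbb{E}[G]\int_{0}^{1}h\big(w(u)\big)\,du .
\end{equation*}
The second equality uses periodicity to shift the integration variable. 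This identity gives both the conditional law $W = w(U)$ and its independence from $\mathcal{F}_{n-1}$.

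\textbf{Independence of the sequence and the random walk.} Fix $z$. The rank $S_{j-1}(z)$ is a function of $\alpha_{1},\ldots,\alpha_{j-1}$, so it is $\mathcal{F}_{j-1}$-measurable. It follows that $V_{j}$ is $\mathcal{F}_{j}$-measurable, and by the first step its conditional law given $\mathcal{F}_{j-1}$ is the fixed law of $W$. A standard induction then shows that $V_{1},\ldots,V_{n}$ are i.i.d.: factor $\mathbb{E}[\prod_{j} h_{j}(V_{j})]$ by conditioning successively on $\mathcal{F}_{n-1}, \mathcal{F}_{n-2}, \ldots$, peeling off one factor $\mathbb{E}\,h_{j}(W)$ at each stage.

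For the walk, I would apply the chain rule to $S_{n} = A_{n}\circ S_{n-1}$ with $A_{j}' = w_{\alpha_{j}}$, which gives $S_{n}'(z) = \prod_{j=1}^{n} w_{\alpha_{j}}(S_{j-1}(z))$. This is the same computation as in the product formula~\eqref{eq:iterates} of Theorem~\ref{thm:composition}, now with varying kernels. Taking logarithms yields~\eqref{eq:reunif-walk}. The step $\log W$ may equal $-\infty$ with probability zero when $w$ vanishes only on a null set, which is the case for the harmonic shapes. The one point needing care is that the chain rule must hold pointwise at a fixed $z$. The phase-shifted harmonic kernels are smooth, so every $A_{j}$ is $C^{\infty}$ and this causes no difficulty. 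For a general kernel I would state the identity for almost every $z$, using the absolute-continuity closure from Theorem~\ref{thm:composition}.

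\textbf{Harmonic shapes.} For $w_{k}(z) = 2\sin^{2}(k\pi z)$ we need the law of $2\sin^{2}(k\pi U)$. The map $u \mapsto ku \bmod 1$ pushes uniform measure on $[0,1)$ to uniform measure. Since $\sin^{2}(\pi\cdot)$ has period one, $2\sin^{2}(k\pi U) \overset{d}{=} 2\sin^{2}(\pi U)$. I expect no genuine obstacle in this lemma; the only delicate part is keeping the conditional-independence bookkeeping rigorous in the second step, and the factorization identity handles it.
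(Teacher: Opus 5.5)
Your proof is correct and follows the paper's argument step for step. The paper uses the same four ingredients: translation invariance of Lebesgue measure on the circle for the conditional law, induction along the filtration for joint independence, the chain rule for the walk, and periodicity for the harmonic step law, which it phrases as $k\alpha \bmod 1$ being uniform, the same pushforward fact you use.

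Your fallback to almost every $z$ for rough kernels is more cautious than needed. At each fixed $z$ the point $S_{j-1}(z)+\alpha_{j}$ is uniform mod one, hence almost surely a Lebesgue point of $w$. The product formula therefore holds almost surely at every fixed $z$, as the statement asserts.
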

	
	\begin{proof}
		By periodicity, $w_{\alpha}(y)$ depends only on $y + \alpha$ modulo
		one, and if $\alpha$ is uniform and independent of $y$ then
		$y + \alpha \bmod 1$ is uniform and independent of
		$\mathcal{F}_{n-1}$, by translation invariance of Lebesgue measure.
		The conditional law of $w_{\alpha_{n}}(y)$ given
		$\mathcal{F}_{n-1}$ is therefore the fixed law of $w(U)$, which,
		being constant, makes the multiplier independent of the past;
		applying this to $y = S_{j-1}(z)$ and inducting gives joint
		independence, and the walk~\eqref{eq:reunif-walk} is the chain rule.
		For the harmonic shapes,
		$w_{k}(y + \alpha) = 2\sin^{2}(\pi(ky + k\alpha))$ and
		$k\alpha \bmod 1$ is uniform, so the step law does not depend on
		$k$.
	\end{proof}
	
	The lemma says that phase randomization makes the cascade spatially
	blind at one point: the noise seen by any single rank is pure
	independent noise, identical across the entire harmonic family. All
	one-point statistics are therefore universal and computable; all
	geometry lives in the joint behavior of several ranks, where the
	harmonic index re-enters. The second structural fact is conservation
	in mean: since
	$\mathbb{E}_{\alpha}A_{\alpha}(z)
	= \int_{0}^{z}\int_{0}^{1}w(s+\alpha)\,d\alpha\,ds = z$ by
	periodicity, every rank orbit is a bounded martingale, the
	measure-valued sequence $S_{n}'(z)\,dz$ is a martingale with mean
	Lebesgue measure, and
	\begin{equation}
		\mathbb{E}\,[\,f_{n}\,] = f
		\qquad\text{for every } n:
		\label{eq:annealed}
	\end{equation}
	the annealed law is conserved at every step. This contrasts sharply
	with the deterministic theory, where each application strictly
	contracts the law toward its median by the stochastic
	contraction~\eqref{eq:contraction}: the randomized update is
	unbiased, and everything that follows is the interplay of exact
	conservation in mean with almost sure degeneration or survival of
	individual realizations.
	
	\begin{theorem}[Synchronization and exact sampling]\label{thm:sync}
		For the canonical shape $w = 2\sin^{2}(\pi\cdot)$:
		\begin{enumerate}[label=\normalfont(\roman*), leftmargin=*]
			\item for every $z$, the rank orbit $z_{n}$ converges almost surely
			to a limit in $\{0,1\}$ with $\mathbb{P}(z_{\infty} = 1) = z$, and
			the conditional variance identity
			\begin{equation}
				\operatorname{Var}\big(z_{n+1} \,\big|\, \mathcal{F}_{n}\big)
				= \frac{\sin^{2}(\pi z_{n})}{2\pi^{2}},
				\qquad
				\sum_{n \ge 0}\mathbb{E}\big[\sin^{2}(\pi z_{n})\big]
				= 2\pi^{2}\,z(1-z),
				\label{eq:variance-identity}
			\end{equation}
			holds exactly;
			\item almost surely there is a random threshold $\tau$ such that
			$S_{n}(z) \to 0$ for all $z < \tau$ and $S_{n}(z) \to 1$ for all
			$z > \tau$, and $\tau$ is uniformly distributed on $(0,1)$;
			\item for every $f \in \mathcal{D}_{+}$, almost surely
			\begin{equation}
				f_{n} \;\xrightarrow{\ w^{*}\ }\; \delta_{Q(\tau)},
				\qquad\text{and}\qquad
				Q(\tau) \sim f:
				\label{eq:exact-sampler}
			\end{equation}
			the cascade condenses the law onto a single random atom whose
			distribution is the initial law itself;
			\item for every $z \in (0,1)$, almost surely
			$\tfrac1n\log\min(z_{n}, 1 - z_{n}) \to -\log 2$: absorption
			proceeds at one bit per step.
		\end{enumerate}
	\end{theorem}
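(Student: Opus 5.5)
The plan is to run everything through the rank orbit $z_{n} = S_{n}(z)$ and the explicit one-step map
\begin{equation*}
A_{\alpha}(z) = z - \frac{\sin(2\pi(z+\alpha)) - \sin(2\pi\alpha)}{2\pi}.
\end{equation*}

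\emph{Part (i).} I will start from the observation that $\mathbb{E}_{\alpha}A_{\alpha}(z) = z$. Hence $(z_{n})$ is a $[0,1]$-valued martingale and converges almost surely to some $z_{\infty}$. The conditional variance is an elementary trigonometric average:
\begin{equation*}
\mathbb{E}_{\alpha}\big[\sin(2\pi(z+\alpha))-\sin 2\pi\alpha\big]^{2} = 1-\cos 2\pi z = 2\sin^{2}(\pi z),
\end{equation*}
and dividing by $4\pi^{2}$ gives the first identity in~\eqref{eq:variance-identity}. Orthogonality of martingale increments gives
\begin{equation*}
\sum_{n}\mathbb{E}\operatorname{Var}(z_{n+1}\mid\mathcal{F}_{n}) = \mathbb{E}z_{\infty}^{2}-z^{2} \le 1.
\end{equation*}
So $\sum_{n}\sin^{2}(\pi z_{n}) < \infty$ almost surely, which forces $\sin(\pi z_{\infty}) = 0$, that is, $z_{\infty}\in\{0,1\}$. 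Then $\mathbb{E}z_{\infty} = z$ gives $\mathbb{P}(z_{\infty}=1) = z$. It also gives $\mathbb{E}z_{\infty}^{2} = z$, so the variance sum equals $z(1-z)$, which is the second identity.

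\emph{Parts (ii) and (iii).} I will use monotonicity of each $S_{n}$. Fix a countable dense set $\Delta\subset(0,1)$; on a full-measure event every $z\in\Delta$ has $z_{n}\to\{0,1\}$. Monotonicity makes these limits ordered in $z$, so $\tau := \sup\{z\in\Delta : z_{n}\to 0\}$ is a threshold. For $z<\tau$ pick $z<z'<\tau$ in $\Delta$; then $S_{n}(z)\le S_{n}(z')\to 0$, and symmetrically above $\tau$. For $z\in\Delta$ we have the inclusions
\begin{equation*}
\{\tau<z\}\subseteq\{z_{\infty}=1\}\subseteq\{\tau\le z\}.
\end{equation*}
Combined with (i) and density of $\Delta$, this gives $\mathbb{P}(\tau\le z) = z$, so $\tau$ is uniform. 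For (iii), by the random cascade formula~\eqref{eq:random-cascade} $f_{n}$ has distribution function $S_{n}\circ F$. Off the single point $Q(\tau)$ this converges pointwise to $\mathbf{1}_{[Q(\tau),\infty)}$; here I use that $F$ is a bijection from the support onto $(0,1)$ and that $\tau\in(0,1)$ almost surely. Pointwise convergence off one point gives weak-$*$ convergence, and $Q(\tau)\sim f$ is the probability integral transform.

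\emph{Part (iv), the main obstacle.} By the reflection $z\mapsto 1-z$ it suffices to treat $z_{\infty}=0$, where $z_{n}\to 0$. Write $z_{n+1} = R_{n}z_{n}$ with
\begin{equation*}
R_{n} = \int_{0}^{1}w(\alpha_{n+1}+s z_{n})\,ds.
\end{equation*}
The idea is to compare $\log z_{n}$ with the random walk of Lemma~\ref{lem:reunif}, whose step has mean $\mathbb{E}\log(2\sin^{2}\pi U) = -\log 2$, as in the computation behind~\eqref{eq:kl-identities}. The difficulty is the logarithmic singularity of $\log w$ at its zeros, which prevents a naive linearization. The route I would try uses Jensen for the lower bound:
\begin{equation*}
\log R_{n} \ge \int_{0}^{1}\log w(\alpha_{n+1}+s z_{n})\,ds.
\end{equation*}
By the re-uniformization argument, the right side has conditional mean exactly $-\log 2$ and uniformly bounded conditional second moment, since $\log\sin$ is square-integrable. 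The martingale strong law then yields $\liminf \tfrac1n\log z_{n} \ge -\log 2$.

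For the upper bound, the conditional law of $R_{n}$ depends only on $z_{n}$ through $R(\varepsilon) = \int_{0}^{1}w(U+s\varepsilon)\,ds$. I will show that $\mathbb{E}\log R(\varepsilon)\to -\log 2$ as $\varepsilon\to 0$, with uniformly bounded second moments; the moment bound again comes from the Jensen lower bound together with $R\le 2$. Decompose $\log R_{n}$ into its conditional mean plus a martingale difference. The differences obey the martingale strong law, and the conditional means converge to $-\log 2$ because $z_{n}\to 0$. A Ces\`aro average then gives the full limit $-\log 2$.
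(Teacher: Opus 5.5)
Your parts (i)--(iii) follow the paper's argument: martingale convergence and orthogonality of increments, the threshold read off a countable dense set of ranks, and the inverse transform. They are correct.

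Part (iv) is also correct, but it takes a genuinely different route. The paper compares $\log z_n$ pathwise with the i.i.d.\ walk $\sum_j \log W_j$, where $W_j = w(\alpha_j)$:
\begin{itemize}
\item the Lipschitz bound $|A_\alpha(z)/z - w(\alpha)| \le \pi z$ gives $z_{n+1} = z_n(W_{n+1}+\eta_n)$ with $|\eta_n| \le \pi z_n$;
\item a crude geometric rate for $z_n$ comes from the i.i.d.\ strong law applied to $\log(W_{n+1}+\pi\varepsilon)$;
\item Borel--Cantelli with $\mathbb{P}(W\le t)\le\sqrt{t/2}$ gives $W_n > n^{-4}$ eventually;
\item together these make the corrections $\log(1+\eta_n/W_{n+1})$ summable.
\end{itemize}

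You instead split the true increment $\log R_n$ into its conditional drift $m(z_n)$, with $m(\varepsilon) = \mathbb{E}\log\int_0^1 w(U+s\varepsilon)\,ds$, plus a martingale difference. Jensen's inequality does double duty here. First, the lower envelope $\int_0^1 \log w(\alpha_{n+1}+s z_n)\,ds$ has conditional mean exactly $-\log 2$. This gives $m \ge -\log 2$, and it gives $\liminf \frac1n \log z_n \ge -\log 2$ almost surely on the whole space, not only on the absorption event. Second, combined with $R_n \le 2$, it yields the uniform conditional second-moment bound that the martingale strong law needs.

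You announce the limit $m(\varepsilon)\to -\log 2$ without proving it, but it is immediate. You have $m \ge -\log 2$, and reverse Fatou gives the other direction, since $\log R(\varepsilon) \le \log 2$ and $R(\varepsilon)\to w(U)$ pointwise.

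Your route needs no a priori rate, no small-ball estimate for $W$, and no Lipschitz constant for $w$. The paper's route buys a stronger conclusion: on the absorption event, $\log z_n - \sum_{j\le n}\log W_j$ converges almost surely. That is an $O(1)$ coupling to the i.i.d.\ walk, whereas yours controls the discrepancy only at order $o(n)$.
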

	
	\begin{proof}
		For part (i), the orbit is a bounded martingale, hence converges
		almost surely and in $L^{2}$. The increment is
		\begin{equation*}
			z_{n+1} - z_{n}
			= -\frac{\sin\big(2\pi(z_{n}+\alpha)\big) - \sin(2\pi\alpha)}{2\pi}
			= -\frac{\cos\big(\pi(z_{n}+2\alpha)\big)\,\sin(\pi z_{n})}{\pi},
		\end{equation*}
		and averaging the squared cosine over the uniform phase gives the
		first identity of the display~\eqref{eq:variance-identity}. By
		orthogonality of martingale increments,
		$\mathbb{E}\,z_{n}^{2} = z^{2} +
		\sum_{j<n}\mathbb{E}\operatorname{Var}(z_{j+1}|\mathcal{F}_{j})$;
		the left side is bounded, so the variance series converges,
		$\sin^{2}(\pi z_{n}) \to 0$ almost surely, and the limit lies in
		$\{0,1\}$; then $\mathbb{E}\,z_{\infty} = z$ identifies the hitting
		probability, $\mathbb{E}\,z_{\infty}^{2} = z$ closes the second
		identity of the display~\eqref{eq:variance-identity}. For part
		(ii), apply part (i) simultaneously to all rationals; monotonicity
		of $S_{n}$ makes the limit function nondecreasing with values in
		$\{0,1\}$ on the rationals, and
		$\tau := \sup\{z \in \mathbb{Q} : S_{\infty}(z) = 0\}$ satisfies
		$\mathbb{P}(\tau < z) \le z \le \mathbb{P}(\tau \le z)$ for every
		rational $z$, and the monotone limits force equality and
		atomlessness: $\tau$ is uniform. Part (iii): the distribution functions satisfy
		$F_{f_{n}} = S_{n}\circ F \to \mathbf{1}\{x \ge Q(\tau)\}$ at every
		continuity point, which is the weak-$*$
		convergence~\eqref{eq:exact-sampler}, and $Q(\tau) \sim f$ is the
		inverse-transform principle for uniform $\tau$; note the
		consistency with the annealed conservation
		law~\eqref{eq:annealed}, since
		$\mathbb{E}\,\delta_{Q(\tau)} = f$. Part (iv) is proved in
		Appendix~\ref{app:proofs}.
	\end{proof}
	
	\begin{remark}[Sampling by dynamics]
		Part (iii) of Theorem~\ref{thm:sync} says that the randomized
		canonical operator \emph{samples}: run the cascade and read off the
		surviving atom, and one has drawn exactly once from $f$, by pure
		rank dynamics, with the one-bit rate of part (iv) as the cost of
		precision, about $\log_{2}(1/\varepsilon)$ steps per
		$\varepsilon$ of rank resolution. The mechanism is the martingale
		structure: unbiasedness at every step plus almost sure condensation
		forces the atom's law to be $f$. The deterministic theory is the
		completely phase-locked limit, the threshold degenerating to the
		median; and unbiasedness constrains the phase law exactly through
		the first harmonic of the kernel, the uniform phase being the
		canonical choice, the one whose phase mixture of the kernel is
		exactly the uniform density.
	\end{remark}
	
	The harmonic cascade synchronizes cell by cell, with one common
	noise.
	
	\begin{theorem}[Systematic sampling]\label{thm:systematic}
		Run the randomized cascade with the harmonic shape $w_{k}$. Then
		almost surely there is a single uniform $\tau$ such that
		\begin{equation}
			f_{n} \;\xrightarrow{\ w^{*}\ }\;
			\frac1k\sum_{i=0}^{k-1}
			\delta_{\,Q\left(\frac{i+\tau}{k}\right)}:
			\label{eq:systematic}
		\end{equation}
		the cascade condenses onto the classical systematic sample of size
		$k$ from $f$, the $k$ quantile cells sampled at a common uniform
		offset.
	\end{theorem}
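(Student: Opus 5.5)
The plan is to reduce the harmonic cascade to the canonical one through the cell conjugacy of Section~\ref{sec:iteration}, applied now with phases. With $w_{k}(z+\alpha) = 2\sin^{2}(\pi(kz + k\alpha))$, the transport map of the shifted harmonic kernel is
\begin{equation*}
A_{w_{k,\alpha}}(z) = z - \frac{\sin(2\pi k(z+\alpha)) - \sin(2\pi k\alpha)}{2\pi k}.
\end{equation*}
Writing $z = (i + y)/k$ with $y \in [0,1]$, one checks directly that
\begin{equation*}
A_{w_{k,\alpha}}\big((i+y)/k\big) = \big(i + A_{w_{1,k\alpha}}(y)\big)/k.
\end{equation*}
So the map preserves each cell $[i/k,(i+1)/k]$ and acts on every cell as the same canonical shifted map with phase $\beta = k\alpha \bmod 1$. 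Since $\alpha$ is uniform, $\beta_{j} = k\alpha_{j} \bmod 1$ are again independent uniform phases. Hence $S_{n}((i+y)/k) = (i + \widetilde S_{n}(y))/k$, where $\widetilde S_{n}$ is a canonical randomized composition. The key point is that this single realization is common to all $k$ cells: this is the ``one common noise''.

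Next, apply Theorem~\ref{thm:sync}(ii) to $\widetilde S_{n}$. Almost surely there is a uniform threshold $\tau$ with $\widetilde S_{n}(y) \to 0$ for $y < \tau$ and $\widetilde S_{n}(y) \to 1$ for $y > \tau$. Transported back, $S_{n}(z)$ converges to the step function that equals $i/k$ for $z < (i+\tau)/k$ and $(i+1)/k$ for $z > (i+\tau)/k$ on cell $i$. This limit is the distribution function of $\frac1k\sum_{i}\delta$ at the ranks $(i+\tau)/k$.

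Then compose with $F$ exactly as in Theorem~\ref{thm:sync}(iii). By the composition law, $F_{f_{n}} = S_{n}\circ F$. Pointwise convergence of this sequence at every $x$ with $F(x) \notin \{(i+\tau)/k\}$ gives convergence at all continuity points of the limit $\sum_{i}\frac1k\mathbf{1}\{x \ge Q((i+\tau)/k)\}$. Because $f \in \mathcal{D}_{+}$ makes $F$ a homeomorphism onto $(0,1)$ on the support, these exceptional $x$ are exactly the limit's atoms. That yields the weak-$*$ convergence~\eqref{eq:systematic}.

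The only step needing care is the conjugacy identity with the phase, specifically that the additive constant $\sin(2\pi k\alpha)/(2\pi k)$ rescales correctly to the canonical constant $\sin(2\pi\beta)/(2\pi)$, and that the cell endpoints are fixed for every phase. Both follow from $k$-periodicity of $\sin(2\pi k\cdot)$ and are direct to verify.

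A secondary point is that the reduction must use Theorem~\ref{thm:sync} for the phase sequence $\beta_{j}$ rather than $\alpha_{j}$. This is legitimate because its hypotheses involve only the law of the phases, which is identical.
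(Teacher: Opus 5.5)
Your proposal is correct and follows essentially the same route as the paper. The cell conjugacy $z=(i+y)/k$ turns each phase-shifted harmonic transport map into the canonical one with the common phase $\beta=k\alpha \bmod 1$ on every cell; Theorem~\ref{thm:sync}(ii), applied to that shared noise, gives one uniform threshold $\tau$; and composing with $F$ yields the limit~\eqref{eq:systematic}, with your explicit checks of the phase constant and of the continuity points filling in details the paper leaves implicit.
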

	
	\begin{proof}
		Every cell boundary is fixed by every phase,
		$A_{w_{k,\alpha}}(i/k) = i/k$, and on the $i$-th cell the affine
		conjugation $\psi_{i}(z) = kz - i$ of the crystallization
		theory gives, by periodicity of the sine,
		\begin{equation*}
			\psi_{i}\circ A_{w_{k,\alpha}}
			= A_{w_{1,\beta}}\circ \psi_{i},
			\qquad
			\beta = k\alpha \bmod 1,
		\end{equation*}
		with the \emph{same} $\beta$ for every cell, and the
		$\beta_{j}$ independent uniform. Each cell therefore carries a copy
		of the canonical cascade, all cells driven by identical noise; by
		Theorem~\ref{thm:sync}(ii) applied to that common noise there is one
		uniform threshold $\tau$ in the conjugated coordinate, so $S_{n}$
		converges to the step function with jumps of size $1/k$ at the ranks
		$(i+\tau)/k$, and transport by $Q$ gives the
		limit~\eqref{eq:systematic}.
	\end{proof}
	
	\begin{remark}[Quantization and sampling as two phases]
		Theorem~\ref{thm:systematic} completes a symmetry with the
		deterministic crystallization
		limit~\eqref{eq:crystallization}. Coherent phases condense the
		harmonic dynamics onto the odd quantiles, which are exactly the
		$W_{1}$-optimal equal-mass quantizer; incoherent phases condense it
		onto the ranks $(i+\tau)/k$ with a single uniform offset, which is
		exactly the systematic, or stratified, sample of classical survey
		statistics, the unbiased randomization of the same lattice. Optimal
		quantization and stratified sampling are thus the coherent and
		incoherent phases of one rank dynamics. Note also the division of
		labor established by Lemma~\ref{lem:reunif}: one-point statistics
		cannot see $k$ at all, while the joint geometry is a $k$-fold
		synchronized replica of the canonical cascade.
	\end{remark}
	
	At a single rank the cascade is a free lunch of explicit formulas.
	
	\begin{theorem}[One-point solvability]\label{thm:onepoint}
		Let $m(q) := \mathbb{E}[W^{q}]$ and $\lambda(q) := \log m(q)$, for
		the canonical step law $W = 2\sin^{2}(\pi U)$. Then:
		\begin{enumerate}[label=\normalfont(\roman*), leftmargin=*]
			\item for $q > -\tfrac12$,
			\begin{equation}
				m(q) = \frac{2^{q}\,\Gamma(q+\tfrac12)}{\sqrt{\pi}\,\Gamma(q+1)},
				\qquad
				\lambda(q) = q\log 2 + \log\Gamma(q+\tfrac12)
				- \log\Gamma(q+1) - \tfrac12\log\pi,
				\label{eq:free-energy}
			\end{equation}
			and $\mathbb{E}[(S_{n}'(z))^{q}] = m(q)^{n}$ exactly, for every
			fixed $z$;
			\item $\tfrac1n\log S_{n}'(z) \to -\log 2$ almost surely, and
			\begin{equation}
				\frac{\log S_{n}'(z) + n\log 2}{\sqrt{n}}
				\;\xrightarrow{\ d\ }\;
				\mathcal{N}\Big(0,\ \frac{\pi^{2}}{3}\Big),
				\qquad
				\frac{\pi^{2}}{3} = \lambda''(0)
				= \psi_{0}'(\tfrac12) - \psi_{0}'(1),
				\label{eq:random-clt}
			\end{equation}
			with $\psi_{0}$ the digamma function, and
			$\tfrac1n\log S_{n}'$ satisfies a large deviation principle with
			rate the Legendre transform of $\lambda$;
			\item the tangents of the free energy are the information constants
			of the deterministic theory,
			\begin{equation}
				\lambda'(0) = \mathbb{E}\log W = -\log 2,
				\qquad
				\lambda'(1) = \mathbb{E}[W\log W] = 1 - \log 2,
				\label{eq:tangents}
			\end{equation}
			the relative entropies of the identities~\eqref{eq:kl-identities},
			while the deterministic sharp Lipschitz constant $2^{n}$ of
			Proposition~\ref{prop:intertwine} is the almost sure envelope
			$S_{n}' \le 2^{n}$ and the limiting slope
			$\lambda(q) = q\log2 - \tfrac12\log q + O(1)$ as $q \to \infty$.
		\end{enumerate}
	\end{theorem}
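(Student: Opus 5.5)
The whole theorem rests on Lemma~\ref{lem:reunif}. By \eqref{eq:reunif-walk}, for fixed $z$, $\log S_{n}'(z)$ is a sum of $n$ independent copies of $\log W$ with $W = 2\sin^{2}(\pi U)$. Every statement in (i)--(iii) is therefore a statement about one i.i.d.\ random walk whose step has moment generating function $m(q) = \mathbb{E}[W^{q}]$. The plan is to compute $m(q)$ in closed form first and then read everything else off it.

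\textbf{Part (i).} I would write
\begin{equation*}
\mathbb{E}[W^{q}] = 2^{q}\int_{0}^{1}\sin^{2q}(\pi u)\,du = \frac{2^{q}}{\pi}\int_{0}^{\pi}\sin^{2q}\theta\,d\theta .
\end{equation*}
The Wallis/Beta integral $\int_{0}^{\pi}\sin^{2q}\theta\,d\theta = B(q+\tfrac12,\tfrac12) = \sqrt{\pi}\,\Gamma(q+\tfrac12)/\Gamma(q+1)$ converges exactly when $2q > -1$, and it gives \eqref{eq:free-energy}. Independence of the multipliers then gives $\mathbb{E}[(S_{n}')^{q}] = \prod_{j}\mathbb{E}V_{j}^{q} = m(q)^{n}$ exactly. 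The restriction $q > -\tfrac12$ is sharp, since the integral diverges at the zeros of the sine.

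\textbf{Parts (ii) and (iii).} The mean is $\lambda'(0) = \psi_{0}(\tfrac12) - \psi_{0}(1) + \log 2$. Using $\psi_{0}(\tfrac12) = -\gamma - 2\log 2$ and $\psi_{0}(1) = -\gamma$, this equals $-\log 2$. It agrees with $\mathbb{E}\log W = \log 2 + 2\int_{0}^{1}\log\sin(\pi u)\,du = -\log 2$, which comes from the Fourier expansion already used for \eqref{eq:kl-identities}. The variance is $\lambda''(0) = \psi_{0}'(\tfrac12) - \psi_{0}'(1) = \pi^{2}/2 - \pi^{2}/6 = \pi^{2}/3$.

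With these two numbers the classical limit theorems apply. The strong law gives $\tfrac1n\log S_{n}' \to -\log 2$ almost surely. The Lindeberg--L\'evy CLT gives \eqref{eq:random-clt}; $\log W$ has all moments because $m$ is finite on a neighborhood of $0$. Cram\'er's theorem gives the LDP with rate $\lambda^{*}$, since $\lambda$ is finite on the open half-line $(-\tfrac12,\infty)$, which contains $0$ in its interior.

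For (iii), I would differentiate at $q=1$:
\begin{equation*}
\lambda'(1) = \log 2 + \psi_{0}(\tfrac32) - \psi_{0}(2).
\end{equation*}
Here $\psi_{0}(\tfrac32) = 2 - \gamma - 2\log 2$ and $\psi_{0}(2) = 1 - \gamma$, so $\lambda'(1) = 1 - \log 2$. Because $m(1) = 1$, the general identity $\lambda'(1) = \mathbb{E}[W\log W]/m(1)$ shows this is exactly $\mathbb{E}[W\log W]$. This matches \eqref{eq:kl-identities} via the universal-cost formulas $-\int\log v$ and $\int v\log v$.

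The envelope $S_{n}' \le 2^{n}$ follows from $V_{j} \le \|w\|_{\infty} = 2$. For the $q \to \infty$ slope I would use Stirling in the form $\Gamma(q+\tfrac12)/\Gamma(q+1) = q^{-1/2}(1+O(1/q))$, which gives $\lambda(q) = q\log 2 - \tfrac12\log q - \tfrac12\log\pi + o(1)$.

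\textbf{Main obstacle.} The computations themselves are routine. The point that genuinely needs care is justifying that Lemma~\ref{lem:reunif} applies at a fixed $z$ jointly across all steps, i.e.\ that the $V_{j}$ are fully independent and not merely conditionally identically distributed. The lemma already asserts this, so I would only verify it by the tower property. I would also check the digamma values at half-integers, which is where a sign slip is most likely.
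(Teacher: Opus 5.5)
Your proposal is correct and follows essentially the paper's own argument: the Wallis integral together with the independence of Lemma~\ref{lem:reunif} for (i), the strong law, central limit theorem and Cram\'er's theorem with the digamma evaluations $\psi_{0}(\tfrac12)$, $\psi_{0}'(\tfrac12)-\psi_{0}'(1)$ for (ii), and differentiation of the free energy at $0$ and $1$, the envelope $W\le 2$, and Stirling's formula for (iii). Your digamma values and the Stirling constant $-\tfrac12\log\pi$ all check out.
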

	
	\begin{proof}
		Part (i) is the Wallis integral
		$\int_{0}^{1}\sin^{2q}(\pi u)\,du
		= \Gamma(q+\tfrac12)/(\sqrt\pi\,\Gamma(q+1))$ together with the
		independence of Lemma~\ref{lem:reunif}; finiteness requires
		$2q > -1$. Part (ii) is the strong law and the central limit
		theorem for the walk~\eqref{eq:reunif-walk}, the mean being the
		rank integral $\int_{0}^{1}\log(2\sin^{2}\pi u)\,du = -\log 2$
		computed in Lemma~\ref{lem:kl}, and the variance being two digamma
		derivatives, $\pi^{2}/2 - \pi^{2}/6 = \pi^{2}/3$; the large
		deviation principle is Cram\'er's theorem. Part (iii) is
		differentiation of the free energy~\eqref{eq:free-energy} at $0$
		and $1$ using $\psi_{0}(\tfrac12) = -\gamma - 2\log2$,
		$\psi_{0}(\tfrac32) = 2 - \gamma - 2\log2$, and the Stirling
		expansion for the slope; the envelope is $W \le 2$.
	\end{proof}
	
	\begin{theorem}[Exact entropy production]\label{thm:random-entropy}
		For every $f \in \mathcal{D}_{+}$ and every $n$,
		\begin{equation}
			\mathbb{E}\,D\big(f \,\|\, f_{n}\big) = n\log 2,
			\qquad
			\mathbb{E}\,D\big(f_{n} \,\|\, f\big) = n\,(1 - \log 2),
			\label{eq:random-entropy}
		\end{equation}
		with no lower-order correction, and
		$\tfrac1n D(f\|f_{n}) \to \log 2$ almost surely.
	\end{theorem}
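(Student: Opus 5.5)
The plan is to reduce both divergences to rank integrals of the random distortion $S_{n}$ and then average over phases using the re-uniformization lemma. By the composition law, $f_{n} = S_{n}'(F)\,f$ with $S_{n}$ an increasing homeomorphism of $[0,1]$ whose derivative is a kernel. For each fixed realization of the phases, the universality principle~\eqref{eq:universality} with $\Psi = -\log S_{n}'$ and $\Psi = S_{n}'\log S_{n}'$ gives
\begin{equation*}
D(f\|f_{n}) = -\int_{0}^{1}\log S_{n}'(z)\,dz,
\qquad
D(f_{n}\|f) = \int_{0}^{1}S_{n}'(z)\log S_{n}'(z)\,dz.
\end{equation*}
Both integrands are measurable in $(z,\alpha_{1},\ldots,\alpha_{n})$. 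The first is bounded below by $-n\log 2$ because $S_{n}'\le 2^{n}$, and the second is bounded below by $-e^{-1}$. So Tonelli applies after adding constants, and we may exchange expectation and the $z$-integral.

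For the first identity, I would use Lemma~\ref{lem:reunif}: for each fixed $z$, $\log S_{n}'(z)$ is a sum of $n$ i.i.d. copies of $\log W$. Hence $\mathbb{E}\log S_{n}'(z) = n\,\mathbb{E}\log W = -n\log 2$, by Lemma~\ref{lem:kl} or~\eqref{eq:tangents}. Integrating over $z$ gives $n\log 2$ exactly.

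For the second, the one-point law is a size-biased expectation of the walk, so independence still gives exactness:
\begin{equation*}
\mathbb{E}\big[S_{n}'\log S_{n}'\big]
= \sum_{j=1}^{n}\mathbb{E}\Big[\log V_{j}\prod_{i}V_{i}\Big]
= n\,\mathbb{E}[W\log W]\,\mathbb{E}[W]^{n-1}
= n(1-\log 2).
\end{equation*}
This uses $\mathbb{E}W = 1$ and $\lambda'(1) = 1-\log 2$. Integrability of $W\log W$ is clear because $W\le 2$, and $\log W$ is integrable since $\int\log\sin$ is finite.

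The almost sure statement is where the real work lies, since the strong law in Theorem~\ref{thm:onepoint}(ii) holds pointwise in $z$ and not uniformly. Set $Y_{n}(z) = -\tfrac1n\log S_{n}'(z)$. By Fubini, for almost every realization, $Y_{n}(z)\to\log 2$ for Lebesgue-almost every $z$, and we need to pass the limit through $\int_{0}^{1}$. The lower bound $Y_{n}\ge -\log 2$ together with Fatou gives $\liminf \int Y_{n}\ge\log 2$.

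For the upper bound I would try to prove uniform integrability in $z$ almost surely. The route is to control second moments: $\int_{0}^{1}Y_{n}^{2}\,dz$ has expectation $\log^{2}2 + \mathrm{Var}(\log W)/n = \log^{2}2 + \pi^{2}/(3n)$, using the variance $\pi^{2}/3$ from Theorem~\ref{thm:onepoint}(ii). A cleaner way is to bound the deviation directly: $\mathbb{E}\bigl(\int Y_{n}\,dz-\log 2\bigr)^{2}\le\mathbb{E}\int(Y_{n}-\log 2)^{2}\,dz = \pi^{2}/(3n)$. Along the subsequence $n=m^{2}$ this is summable, so Borel--Cantelli gives almost sure convergence there.

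To fill the gaps between consecutive squares, I would use that $n\int Y_{n}$ is built up by adding the terms $-\int\log w_{\alpha_{k}}(S_{k-1}(z))\,dz$. Each term is nonnegative up to $\log 2$, because every term is at least $-\log 2$. The increment over $m^{2}\le n<(m+1)^{2}$ therefore has expectation $O(m)$ with a fourth-moment bound, which gives control of order $o(m^{2})$. Alternatively, I would use fourth moments of the i.i.d. walk to get a rate $O(n^{-2})$ directly, which is summable and removes the subsequence step. This interpolation step is the main obstacle. The first thing to try is the fourth-moment bound $\mathbb{E}\int(Y_{n}-\log 2)^{4}\,dz = O(n^{-2})$, which follows since $\log W$ has all moments, followed by Borel--Cantelli along all $n$.
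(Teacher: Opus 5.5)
Your proposal is correct. The reduction to rank integrals and both expectation identities are the paper's argument. Your Tonelli justification, via the lower bounds $-\log S_n'\ge -n\log 2$ and $S_n'\log S_n'\ge -e^{-1}$, spells out what the paper compresses into ``Fubini applies''.

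The almost sure statement is where you take a genuinely different route. The paper writes $\tfrac1n D(f\|f_n)$ as the average of the increments $\xi_j=-\int_0^1\log(S_j'/S_{j-1}')\,dz$. Re-uniformization shows that $\xi_j-\log 2$ are martingale differences for $(\mathcal{F}_j)$, with conditional second moments bounded by $\mathbb{E}[(\log W)^2]$ (Cauchy--Schwarz in $z$). The paper then invokes the strong law for martingale differences.

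You instead stay at a fixed rank, where $\log S_n'(z)$ is an i.i.d. walk, and move moments through the $z$-integral by Jensen. With $Y_n=-\tfrac1n\log S_n'$ this gives $\mathbb{E}\bigl(\tfrac1n D(f\|f_n)-\log 2\bigr)^4\le\int_0^1\mathbb{E}\bigl(Y_n(z)-\log 2\bigr)^4\,dz=O(n^{-2})$. Markov and Borel--Cantelli then finish, with no martingale limit theorem.

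What each route buys:
\begin{itemize}
\item Your route is more elementary and uses only the one-point law of Lemma~\ref{lem:reunif}. It costs a fourth moment of $\log W$, which is harmless since $\log W$ has moments of all orders.
\item The paper's route needs only the second moment. It also exposes the martingale structure of the ledger, the natural starting point for finer fluctuation results.
\end{itemize}

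Two simplifications to your write-up. First, the Fatou and uniform-integrability discussion is superseded by either moment route and can be dropped. Second, the gap-filling you flagged as the main obstacle in the second-moment route is immediate. Because $w\le 2$, the quantity $D(f\|f_n)+n\log 2=-\int_0^1\log(2^{-n}S_n')\,dz$ is nondecreasing in $n$. Chebyshev along $n=m^2$ gives the summable bound $\pi^2/(3m^2\varepsilon^2)$, and sandwiching between consecutive squares extends the limit to all $n$, using only the variance $\pi^2/3$. Your description of increments with expectation $O(m)$ controlled by fourth moments is therefore unnecessary.
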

	
	\begin{proof}
		By the universality principle~\eqref{eq:universality},
		\begin{equation*}
			D\big(f \,\|\, f_{n}\big) = -\int_{0}^{1}\log S_{n}'(z)\,dz,
			\qquad
			D\big(f_{n} \,\|\, f\big) = \int_{0}^{1}S_{n}'\log S_{n}'\,dz.
		\end{equation*}
		Taking expectations pointwise in $z$ under the
		walk~\eqref{eq:reunif-walk},
		$\mathbb{E}\log S_{n}'(z) = -n\log 2$, while by independence and
		$\mathbb{E}V = 1$,
		\begin{equation*}
			\mathbb{E}\big[S_{n}'\log S_{n}'\big]
			= \sum_{j=1}^{n}\mathbb{E}\big[V_{j}\log V_{j}\big]
			\prod_{i\neq j}\mathbb{E}V_{i}
			= n\,\mathbb{E}[W\log W] = n(1-\log2);
		\end{equation*}
		Fubini applies since $W\log W$ and $\log W$ are integrable. For the
		almost sure statement, write
		$\tfrac1n D(f\|f_{n}) = \tfrac1n\sum_{j\le n}\xi_{j}$ with
		$\xi_{j} := -\int_{0}^{1}\log\big(S_{j}'/S_{j-1}'\big)(z)\,dz$:
		by the re-uniformization of Lemma~\ref{lem:reunif} the increments
		satisfy $\mathbb{E}[\xi_{j}\,|\,\mathcal{F}_{j-1}] = \log 2$ and
		by the Cauchy--Schwarz inequality on the unit rank interval,
		\begin{equation*}
			\mathbb{E}[\xi_{j}^{2}\,|\,\mathcal{F}_{j-1}]
			\;\le\; \int_{0}^{1}
			\mathbb{E}\big[\log^{2}\big(S_{j}'/S_{j-1}'\big)(z)\,\big|\,
			\mathcal{F}_{j-1}\big]\,dz
			\;=\; \mathbb{E}\big[(\log W)^{2}\big]
			\;=\; \frac{\pi^{2}}{3} + \log^{2}2,
		\end{equation*}
		finite and uniform in $j$, a bound that controls both tails of the
		increments; the strong law of large numbers for martingale
		differences with bounded conditional second
		moments~\cite{hallheyde1980} then gives
		$\tfrac1n D(f\|f_{n}) \to \log 2$ almost surely.
	\end{proof}
	
	\begin{remark}[The Koenigs deficit is coherence]
		Compare the exact ledger~\eqref{eq:random-entropy} with the
		deterministic identity~\eqref{eq:divergence-entropy}, whose
		asymptotic deficit is the entropy of the Koenigs law. Randomization
		makes the ledger exactly linear, one bit per step forever: the
		deficit is thus revealed as a coherence effect, the information
		retained when the modulation interferes with its own past through
		the deterministic orbit, and independent phases destroy it. The
		tangent identities~\eqref{eq:tangents} place the deterministic
		constants on the multifractal spectrum of the random theory: the
		one-bit cost is the quenched slope of one Gamma-ratio free energy,
		the complementary constant is its size-biased slope, and the sharp
		Lipschitz constant is its ballistic edge. The constant $\log 2$ now
		has three provably equivalent roles: deterministic update cost,
		quenched decay rate of the random cascade, and the
		annealed--quenched gap of the frozen phase described next.
	\end{remark}
	
	The cascade is critical in mean and totally degenerate in
	realization: for every interval $I$, the mass
	$\mu_{n}(I) = \int_{I}S_{n}'\,dz$ is a martingale with
	$\mathbb{E}\mu_{n}(I) = |I|$ for all $n$, while
	$\mu_{n}(I) \to \mathbf{1}\{\tau \in I\}$ almost surely and
	$\mathbb{E}[\mu_{n}(I)^{2}] \uparrow |I|$, the signature of
	condensation onto a single unit atom at a uniform location.
	Two-point structure is equally explicit: for two ranks the
	separation $d_{n} := S_{n}(z') - S_{n}(z)$ is an autonomous Markov
	martingale,
	\begin{equation}
		d_{n+1} = d_{n}
		- \frac{\sin(\pi d_{n})\cos\theta_{n}}{\pi},
		\qquad \theta_{n}\ \text{independent uniform on } [0, 2\pi),
		\label{eq:separation}
	\end{equation}
	obtained from the increment formula by the product-to-sum identity,
	the angle being uniform and independent of the pair by
	re-uniformization; it absorbs at $\{0,1\}$ with
	$\mathbb{P}(d_{\infty} = 1) = z' - z$, consistently with the
	uniform threshold, and the two-point function obeys
	$\mathbb{E}[S_{n}'(z)S_{n}'(z')] \le m(2)^{n} = (3/2)^{n}$ by the
	Cauchy--Schwarz inequality. Nor is the freezing special to the
	canonical shape: for the tempered kernels
	$1 + \lambda\cos(2\pi(z+\alpha))$, mixtures of the uniform and
	canonical kernels inside the family, the same martingale and
	variance arguments apply verbatim with conditional variance scaled
	by $\lambda^{2}$, and the cascade condenses onto a single uniform
	atom for \emph{every} amplitude $\lambda \in (0,1]$, with Lyapunov
	exponent $\log\big((1+\sqrt{1-\lambda^{2}})/2\big) < 0$: no
	weak-noise limit at fixed frequency avoids condensation.
	
	\begin{remark}[Glassy and Bayesian readings]
		The frozen cascade is an exactly solvable glass at zero
		temperature: the annealed free energy per step vanishes,
		criticality, while the
		quenched free energy is one negative bit, the whole annealed mass
		carried by realizations of exponentially small probability; two
		replicas sharing the same phases synchronize onto the same atom,
		overlap one, while independent replicas land at independent
		uniforms, overlap zero, the simplest one-step replica-symmetry
		breaking. In the Bayesian reading of the
		relation~\eqref{eq:bayes}, each randomized application is the
		posterior update from a random binary rank observation;
		Theorem~\ref{thm:random-entropy} says the information account is
		exact and Theorem~\ref{thm:sync} says the posterior almost surely
		collapses onto one uniformly chosen rank: this is precisely the
		weight-degeneracy phenomenon of sequential Monte
		Carlo~\cite{delmoral2004}, produced here with an exact martingale
		structure. Synchronization of random compositions of interval
		homeomorphisms is itself a classical theme, Antonov's theorem and
		its descendants~\cite{antonov1984,kleptsyn2004,deroin2007}; what
		the derangetropy family adds is exact solvability and the
		identification of the invariant threshold law.
	\end{remark}
	
	Freezing at bounded frequency shows that nondegenerate randomness
	must be injected at renewed scales. Fix an integer $b \ge 2$ and
	$\lambda \in (0,1)$, let the phases $\alpha_{j}$ be independent
	uniform and, independently, let the frequencies $K_{j}$ be uniform
	on $\{b^{j}, \ldots, 2b^{j}-1\}$, and let step $j$ apply the
	tempered kernel
	\begin{equation}
		v_{j}(z) = 1 + \lambda\cos\big(2\pi K_{j}(z + \alpha_{j})\big),
		\label{eq:lacunary-kernel}
	\end{equation}
	again a mixture inside the derangetropy family. Write
	$D_{n}(z) = S_{n}'(z)$ and $\mu_{n}(dz) = D_{n}(z)\,dz$, set
	\begin{equation}
		\Lambda := \log\frac{1}{1-\lambda},
		\qquad
		\bar\gamma^{2} := \frac{\lambda^{2}}{2(\ln b - \Lambda)},
		\label{eq:effective-temperature}
	\end{equation}
	and assume the subcriticality condition
	\begin{equation}
		\Lambda < \ln b
		\qquad\text{and}\qquad
		\bar\gamma^{2} < 1.
		\label{eq:subcritical}
	\end{equation}
	Each step now displaces ranks by at most $\lambda b^{-j}$ and
	distorts by at most $e^{\Lambda}$ per step, so compositions satisfy
	the two-sided distortion bound
	\begin{equation}
		e^{-j\Lambda}\,|z' - z|
		\;\le\; \big|S_{j}(z') - S_{j}(z)\big|
		\;\le\; e^{j\Lambda}\,|z' - z|.
		\label{eq:distortion-bound}
	\end{equation}
	
	\begin{theorem}[Nondegenerate chaos]\label{thm:chaos}
		Under the subcriticality condition~\eqref{eq:subcritical}:
		\begin{enumerate}[label=\normalfont(\roman*), leftmargin=*]
			\item for every $z$, $D_{n}(z)$ is a mean-one martingale, and
			$\mu_{n} \to \mu$ almost surely weakly for a random Borel measure
			$\mu$ on $[0,1]$;
			\item the two-point function is log-correlated: uniformly in $n$
			and in pairs with $|z - z'| \le \tfrac12$,
			\begin{equation}
				\mathbb{E}\big[D_{n}(z)\,D_{n}(z')\big]
				\;\le\; C_{b,\lambda}\,|z - z'|^{-\bar\gamma^{2}},
				\label{eq:twopoint}
			\end{equation}
			together with a power-law lower bound, uniform in $n$, governed by
			the complementary distortion constant $\ln b + \Lambda$; in the
			weak-tempering regime the lower exponent is
			positive and matches the
			exponent~\eqref{eq:effective-temperature} to leading order;
			\item $\sup_{n}\mathbb{E}[\mu_{n}(I)^{2}]
			\le C_{b,\lambda}\,|I|^{2-\bar\gamma^{2}}$ for every
			interval $I$, so the martingale $\mu_{n}(I)$ is uniformly
			integrable,
			$\mathbb{E}\,\mu(I) = |I|$ for every interval and
			$\mathbb{P}(\mu \neq 0) = 1$;
			\item almost surely $\mu$ has full support, no atoms, and vanishes
			on every Borel set of Hausdorff dimension less than
			$1 - \bar\gamma^{2}$;
			\item $S_{\infty}(z) := \mu([0,z])$ is almost surely a strictly
			increasing H\"older homeomorphism of $[0,1]$, with any exponent
			below $(1-\bar\gamma^{2})/2$.
		\end{enumerate}
	\end{theorem}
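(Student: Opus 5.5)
The plan is to treat $D_{n}(z) = \prod_{j\le n} v_{j}(S_{j-1}(z))$ as a multiplicative cascade of the Kahane type, in which each factor is a tempered multiplier and the displacement $\lambda b^{-j}$ of step $j$ is a geometrically vanishing perturbation of rank space. This keeps the cascade close to a \emph{linear} lacunary Riesz-product chaos. Part~(i) is the re-uniformization of Lemma~\ref{lem:reunif}. Conditionally on $\mathcal{F}_{j-1}$, with $y = S_{j-1}(z)$ and $\alpha_{j}$ uniform, the quantity $K_{j}(y+\alpha_{j}) \bmod 1$ is uniform, so $\mathbb{E}[v_{j}(y)\mid\mathcal{F}_{j-1}] = 1$. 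Hence $D_{n}(z)$ is a nonnegative mean-one martingale for each $z$, and $\mu_{n}(I) = S_{n}(\sup I) - S_{n}(\inf I)$ is a bounded martingale for every interval $I$. Taking a countable dense family of intervals and using monotonicity of $S_{n}$ gives almost sure weak convergence to a random measure $\mu$, exactly as in the proof of Theorem~\ref{thm:sync}(ii).

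The core is the two-point bound~(ii). Write
\begin{equation*}
\mathbb{E}[D_{n}(z)D_{n}(z')] = \mathbb{E}\prod_{j}\mathbb{E}\big[v_{j}(y_{j})v_{j}(y_{j}')\mid\mathcal{F}_{j-1}\big],
\end{equation*}
with $y_{j}, y_{j}'$ the two orbits. Averaging over the phase and over $K_{j}$ gives the exact conditional factor
\begin{equation*}
1 + \tfrac{\lambda^{2}}{2}\,\mathbb{E}_{K}\cos\big(2\pi K_{j}\delta_{j-1}\big),
\qquad \delta_{j-1} = y_{j}' - y_{j}.
\end{equation*}
The average over $K_{j}$ is a normalized Dirichlet kernel. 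It is at most $1$ in absolute value, and it is $O\big((b^{j}\delta_{j-1})^{-1}\big)$ once $b^{j}\delta_{j-1}\gg1$. The factor therefore behaves like $1+\lambda^{2}/2$ while the separation is below scale $b^{-j}$, and it is $1+O\big(\lambda^{2}/(b^{j}\delta)\big)$ above it.

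The separation itself changes by the distortion bound~\eqref{eq:distortion-bound}: $\delta_{j}\ge e^{-j\Lambda}\delta_{0}$. Consequently the number of \emph{correlated} steps, those with $b^{j}\delta_{j-1}\lesssim 1$, is at most the first $j$ with $j(\ln b - \Lambda) \ge \ln(1/\delta_{0})$, namely
\begin{equation*}
\frac{\ln(1/|z-z'|)}{\ln b - \Lambda} + O(1).
\end{equation*}
The decorrelated tail contributes a convergent product, since $\sum_{j} b^{-j}e^{j\Lambda}/\delta$-type terms are summable beyond the crossover by $\Lambda<\ln b$. Bounding $\log(1+\lambda^{2}/2)\le\lambda^{2}/2$ on the correlated steps yields $|z-z'|^{-\bar\gamma^{2}}$ with $\bar\gamma^{2} = \lambda^{2}/(2(\ln b-\Lambda))$, which is~\eqref{eq:twopoint}.

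For the lower bound I will use the other side of the distortion bound, $\delta_{j}\le e^{j\Lambda}\delta_{0}$. This guarantees at least $\ln(1/\delta_{0})/(\ln b+\Lambda)$ steps on which $b^{j}\delta$ is small, and on each of them the Dirichlet average is at least, say, $\tfrac12$. That gives a positive exponent, and as $\lambda\to0$ both exponents tend to $\lambda^{2}/(2\ln b)$. I expect this step to be the main obstacle. The delicate points are making the Dirichlet-kernel estimate uniform, handling the transitional scales where $b^{j}\delta\asymp1$ and the cosine average may be negative, and controlling the random orbit separation using only the deterministic distortion envelope rather than its actual law. I would first try a stopping-time argument at the random crossover index, conditioning on $\mathcal{F}$ at that time.

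Part~(iii) follows by integrating~(ii) over $I\times I$: $\int_{I}\int_{I}|z-z'|^{-\bar\gamma^{2}}\lesssim|I|^{2-\bar\gamma^{2}}$ since $\bar\gamma^{2}<1$. This gives $L^{2}$-boundedness, hence uniform integrability and $\mathbb{E}\mu(I)=|I|$. Nondegeneracy, $\mathbb{P}(\mu\ne0)=1$, is a zero-one law: the event $\{\mu=0\}$ is asymptotic, because the first $N$ steps act by a fixed homeomorphism with two-sided bounded distortion and $\mu$ is the pushforward structure of the tail cascade. For~(iv), the energy estimate
\begin{equation*}
\mathbb{E}\iint|z-z'|^{-s}\,d\mu\,d\mu<\infty\quad\text{for } s<1-\bar\gamma^{2},
\end{equation*}
obtained from~(ii) via Fatou, gives finite $s$-energy and hence that $\mu$ charges no set of dimension below $1-\bar\gamma^{2}$. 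This in particular rules out atoms. Full support again comes from the zero-one law applied to each dyadic interval together with $\mathbb{E}\mu(I)>0$. Part~(v) follows: $S_{\infty}$ is continuous because $\mu$ has no atoms, and strictly increasing because $\mu$ has full support. Hölder continuity comes from Kolmogorov--Chentsov applied to the bound $\mathbb{E}\mu(I)^{2}\le C|I|^{2-\bar\gamma^{2}}$, which gives exponents below $(1-\bar\gamma^{2})/2$.
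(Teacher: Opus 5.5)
There is a genuine gap. Your arguments for (i), the upper bound in (ii), the second-moment bound in (iii), the energy estimate in (iv) and the Kolmogorov--Chentsov step in (v) are essentially the paper's; the problem is \emph{full support}, and with it the strict monotonicity of $S_{\infty}$ in (v).

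You appeal to a zero-one law ``applied to each dyadic interval together with $\mathbb{E}\mu(I)>0$''. But for a fixed interval $I$ the event $\{\mu(I)=0\}$ is not a tail event. Let $\mu^{(N)}$ denote the limit of the cascade run from step $N+1$ on. Then $\mu=(S_{N}^{-1})_{\#}\mu^{(N)}$ and $\mu(I)=\mu^{(N)}(S_{N}(I))$, where the interval $S_{N}(I)$ depends on the first $N$ phases and frequencies. Only the global event $\{\operatorname{supp}\mu=[0,1]\}$ is tail-measurable, so Kolmogorov's law gives it probability $0$ or $1$. However, $\mathbb{E}\mu(I)=|I|>0$ (or Paley--Zygmund, $\mathbb{P}(\mu(I)>0)\ge c\,|I|^{\bar\gamma^{2}}$) only makes each event $\{\mu(I)>0\}$ individually of positive probability. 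That does not prevent their countable intersection from being null. Nothing else in your plan fills this: finite $s$-energy and H\"older continuity are both compatible with $\mu$ living on a Cantor set.

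The missing idea is the paper's pathwise resolution argument. For fixed $z<z'$, the separation $d_{n}=S_{n}(z')-S_{n}(z)$ obeys the exact recursion $d_{n+1}=d_{n}\big(1+\lambda\sigma_{n}\cos\theta_{n+1}\big)$. Here $\sigma_{n}=\sin(\pi K_{n+1}d_{n})/(\pi K_{n+1}d_{n})$ and $\theta_{n+1}$ is uniform given $\mathcal{F}_{n}$ and $K_{n+1}$. Hence $X_{n}=\ln(b^{n}d_{n})$ has bounded increments and conditional drift $\ln b+\log\frac{1+\sqrt{1-\lambda^{2}\sigma_{n}^{2}}}{2}\ge\ln b-\Lambda>0$. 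The martingale strong law then gives $X_{n}\ge\delta_{0}n$ eventually. So $|\sigma_{n}|\le(\pi b)^{-1}e^{-X_{n}}$ decays geometrically, the logarithmic increments are summable, and $\lim d_{n}>0$ almost surely. Running over rational pairs yields strict monotonicity and full support.

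Three smaller points.
\begin{enumerate}
\item $\mathbb{P}(\mu\neq0)=1$ needs no zero-one law: $S_{n}(0)=0$ and $S_{n}(1)=1$ make every $\mu_{n}$, hence $\mu$, a probability measure.
\item In the lower bound of (ii), no stopping time is needed at the transitional scales. Since $\lambda<1$, every factor satisfies the deterministic pathwise bound $1+\frac{\lambda^{2}}{2}r_{j}\ge1-\frac{\lambda^{2}}{2}$. The resulting window cost, of order $\lambda^{3}$ in the exponent, is why the lower exponent is positive only in the weak-tempering regime, not in general as you assert.
\item The Dirichlet decay must be stated in the circle distance $\|\delta\|$, not $\delta$, because the kernels are periodic. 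Accordingly, the distortion bound must be applied to the complementary arc as well.
\end{enumerate}
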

	
	The proof is given in Appendix~\ref{app:proofs}; part (i) is the
	martingale argument of Theorem~\ref{thm:sync}(ii), the phase average
	of the cosine vanishing at every frequency, and the remaining parts
	rest on the exact pair identity
	\begin{equation}
		\mathbb{E}\big[v_{j}(y)\,v_{j}(y')\,\big|\,K_{j} = k\big]
		= 1 + \frac{\lambda^{2}}{2}\cos\big(2\pi k(y - y')\big),
		\label{eq:pair-identity}
	\end{equation}
	whose average over the frequency block is a Dirichlet kernel: each
	scale contributes the factor $1 + \tfrac{\lambda^{2}}{2}$ while it
	cannot resolve the pair and a summably small factor afterwards, and
	counting the unresolved scales with the distortion
	bound~\eqref{eq:distortion-bound} produces the
	exponent~\eqref{eq:effective-temperature}. Parts (iv) and (v) rest
	on a second, pathwise mechanism: the logarithm of the resolution
	variable $b^{n}\big(S_{n}(z') - S_{n}(z)\big)$ acquires a uniformly
	positive drift as soon as frequency growth outruns the worst-case
	contraction of the tempered kernels, so every pair of ranks is
	eventually resolved and no separation can vanish in the limit.
	
	\begin{remark}[Multiplicative chaos and random welding]
		The two-point law~\eqref{eq:twopoint} identifies
		$\bar\gamma$ as the effective inverse temperature of a
		log-correlated multiplicative chaos, and the subcriticality
		condition~\eqref{eq:subcritical} is its $L^{2}$ regime; the
		linearized field
		$\lambda\sum_{j}\cos(2\pi K_{j}(z+\alpha_{j}))$ is exactly Kahane's
		original lacunary construction of a log-correlated
		field~\cite{kahane1985}, so the theorem produces a non-Gaussian,
		dynamically generated cousin of Gaussian multiplicative
		chaos~\cite{kahane1976,rhodes2014}, and we expect convergence to
		the Gaussian chaos of parameter $\gamma$ in the joint limit
		$b \downarrow 1$, $\lambda \to 0$ with
		$\lambda^{2}/(2\ln b) = \gamma^{2}$ fixed. The novelty is where the
		chaos lives: the limit is not only a random measure but a random
		\emph{homeomorphism} of rank space with multifractal derivative,
		that is, a random element of the distortion group $\mathcal{G}$ of
		Section~\ref{sec:virasoro}, the structure that enters the conformal
		welding construction of random curves~\cite{astala2011}. In the
		coadjoint language of Sections
		\ref{sec:virasoro} and~\ref{sec:orbits}, the randomized cascade is
		a random walk on $\mathcal{G}$ and the law's Hill potential evolves
		by the random coadjoint action of
		Theorem~\ref{thm:vir-equivariance}: the frozen phase is the almost
		sure escape of this walk to the boundary of the group, the uniform
		threshold being the residual bulk direction, while the chaotic
		phase is its convergence to a genuine random group element, a
		natural candidate for a heat-kernel-type measure on the distortion
		group at central charge one. Making that last phrase precise seems
		to us the most interesting continuation of the random theory.
	\end{remark}
	
	\section{Quantum Carpets: the Unitary Dynamics of Rank Space}
	\label{sec:carpet}
	
	The operator calculus now runs along three dynamical axes: the
	multiplicative axis of iteration and flow, geometrized by the
	coadjoint action of Sections \ref{sec:virasoro} and~\ref{sec:orbits},
	and the stochastic axis of Section~\ref{sec:random}. This section
	develops the third axis, opened by the periodicity
	identity~\eqref{eq:period} of the amplitude representation: the
	unitary axis, on which the modulation kernel itself evolves under the
	Schr\"odinger dynamics of rank space. Let
	$u_{\tau} := e^{-i\tau H_{0}}\mathbf{1}$ be the Dirichlet evolution
	of the uniform kernel amplitude and define the \emph{carpet flow} of
	a law $f \in \mathcal{D}_{+}$ as
	\begin{equation}
		f_{\tau} := \rho_{|u_{\tau}|^{2}}[f]
		= \big|u_{\tau}(F)\big|^{2} f
		= \Big|\sum_{k\ \mathrm{odd}}\frac{2\sqrt2}{k\pi}\,
		e^{-ik^{2}\pi^{2}\tau/2}\,\psi_{k}[f]\Big|^{2},
		\label{eq:carpet-flow}
	\end{equation}
	the last form by the expansion~\eqref{eq:universal-expansion}: the
	carpet flow does nothing but let the universal expansion of
	$\sqrt f$ rotate at its natural frequencies. At $\tau = 0$ the
	ladder is coherent and $f_{0} = f$; at every $\tau$ the kernel
	$w_{\tau} := |u_{\tau}|^{2}$ has unit mass by unitarity of the
	dilation of Theorem~\ref{thm:dilation}, so each $f_{\tau}$ is a law
	automatically, mass conservation being once more built into the
	geometry rather than imposed. The carpet flow is thus a closed
	curve inside the modulation family, traversed at constant unitary
	speed. On the doubled circle
	$\mathbb{T}_{2} := \mathbb{R}/2\mathbb{Z}$, where the odd extension
	of $\mathbf{1}$ is the square wave, the seed and its evolution read
	\begin{equation}
		u_{s}(z) = \sum_{n\ \mathrm{odd}}\frac{2}{i\pi n}\,
		e^{-2\pi in^{2}s}\,e^{i\pi nz},
		\qquad
		s := \frac{\pi\tau}{4},
		\label{eq:carpet-seed}
	\end{equation}
	the rescaled time $s$ making the mode-$n$ phase $e^{-2\pi in^{2}s}$;
	this is the initial datum of the Talbot effect of wave optics,
	carried by rank space, and the dilation prints its evolution on
	every law at once. Throughout this section $P_{N}$ denotes the
	dyadic block of the expansion~\eqref{eq:carpet-seed}, retaining the
	frequencies $|n| \in [N, 2N)$.
	
	\begin{proposition}[The observable clock]\label{prop:carpet-clock}
		The amplitude flow $s \mapsto u_{s}$ has period $1$, so the flow of
		laws is periodic in $\tau$ with period $4/\pi$. The density flow
		$s \mapsto w_{s}$ has minimal period $\tfrac18$: for every law and
		every $\tau$,
		\begin{equation}
			f_{\tau + \frac{1}{2\pi}} = f_{\tau},
			\qquad
			u_{s+1/8} = e^{-i\pi/4}\,u_{s}.
			\label{eq:carpet-period}
		\end{equation}
		Within one period the density flow is a palindrome,
		$w_{1/8 - s} = w_{s}$, and at every time the kernel is symmetric
		about the median rank, $w_{s}(1-z) = w_{s}(z)$.
	\end{proposition}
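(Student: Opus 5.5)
The whole proposition should reduce to bookkeeping with the Fourier coefficients of the seed~\eqref{eq:carpet-seed},
\begin{equation*}
u_{s}(z) = \sum_{n\ \mathrm{odd}} c_{n}\,e^{-2\pi i n^{2}s}\,e^{i\pi n z},
\qquad c_{n} = \frac{2}{i\pi n},
\end{equation*}
viewed in $L^{2}(\mathbb{T}_{2})$ with $\ell^{2}$ coefficients. First I will record why this series is the Dirichlet evolution of $\mathbf{1}$. Restricted to $(0,1)$, the odd square wave is $\mathbf{1}$. The Dirichlet evolution on $(0,1)$ coincides with the free evolution of the odd $2$-periodic extension. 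The phase of mode $n$ is $e^{-i\tau n^{2}\pi^{2}/2} = e^{-2\pi i n^{2}s}$ with $s = \pi\tau/4$.

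All statements about laws then follow from statements about $w_{s} = |u_{s}|^{2}$, because $f_{\tau} = w_{s}(F)\,f$ by the definition~\eqref{eq:carpet-flow}. A shift $\Delta s$ corresponds to $\Delta\tau = 4\Delta s/\pi$. So period $1$ in $s$ gives $4/\pi$ in $\tau$, and period $\tfrac18$ gives $1/(2\pi)$.

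\textbf{Periods.} Period $1$ holds because $n^{2}\in\mathbb{Z}$. For the quasi-period, the key arithmetic fact is $n^{2}\equiv 1 \pmod 8$ for odd $n$. Hence every mode acquires the same factor $e^{-2\pi i n^{2}/8} = e^{-i\pi/4}$, which gives $u_{s+1/8} = e^{-i\pi/4}u_{s}$. Taking moduli yields $w_{s+1/8} = w_{s}$, and therefore $f_{\tau+1/(2\pi)} = f_{\tau}$ for every law.

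\textbf{Minimality of the period.} This is the only step that is not a one-line identity, and I would handle it through Fourier coefficients in both variables. Write
\begin{equation*}
w_{s} = \sum_{d\ \mathrm{even}} e^{i\pi d z}\sum_{m\ \mathrm{odd}} c_{m+d}\,\bar c_{m}\,e^{-2\pi i\,((m+d)^{2}-m^{2})s}.
\end{equation*}
Look at the coefficient with $d = 2$. There $(m+2)^{2}-m^{2} = 4(m+1) = 8j$ with $m = 2j-1$, so this coefficient is a function of $s$ alone,
\begin{equation*}
a(s) = \sum_{j} a_{j}\,e^{-16\pi i j s},
\qquad
a_{j} = \frac{4}{\pi^{2}(2j+1)(2j-1)}.
\end{equation*}
The series converges absolutely, and $a_{1} = 4/(3\pi^{2}) \neq 0$.

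Suppose $p$ were a period of $w_{s}$. Then $a(s+p) = a(s)$, and comparing the $j=1$ Fourier coefficients of this absolutely convergent series gives $e^{-16\pi i p} = 1$, so $p \in \tfrac18\mathbb{Z}$. Hence $\tfrac18$ is the minimal period.

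The point to be careful about is that the Fourier coefficient of $w_{s}$ in $z$ is computed legitimately. It is the coefficient of the product $|u_{s}|^{2}$, with $u_{s}\in L^{2}$, so it equals the absolutely convergent convolution sum of the coefficients, and nothing more delicate is needed.

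\textbf{Palindrome and symmetry.} Both come from the two coefficient symmetries $\bar c_{n} = -c_{n} = c_{-n}$.

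For the palindrome, conjugate the series and reindex $n\mapsto -n$; this gives $\overline{u_{s}} = u_{-s}$. Hence $w_{-s} = w_{s}$, and combined with the $\tfrac18$-period, $w_{1/8-s} = w_{s}$.

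For the symmetry about the median rank, substitute $z\mapsto 1-z$. Since $e^{i\pi n(1-z)} = (-1)^{n}e^{-i\pi n z} = -e^{-i\pi n z}$ for odd $n$, reindexing $n\mapsto -n$ and using $c_{-n} = -c_{n}$ gives $u_{s}(1-z) = u_{s}(z)$. In particular $w_{s}(1-z) = w_{s}(z)$. This also matches the earlier remark that the reflection commutes with $H_{0}$ and that $\mathbf{1}$ lies in the sector $\mathcal{H}_{+}$.
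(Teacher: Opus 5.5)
Your proposal is correct and follows the paper's proof essentially step for step. The paper uses the same mod-$8$ fact $n^{2}\equiv 1$ for the quasi-period. For minimality it uses the same spatial mode $d=2$: its $\widehat{w_{s}}(2)$, with the $n=3$ term, is exactly your $a(s)$ with the $j=1$ term. The palindrome and median symmetry come from the same coefficient symmetries $\bar c_{n}=c_{-n}=-c_{n}$. Your remarks identifying the seed as the Dirichlet evolution of $\mathbf{1}$, and justifying the coefficients of $|u_{s}|^{2}$ as an absolutely convergent convolution sum, are small additions the paper leaves implicit.
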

	
	\begin{proof}
		The amplitude period is $e^{-2\pi in^{2}} = 1$. The seed populates
		only odd modes, and odd squares are $1$ modulo $8$, so advancing $s$
		by $\tfrac18$ multiplies every mode by the common eighth root of
		unity $e^{-i\pi/4}$, which is invisible in the modulus: the
		observable flow lives on a circle eight times shorter than its
		unitary dressing, the discrepancy carried by exactly the roots of
		unity that govern quadratic Gauss sums. For minimality, the
		coefficient of $e^{2\pi i\cdot 2z}$ in $w_{s}$ is
		\begin{equation*}
			\widehat{w_{s}}(2) = \sum_{n\ \mathrm{odd}}
			\frac{4}{\pi^{2}n(n-2)}\;e^{-8\pi i(n-1)s},
		\end{equation*}
		a series in $s$ whose frequencies $4(n-1)$ are pairwise distinct
		and whose coefficients never vanish; a period $p$ of the density
		flow forces $4(n-1)p \in \mathbb{Z}$ for every odd $n$, and $n = 3$
		gives $8p \in \mathbb{Z}$. Since $\overline{c_{n}} = c_{-n}$ for
		the coefficients $c_{n} = 2/(i\pi n)$, conjugating the
		series~\eqref{eq:carpet-seed} gives $\overline{u_{s}} = u_{-s}$,
		whence $w_{-s} = w_{s}$ and, with the
		relation~\eqref{eq:carpet-period}, the palindrome. The median
		symmetry is the substitution $z \mapsto 1-z$, which multiplies the
		mode $n$ by $e^{i\pi n} = -1$ and is undone by $n \mapsto -n$.
	\end{proof}
	
	At rational times the carpet flow revives into finitely many
	quantile cells.
	
	\begin{theorem}[Quantile mosaics]\label{thm:quantile-mosaic}
		Let $s = p/q$ in lowest terms. The phase
		$\chi(n) = e^{-2\pi in^{2}p/q}$ is periodic in $n$ with period $q$,
		and its discrete Fourier expansion yields the revival
		\begin{equation}
			u_{p/q}(z) = \sum_{r=0}^{q-1}\hat c(r)\,
			u_{0}\Big(z - \frac{2r}{q}\Big),
			\qquad
			\hat c(r) = \frac{1}{q}\sum_{n=0}^{q-1}
			e^{-2\pi i\left(pn^{2} - rn\right)/q},
			\label{eq:carpet-revival}
		\end{equation}
		a finite superposition of translated square waves whose
		coefficients are generalized quadratic Gauss
		sums~\cite{berndt1998}. Consequently $w_{p/q}$ is constant on the
		cells of the quantile lattice $\{l/q\}$, and for every law $f$,
		\begin{equation}
			f_{(p/q)\cdot 4/\pi} = w_{p/q}(F)\,f,
			\label{eq:mosaic-histogram}
		\end{equation}
		a histogram reweighting of $f$ on at most $q$ of its own quantile
		cells, of total mass one. In particular the rational-time orbit of
		every law consists of explicitly computable histogram modulations
		of that law.
	\end{theorem}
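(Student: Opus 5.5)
The plan is to work entirely with the Fourier series~\eqref{eq:carpet-seed} on the doubled circle $\mathbb{T}_{2}$ and to separate the time dependence from the space dependence through the periodicity of the phase in $n$.

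\emph{First step.} Since $e^{-2\pi i(n+q)^{2}p/q} = e^{-2\pi i n^{2}p/q}\,e^{-2\pi i(2nq+q^{2})p/q}$ and the last factor is $1$, the map $\chi(n) = e^{-2\pi i n^{2}p/q}$ is $q$-periodic on $\mathbb{Z}$. Finite Fourier inversion on $\mathbb{Z}/q$ then writes
\begin{equation*}
  \chi(n) = \sum_{r=0}^{q-1}\hat c(r)\,e^{-2\pi i rn/q},
\end{equation*}
with $\hat c(r)$ as displayed in~\eqref{eq:carpet-revival}; I would check the sign convention against that display.

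\emph{Second step.} Insert this expansion into $u_{p/q}(z)=\sum_{n\ \mathrm{odd}} c_{n}\chi(n)e^{i\pi nz}$. The factor $e^{-2\pi i rn/q}e^{i\pi nz}$ equals $e^{i\pi n(z-2r/q)}$, so the $r$-th term is the seed $u_{0}$ translated by $2r/q$. Interchanging the finite sum over $r$ with the series over $n$ is justified in $L^{2}(\mathbb{T}_{2})$, since $c_{n}=O(1/n)$ and the sum over $r$ is finite. This gives~\eqref{eq:carpet-revival}. The identification of the $\hat c(r)$ as generalized quadratic Gauss sums is only a matter of naming.

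\emph{Third step: cell structure.} The function $u_{0}$ is the odd square wave, equal to $\pm1$ and constant on $(0,1)$ and on $(1,2)$ modulo $2$. Hence each translate $u_{0}(\cdot-2r/q)$ is piecewise constant, with jumps only at points of $\tfrac{2r}{q}+\mathbb{Z}$. Restricted to $z\in(0,1)$, these jumps lie in the lattice $\{l/q\}$, because $2r/q$ reduced modulo $1$ is a multiple of $1/q$. So $u_{p/q}$, and with it $w_{p/q}=|u_{p/q}|^{2}$, is constant on each cell $(l/q,(l+1)/q)$.

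The only real care point is the parity bookkeeping. The $2r/q$ shifts live on $\mathbb{T}_{2}$, but the cells are read on $(0,1)$. I would also remark that at most $q$ cells carry distinct values; when $q$ is even the shifts only realize the sublattice $\{2l/q\}$ modulo $2$, so fewer cells actually occur.

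\emph{Fourth step: transport to laws.} Translating back to the original time, $s=p/q$ corresponds to $\tau=(p/q)\cdot 4/\pi$. The definition~\eqref{eq:carpet-flow} then gives $f_{\tau}=w_{p/q}(F)\,f$ directly, which is~\eqref{eq:mosaic-histogram}. Unit mass follows from the unitarity remark after~\eqref{eq:carpet-flow}, or equivalently from the pushforward identity $\int w(F)f=\int_{0}^{1}w$. Since $w_{p/q}$ is a step function on the quantile cells $F^{-1}((l/q,(l+1)/q))$, the image is a histogram reweighting of $f$ on those cells, which completes the proof.

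I do not expect a genuine obstacle. The hardest part is the convergence and interchange in the second step, handled by the $L^{2}$ argument there, together with the lattice bookkeeping in the third step.
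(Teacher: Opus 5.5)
Your proof is correct and follows the paper's argument essentially line by line: periodicity of $\chi$ from $(n+q)^{2}-n^{2}=2nq+q^{2}$, the character expansion on $\mathbb{Z}/q\mathbb{Z}$ turned into translations by $2r/q$ on $\mathbb{T}_{2}$, jumps confined to $\tfrac{2r}{q}+\mathbb{Z}$ and hence to $\{l/q\}$ inside $(0,1)$, and unit mass from unitarity of the dilation. Your additions are accurate refinements that the paper leaves implicit: the sign-convention check, the $L^{2}$ justification of the interchange, and the remark that for even $q$ the jump set coarsens to $\{2l/q\}$.
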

	
	\begin{proof}
		Periodicity: $(n+q)^{2} - n^{2} = 2nq + q^{2}$, and
		$p(2nq + q^{2})/q = 2np + pq \in \mathbb{Z}$ for every parity of
		$q$. Expanding $\chi$ in the characters of
		$\mathbb{Z}/q\mathbb{Z}$ and substituting into the
		series~\eqref{eq:carpet-seed} converts the character
		$e^{2\pi irn/q}$ into the translation by $2r/q$ on
		$\mathbb{T}_{2}$, which is the superposition
		formula~\eqref{eq:carpet-revival}. The jumps of $u_{0}$ lie on the
		integers, so those of the superposition lie on the lattice
		$\{2r/q\} \cup \{1 + 2r/q\}$ modulo $2$, which meets $(0,1)$ inside
		$\{l/q\}$; the squared modulus inherits the cells, and unit mass is
		the unitarity of the dilation.
	\end{proof}
	
	Two instances display the arithmetic. At $s = \tfrac13$ the weight
	takes the values
	\begin{equation}
		w_{1/3} = \Big(\tfrac13,\ \tfrac73,\ \tfrac13\Big)
		\quad\text{on the tercile cells:}
		\label{eq:tercile-mosaic}
	\end{equation}
	at time $\tau = 4/3\pi$ every law concentrates seven ninths of its
	mass on its middle tercile. At $s = \tfrac{1}{16}$, the phases
	depend on $n$ only through the Jacobi symbol,
	$\chi(n) = e^{-i\pi/8}\left(\tfrac{2}{n}\right)$, and summing the
	four translates of the Gauss expansion modulo $8$ gives exactly
	\begin{equation}
		u_{1/16}\big|_{(0,1)}
		= e^{-i\pi/8}\,\sqrt2\;\mathbf{1}_{(\frac14,\frac34)},
		\qquad
		w_{1/16} = \big(0,\ 2,\ 2,\ 0\big)
		\quad\text{on the quartile cells}:
		\label{eq:quartile-blackout}
	\end{equation}
	one sixteenth of the way through its period, every law extinguishes
	its outer quartiles and doubles its interquartile body, the mosaic
	being governed by the quadratic residues modulo $8$. The
	amplitude in the display~\eqref{eq:quartile-blackout} is, up to
	phase and recentering, the generator
	$g = \mathbf{1}_{(1/4,3/4)} - \mathbf{1}_{(0,1/4)\cup(3/4,1)}$ of
	the irreducibility theorem, Theorem~\ref{thm:irreducible}: the
	carpet flow passes through the witness of irreducibility at the
	sixteenth of its period.
	
	\begin{remark}[The quantile lattice, twice]
		The lattice $\{l/q\}$ is by now a familiar attractor: coherent
		iteration of harmonic kernels crystallizes every law onto the odd
		quantiles of the limit~\eqref{eq:crystallization}, the optimal
		equal-mass quantizer, and incoherent phases produce the systematic
		sample of Theorem~\ref{thm:systematic}. The unitary axis realizes
		the same lattice a third way, as the revival skeleton of rational
		times. Statistically, a piecewise-constant rank reweighting is a
		histogram alternative in the sense of goodness-of-fit theory, while
		the trigonometric kernels of the derangetropy family are exactly
		Neyman's smooth alternatives~\cite{neyman1937,ledwina1994}; the
		mosaic theorem says the unitary orbit of the identity passes
		through histogram alternatives at every rational time, with
		Gauss-sum weights. What it does at every other time is the subject
		of the remainder of this section.
	\end{remark}
	
	Before the fractal theory, the information ledger of the unitary
	axis, which is exactly computable and exactly bounded.
	
	\begin{theorem}[Bounded breathing of information]\label{thm:carpet-information}
		For every $f \in \mathcal{D}_{+}$:
		\begin{enumerate}[label=\normalfont(\roman*), leftmargin=*]
			\item the time-averaged carpet is the tent modulation,
			\begin{equation}
				2\pi\int_{0}^{1/2\pi} f_{\tau}\,d\tau
				= \rho_{\Lambda}[f],
				\qquad
				\Lambda(z) = 4\min(z,\,1-z);
				\label{eq:carpet-tent}
			\end{equation}
			\item the time-averaged chi-square divergence from the base law is
			universal,
			\begin{equation}
				2\pi\int_{0}^{1/2\pi}
				\chi^{2}\big(f_{\tau}\,\|\,f\big)\,d\tau
				= \frac{2}{3},
				\qquad
				\chi^{2}\big(f_{\tau}\|f\big)
				= \int_{0}^{1}\big(w_{s} - 1\big)^{2}dz;
				\label{eq:carpet-chi}
			\end{equation}
			\item information breathes but never grows:
			$D(f_{\tau}\,\|\,f) \le \log\big(1 + \chi^{2}(f_{\tau}\|f)\big)$,
			and $\sup_{\tau}\chi^{2}(f_{\tau}\|f)$ is a finite absolute
			constant.
		\end{enumerate}
	\end{theorem}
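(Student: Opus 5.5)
The plan is to push everything into rank space, as the universality principle~\eqref{eq:universality} allows. By the definition~\eqref{eq:carpet-flow}, $f_{\tau} = w_{s}(F)\,f$ with $s = \pi\tau/4$. Hence any time average of $f_{\tau}$ is $\rho_{\bar w}[f]$, where $\bar w$ is the corresponding time average of $w_{s}$. Likewise
\begin{equation*}
\chi^{2}(f_{\tau}\|f) = \int_{\mathbb{R}}(w_{s}(F)-1)^{2}f\,dx = \int_{0}^{1}(w_{s}-1)^{2}\,dz,
\end{equation*}
and $D(f_{\tau}\|f) = \int_{0}^{1}w_{s}\log w_{s}\,dz$. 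So all three claims are statements about the single kernel $w_{s} = |u_{s}|^{2}$ and are independent of $f$. The window $\tau\in[0,1/2\pi]$ corresponds to $s\in[0,\tfrac18]$, which is exactly one observable period by Proposition~\ref{prop:carpet-clock}. I will write $c_{n} = 2/(i\pi n)$ for odd $n\in\mathbb{Z}$, and
\begin{equation*}
w_{s}(z) = \sum_{n,m}c_{n}\overline{c_{m}}\,e^{-2\pi i(n^{2}-m^{2})s}\,e^{i\pi(n-m)z}.
\end{equation*}
The key arithmetic fact, already used for the clock, is that $n^{2}-m^{2}\in 8\mathbb{Z}$ for odd $n,m$. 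Averaging over $s\in[0,\tfrac18]$ therefore kills every term with $n^{2}\neq m^{2}$ and keeps exactly the pairs $m=\pm n$. I will justify exchanging the average with the double series by working with the truncations $P_{\le N}u_{s}$: these converge in $L^{2}(0,1)$ uniformly in $s$, by unitarity, and the quadratic form is continuous under that convergence.

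For (i), the diagonal $m=n$ contributes $\sum|c_{n}|^{2} = \sum_{n\,\mathrm{odd}}4/(\pi^{2}n^{2}) = 1$. The antidiagonal $m=-n$ contributes $c_{n}\overline{c_{-n}}\,e^{2\pi inz} = -\tfrac{4}{\pi^{2}n^{2}}e^{2\pi inz}$. Together they give
\begin{equation*}
\bar w(z) = 1 - \frac{8}{\pi^{2}}\sum_{n\ge1\ \mathrm{odd}}\frac{\cos(2\pi nz)}{n^{2}}.
\end{equation*}
The classical triangle-wave series $\sum_{\mathrm{odd}}\cos(nx)/n^{2} = \pi^{2}/8 - \pi|x|/4$ on $[-\pi,\pi]$, applied at $x = 2\pi z$ reduced to distance from $2\pi\mathbb{Z}$, turns $\bar w$ into $4\min(z,1-z)$. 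This is the tent~\eqref{eq:carpet-tent}.

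For (ii), I will compute the period average of $\int_{0}^{1}w_{s}^{2}\,dz$. This equals the average of $\tfrac12\int_{\mathbb{T}_{2}}|u_{s}|^{4}$, since $w_{s}$ is even under $z\mapsto -z$ on $\mathbb{T}_{2}$ (this is $\overline{c_{n}} = c_{-n}$ together with the median symmetry). Expanding the quartic, the $z$-integral imposes $n_{1}-n_{2}+n_{3}-n_{4} = 0$, and the $s$-average, again by the mod-$8$ fact, imposes $n_{1}^{2}-n_{2}^{2}+n_{3}^{2}-n_{4}^{2} = 0$. These two relations force $\{n_{1},n_{3}\} = \{n_{2},n_{4}\}$, which is the standard resonance reduction for the cubic NLS on the torus. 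Counting the two pairings and correcting for the double count on the diagonal gives $2(\sum|c_{n}|^{2})^{2} - \sum|c_{n}|^{4}$. Here
\begin{equation*}
\sum_{n\,\mathrm{odd}\in\mathbb{Z}}|c_{n}|^{4} = \frac{32}{\pi^{4}}\sum_{n\ge1\,\mathrm{odd}}\frac{1}{n^{4}} = \frac{32}{\pi^{4}}\cdot\frac{\pi^{4}}{96} = \frac13,
\end{equation*}
so the average of $\int_{0}^{1}w_{s}^{2}$ is $2-\tfrac13 = \tfrac53$. Subtracting $1$, because $\int_{0}^{1}w_{s} = 1$, yields~\eqref{eq:carpet-chi}. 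The interchange of the $s$-average with the quartic sum I will handle by proving the identity for $P_{\le N}u_{s}$ and passing to the limit in $L^{4}_{s,z}$. The limit passage uses the uniform bound from (iii); alternatively the periodic $L^{4}$ Strichartz estimate of Zygmund/Bourgain controls it at the time-averaged level.

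For (iii), the inequality $D\le\log(1+\chi^{2})$ is Jensen's inequality applied to the concave logarithm: $\int w\log w\,dz \le \log\int w^{2}\,dz$. The main obstacle is the uniform bound $\sup_{s}\chi^{2}(f_{\tau}\|f)<\infty$, which is a pointwise-in-time statement. My first attempt will be to show $\sup_{s,z}|u_{s}(z)|<\infty$ through the classical uniform boundedness of the partial sums $\sum_{0<|n|\le N}e^{i(n^{2}t+nx)}/n$ in $(x,t,N)$, due to Hardy--Littlewood and in sharp form to Oskolkov. The odd-only restriction and the prefactor $2/(i\pi)$ do not affect this: the odd-only sum is a difference of the full sum and a rescaled even-index sum, each bounded by the same theorem. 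With $\|w_{s}\|_{\infty}\le C^{2}$ and $\int_{0}^{1}w_{s}=1$ we get $\chi^{2}\le\int_{0}^{1}w_{s}^{2}\le C^{2}$, an absolute constant independent of $f$ and $\tau$. If the citation turns out to need adaptation, the fallback is the weaker bound $\|u_{s}\|_{L^{4}(\mathbb{T}_{2})}\le C$ uniformly in $s$. I would obtain it by splitting $u_{s}$ into dyadic blocks $P_{N}$ and using Weyl-sum estimates of the quadratic phases on each block. Since $\chi^{2}\le\int_{0}^{1}w_{s}^{2} = \tfrac12\|u_{s}\|_{L^{4}(\mathbb{T}_{2})}^{4}$, this $L^{4}$ bound is all (iii) needs.
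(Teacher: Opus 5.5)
Your argument is correct. Parts (i), (ii) and the Jensen step of (iii) coincide with the paper's proof: only the static frequencies $m=\pm n$ survive the time average, resonant quadruples reduce to equal pairs, and the count $2(\sum|c_n|^2)^2-\sum|c_n|^4=\tfrac53$ follows. Your truncation argument also supplies the interchange of averages that the paper leaves implicit. The $L^4_{s,z}$ passage needs no Strichartz citation: the same resonance identity applied to $P_{\le N}u-P_{\le M}u$ bounds its fourth power by $2(\sum_{M<|n|\le N}|c_n|^2)^2$, so the truncations are Cauchy in $L^4_{s,z}$.

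One small correction. The identity $\int_0^1 w_s^2\,dz=\tfrac12\int_{\mathbb{T}_2}|u_s|^4$ holds because $w_s$ is $1$-periodic: only odd modes occur, so all spatial frequencies $n-m$ are even. Equivalently, $u_s$ is odd since $c_{-n}=-c_n$. The relation $\overline{c_n}=c_{-n}$ that you cite is time reversal and is not what is needed.

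Where you genuinely differ from the paper is the uniform bound in (iii).
\begin{itemize}
\item The paper stays phase-blind on the Fourier side. For every $s$ one has $|\widehat{w_s}(m)|\le\sum_n|c_nc_{n-2m}|=O(\log m/m)$, so $\int_0^1w_s^2\,dz\le\sum_k\bigl(\sum_n|c_nc_{n-k}|\bigr)^2<\infty$. This is in effect Young's inequality with $(c_n)\in\ell^{4/3}$.
\item You instead invoke the Oskolkov (Arkhipov--Oskolkov) uniform boundedness of $\sum_{0<|n|\le N}e^{i(tn^2+xn)}/n$, which your odd/even splitting correctly makes applicable. This gives the stronger conclusion $\sup_s\|w_s\|_\infty<\infty$, and hence uniform bounds on every R\'enyi-type divergence of the carpet flow. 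The cost is a deep arithmetic theorem that (iii) does not need.
\end{itemize}

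Your fallback is also easier than you suggest, and no Weyl sums are required. The trivial block bound $\|P_Nu_s\|_\infty\le\sum_{n\in\mathrm{block}}|c_n|=O(1)$, interpolated with $\|P_Nu_s\|_{L^2(\mathbb{T}_2)}\asymp N^{-1/2}$, gives $\|P_Nu_s\|_{L^4}=O(N^{-1/4})$. This is summable over dyadic $N$, uniformly in $s$.
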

	
	\begin{proof}
		All three statements live in rank space, since by the universality
		principle~\eqref{eq:universality} the divergences depend only on
		$w_{s}$. For part (i), the time average retains exactly the static
		frequencies: the coefficient of $e^{2\pi imz}$ in $w_{s}$ is
		$\sum_{n} c_{n}\overline{c_{n-2m}}\,e^{-2\pi i\cdot 4m(n-m)s}$, and
		the frequency $4m(n-m)$ vanishes precisely at $n = m$, admissible
		only for odd $m$, with static weight
		$c_{m}\overline{c_{-m}} = -4/(\pi^{2}m^{2})$. Hence
		\begin{equation*}
			\int_{0}^{1}w_{s}\,ds\;(z)
			= 1 - \frac{8}{\pi^{2}}\sum_{m\ \mathrm{odd},\,\ge1}
			\frac{\cos(2\pi mz)}{m^{2}}
			= 4\min(z,\,1-z),
		\end{equation*}
		the classical triangular-wave series. For part (ii), expand
		$\int_{0}^{1}w_{s}^{2}\,dz$ by Parseval and average in $s$: a
		quadruple $(n_{1},n_{2},n_{3},n_{4})$ of odd modes survives both
		the spatial pairing and the time average precisely when
		$n_{1} + n_{3} = n_{2} + n_{4}$ and
		$n_{1}^{2} + n_{3}^{2} = n_{2}^{2} + n_{4}^{2}$, and equal sums
		with equal sums of squares force equal pairs
		$\{n_{1},n_{3}\} = \{n_{2},n_{4}\}$. The two pairings give
		\begin{equation*}
			\int_{0}^{1}\!\!\int_{0}^{1}w_{s}^{2}\,dz\,ds
			= 2\Big(\sum_{n}|c_{n}|^{2}\Big)^{2} - \sum_{n}|c_{n}|^{4}
			= 2 - \frac13 = \frac53,
		\end{equation*}
		since $\sum|c_{n}|^{2} = 1$ and
		$\sum|c_{n}|^{4} = (32/\pi^{4})\sum_{\mathrm{odd}}n^{-4} = \tfrac13$;
		subtracting the squared mass gives the value~\eqref{eq:carpet-chi}.
		For part (iii), Jensen's inequality with respect to the probability
		measure $w_{s}\,dz$ gives
		$\int w_{s}\log w_{s} \le \log\int w_{s}^{2}$, and the uniform
		bound follows from
		$\int_{0}^{1}w_{s}^{2}\,dz
		\le \sum_{k}\big(\sum_{n}|c_{n}c_{n-k}|\big)^{2}$, a convergent
		series independent of $s$.
	\end{proof}
	
	\begin{remark}[Three ledgers]
		The three dynamical axes now carry three information ledgers.
		Coherent iteration pays one bit per step forever, by the
		identities~\eqref{eq:kl-identities}, retaining only the Koenigs
		entropy through the asymptotics~\eqref{eq:divergence-entropy};
		randomized iteration pays the same bit in quenched mean with an
		exactly linear ledger, by the law~\eqref{eq:random-entropy}; the
		unitary axis pays nothing at all in the long run, its divergence a
		bounded palindromic function of time with universal mean-square
		level $\tfrac23$, the identity~\eqref{eq:carpet-chi}. Reversibility,
		in this calculus, is visible directly in the bookkeeping: on the
		unitary axis complexity cannot appear as entropy production, and
		the theorems below show it appears instead as geometry, in the form
		of fractal dimension. The tent identity~\eqref{eq:carpet-tent} is a
		further point of contact with the canonical theory: the tent kernel
		$\Lambda$ is the piecewise-linear companion of the canonical kernel
		$2\sin^{2}(\pi z)$, with the same unit mass, the same endpoint
		zeros, and the same peak of height two at the median, so the
		time-averaged carpet is a canonical-type modulation sharpened to a
		corner.
	\end{remark}
	
	At irrational times the mosaics give way to fractals, and the
	remainder of the section proves the dichotomy in its sharp form.
	The two mechanisms are conservation and generic smoothing. On one
	hand the evolution is a unimodular Fourier multiplier, so
	$|\widehat{u_{s}}(n)| = 2/(\pi|n|)$ for every $s$ and every Sobolev
	norm is frozen: in particular
	$\sum_{|n|\le N}|n|\,|\widehat{u_{s}}(n)|^{2} \asymp \log N$, and
	$u_{s}$ lies outside $H^{1/2}(\mathbb{T}_{2})$ at every time, the
	roughness of the seed's jump being indestructible. On the other
	hand, at almost every time the roughness is spread rather than
	concentrated.
	
	\begin{lemma}[Generic smoothing]\label{lem:carpet-smoothing}
		For almost every $s$: $u_{s}$ is continuous, and for every
		$\varepsilon > 0$ there is $C_{s,\varepsilon}$ with
		\begin{equation}
			\Vert P_{N}u_{s}\Vert_{\infty}
			\le C_{s,\varepsilon}\,N^{-1/2+\varepsilon}
			\quad\text{for all dyadic } N,
			\label{eq:carpet-blocks}
		\end{equation}
		so that $u_{s}$ and $w_{s}$ belong to
		$C^{1/2-\varepsilon}(\mathbb{T}_{2})$ for every $\varepsilon > 0$.
		At every rational $s$ the conclusion fails: $u_{s}$ is a step
		function.
	\end{lemma}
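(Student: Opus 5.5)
The plan is to prove the block bound~\eqref{eq:carpet-blocks} first and to read off continuity and H\"older regularity from it. On $\mathbb{T}_{2}$ the dyadic block is
\begin{equation*}
	P_{N}u_{s}(z) = \sum_{N\le |n|<2N,\ n\ \mathrm{odd}}\frac{2}{i\pi n}\,e^{-2\pi i n^{2}s}\,e^{i\pi nz}.
\end{equation*}
Summation by parts in $n$ removes the weight $1/n$ at the cost of a factor $N^{-1}$. This reduces the block bound to a uniform estimate on incomplete quadratic Weyl sums,
\begin{equation*}
	\sup_{z}\ \sup_{M\le 2N}\Big|\sum_{N\le n<M,\ n\ \mathrm{odd}} e^{-2\pi i n^{2}s + i\pi nz}\Big| \;\le\; C_{s,\varepsilon}\,N^{1/2+\varepsilon}.
\end{equation*}
The odd-$n$ restriction is handled by writing $n = 2m+1$. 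This turns the sum into a quadratic Weyl sum in $m$ with leading coefficient $4s$ and a linear phase absorbing $z$ and the cross term. The estimate must hold uniformly in that linear phase.

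The main step is this Weyl-sum estimate for almost every $s$. I would try Weyl differencing (van der Corput) first. Squaring the sum and differencing gives
\begin{equation*}
	|S|^{2}\le N+\sum_{0<|h|<N}\min\big(N,\ \|8hs\|^{-1}\big),
\end{equation*}
and this bound is uniform in the linear phase. The right side is controlled by the continued-fraction denominators of $8s$. For almost every $s$, Khinchin's theorem gives $a_{k+1}\le q_{k}^{\varepsilon}$ eventually, so $\|q s\|\gg q^{-1-\varepsilon}$. I will pick the denominator $q_{k}$ with $q_{k}\approx N$. The standard estimate then gives
\begin{equation*}
	\sum_{h<N}\min\big(N,\|8hs\|^{-1}\big)\ll (N/q_{k}+1)(N+q_{k}\log q_{k})\ll N^{1+\varepsilon},
\end{equation*}
hence $|S|\ll N^{1/2+\varepsilon}$.

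I expect this step to be the main obstacle. The difficulty is making the exceptional set of $s$ null and independent of $\varepsilon$ and of $N$. I would take a countable intersection over rational $\varepsilon$. Alternatively, I would invoke the almost-everywhere bound for $\sup_{x}|\sum e(n^{2}s+nx)|$ due to Fiedler, Jurkat and K\"orner, which gives $N^{1/2}\log^{1/2+\varepsilon}N$ for almost every $s$. That bound is slightly better and could be cited to shortcut the differencing argument.

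Given~\eqref{eq:carpet-blocks}, the remaining steps are routine.
\begin{itemize}
	\item Summing dyadic blocks gives absolute uniform convergence of the Littlewood--Paley decomposition, so $u_{s}$ is continuous.
	\item The block characterization of Besov spaces, $\Vert P_{N}u\Vert_{\infty}\lesssim N^{-\alpha}$ with $\alpha=\tfrac12-\varepsilon$ not an integer, places $u_{s}$ in $C^{1/2-\varepsilon}(\mathbb{T}_{2})$.
	\item Since $w_{s}=|u_{s}|^{2}$ and $C^{\alpha}$ is an algebra under products with bounded functions, $w_{s}$ is in the same class.
\end{itemize}

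Finally, the failure at rational times is read directly from Theorem~\ref{thm:quantile-mosaic}. By~\eqref{eq:carpet-revival}, $u_{p/q}$ is a finite combination of translated square waves. The Gauss-sum coefficients $\hat c(r)$ satisfy Parseval, $\sum_r|\hat c(r)|^{2}=1$, so they are not all zero. Hence $u_{p/q}$ has at least one genuine jump and is discontinuous.

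Rigor requires one further check: that the jumps do not all cancel. The total jump size is $\sum_{r}|\hat c(r)|^{2}$ times positive constants in an $L^{2}$ sense, because $\Vert u_{p/q}\Vert_{2}=1$ while the Fourier coefficients keep modulus $2/(\pi|n|)$. A function with coefficients of exact size $\asymp 1/|n|$ cannot be continuous and piecewise constant at once, since a continuous piecewise constant function on the circle is constant. This gives the discontinuity directly.
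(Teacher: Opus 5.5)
Your proposal is correct and follows essentially the paper's route: Abel summation against the weights $2/(\pi n)$, the substitution $n=2m+1$ giving a quadratic Weyl sum in $4s$, a Gauss--Weyl bound uniform in the linear phase secured for almost every $s$ by a denominator of size between $N^{1-\varepsilon}$ and $N$ (your Khinchin bound on partial quotients plays the role of the paper's Dirichlet-plus-Borel--Cantelli selection), then the block characterization of $C^{1/2-\varepsilon}$ and the product rule for $w_{s}=|u_{s}|^{2}$. Your additional check at rational times, that the finite superposition of translated square waves cannot be continuous because its Fourier coefficients keep modulus $2/(\pi|n|)$ and a continuous piecewise constant function on the circle is constant, supplies a step the paper leaves implicit.
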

	
	The proof, in Appendix~\ref{app:proofs}, rests on the classical
	Gauss--Weyl bounds for quadratic exponential
	sums~\cite{montgomery1994} along the continued-fraction convergents
	of $s$, which are of balanced size for almost every time. The next
	lemma is the heart of the theory: it identifies exactly where the
	fine-scale energy of the carpet lives. Recall that the block
	$P_{N}u_{s}$ carries total squared mass
	$\asymp N^{-1}$, the frozen Sobolev tail.
	
	\begin{lemma}[Equidistribution of block energy]\label{lem:block-equi}
		Let $I \subseteq (0,1)$ be an interval. For almost every $s$, along
		dyadic $N \to \infty$:
		\begin{equation}
			N\,\big\Vert P_{N}u_{s}\big\Vert_{L^{2}(I)}^{2}
			\;\longrightarrow\; \frac{2}{\pi^{2}}\,|I|,
			\qquad
			N\int_{I}\big(\mathrm{Re}\,[e^{i\theta}P_{N}u_{s}]\big)^{2}dz
			\;\longrightarrow\; \frac{1}{\pi^{2}}\,|I|
			\ \text{ uniformly in }\theta,
			\label{eq:block-equi-amp}
		\end{equation}
		while the blocks of the density obey the self-referential law
		\begin{equation}
			N\,\big\Vert P_{2N}^{\mathbb{Z}}\,w_{s}\big\Vert_{L^{2}(I)}^{2}
			\;\longrightarrow\; \frac{2}{\pi^{2}}\int_{I}w_{s}(z)\,dz,
			\label{eq:block-equi-density}
		\end{equation}
		where $P_{2N}^{\mathbb{Z}}$ retains the spatial frequencies
		$|k| \in [2N, 4N)$ of $w_{s}$.
	\end{lemma}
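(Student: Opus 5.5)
The plan is a second-moment computation in the time variable $s$, followed by Borel--Cantelli along dyadic $N$. Write $c_{n} = 2/(i\pi n)$ and $e(x) = e^{2\pi i x}$, so that $P_{N}u_{s}(z) = \sum_{n\in B_{N}} c_{n}\,e(-n^{2}s)\,e^{i\pi nz}$, where $B_{N}$ is the set of odd $n$ with $|n|\in[N,2N)$. Expanding the square over the interval $I$ gives
\begin{equation*}
N\Vert P_{N}u_{s}\Vert_{L^{2}(I)}^{2}
= N|I|\sum_{n\in B_{N}}|c_{n}|^{2}
+ N\sum_{\substack{n\ne m\\ n,m\in B_{N}}} c_{n}\overline{c_{m}}\,
e\big((m^{2}-n^{2})s\big)\,\widehat{\mathbf 1_{I}}(m-n),
\end{equation*}
with $\widehat{\mathbf 1_{I}}(h) = \int_{I}e^{-i\pi hz}\,dz$. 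The diagonal term is deterministic. Since $\sum_{n\in B_{N}}4/(\pi^{2}n^{2}) = 2/(\pi^{2}N) + O(N^{-2})$, it already equals $\tfrac{2}{\pi^{2}}|I| + O(N^{-1})$. So everything reduces to showing that the off-diagonal sum $E_{N}(s)$ tends to $0$ for almost every $s$.

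For the off-diagonal sum, I will set $h = m-n \neq 0$. The time frequency is then $v = h(2n+h)$ and the weight is bounded by $|c_{n}c_{m}\widehat{\mathbf 1_{I}}(h)| \lesssim N^{-2}|h|^{-1}$. By Parseval in $s\in[0,1]$, $\int_{0}^{1}|E_{N}|^{2}\,ds$ is $N^{2}$ times $\sum_{v}\big|\sum_{(h,n)\mapsto v}a_{h,n}\big|^{2}$. By Cauchy--Schwarz over each fibre, this is at most $N^{2}\max_{v}r(v)\sum_{h,n}|a_{h,n}|^{2}$, where $r(v)$ is the number of pairs $(h,n)$ with $h(2n+h)=v$. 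Since $|v|\lesssim N^{2}$, the divisor bound gives $r(v)\le N^{\varepsilon}$. Moreover $\sum_{h,n}|a_{h,n}|^{2}\lesssim N^{-4}\cdot N\sum_{h}h^{-2}\lesssim N^{-3}$. Hence $\int_{0}^{1}|E_{N}|^{2}\,ds\lesssim N^{-1+\varepsilon}$. This is summable over dyadic $N$, so Chebyshev and Borel--Cantelli give $E_{N}(s)\to0$ almost surely.

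For the quadrature statement, write $(\mathrm{Re}\,e^{i\theta}P)^{2} = \tfrac12|P|^{2} + \tfrac12\mathrm{Re}\big(e^{2i\theta}P^{2}\big)$. The first half gives $\tfrac{1}{\pi^{2}}|I|$ by the previous step. In the second half, $\theta$ enters only through a unimodular factor, so uniformity in $\theta$ is free, and it suffices to show $N\int_{I}P_{N}u_{s}^{2}\to0$. The pairs with $m=-n$ contribute the weighted Weyl sum $N|I|\sum_{n} c_{n}c_{-n}\,e(-2n^{2}s)$, whose $s$-mean square is $\lesssim N^{2}\sum_{n}n^{-4}\lesssim N^{-1}$. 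The pairs with $m\neq-n$ have spatial frequency $n+m\neq0$ and are handled by the same divisor-counting second-moment bound. Borel--Cantelli again closes the argument.

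The density statement \eqref{eq:block-equi-density} is the step I expect to be the main obstacle. I would split $u_{s} = u_{<} + u_{>}$ at frequency $\eta N$ with $\eta$ small. Modulo negligible terms, the spatial block $[2N,4N)$ of $w_{s} = |u_{s}|^{2}$ comes from products $u_{<}\cdot\overline{P_{N'}u_{s}}$ and their conjugates, with $N'\sim N$ (the block of $w_{s}$ at frequency $k$ pairs modes $n,n-2k$). The products $u_{>}\bar u_{>}$ are controlled by Lemma~\ref{lem:carpet-smoothing}, since $\Vert P_{N}u_{s}\Vert_{\infty}\lesssim N^{-1/2+\varepsilon}$ and the tail is summable. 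Next I use the almost-sure continuity of $u_{s}$ from the same lemma. On subintervals of length $\delta$ I can freeze $u_{<}\approx u_{s}(z_{j})$, and apply the amplitude equidistribution just proved on each subinterval. A countable dense family of intervals suffices, so a single null set serves all of them. This produces a weighted limit $\sum_{j}|u_{s}(z_{j})|^{2}\cdot\tfrac{2}{\pi^{2}}|I_{j}|\to\tfrac{2}{\pi^{2}}\int_{I}w_{s}$. The delicate points are two. First, the bookkeeping of which blocks $N'$ feed the $w$-block $[2N,4N)$ must reproduce exactly the constant $2/\pi^{2}$, including the cross term $\mathrm{Re}(\bar u^{2}P^{2})$, which should vanish by the quadrature step. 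Second, the frozen-coefficient approximation must be shown uniform as $\eta,\delta\to0$. I would first verify the constant on the coherent-phase model $u_{s}\equiv$ const before doing the general approximation.
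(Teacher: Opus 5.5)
Your amplitude argument is the paper's own: expand against $\mathbf 1_I$, bound the $s$-variance by Parseval with divisor counts for $m^2-n^2$ (and for $n^2+m^2$ in the quadrature term), then apply Borel--Cantelli along dyadic $N$. One step fails as written. The fibre $v=0$ is not divisor-bounded: every antidiagonal pair $m=-n$ has $m^2-n^2=0$, so $r(0)\asymp N$, and $\max_v r(v)\le N^{\varepsilon}$ is false. Split these stationary terms off, as the paper does in its mean computation. Each has size $|c_n\overline{c_{-n}}\,\widehat{\mathbf 1_I}(-2n)|\lesssim N^{-3}$, so together they add a deterministic $O(N^{-1})$ to $E_N$. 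With Cauchy--Schwarz done fibre by fibre, your bound $\int_0^1|E_N|^2\,ds\lesssim N^{-1+\varepsilon}$ stands.

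For the density law you take a genuinely different route, and it works.

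\emph{The paper's route.} It cuts at $K=\lceil\sqrt N\,\rceil$ and expands $|P_{<K}u_s|^2=\sum_{|j|<2K}V_je^{i\pi jz}$ with $\sum|V_j|\lesssim\log^2N$. It must therefore equidistribute the window block against the whole modulated family $\chi_Ie^{i\pi jz}$, $|j|\le2\sqrt N$. That needs a variance bound uniform in $j$ and a union bound over $O(\sqrt N)$ tests. The both-high remainder is removed by an exact $s$-average and Markov's inequality.

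\emph{Your route.} You cut at $\eta N$ (take $\eta$ a negative power of two). You remove the both-high part deterministically by Lemma~\ref{lem:carpet-smoothing}, since $N\|u_>\|_\infty^2\|u_>\|_2^2\lesssim\eta^{-2}N^{-1+2\varepsilon}$. You then freeze the low factor, which needs only the unmodulated amplitude and quadrature laws on rational intervals.

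Both of your ``delicate points'' close with tools you already cite:
\begin{itemize}
\item Summing the block bound gives $\|u_<\|_\infty\le C_s$ and $\|u_s-u_<\|_\infty\lesssim(\eta N)^{-1/2+\varepsilon}$. So freezing costs $O(C_s\,\omega_{u_s}(\delta))$, uniformly in $N$.
\item The window pieces within $\eta N$ of frequencies $2N$ and $4N$ carry $L^2$ mass $O(\eta/N)$. They shift the normalized quantity by $O(C_s^2\sqrt\eta)$.
\end{itemize}
Take $\limsup_N$ first, then let $\delta,\eta\to0$.

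The constant, however, depends on a convention you left open. It equals $2/\pi^2$ only with the paper's pairing $(n,n-k)$, where $k$ is the frequency of $e^{i\pi kz}$ on $\mathbb T_2$. Then the fed block is exactly $P_{2N}u_s$, and your amplitude law gives $N\|P_{2N}u_s\|^2_{L^2(J)}\to|J|/\pi^2$. The identity $|h\bar u_<+u_<\bar h|^2=2|u_<|^2|h|^2+2\operatorname{Re}(\bar u_<^2h^2)$ supplies the factor two, and the second term vanishes by your quadrature step. With your parenthetical pairing $(n,n-2k)$ the fed block is $P_{4N}u_s$, and the same computation yields $\frac{1}{\pi^2}\int_Iw_s$. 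No coherent-phase model check is needed, but the convention must be fixed.

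As for what each route buys: yours is shorter but leans on the Gauss--Weyl sup bound and the continuity of $u_s$. The paper's stays inside $L^2$ arithmetic, using only $P_{<K}u_s\to u_s$ in $L^2$ and deterministic $\ell^1$ bounds on the low coefficients.
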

	
	The amplitude statement says the fine structure of the wave is
	spatially homogeneous: every rank interval holds its Lebesgue share
	of every high block, in every quadrature, with the sharp constant
	$2/\pi^{2}$. The density statement is of a different nature: the
	fine-scale energy of the observable is distributed not according to
	Lebesgue measure but according to \emph{the observable itself}. The
	carpet weaves its fractal detail in proportion to its own local
	mass, a self-similarity in mean that is proved in
	Appendix~\ref{app:proofs} by a paraproduct decomposition: the high
	block of $|u_{s}|^{2}$ is, to leading order, twice the real part of
	$\bar u_{s}\times(\text{high block of } u_{s})$, and the amplitude
	equidistribution then delivers the local weight $|u_{s}|^{2}$. The
	variance estimates behind both statements reduce to divisor and
	sum-of-two-squares bounds on the quadratic frequency lattice
	$n^{2} - \tilde n^{2}$, and the constant $2/\pi^{2}$ is the frozen
	mass of the dyadic tail of the seed. One consequence of the
	law~\eqref{eq:block-equi-density} deserves its own statement.
	
	\begin{lemma}[No dark intervals at generic times]\label{lem:dark-intervals}
		For almost every $s$, the kernel $w_{s}$ has positive mass on every
		nonempty open subinterval of $(0,1)$. At rational times this fails:
		entire quantile cells can be dark, as in the quartile
		blackout~\eqref{eq:quartile-blackout}.
	\end{lemma}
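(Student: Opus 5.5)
The plan is to derive the positivity claim directly from the self-referential block law~\eqref{eq:block-equi-density} of Lemma~\ref{lem:block-equi}, combined with the frozen-Sobolev lower bound on the amplitude blocks. The idea is that if $w_{s}$ vanished almost everywhere on an interval, the right side of the density law would be zero there. Yet the amplitude law~\eqref{eq:block-equi-amp} says every interval carries its Lebesgue share $\tfrac{2}{\pi^{2}}|I|$ of high-frequency amplitude energy. And where $w_{s} = |u_{s}|^{2}$ vanishes, $u_{s}$ itself vanishes, so its high blocks cannot carry that energy. The argument therefore compares the two statements of Lemma~\ref{lem:block-equi} on a single interval.

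Concretely, I would first fix a full-measure set of times on which three things hold simultaneously: Lemma~\ref{lem:carpet-smoothing}, which gives continuity of $u_{s}$, and both conclusions of Lemma~\ref{lem:block-equi}, each for every interval with rational endpoints. This is a countable intersection of full-measure sets. Now suppose some open $J \subset (0,1)$ has $\int_{J} w_{s} = 0$. Then $u_{s} = 0$ on $J$, first almost everywhere and then everywhere by continuity. Choose a rational interval $I$ with $\overline{I} \subset J$ and a smooth cutoff $\chi$ equal to one on $I$ and supported in $J$, so that $\chi u_{s} \equiv 0$. The key step is a commutator estimate: $\chi P_{N}u_{s} = P_{N}(\chi u_{s}) + [\chi,P_{N}]u_{s} = [\chi,P_{N}]u_{s}$. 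The commutator of a smooth multiplier with a dyadic frequency projection only mixes frequencies within $O(1)$ of the block edges, plus rapidly decaying tails. Its output is therefore controlled by the coefficients $|\widehat{u_{s}}(n)| = 2/(\pi|n|)$ with $|n| \approx N$ or $2N$, giving $\|[\chi,P_{N}]u_{s}\|_{L^{2}}^{2} = O(N^{-2})$, where I use the boundedness of $\sum_{n}|\hat\chi(n)||n|$. Hence $N\|P_{N}u_{s}\|_{L^{2}(I)}^{2} \le N\|\chi P_{N}u_{s}\|_{2}^{2} = O(N^{-1}) \to 0$. This contradicts the limit $\tfrac{2}{\pi^{2}}|I| > 0$ in~\eqref{eq:block-equi-amp}. (One could equally run the argument through the density law~\eqref{eq:block-equi-density}, but the amplitude version is cleaner and avoids the paraproduct.)

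The main obstacle is the commutator bound. A sharp projection $P_{N}$ has a Dirichlet-type kernel with slowly decaying tails, so I would handle it on the Fourier side. The coefficient of $e^{i\pi kz}$ in $[\chi,P_{N}]u_{s}$ is $\sum_{n}\hat\chi(k-n)\widehat{u_{s}}(n)\,(\mathbf{1}_{B}(n) - \mathbf{1}_{B}(k))$, where $B$ is the dyadic block. This sum is nonzero only when $n$ and $k$ lie on opposite sides of a block edge. Rapid decay of $\hat\chi$ then localizes both $n$ and $k$ near $N$ or $2N$, and Schur's test bounds the $\ell^{2}$ norm by $O(N^{-1})$. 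Working on $\mathbb{T}_{2}$ requires $\chi$ to be periodized and compactly supported in $J$ modulo $2$, which is harmless since $J \subset (0,1)$.

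The rational-time clause is exactly the explicit example~\eqref{eq:quartile-blackout}, where $w_{1/16}$ vanishes on the outer quartile cells. More generally, Theorem~\ref{thm:quantile-mosaic} shows that at rational times $w_{s}$ is a finite step function, which can take the value zero on whole cells.
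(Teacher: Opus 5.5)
Your argument is correct, but it takes a genuinely different route from the paper's. The paper does not invoke Lemma~\ref{lem:block-equi} here. For a smooth $\varphi\ge0$ supported in a rational interval with $\int\varphi=1$, it writes $\langle u_{s},\varphi\rangle=B(e^{-2\pi is})$, where $B(\zeta)=\sum_{n}b_{n}\zeta^{n^{2}}$ has $\sum_{n}|b_{n}|<\infty$. So $B$ is bounded and analytic in the disk, continuous up to the circle, and satisfies $B(1)=\langle u_{0},\varphi\rangle=1$. The boundary uniqueness theorem makes its zero set on the circle null. A dark interval would force $\langle u_{s},\varphi\rangle=0$, and a countable union over rational intervals finishes the proof.

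You instead deduce the lemma from the amplitude limit in~\eqref{eq:block-equi-amp} together with the commutator bound $\Vert[\chi,P_{N}]u_{s}\Vert_{2}=O(N^{-1})$. That bound holds for every $s$ because $|\widehat{u_{s}}(n)|=2/(\pi|n|)$ is frozen, and it is sound: the row sums of $|\hat\chi(k-n)|$ over pairs straddling a block edge lie in $\ell^{1}\cap\ell^{\infty}$, and low frequencies reach the block only through the rapidly decaying tail of $\hat\chi$. There is no circularity, since the appendix proof of Lemma~\ref{lem:block-equi} does not use dark intervals.

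The paper's route is short and self-contained. It is independent of the variance and Borel--Cantelli machinery, and it applies to essentially any $L^{2}$ seed, needing only one test function with nonzero pairing.

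Your route yields more. The same computation shows that $u_{s}$ cannot agree on any interval with a function that is locally $H^{1/2+\varepsilon}$; vanishing is just the extreme case. Inside the proof of Theorem~\ref{thm:carpet}, it also turns Lemma~\ref{lem:dark-intervals} into a corollary of an input that theorem already uses, so no analytic-function theory is needed.

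One small redundancy: the continuity from Lemma~\ref{lem:carpet-smoothing} is unnecessary. The condition $\int_{J}w_{s}=0$ already gives $\chi u_{s}=0$ in $L^{2}$, which is all your Fourier-side argument uses.
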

	
	\begin{proof}
		Fix an interval with rational endpoints and a smooth $\varphi \ge 0$
		supported in it with $\int\varphi = 1$. Pairing the
		series~\eqref{eq:carpet-seed} with $\varphi$ and folding
		$n \mapsto -n$,
		\begin{equation*}
			\langle u_{s},\varphi\rangle
			= \sum_{n\ \mathrm{odd},\,n\ge1} b_{n}\,e^{-2\pi in^{2}s}
			= B\big(e^{-2\pi is}\big),
			\qquad
			B(\zeta) := \sum_{n} b_{n}\,\zeta^{\,n^{2}},
		\end{equation*}
		with $\sum|b_{n}| < \infty$ by smoothness of $\varphi$. The
		function $B$ is analytic and bounded on the unit disk and
		continuous on its closure, and $B \not\equiv 0$ because
		$B(1) = \langle u_{0},\varphi\rangle = \int\varphi = 1$. By the
		boundary uniqueness theorem for bounded analytic
		functions~\cite{duren1970}, the zero set of $B$ on the unit circle
		is Lebesgue-null. If $w_{s}$ vanished on the interval then
		$u_{s}$ would vanish there and $\langle u_{s},\varphi\rangle = 0$;
		hence for each rational interval the exceptional set of times is
		null, and a countable union finishes the proof.
	\end{proof}
	
	Everything is now in place for the main theorem of the section.
	
	\begin{theorem}[The universal quantum carpet]\label{thm:carpet}
		For almost every $\tau$, the following holds for every
		$f \in \mathcal{D}_{+}$ that is continuously differentiable and
		positive on the interior of its support, every compact
		nondegenerate interval $J$ in that interior, and every phase
		$\theta$:
		\begin{enumerate}[label=\normalfont(\roman*), leftmargin=*]
			\item the transported amplitude in every quadrature,
			$\Psi_{\theta,\tau} :=
			\mathrm{Re}\big[e^{i\theta}\,u_{\tau}(F)\big]\sqrt f$, has graph of
			upper box dimension exactly $\tfrac32$ on $J$;
			\item the density $f_{\tau}$ itself has graph of upper box
			dimension exactly $\tfrac32$ on $J$;
			\item at every rational $s$, by contrast, all these graphs are
			piecewise continuously differentiable, of dimension one.
		\end{enumerate}
		The value $\tfrac32$ is independent of the law, the interval, and
		the quadrature: the fractal is woven entirely in rank space and
		printed on every law through its quantile chart.
	\end{theorem}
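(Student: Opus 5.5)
The plan is to prove the two bounds separately: the upper bound $\le \tfrac32$ from the Hölder regularity of Lemma~\ref{lem:carpet-smoothing}, and the lower bound $\ge \tfrac32$ from the block equidistribution of Lemma~\ref{lem:block-equi}. Both are proved in rank space, and then transported to the line through the quantile chart. The full-measure set of times is fixed once and for all as follows. Take the intersection of the full-measure sets of Lemmas~\ref{lem:carpet-smoothing}, \ref{lem:block-equi} and~\ref{lem:dark-intervals}, applied to the countably many intervals $I \subseteq (0,1)$ with rational endpoints. Remove the rationals. The resulting set does not depend on $f$, $J$ or $\theta$; uniformity in $\theta$ comes from the uniform clause of the display~\eqref{eq:block-equi-amp}. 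Part (iii) is immediate from Theorem~\ref{thm:quantile-mosaic}. At rational $s$, $u_{s}$ and $w_{s}$ are step functions on the lattice $\{l/q\}$, so $\Psi_{\theta,\tau}$ and $f_{\tau}$ are products of $C^{1}$ functions of $x$ on finitely many subintervals of $J$, and their graphs have dimension one.

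\textbf{Transport.} Fix $J$ compact in the interior of the support. On $J$ the map $F$ is a $C^{1}$ diffeomorphism onto a compact $I = F(J) \subset (0,1)$, bi-Lipschitz since $f$ is positive and continuous there. The factors $\sqrt f$ and $f$ are $C^{1}$ and bounded below on $J$. Upper box dimension of a graph is governed by the oscillation sums $\delta^{-2}\sum_{|\Delta| = \delta}\operatorname{osc}_{\Delta}$. A bi-Lipschitz reparametrization changes this sum only by constants. Multiplication by a positive $C^{1}$ factor $h$ obeys $\operatorname{osc}(gh) = h\,\operatorname{osc}(g) + O(\delta)$ on an interval of length $\delta$, and the $O(\delta)$ error is negligible against the $\delta^{1/2\pm\varepsilon}$ oscillations at stake. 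It therefore suffices to prove that $g = \mathrm{Re}[e^{i\theta}u_{s}]$ and $g = w_{s}$ have graph dimension $\tfrac32$ on every such $I$.

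\textbf{Upper bound.} By Lemma~\ref{lem:carpet-smoothing}, $g \in C^{1/2-\varepsilon}$, so every $\delta$-interval has oscillation $\lesssim \delta^{1/2-\varepsilon}$. The box count is then $\lesssim \delta^{-3/2-\varepsilon}$, giving upper box dimension $\le \tfrac32 + \varepsilon$ for every $\varepsilon$.

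\textbf{Lower bound.} Put $N \asymp \delta^{-1}$ dyadic. Lemma~\ref{lem:block-equi} gives
\begin{equation*}
\|P_{N}g\|_{L^{2}(I)}^{2} \ge c\,N^{-1}.
\end{equation*}
For the amplitude quadratures $c = |I|/(2\pi^{2})$. For the density, $c \asymp \int_{I}w_{s} > 0$, which is positive by Lemma~\ref{lem:dark-intervals}; this is exactly where no-dark-intervals enters. Next I would bound $\|P_{N}g\|_{L^{2}(I)}$ above by the local oscillation. Write $P_{N}g = P_{N}(g - g_{\Delta})$ on each $\delta$-cell $\Delta$, with $g_{\Delta}$ the cell average, so that $\|P_{N}g\|_{L^{2}(I)} \lesssim \|\operatorname{osc}_{\delta}g\|_{L^{2}(I')} + \text{tail}$, with $I'$ slightly larger than $I$. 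The tail comes from the nonlocal kernel of $P_{N}$; I would control it by replacing the sharp dyadic projector with a smooth Littlewood--Paley multiplier, whose kernel decays rapidly at scale $N^{-1}$. The cost is a comparison between smooth and sharp blocks, which follows from the same block-energy statements applied to adjacent dyadic blocks.

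This gives $\|\operatorname{osc}_{\delta}g\|_{L^{2}(I')}^{2} \gtrsim \delta$. Combining with the Hölder ceiling $\|\operatorname{osc}_{\delta}g\|_{\infty} \lesssim \delta^{1/2-\varepsilon}$ yields
\begin{equation*}
\|\operatorname{osc}_{\delta}g\|_{L^{1}(I')} \ge \frac{\|\operatorname{osc}_{\delta}g\|_{L^{2}(I')}^{2}}{\|\operatorname{osc}_{\delta}g\|_{\infty}} \gtrsim \delta^{1/2+\varepsilon}.
\end{equation*}
The box count is therefore $\gtrsim \delta^{-2}\,\delta^{1/2+\varepsilon}$ along $\delta = 1/N$. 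This suffices for upper box dimension $\ge \tfrac32 - \varepsilon$, and hence equality.

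I expect the main obstacle to be this localization step: converting the frequency-block $L^{2}$ mass on $I$ into spatial oscillation on $I'$ without losing powers of $N$. The sharp dyadic projector has slowly decaying kernel tails, and the density block $P_{2N}^{\mathbb{Z}}w_{s}$ is a paraproduct rather than a clean block. My first attempt would be a smooth Littlewood--Paley replacement, together with the Hölder bound on the far-field contribution. The remainder of the argument is bookkeeping.
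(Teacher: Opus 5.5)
Your proposal is correct and follows the paper's proof essentially step for step. It uses the same full-measure set of times, the H\"older upper bound from Lemma~\ref{lem:carpet-smoothing}, and a lower bound from block equidistribution (Lemma~\ref{lem:block-equi}) with Lemma~\ref{lem:dark-intervals} supplying positivity for the density, followed by chart transport as in Lemma~\ref{lem:chart-transport} and the mosaic theorem for (iii). The only variation is where you interpolate: you bound the smooth block by an $L^{2}$ oscillation function and use the H\"older ceiling on oscillations, whereas the paper interpolates on the block itself with the sup bound~\eqref{eq:carpet-blocks} and then applies the $L^{1}$ inequality~\eqref{eq:l1-oscillation}; both work, and for the smooth-block step you should simply rerun the proof of Lemma~\ref{lem:block-equi} with smooth dyadic weights, as the paper notes, rather than compare with adjacent sharp blocks, whose cross terms localized to $I$ do not vanish.
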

	
	The proof is given in Appendix~\ref{app:proofs}. Its architecture
	is a pincer. Generic smoothing caps the H\"older exponent at
	$\tfrac12$ from below, hence the dimension at $\tfrac32$ from
	above; conserved roughness must then surface somewhere, and the
	equidistribution lemma shows it surfaces \emph{everywhere}: on
	every interval the block mass of the display~\eqref{eq:block-equi-amp}
	together with the sup bound~\eqref{eq:carpet-blocks} forces, by
	interpolation, a lower bound on the local
	$L^{1}$-oscillation of every quadrature at scale $N^{-1}$, which an
	elementary oscillation calculus converts into box dimension
	$\tfrac32$ from below; for the density the same interpolation runs
	through the self-referential law~\eqref{eq:block-equi-density},
	whose limit is positive on every interval by
	Lemma~\ref{lem:dark-intervals}. Finally the quantile chart, a
	bi-Lipschitz change of variable on compacts with a positive
	$C^{1}$ multiplier, moves box dimensions without distortion.
	
	The carpet theorem used two properties of the canonical seed: its
	jump, which conserves roughness, and the equidistribution of its
	block energy. The final results of this section show that the first
	property alone suffices, for arbitrary rough seeds and arbitrary
	dispersion, through a mechanism of independent interest in
	mathematical physics: a strong law of large numbers for the critical
	Sobolev mass of the evolved density. Work on the standard torus,
	let $g$ be of bounded variation with jumps $\{J_{j}\}$, let
	$u(x,t) = \sum_{n}\hat g(n)e^{2\pi i(nx - n^{2}t)}$ be its free
	Schr\"odinger evolution, $w = |u|^{2}$ the density, and define the
	critical mass
	\begin{equation}
		Y_{M}(t) := \sum_{m=1}^{M} m\,\big|\hat w(m,t)\big|^{2},
		\label{eq:critical-mass}
	\end{equation}
	a quantity nondecreasing in $M$ and bounded precisely when
	$w(\cdot,t) \in H^{1/2}$. The fractality of the \emph{field} for
	rough data is the theorem of Rodnianski already
	cited~\cite{rodnianski2000}, and the field-level theory has since
	advanced to Schr\"odinger evolutions with potentials~\cite{cho2024}
	and to higher dimensions~\cite{erdogan2024}; for the \emph{density},
	the observable, fractality was known only for step data with jumps
	at rational points, through the Gauss-sum revival arithmetic, and
	the general case was stated as beyond that
	method~\cite{chousionis2015}, nor does it follow from the
	field-level advances, whose control of $u$ does not transfer to
	$|u|^{2}$: the obstruction is that unitarity controls the field and
	not its modulus, so the conserved roughness could in principle hide
	in the phase and cancel in the intensity.
	
	\begin{theorem}[Strong law for the critical mass]\label{thm:strong-law}
		Let $g$ be real-valued and of bounded variation on the torus with
		at least one jump. Then, with the average taken over a period in
		time,
		\begin{equation}
			\mathbb{E}_{t}\,Y_{M} = \big(\kappa_{g} + o(1)\big)\log M,
			\qquad
			\operatorname{Var}_{t}(Y_{M}) = O(\log M),
			\qquad
			\kappa_{g}
			= \frac{\Vert g\Vert_{L^{2}}^{2}\,\sum_{j}|J_{j}|^{2}}{2\pi^{2}},
			\label{eq:kappa}
		\end{equation}
		and consequently, for almost every $t$,
		\begin{equation}
			\frac{Y_{M}(t)}{\log M}\;\longrightarrow\;\kappa_{g};
			\label{eq:mass-strong-law}
		\end{equation}
		at almost every time the density lies outside $H^{1/2}$, its
		critical mass diverging at the universal logarithmic rate
		$\kappa_{g}$; moreover, for almost every $t$ the density lies in
		$C^{1/2-\varepsilon}$, in no Besov space
		$B^{\sigma}_{1,\infty}$ with $\sigma > \tfrac12$, and its graph has
		upper box dimension exactly $\tfrac32$.
	\end{theorem}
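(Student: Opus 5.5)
The plan is a second-moment computation in time, followed by a lacunary Chebyshev argument. I start from the explicit Fourier coefficients of the density,
\begin{equation*}
\hat w(m,t)=\sum_{n\in\mathbb{Z}}\hat g(n+m)\,\overline{\hat g(n)}\,e^{-2\pi i\,m(2n+m)t}.
\end{equation*}
For fixed $m\ge1$ the time frequencies $m(2n+m)$ are pairwise distinct in $n$. Averaging over a period therefore kills every off-diagonal term:
\begin{equation*}
\mathbb{E}_{t}|\hat w(m,t)|^{2}=\sum_{n}|\hat g(n)|^{2}\,|\hat g(n+m)|^{2}.
\end{equation*}
To evaluate $\sum_{m\le M}m\sum_{n}|\hat g(n)|^{2}|\hat g(n+m)|^{2}$, I split the $n$-sum into three regimes.

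\emph{Regime 1: $|n|$ bounded, $n+m$ large.} Write $\mu=dg$, so that $2\pi i k\,\hat g(k)=\hat\mu(k)$. Then
\begin{equation*}
\sum_{m\le M}m\,|\hat g(n+m)|^{2}=\frac{1}{4\pi^{2}}\sum_{k\le M}\frac{|\hat\mu(k)|^{2}}{k}\,\bigl(1+o(1)\bigr).
\end{equation*}
By Wiener's theorem the Ces\`aro means of $|\hat\mu(k)|^{2}$ over $|k|\le N$ converge to $\sum_{j}|J_{j}|^{2}$. Since $g$ is real, $|\hat\mu(-k)|=|\hat\mu(k)|$, so the one-sided means have the same limit. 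Abel summation then gives $\bigl(\sum_{j}|J_{j}|^{2}/4\pi^{2}\bigr)\log M$. Summing over $n$ against $|\hat g(n)|^{2}$ produces the factor $\Vert g\Vert_{L^{2}}^{2}$.

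\emph{Regime 2: $n\approx-m$, so $n+m$ bounded.} Reality of $g$ makes this regime mirror the first, doubling the constant to $\kappa_{g}$.

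\emph{Regime 3: both $|n|$ and $|n+m|$ large.} This contributes $O(1)$, because $\sum_{n,m}m\,n^{-2}(n+m)^{-2}$ converges once restricted away from the two diagonals.

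The only delicate point in the mean is the uniformity needed to interchange the Wiener limit with the $n$-sum. I handle it by splitting $n$ at a slowly growing cutoff. This yields $\mathbb{E}_{t}Y_{M}=(\kappa_{g}+o(1))\log M$.

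The variance bound $\operatorname{Var}_{t}(Y_{M})=O(\log M)$ is where I expect the main obstacle. Expanding $\mathbb{E}_{t}\bigl[|\hat w(m)|^{2}|\hat w(m')|^{2}\bigr]$ gives octuple sums constrained by one linear time-resonance relation among the quadratic frequencies $m(2n+m)$. The diagonal pairings reproduce $(\mathbb{E}_{t}Y_{M})^{2}$ exactly. The off-diagonal resonances are equivalent to coincidences of differences of squares. I would bound those using the size of the coefficients, $|\hat g(k)|\lesssim 1/|k|$, together with divisor bounds $d(r)\ll r^{\varepsilon}$ on representations $r=a^{2}-b^{2}=(a-b)(a+b)$, the same arithmetic as in Lemma~\ref{lem:block-equi}. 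The weights $m\,m'$ paired with the $1/k^{2}$ decay should leave a single logarithm. I would first try to reduce to the case of step data, where $\hat g(k)=(2\pi ik)^{-1}\sum_{j}J_{j}e^{-2\pi ikx_{j}}$ exactly, and then treat the continuous part of $\mu$ as an $\ell^{2}$-small perturbation in Ces\`aro mean.

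Given the mean and variance estimates, the strong law~\eqref{eq:mass-strong-law} follows along $M_{k}=\exp(k^{2})$. There, Chebyshev gives $\mathbb{P}\bigl(|Y_{M_{k}}/\log M_{k}-\kappa_{g}|>\delta\bigr)\lesssim k^{-2}$, and Borel--Cantelli applies. Monotonicity of $Y_{M}$ in $M$, together with $\log M_{k+1}/\log M_{k}\to1$, fills the gaps between the $M_{k}$.

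The regularity corollaries then come from interpolation.
\begin{itemize}
\item Divergence of $Y_{M}$ excludes $H^{1/2}$.
\item Rodnianski's almost-everywhere H\"older bound for the field $u$~\cite{rodnianski2000} gives $u\in C^{1/2-\varepsilon}$. The product $w=|u|^{2}$ inherits this, which yields the upper bound $\tfrac32$ on box dimension and the dyadic sup bound $\Vert P_{N}w\Vert_{\infty}\lesssim N^{-1/2+\varepsilon}$.
\item If $w\in B^{\sigma}_{1,\infty}$ with $\sigma>\tfrac12$, then $\Vert P_{N}w\Vert_{2}^{2}\le\Vert P_{N}w\Vert_{1}\Vert P_{N}w\Vert_{\infty}\lesssim N^{-\sigma-1/2+\varepsilon}$. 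Choosing $\varepsilon<\sigma-\tfrac12$, this makes $Y_{M}$ bounded, a contradiction.
\item For the lower dimension bound, the strong law gives block mass $N\Vert P_{N}w\Vert_{2}^{2}\gtrsim1$ on a positive proportion of dyadic $N$. The same H\"older inequality then gives $\Vert P_{N}w\Vert_{1}\gtrsim N^{-1/2-\varepsilon}$ along those $N$. The oscillation calculus used for Theorem~\ref{thm:carpet} converts this into upper box dimension at least $\tfrac32-\varepsilon$.
\end{itemize}
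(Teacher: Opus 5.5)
Outside the variance, your outline follows the paper's proof essentially step for step:
\begin{itemize}
\item Parseval in time, using that the within-mode frequencies are collision-free, with Wiener's theorem inserted at the two ends of $\sum_n|\hat g(n)|^2|\hat g(n+m)|^2$.
\item Chebyshev and Borel--Cantelli along $M_k=e^{k^2}$, with the monotone sandwich.
\item The Besov exclusion through $\Vert P_Nw\Vert_2^2\le\Vert P_Nw\Vert_1\Vert P_Nw\Vert_\infty$, followed by the oscillation calculus.
\end{itemize}
You cite the almost-everywhere $C^{1/2-\varepsilon}$ bound for the field; the paper re-derives it by writing $P_Nu$ as a Stieltjes convolution of $dg$ with a Weyl block. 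Either is fine.

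Two small repairs are needed.
\begin{itemize}
\item Regime 3: with $|n|,|n+m|>K$ for a fixed $K$, the sum you quote does not converge. It is of order $(\log M)/K$. So it is your growing cutoff, with $\log K=o(\log M)$, that makes regime 3 $o(\log M)$.
\item Lower dimension bound: the strong law does not give block mass $\gtrsim1$ on a positive proportion of dyadic $N$, since individual blocks are only known to be $o(\log N)$. It does give $\limsup_N N\Vert P_Nw\Vert_2^2\ge\kappa_g\log 2$, and infinitely many $N$ suffice for an upper box dimension.
\end{itemize}

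The genuine gap is the variance. You leave it at ``should leave a single logarithm'', yet it is the only non-routine estimate in the theorem. A count that pairs the large amplitudes of different modes, without controlling which pairs can share a time frequency, gives $\sum_{m,m'\le M}(mm')^{-1}\asymp\log^2M$. That is exactly the size of $(\mathbb{E}_tY_M)^2$, so Chebyshev then yields nothing.

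The reduction to step data is beside the point. Wiener's Ces\`aro information enters only the mean. The variance needs only $|\hat g(n)|\lesssim\langle n\rangle^{-1}$, which holds for every seed of bounded variation.

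Your divisor bound does close the gap once the sum is organized by time frequency. Write $\sigma_m^2=\mathbb{E}_t|\hat w(m,t)|^2$, $y_n=\hat g(n+m)\overline{\hat g(n)}$ and $R_m(\Delta)=\sum_ny_n\overline{y_{n-\Delta}}$, so that
\[
|\hat w(m,t)|^2-\sigma_m^2=\sum_{\Delta\ne0}R_m(\Delta)e^{-4\pi i m\Delta t}.
\]
Coefficient decay alone gives two bounds:
\begin{itemize}
\item $\sum_\Delta|R_m(\Delta)|^2=\int_0^1\bigl|\sum_ny_ne^{2\pi in\theta}\bigr|^4d\theta\le\Vert y\Vert_1^2\Vert y\Vert_2^2\lesssim m^{-4}\log^2(2+m)$;
\item $|R_m(\Delta)|\lesssim m^{-1}\Delta^{-2}\log(2+m)$ for $|\Delta|>2m$.
\end{itemize}
Now group the terms by the frequency $\nu=m\Delta$. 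Apply Parseval in $t$, then Cauchy--Schwarz over the at most $d(|\nu|)$ factorizations of each $\nu$, with $d$ the divisor function. This gives
\[
\operatorname{Var}_t(Y_M)\;\le\;\sum_{m\le M}m^2\sum_{\Delta\ne0}d(m|\Delta|)\,|R_m(\Delta)|^2\;\ll_\varepsilon\;\sum_{m\ge1}m^{-2+\varepsilon}\;=\;O(1).
\]
This is shorter than the paper's route. The paper instead parametrizes the collisions $m\Delta=m'\Delta'$ by $\gcd(m,m')$, proves separate generic and resonant bounds on $R_m$, and treats the doubly resonant band $|m'-m|\lesssim\langle r\rangle+\langle r'\rangle$ by hand. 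The route above yields a bounded variance, which is better than the $O(\log M)$ the paper states and the Chebyshev step needs.
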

	
	The proof, in Appendix~\ref{app:proofs}, rests on two arithmetic
	pillars and one probabilistic remark. The density coefficients are
	\begin{equation*}
		\hat w(m,t) = \sum_{n}\hat g(n)\,\overline{\hat g(n-m)}\;
		e^{-2\pi im(2n-m)t},
	\end{equation*}
	and within a mode the time frequencies $m(2n-m)$ are
	collision-free, so the time average of $|\hat w(m,t)|^{2}$ is a sum
	of nonnegative terms that no phase arrangement can reduce, and
	Wiener's quantitative theorem on the
	jump content of a function of bounded
	variation~\cite{wiener1924,zygmund2002} converts the weighted sum
	of these averages into the logarithmic law~\eqref{eq:kappa}. Across
	modes, collisions are constrained to the lattice
	$m\Delta = m'\Delta'$ and are sparse in the greatest common
	divisor, so the variance grows strictly slower than the squared
	mean. Since $Y_{M}$ is monotone in $M$, Chebyshev's inequality
	along a sparse sequence of scales and interpolation upgrade the
	mean divergence to the almost-sure law~\eqref{eq:mass-strong-law}:
	a positive-measure set of times with bounded critical mass would
	have to bend a monotone divergent mean with subquadratic
	fluctuations, at every scale simultaneously, which is impossible.
	The phases keep their freedom at each fixed scale and lose it in
	the aggregate. The average is over the circle of times with
	normalized Lebesgue measure, so the strong law is a deterministic
	second-moment statement in disguise; the density itself, as an
	object of study for rough data, goes back to
	Oskolkov~\cite{oskolkov2006}, who bounded it along rational rays,
	and the rational--irrational regularity dichotomy of the field
	originates with Oskolkov~\cite{oskolkov1992} and
	Kapitanski--Rodnianski~\cite{kapitanski1999} before its fractal
	form~\cite{rodnianski2000}. The passage from Sobolev divergence to dimension
	reuses the machinery already in place: the smoothing
	bound~\eqref{eq:carpet-blocks} extends to every seed of bounded
	variation by writing the block of $u$ as the Stieltjes convolution
	of $dg$ with the block of the canonical Weyl kernel, and the
	oscillation calculus of Lemma~\ref{lem:osc-calculus} converts the
	Besov divergence into the dimension.
	
	\begin{corollary}[Universality over seeds and laws]\label{cor:rough-seeds}
		Let the seed $g$ on rank space be of bounded variation with a jump,
		in the sense that its odd extension to the rank circle has a jump.
		Then for almost every $\tau$ the deformation
		$\rho_{|e^{-i\tau H_{0}}g|^{2}}[f]$ of any law $f$ whose density is
		continuously differentiable and bounded above and below on its
		support has graph dimension exactly $\tfrac32$ there, the
		differentiability being necessary, since a rough multiplier $f$ can
		dominate the oscillation of the product; for the canonical seed the constant of the
		law~\eqref{eq:mass-strong-law} on the rank circle is
		\begin{equation}
			\kappa_{\mathrm{can}} = \frac{4}{\pi^{2}},
			\label{eq:kappa-canonical}
		\end{equation}
		independent of the law. In physical terms, the intensity carpet of
		Talbot optics is fractal of dimension $\tfrac32$ at almost every
		time for \emph{arbitrary} grating profiles of bounded variation
		with a jump, at arbitrary, possibly irrational, positions; for
		rational step gratings this is the theorem of Chousionis,
		Erdo\u{g}an, and Tzirakis~\cite{chousionis2015}, and the general
		case had remained open.
	\end{corollary}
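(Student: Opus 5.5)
The plan is to reduce everything to Theorem~\ref{thm:strong-law} by passing from rank space to a standard torus, then to localize its conclusion to subintervals, and finally to print the result on $f$ through the quantile chart.

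\emph{Step 1: reduction to a torus.} Let $G$ be the odd extension of $g$ to the doubled circle $\mathbb{T}_{2}$. On $(0,1)$ the Dirichlet evolution $e^{-i\tau H_{0}}g$ is the restriction of the free periodic Schr\"odinger evolution of $G$, exactly as for the canonical seed in the display~\eqref{eq:carpet-seed}. The affine substitution $x = z/2$, together with the matching rescaling of time, turns this into the setting of Theorem~\ref{thm:strong-law} on the standard torus. The rescaled seed $G(2\,\cdot)$ is real, of bounded variation, and has a jump by hypothesis. For almost every $\tau$ the theorem then gives three things for the density $w_{\tau} = |e^{-i\tau H_{0}}g|^{2}$: membership in $C^{1/2-\varepsilon}$, exclusion from every $B^{\sigma}_{1,\infty}$ with $\sigma > \tfrac12$, and graph dimension $\tfrac32$ on the whole circle.

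\emph{Step 2: the constant $\kappa_{\mathrm{can}}$.} For the canonical seed $g \equiv 1$, the odd extension is the square wave, with jumps of size $2$ at $z = 0$ and $z = 1$. On the unit torus it has $\Vert G\Vert_{L^{2}}^{2} = 1$ and $\sum_{j}|J_{j}|^{2} = 8$. Formula~\eqref{eq:kappa} therefore gives $\kappa = 8/(2\pi^{2}) = 4/\pi^{2}$. Independence of the law is automatic, since $f$ never enters $w_{\tau}$.

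\emph{Step 3: localization (the main obstacle).} Theorem~\ref{thm:strong-law} is global: it places the divergence somewhere on the circle, whereas the corollary needs dimension $\tfrac32$ on every compact $J$. I would first try to prove a localized strong law, weighting the critical mass with a smooth bump $\varphi$ supported in an interval with rational endpoints. The expected answer is that $Y_{M}^{\varphi}/\log M$ converges to a constant multiple of $\int\varphi\,w_{\tau}$, the general-seed analogue of the self-referential law~\eqref{eq:block-equi-density}.
\begin{itemize}
\item \emph{Mean.} Use the paraproduct reduction: the high block of $|u|^{2}$ is, to leading order, $2\,\mathrm{Re}[\bar u\cdot(\text{high block of }u)]$. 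Combine this with Wiener's jump theorem applied to the local jump content.
\item \emph{Variance.} Use the same gcd-sparsity of collisions on the lattice $m\Delta = m'\Delta'$ as in the global case.
\item \emph{Positivity.} Show that $\int\varphi\,w_{\tau} > 0$ for almost every $\tau$ by repeating the proof of Lemma~\ref{lem:dark-intervals}. The pairing $\langle u_{\tau},\varphi\rangle$ is a bounded analytic function $B(\zeta) = \sum b_{n}\zeta^{n^{2}}$ on the disk. It is not identically zero unless all coefficients vanish, which would force $G$, and hence $g$, to vanish on the support of $\varphi$. If $g$ vanishes on the chosen interval, I would argue the positivity differently via the dispersion, which spreads mass instantly. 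This is the point where I am least certain of the details.
\end{itemize}
A countable union over rational intervals then gives a single null set of exceptional times.

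\emph{Step 4: transport to $f$.} For each compact $J$, the map $F$ restricted to $J$ is a $C^{1}$ bi-Lipschitz chart onto a compact rank interval, because $f$ is $C^{1}$ and bounded below there. The deformed law is $w_{\tau}(F)\,f$, a positive $C^{1}$ multiple of a bi-Lipschitz reparametrization of $w_{\tau}$. The oscillation calculus of Lemma~\ref{lem:osc-calculus} preserves both the H\"older upper bound and the local Besov divergence under such operations, since a $C^{1}$ multiplier contributes only Lipschitz oscillation, which is of lower order at exponent $\tfrac12$. Hence the graph dimension is exactly $\tfrac32$ on $J$. The same step explains why differentiability of $f$ is needed: a rough multiplier would add oscillation of its own and could dominate the product.
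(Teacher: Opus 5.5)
Your Steps 1 and 2, and the transport mechanism of Step 4, are the paper's proof. The paper uses the odd extension to $\mathbb{T}_{2}$, rescales to the unit torus, applies Theorem~\ref{thm:strong-law}, transports through the chart, and computes $\kappa = 1\cdot 8/(2\pi^{2}) = 4/\pi^{2}$. The ``main obstacle'' of your Step 3, however, comes from misreading the claim, and as you have organized things, Step 4 rests on that unfinished step.

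The corollary asserts dimension $\tfrac32$ on the support of $f$ (``there''), not on every compact $J$ inside it. The hypothesis that $f$ is bounded \emph{below} on its support is exactly what removes the need to localize:
\begin{itemize}
\item since $f \ge c > 0$ and $f$ has unit mass, the support is a bounded interval;
\item $F$ is then a globally bi-Lipschitz $C^{1}$ chart of that interval onto $[0,1]$;
\item the multiplier $f$ is $C^{1}$ and bounded below.
\end{itemize}
That is all the proof of Lemma~\ref{lem:chart-transport} uses, so the lemma applies with $J$ equal to the whole support. The global input is then just Step 1 read on rank space. The free evolution of an odd datum stays odd, so $w_{\tau}$ is even on $\mathbb{T}_{2}$. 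Its graph there is two mirror copies of its graph over $[0,1]$, and the latter therefore has upper box dimension $\tfrac32$. No localized strong law appears in the paper's argument.

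Step 3 would be needed only for the stronger statement on every compact $J$, the arbitrary-seed analogue of Theorem~\ref{thm:carpet}, which the corollary does not claim. As written it is not established. The localized block estimates of Lemma~\ref{lem:block-equi} are proved only for the canonical coefficients $2/(i\pi n)$. They would have to be redone for general $O(1/n)$ coefficients whose jump content is only Ces\`aro-averaged. If you pursue it, two corrections apply.
\begin{itemize}
\item \emph{The mean.} After the paraproduct reduction, the relevant quantity is the local block energy of the field, and its time average is not governed by local jump content. Averaging $\langle\varphi, |P_{N}u_{\tau}|^{2}\rangle$ over a period keeps only the pairs with $\tilde n = \pm n$. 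This gives the global block mass $\sum_{\mathrm{block}}|\hat G(n)|^{2}$ times $\int\varphi$, up to a negligible antidiagonal term, because dispersion spreads the roughness of every jump over the whole circle.
\item \emph{Positivity.} The step you are unsure about is easy even when $g$ vanishes on the interval. Suppose $w_{\tau}$ vanished on $I$ for a positive-measure set of $\tau$. Then for each $\varphi$ in a countable dense subset of $C_{c}^{\infty}(I)$, the function $B_{\varphi}$ would vanish on a subset of the circle of positive measure, hence identically. So $u_{\tau}|_{I} = 0$ for every $\tau$. Isolating the time frequency $n^{2}$ gives $\hat G(n)e^{i\pi nz} + \hat G(-n)e^{-i\pi nz} = 0$ on $I$ for each $n$. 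This forces $G \equiv 0$, contradicting the jump.
\end{itemize}
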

	
	\begin{proposition}[Collision arithmetic of the dispersion]\label{prop:dispersion}
		Let $g$ be of bounded variation with a jump.
		In both parts the cross-mode variance combinatorics is carried at
		sketch level in the appendix; Theorem~\ref{thm:strong-law} and
		Corollary~\ref{cor:rough-seeds} do not depend on it.
		\begin{enumerate}[label=\normalfont(\roman*), leftmargin=*]
			\item \textup{(Temporal halving)} For the temporal critical mass
			$Z_{K}(x) := \sum_{k=1}^{K}k^{1/2}\,
			|\hat w^{\,\mathrm{t}}(k;x)|^{2}$ of the time trace at a point,
			one-sided since the density is real and the negative temporal
			frequencies are conjugate, the representations $k = m(2n-m)$ carry
			distinct spatial characters, and
			\begin{equation}
				\frac{Z_{K}(x)}{\log K}\;\longrightarrow\;\frac{\kappa_{g}}{2}
				\qquad\text{for a.e. } x.
				\label{eq:temporal-halving}
			\end{equation}
			The time trace of the density lies outside $H^{1/4}$ for almost
			every $x$, and wherever the field obeys the temporal
			$C^{1/4-\varepsilon}$ block bound, as it does for data of bounded
			variation~\cite{erdogan2019}, the graph of the trace has dimension
			exactly $\tfrac74$.
			\item \textup{(Airy doubling)} For the Airy evolution, with
			dispersion $n^{3}$ and a single jump, the within-mode collision
			classes are the pairs $\{n, m-n\}$, their two leading contributions
			carry equal phase and reinforce, and the strong
			law~\eqref{eq:mass-strong-law} holds with constant exactly
			$2\kappa_{g}$.
		\end{enumerate}
	\end{proposition}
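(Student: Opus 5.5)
The plan mirrors the proof of Theorem~\ref{thm:strong-law}: compute the time-averaged critical mass exactly by collision-free orthogonality within a mode, extract the logarithm from Wiener's jump statistic, bound the variance by counting collisions across modes, and upgrade to an almost-sure law by Chebyshev along sparse scales together with monotonicity. Only the second-moment arithmetic changes between the two parts.

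\textbf{(i) Temporal halving.} Fix $x$ and expand
\begin{equation*}
w(x,t) = \sum_{m,n}\hat g(n)\overline{\hat g(n-m)}\,e^{2\pi i m x}\,e^{-2\pi i m(2n-m)t}.
\end{equation*}
The temporal coefficient at frequency $k$ is then
\begin{equation*}
\hat w^{\mathrm t}(k;x) = \sum_{(m,n):\,m(2n-m)=k} a_{m,n}\,e^{2\pi i m x},
\qquad a_{m,n} := \hat g(n)\overline{\hat g(n-m)}.
\end{equation*}
Distinct representations $k = m(2n-m)$ carry distinct spatial characters $e^{2\pi imx}$. Hence averaging $|\hat w^{\mathrm t}(k;x)|^2$ over $x$ kills all cross terms and gives $\sum_{m(2n-m)=k}|a_{m,n}|^2$.

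Summing with weight $k^{1/2}$ and regrouping by $m$, I want to compare $\sum_{k\le K}k^{1/2}\sum|a_{m,n}|^2$ with the spatial mass $\sum_m m\,\mathbb{E}_t|\hat w(m)|^2 = \sum_m m\sum_n|a_{m,n}|^2$. The dominant contributions come from $|n|\asymp|n-m|$ large with $|a_{m,n}|^2\approx |J|^2/(4\pi^2 n^2)\cdot(\ldots)$, as in Wiener's theorem. For $k = m(2n-m)$ with $n\sim m$ one has $k^{1/2}\approx m\sqrt{(2n-m)/m}$. A cutoff $k\le K$ translates to $m\lesssim K^{1/2}$, so $\log K = 2\log M$. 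This change of scale is the source of the factor $\tfrac12$ in the limit $\kappa_g/2$.

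The step I expect to be the main obstacle is doing this bookkeeping precisely, since the weight $k^{1/2}$ and the hyperbolic constraint must reproduce the Wiener average with exact constant. I would first verify the constant on the single-jump model $\hat g(n) = J/(2\pi i n)$, then pass to general $g$ by Wiener's quantitative theorem applied to $\sum_{|n|\le N}|n\hat g(n)|^2/N$.

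For the variance in $x$, collisions $m(2n-m) = m'(2n'-m')$ with $m = m'$ are excluded by the distinct characters. Collisions with $m\ne m'$ enter only the $L^2_x$ fluctuation, and I would bound them by divisor counts of $k$, getting $O(\log K)$ at sketch level. Chebyshev along $K_j = 2^{j^2}$ together with the monotonicity of $Z_K$ then gives the almost-everywhere limit. Divergence of $Z_K$ means the trace is not in $H^{1/4}$. Combining this with the temporal $C^{1/4-\varepsilon}$ block bound of~\cite{erdogan2019} in the oscillation calculus of Lemma~\ref{lem:osc-calculus}, with Hölder exponent $1/4$, gives dimension $2-\tfrac14 = \tfrac74$.

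\textbf{(ii) Airy doubling.} With phases $e^{-2\pi i(n^3-(n-m)^3)t}$, the frequency is $m(3n^2-3nm+m^2)$. Within a mode $m$ this is symmetric under $n\mapsto m-n$ and otherwise injective. So the time average of $|\hat w(m)|^2$ is $\sum_{\text{classes}}|a_{m,n}+a_{m,m-n}|^2$, the only coincidences being within each pair $\{n,m-n\}$.

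For a single jump, $\hat g(n)\sim J/(2\pi i n)$. Then
\begin{equation*}
a_{m,n} = \hat g(n)\overline{\hat g(n-m)}
\qquad\text{and}\qquad
a_{m,m-n} = \hat g(m-n)\overline{\hat g(-n)}
\end{equation*}
agree to leading order, using realness $\hat g(-n) = \overline{\hat g(n)}$. Hence $|a+a|^2\approx 4|a|^2$, counted over half as many classes, which doubles the mean. I must check that the subleading parts of $\hat g$ (the BV remainder) contribute $o(\log M)$; this follows by Cauchy--Schwarz from $\sum_{|n|\le N}|n\hat g(n) - \text{jump part}|^2 = o(N)$.

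The cross-mode variance uses collisions of the cubic form across $m$, bounded crudely at sketch level. The Chebyshev-monotonicity upgrade is then identical to Theorem~\ref{thm:strong-law}, giving the constant $2\kappa_g$.
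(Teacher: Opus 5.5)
Your plan follows the paper's proof: phase-blind positive sums from distinct spatial characters, the scale change $|k|\approx m^{2}$ turning $\log K$ into $2\log M$, divisor-bounded cross-mode variance at sketch level, Chebyshev along sparse scales plus monotonicity, and the oscillation calculus for $\tfrac74$. In (ii) you likewise use the pair classes $\{n,m-n\}$ with reinforcing contributions.

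\textbf{The error: where the logarithm comes from in (i).} In
\[
\sigma_{m}^{2}=\sum_{n}|\hat g(n)|^{2}|\hat g(n-m)|^{2},
\]
the range you name, $|n|\asymp|n-m|$ both large, contributes only $O(m^{-3})$. After weighting by $m$ and summing this is $O(1)$, so it cannot produce the logarithm. The logarithm comes from the two ends of the convolution sum, where one of $n$, $n-m$ is $O(1)$. There the small-index factor sums to $\Vert g\Vert_{L^{2}}^{2}$ and the large-index factor carries Wiener's jump content at frequency $\pm m$.

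Only at these ends is $|2n-m|=|m|+O(1)$, hence $|k|=m^{2}(1+O(1/m))$. That is what makes $k^{1/2}$ match the spatial weight $|m|$ up to $1+o(1)$, and turns the cutoff $k\le K$ into $|m|\le\sqrt K\,(1+o(1))$. In your regime $(2n-m)/m$ is not close to one, so the comparison with $\sum_m m\sigma_m^2$ fails.

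For the bookkeeping you flagged, note that for $k\ge1$ only the end $n-m=O(1)$ survives. It appears once with $m>0$ and once with $m<0$, and together these reproduce the two ends counted in $\sigma_{|m|}^{2}$. This gives $\mathbb{E}_{x}Z_{K}=\mathbb{E}_{t}Y_{\sqrt K}+o(\log K)$, and it is the content of the paper's remark that the one-sided normalization halves the two-sided count.

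\textbf{A simplification in (ii).} You can do better than ``agree to leading order.'' For real $g$,
\[
a_{m,n}=\hat g(n)\overline{\hat g(n-m)}=\hat g(n)\hat g(m-n),
\]
which is exactly symmetric under $n\mapsto m-n$. The within-mode time average is therefore identically $2\sigma_{m}^{2}-\mathbf{1}_{2\mid m}\,|\hat g(m/2)|^{4}$. The doubled mean then holds for every real seed of bounded variation with a jump.

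The paper, like you, splits off the single-jump part and controls the remainder by Cauchy--Schwarz against Wiener's Ces\`aro average. With realness in hand, that remainder estimate and the single-jump restriction are unnecessary.
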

	
	\begin{remark}
		Three readings of the constant. First, it resolves the
		seed-classification question raised by the carpet theorem: every
		seed of bounded variation with a jump weaves a fractal density at
		almost every time, while eigenmode seeds are stationary and finite
		ladders quasiperiodic and smooth, so within the bounded-variation
		class the jump is exactly the fractalization threshold. Second, the
		constant is a spectrometer: by the law~\eqref{eq:mass-strong-law} a
		single snapshot of the intensity at almost any time determines
		$\Vert g\Vert^{2}\sum_{j}|J_{j}|^{2}$, so the fractal remembers,
		and exhibits, the quantitative jump content of a datum it has long
		since dephased; Wiener's statistic, a Fourier-side invariant from
		1924, reappears as the almost-sure divergence rate of an observable
		of quantum dynamics. Third, the constant computes the collision
		arithmetic of the dispersion relation: singleton classes give
		$\kappa_{g}$ for the Schr\"odinger flow, quadratic frequency
		scaling halves it along time traces, and the paired classes of the
		Airy flow double it; the critical mass of the density is an exactly
		measurable probe of resonance structure. On product laws the
		carpets multiply: for tensor seeds on the rank square, the density
		factorizes and the joint carpet
		$w_{1}(F_{1})\,w_{2}(F_{2})\,f$ of a product law has graph
		dimension $\tfrac52$ at almost every time, by fiberwise
		box-counting over sections where the second factor is bounded
		below, a statement we record at remark level; what replaces this
		for
		dependent laws is obstructed by infinite hyperplane collision
		classes, and joins the open problems of the conclusion.
	\end{remark}
	
	\begin{remark}[Physical reading: transported Talbot physics]
		In physical terms, rank space is a quantum box that every
		probability law carries, and the carpet flow is its Talbot
		dynamics: Theorem~\ref{thm:quantile-mosaic} is the fractional
		Talbot revival and Theorem~\ref{thm:carpet} the fractal Talbot
		profile, for the square-wave datum of Berry's quantum
		carpets~\cite{berry1996,berryklein1996,berry2001}, transported by
		the
		dilation onto arbitrary laws. For real and imaginary parts of
		Schr\"odinger evolutions of bounded-variation data, almost-sure
		dimension $\tfrac32$ is a theorem of
		Rodnianski~\cite{rodnianski2000}, with refinements and nonlinear
		extensions by Erdo\u{g}an and Tzirakis~\cite{erdogan2016}; part (i)
		adapts that skeleton, with the quadrature-uniform and localized
		forms supplied by the equidistribution lemma. For the squared
		modulus, the observable, such statements were, to the best of our
		knowledge, previously available only for step data with rational
		jumps~\cite{chousionis2015}, the subsequent
		advances~\cite{cho2024,erdogan2024} again concerning the field
		rather than its modulus; here
		the gap is closed twice over, by the self-referential energy
		law~\eqref{eq:block-equi-density} for the canonical seed, localized
		to every interval, and by the strong law of
		Theorem~\ref{thm:strong-law} for arbitrary rough seeds, with the
		sharp constant of the display~\eqref{eq:kappa}. The dimension
		$\tfrac32$ itself is the unitary shadow of a boundary phenomenon
		this paper has met twice already: the seed's ladder coefficients
		diverge in $H^{1/2}$ exactly because the uniform kernel jumps at
		the ends of rank space, the same endpoint mechanism that discretized
		the spectrum in Section~\ref{sec:amplitude} and saturated the Hardy
		bound of Theorem~\ref{thm:weights} at the conformal weight
		$\tfrac12$; on the unitary axis it reappears as H\"older exponent
		$\tfrac12$, pinned between conservation and smoothing.
	\end{remark}
	
	\begin{remark}[Statistical reading: fractal alternatives]
		The carpet flow interpolates, on a single circle of times of
		circumference $1/(2\pi)$, between the two classical regimes of
		rank-based alternatives: histogram reweightings at rational times
		and, at almost every other time, a modulation of H\"older class
		exactly one half with graph dimension $\tfrac32$ and exactly
		computable energy at every scale, the mean-square law behind the
		limit~\eqref{eq:block-equi-density} with sharp constant
		$2/\pi^{2}$. These generic slices constitute a natural class of
		\emph{fractal alternatives} for goodness-of-fit testing, the
		rough-boundary closure of Neyman's smooth family; whether they are
		least favorable for rank tests in a minimax sense appears to be an
		open and concrete question. It is also worth recording what
		universality means here: by the equidistribution
		law~\eqref{eq:block-equi-amp} the fractal detail is shared equally
		by all quantile cells of every law, so no test that inspects a
		fixed rank window can see more of the carpet than any other window
		sees.
	\end{remark}
	
	\section{Multivariate Theory: Dependence as Noncommutativity}
	\label{sec:multivariate}
	
	Write $\mathcal{D}_{+}(\mathbb{R}^{n})$ for joint densities that are
	continuous and strictly positive on an open rectangle (possibly all
	of $\mathbb{R}^{n}$) and vanish outside it, so that all marginal and
	conditional distribution functions below are well defined and
	continuous. A joint law $f \in \mathcal{D}_{+}(\mathbb{R}^{n})$ has
	no canonical rank: each ordering $\sigma$ of the coordinates provides
	a Rosenblatt chart~\cite{rosenblatt1952}, the ancestor of triangular
	transport maps and of modern normalizing
	flows~\cite{bogachev2005,papamakarios2021},
	\begin{equation}
		T_{\sigma}(x) = \big(F_{1}(x_{1}),\,
		F_{2|1}(x_{2}\mid x_{1}),\,\ldots,\,
		F_{n|1,\ldots,n-1}(x_{n}\mid x_{1},\ldots,x_{n-1})\big),
		\label{eq:rosenblatt}
	\end{equation}
	an almost-everywhere bijection onto the unit cube carrying $f$ to the
	uniform law, with lower-triangular Jacobian $J_{\sigma}$. The
	ambiguity of the ordering is not a defect but the central structure of
	the multivariate theory: the one-dimensional theory has a single rank
	space, while the $n$-dimensional theory has one chart per ordering,
	and the geometry lives in how the charts differ. This is a deliberate
	choice of standpoint: measure-transportation theory removes the
	ordering dependence by constructing a single canonical multivariate
	rank, the center-outward distribution
	function~\cite{chernozhukov2017,hallin2021,ghosal2022}, whereas the
	present theory keeps
	all the charts and reads their mutual disagreement as geometry; the
	two viewpoints are complementary. Two structures
	follow. First, a metric:
	\begin{equation}
		g = J_{\sigma}^{\mathsf T}J_{\sigma}, \qquad \sqrt{\det g} = f,
		\label{eq:rosenblatt-metric}
	\end{equation}
	so that \emph{volume is probability}, generalizing the intrinsic
	length $ds = f\,dx$ of the curvature theory; the charts $T_{\sigma}$
	are the frames of the geometry, and the chart changes
	$S = T_{\sigma'}\circ T_{\sigma}^{-1}$ are measure-preserving
	transformations of the cube determined by the copula of
	$f$~\cite{sklar1959}. A useful
	rigidity accompanies the frames: rank-local multiplications in either
	of two charts commute, because $S$-conjugation sends multiplications
	to multiplications, $M_{b}\mapsto M_{b\circ S}$; noncommutativity in
	this theory arises only dynamically, as follows. Second, coordinate
	operators: for $w \in \mathcal{W}$ let
	\begin{equation}
		\rho^{(i)}[f] = w\big(F_{i}(x_{i})\big)\,f(x),
		\qquad
		L_{i}f = \big(w(F_{i}) - 1\big)f,
		\label{eq:coordinate-ops}
	\end{equation}
	modulation by the current $i$-th marginal rank, with generating flows
	$L_{i}$. Modulating one coordinate changes the other's marginal unless
	the coordinates are independent, and the failure is an exactly
	computable curvature.
	
	\begin{theorem}[Curvature of dependence]\label{thm:dependence}
		For $n = 2$ and $f \in \mathcal{D}_{+}(\mathbb{R}^{2})$, the Lie
		bracket of the coordinate vector fields, in the standard convention
		$[L_{1},L_{2}](f) = DL_{2}[f](L_{1}f) - DL_{1}[f](L_{2}f)$, is
		\begin{equation}
			\begin{aligned}
				[L_{1}, L_{2}]\,f
				&= \Big[w'\big(F_{2}(x_{2})\big)\,\gamma_{1}(x_{2})
				- w'\big(F_{1}(x_{1})\big)\,\gamma_{2}(x_{1})\Big]\,f,\\
				\gamma_{2}(x_{1})
				&= \operatorname{Cov}\!\big(w(F_{2}(X_{2})),\,
				\mathbf{1}\{X_{1}\le x_{1}\}\big),
			\end{aligned}
			\label{eq:bracket}
		\end{equation}
		and symmetrically. The bracket vanishes under independence, and the
		multiplier in square brackets depends on $f$ only through its copula,
		evaluated at the marginal ranks. Conversely, if the brackets of the
		full family $\{w_{k,\alpha}\}$ all vanish, then $X_{1}$ and $X_{2}$
		are independent.
	\end{theorem}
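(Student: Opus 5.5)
The plan is a direct computation of the two G\^ateaux derivatives, in the spirit of Proposition~\ref{prop:intertwine}, followed by a Fourier argument in rank space for the converse. For a perturbation $h$ of the joint density, the only $f$-dependence inside $L_{2}f = (w(F_{2})-1)f$ beyond the explicit factor is through the marginal distribution function $F_{2}(x_{2}) = \int\!\int_{y_{2}\le x_{2}} f$. This gives
\begin{equation*}
DL_{2}[f]\,h = \big(w(F_{2})-1\big)h + w'\big(F_{2}(x_{2})\big)\Big(\int\!\!\int_{y_{2}\le x_{2}} h\Big) f,
\end{equation*}
and symmetrically for $L_{1}$. I will take $w \in C^{1}$, as for all the harmonic and phase-shifted kernels, and differentiate under the integral on $\mathcal{D}_{+}(\mathbb{R}^{2})$.

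Substituting $h = L_{1}f$ into $DL_{2}$ and $h = L_{2}f$ into $DL_{1}$, the zeroth-order terms $(w(F_{2})-1)(w(F_{1})-1)f$ appear in both and cancel in the bracket. The surviving terms carry the factors
\begin{equation*}
\int\!\!\int_{y_{2}\le x_{2}} \big(w(F_{1}(y_{1}))-1\big) f\,dy
= \mathbb{E}\big[(w(F_{1}(X_{1}))-1)\,\mathbf{1}\{X_{2}\le x_{2}\}\big].
\end{equation*}
By the universality principle~\eqref{eq:universality} applied to the marginal, $\mathbb{E}\,w(F_{1}(X_{1})) = 1$, so this expectation is exactly $\gamma_{1}(x_{2})$. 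The analogous computation gives $\gamma_{2}(x_{1})$, and together they yield the formula~\eqref{eq:bracket}.

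For the structural claims, under independence each covariance factorizes and vanishes. Writing $U_{i} = F_{i}(X_{i})$, the covariances become $\operatorname{Cov}(w(U_{1}), \mathbf{1}\{U_{2}\le F_{2}(x_{2})\})$. These are functionals of the copula of $(U_{1},U_{2})$ evaluated at the marginal ranks, and $w'$ is likewise evaluated at those ranks.

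For the converse, suppose the multiplier vanishes identically for every $w_{k,\alpha}$. It has the separated form $a(x_{2}) - b(x_{1})$, and since $f>0$ on an open rectangle, both $a$ and $b$ must equal a common constant $c$. Letting $x_{2}$ tend to the upper end of the support, $\gamma_{1}(x_{2}) \to \operatorname{Cov}(w(U_{1}),1) = 0$ while $w'$ stays bounded, so $c=0$. Hence $\gamma_{1}(x_{2}) = 0$ wherever $w'(F_{2}(x_{2})) \ne 0$. The zeros of $w_{k,\alpha}'$ in rank are isolated and $\gamma_{1}$ is continuous, so $\gamma_{1} \equiv 0$.

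Since $w_{k,\alpha}(z) = 1 - \cos(2\pi k(z+\alpha))$, varying $\alpha$ isolates both $\cos(2\pi kz)$ and $\sin(2\pi kz)$. This gives $\mathbb{E}\big[e^{2\pi i k U_{1}}\mathbf{1}\{U_{2}\le v\}\big] = 0$ for all $k \ne 0$ and all $v$. The finite measure $B \mapsto \mathbb{P}(U_{1}\in B,\, U_{2}\le v)$ on the rank circle therefore has vanishing nonzero Fourier coefficients, so it is $v$ times Lebesgue measure; the point $z=0\equiv1$ carries no mass, by continuity of $U_{1}$. Thus $\mathbb{P}(U_{1}\le u,\, U_{2}\le v) = uv$, which is independence.

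I expect the main obstacle to be the separation step, in particular forcing $c = 0$ and passing through the isolated zeros of $w'$. Rigorously justifying the G\^ateaux derivative through the marginal distribution function is the other delicate point; I would handle it exactly as in Proposition~\ref{prop:intertwine}, using the bounded perturbations $h = L_{i}f \in L^{1}$.
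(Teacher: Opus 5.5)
Your proposal is correct and follows the paper's proof. You use the same G\^ateaux derivative through the marginal distribution function, the same cancellation of the multiplicative terms via $\mathbb{E}\,w(F_{1}(X_{1})) = 1$, and the same separation-of-variables argument with a limit at the edge of the support forcing the constant to zero. The variations are minor: you use same-kernel brackets where the paper uses mixed pairs $(w_{k,\alpha}, w_{k',\alpha'})$, which is a slightly weaker hypothesis that your isolated-zeros argument handles; and you conclude by Fourier uniqueness of $\mathbb{P}(U_{1}\in\cdot,\,U_{2}\le v)$ on the rank circle, where the paper invokes density of the family in $L^{2}(0,1)$.
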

	
	\begin{proof}
		Write $H_{i}(x_{i}; h) = \int_{\{y_{i} \le x_{i}\}} h(y)\,dy$ for the
		shift of the $i$-th marginal distribution function produced by a
		perturbation $h$ of the joint density. Differentiating
		$L_{2}f = (w(F_{2}) - 1)f$ gives
		\begin{equation*}
			DL_{2}[f](h)
			= \big(w(F_{2}) - 1\big)h + w'\big(F_{2}\big)\,H_{2}(x_{2};h)\,f,
		\end{equation*}
		and symmetrically for $L_{1}$. In the difference
		$DL_{2}[f](L_{1}f) - DL_{1}[f](L_{2}f)$ the multiplicative terms
		cancel, while
		\begin{equation*}
			H_{2}(x_{2}; L_{1}f)
			= \mathbb{E}\big[\big(w(F_{1}(X_{1})) - 1\big)\,
			\mathbf{1}\{X_{2}\le x_{2}\}\big]
			= \gamma_{1}(x_{2}),
		\end{equation*}
		using $\mathbb{E}[w(F_{1}(X_{1}))] = 1$; this yields the bracket
		formula~\eqref{eq:bracket}. For the converse, suppose the bracket of the pair
		$(w_{k,\alpha}, w_{k',\alpha'})$ vanishes for all indices. By the
		bracket formula~\eqref{eq:bracket} this reads
		\begin{equation*}
			w_{k,\alpha}'\big(F_{1}(x_{1})\big)\,
			\gamma^{(k',\alpha')}_{2}(x_{1})
			= w_{k',\alpha'}'\big(F_{2}(x_{2})\big)\,
			\gamma^{(k,\alpha)}_{1}(x_{2})
			\qquad\text{for a.e. } (x_{1},x_{2}),
		\end{equation*}
		and the left side depends on $x_{1}$ alone while the right side
		depends on $x_{2}$ alone, so both equal a constant $c$. As
		$x_{1} \to -\infty$ the indicator $\mathbf{1}\{X_{1}\le x_{1}\}$
		tends to $0$ in $L^{2}$, so
		$\gamma^{(k',\alpha')}_{2}(x_{1}) \to 0$ by the Cauchy--Schwarz
		inequality while $w_{k,\alpha}'\circ F_{1}$ stays bounded; hence
		$c = 0$. Choosing $\alpha$ with
		$w_{k,\alpha}'\circ F_{1} \neq 0$ almost everywhere then gives
		$\gamma^{(k',\alpha')}_{2} \equiv 0$ for all $(k',\alpha')$. The
		family $\{w_{k',\alpha'}\}$ spans a dense subspace of $L^{2}(0,1)$,
		containing all Fourier modes, so
		\begin{equation*}
			\operatorname{Cov}\big(h(F_{2}(X_{2})),\,
			\mathbf{1}\{X_{1}\le x_{1}\}\big) = 0
			\qquad \text{for all } h \in L^{2}(0,1) \text{ and all } x_{1}:
		\end{equation*}
		the conditional law of the rank $F_{2}(X_{2})$ given any half-space
		in $X_{1}$ is uniform, which forces independence.
	\end{proof}
	
	The bracket is thus a margin-free measure of dependence vanishing
	exactly at independence, with the scaling of a curvature. Two
	commutators quantify this for the bivariate Gaussian copula $f_{r}$,
	and for nonlinear maps they are distinct objects: the Lie
	bracket~\eqref{eq:bracket} of the generating vector fields, and the
	composition commutator
	$\rho^{(1)}\circ\rho^{(2)} - \rho^{(2)}\circ\rho^{(1)}$ of the finite
	updates. A second-order computation, the first order vanishing by
	kernel symmetry, gives
	\begin{equation}
		\big\|[L_{1},L_{2}]\,f_{r}\big\|_{L^{1}}
		= c_{\mathrm{Lie}}\,r^{2} + O(r^{4}),
		\qquad
		\big\|\rho^{(1)}\rho^{(2)}f_{r}
		- \rho^{(2)}\rho^{(1)}f_{r}\big\|_{L^{1}}
		= c_{\mathrm{map}}\,r^{2} + O(r^{4}),
		\label{eq:gaussian-curvature}
	\end{equation}
	in the correlation $r$, with
	$c_{\mathrm{Lie}} \approx 0.210$ and
	$c_{\mathrm{map}} \approx 0.345$. Both coefficients are governed by
	the cumulant
	\begin{equation}
		\kappa_{2} =
		\mathbb{E}\big[(X^{2}-1)\,2\sin^{2}\big(\pi\Phi(X)\big)\big]
		\approx -0.742,
		\qquad X \sim N(0,1),
		\label{eq:gaussian-constant}
	\end{equation}
	where $\Phi$ is the standard normal distribution function: with
	$a(x) = x\,\Phi'(x)\,w'(\Phi(x))$ and $X, Y$ independent standard
	normals, the second-order expansion of the Gaussian copula yields
	\begin{equation}
		c_{\mathrm{Lie}}
		= \frac{|\kappa_{2}|}{2}\,\mathbb{E}\big|a(X) - a(Y)\big|,
		\qquad
		c_{\mathrm{map}}
		= \frac{|\kappa_{2}|}{2}\,
		\mathbb{E}\big|a(X)\,w(\Phi(Y)) - a(Y)\,w(\Phi(X))\big|,
		\label{eq:gaussian-coefficients}
	\end{equation}
	both evaluated by numerical quadrature. Quadratic vanishing on the independence manifold and copula
	functoriality give the bracket~\eqref{eq:bracket} the phenomenology
	of a curvature tensor; its exact geometric identity is established
	in Section~\ref{sec:dependence}: the bracket is the torsion of a
	canonical parallelism, and every natural connection in the problem
	is flat.
	
	The multivariate variational theory completes the picture. The Fisher
	functional of a joint kernel on the cube is the Dirichlet energy of
	its amplitude,
	\begin{equation}
		\mathcal{I}(v) = 4\int_{[0,1]^{n}}|\nabla\phi|^{2}\,du,
		\qquad v = \phi^{2},
		\label{eq:joint-fisher}
	\end{equation}
	and the Dirichlet ground state of the cube separates.
	
	\begin{theorem}[Product minimizer]\label{thm:product}
		Among joint kernels on $[0,1]^{n}$ with $\sqrt{v} \in H^{1}_{0}$, the
		Fisher information is minimized uniquely by the product of canonical
		kernels,
		\begin{equation}
			v(u) = \prod_{i=1}^{n}2\sin^{2}(\pi u_{i}),
			\qquad
			\mathcal{I}_{\min}^{(n)} = 4\pi^{2}n;
			\label{eq:product-minimizer}
		\end{equation}
		the minimizer has independent coordinates, hence vanishing
		curvature~\eqref{eq:bracket}.
	\end{theorem}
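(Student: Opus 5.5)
The proof transfers Theorem~\ref{thm:variational} to the cube. With $\phi = \sqrt{v} \in H^{1}_{0}((0,1)^{n})$ and $\int v = 1$, the constraint says $\phi$ lies on the unit sphere of $L^{2}((0,1)^{n})$. By the joint Fisher identity~\eqref{eq:joint-fisher}, minimizing $\mathcal{I}(v)$ is therefore the Rayleigh problem for the Dirichlet Laplacian on the cube, and the minimum equals $4\lambda_{1}$, where $\lambda_{1}$ is the first Dirichlet eigenvalue.

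The spectrum separates. The products $\prod_{i}\varphi_{k_{i}}(u_{i})$ of the one-dimensional ladder~\eqref{eq:eigenbasis} form an orthonormal basis of $L^{2}((0,1)^{n})$, because tensor products of orthonormal bases are orthonormal bases. They are Dirichlet eigenfunctions with eigenvalues $\pi^{2}\sum_{i}k_{i}^{2}$. Hence $\lambda_{1} = n\pi^{2}$, and this value is attained only at the multi-index $k = (1,\ldots,1)$. Expanding $\phi = \sum_{k} a_{k}\prod_{i}\varphi_{k_{i}}$ gives
\[
\int|\nabla\phi|^{2} = \pi^{2}\sum_{k}|a_{k}|^{2}\,|k|^{2} \ge n\pi^{2}\sum_{k}|a_{k}|^{2} = n\pi^{2},
\]
with equality if and only if $a_{k} = 0$ for every $k \ne (1,\ldots,1)$. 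So $\phi = \pm\prod_{i}\sqrt2\sin(\pi u_{i})$, and $v = \phi^{2}$ is unique outright, since squaring removes the sign. This yields $v(u) = \prod_{i}2\sin^{2}(\pi u_{i})$ and $\mathcal{I}_{\min}^{(n)} = 4\pi^{2}n$.

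The one step needing care is justifying the expansion identity for $\int|\nabla\phi|^{2}$ on $H^{1}_{0}$. It holds because the Dirichlet form is diagonal in the eigenbasis, a standard consequence of integration by parts on $C_{c}^{\infty}$ followed by density. No deeper obstacle arises, since simplicity of the ground state is immediate from the explicit spectrum.

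For the final clause, the minimizer $v$ factors as a product of one-dimensional densities. A joint law with product density has independent coordinates, and its copula is the independence copula. By Theorem~\ref{thm:dependence}, the covariances $\gamma_{1}$ and $\gamma_{2}$ then vanish, so the bracket~\eqref{eq:bracket} vanishes.
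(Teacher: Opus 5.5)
Your proposal is correct and follows the paper's proof. Both rewrite $\mathcal{I}(v)$ as the Dirichlet energy $4\int|\nabla\phi|^{2}$ of the amplitude on the $L^{2}$ unit sphere in $H^{1}_{0}$, identify the minimizer as the separated Dirichlet ground state of the cube (unique up to sign, hence $v$ unique), and read off vanishing of the bracket from the product form. Your eigen-expansion $\int|\nabla\phi|^{2}=\pi^{2}\sum_{k}|a_{k}|^{2}|k|^{2}$ usefully makes explicit the simplicity of the ground state, which the paper only asserts.
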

	
	\begin{proof}
		With $v = \phi^{2}$, the functional~\eqref{eq:joint-fisher} is the
		Dirichlet energy $4\int_{[0,1]^{n}}|\nabla\phi|^{2}$, to be minimized
		over the $L^{2}$ unit sphere within $H_{0}^{1}([0,1]^{n})$; the
		minimizer is the
		ground state of the Dirichlet Laplacian of the cube, which is simple
		and separates,
		\begin{equation*}
			\phi(u) = \prod_{i=1}^{n}\sqrt2\,\sin(\pi u_{i}),
			\qquad
			\int_{[0,1]^{n}}|\nabla\phi|^{2} = n\pi^{2},
		\end{equation*}
		giving the product kernel and the minimal energy $4\pi^{2}n$;
		uniqueness up to sign of the amplitude gives uniqueness of the
		kernel. The coordinates of the minimizer are independent by the
		product form, so all its dependence brackets vanish.
	\end{proof}
	
	Minimal disturbance implies flatness: the least-informative joint
	modulation carries no dependence, and, by
	Theorem~\ref{thm:dependence}, all dependence registers as
	noncommutativity. The
	converse implication fails, and is not asserted: every product
	kernel is flat, minimizing or not, so flatness selects a much larger
	family than the variational principle does. The
	one-dimensional theory of this paper is thereby the fiber theory of
	a richer geometry, and the next section constructs it: a foliation
	whose leaves are the reachable classes of coordinate modulation, a
	torsion tensor refining the bracket~\eqref{eq:bracket}, and a flat
	transport that identifies the marginal calculus with a classical
	algorithm.
	
	\section{The Geometry of Dependence: Foliation, Torsion, and Flat
		Transport}\label{sec:dependence}
	
	This section constructs the geometry in which the
	bracket~\eqref{eq:bracket} lives. Every natural connection in the
	problem is \emph{flat}, and the bracket is the \emph{torsion} of a
	canonical parallelism: dependence stratifies into exactly three
	layers: a foliation invariant, conserved by every dynamics of this
	paper; a torsion tensor, governed by a single gauge field, the
	conditional expectation operator between the rank
	$\sigma$-algebras; and a flat transport, which is classical
	Sinkhorn scaling. Fix the
	rectangle $R = I_{1}\times I_{2}$ and write, for mean-zero
	$a, b \in L^{2}_{0}(0,1)$,
	\begin{equation}
		\begin{gathered}
			L_{1}^{a}f := a\big(F_{1}(x_{1})\big)f,
			\qquad
			L_{2}^{b}f := b\big(F_{2}(x_{2})\big)f,\\
			\mathcal{H}_{f} :=
			\big\{\big(a(F_{1}) + b(F_{2})\big)f\big\}
			\subseteq T_{f}\mathcal{D}_{+}(R),
		\end{gathered}
		\label{eq:coordinate-fields}
	\end{equation}
	so that the generators of the coordinate
	operators~\eqref{eq:coordinate-ops} are the fields $L^{w-1}_{i}$.
	Throughout this section and the next, $\mathcal{D}_{+}(R)$ is
	regarded, near each of its points in the bounded-logarithm regime,
	as an open subset of the affine hyperplane of unit-mass densities
	inside $L^{2}(R)$, and the tangent space $T_{f}\mathcal{D}_{+}(R)$
	consists of the mean-zero perturbations $hf$ with $h \in L^{2}(f)$;
	brackets of the modulation fields are computed on this common
	chart, and no Frobenius theorem is invoked anywhere: integrability
	is established directly, the leaves being exhibited in closed form
	in Theorem~\ref{thm:leaves} below.
	
	\begin{lemma}[Involutivity]\label{lem:involutive}
		The distribution $\mathcal{H}$ is involutive. Explicitly, the
		computation of Theorem~\ref{thm:dependence} applies verbatim to give
		the cross bracket
		\begin{equation}
			[L_{1}^{a}, L_{2}^{b}]\,f
			= \Big(b'\big(F_{2}\big)\,\gamma^{(a)}_{1}(x_{2})
			- a'\big(F_{1}\big)\,\gamma^{(b)}_{2}(x_{1})\Big)f
			\;\in\; \mathcal{H}_{f},
			\label{eq:cross-bracket}
		\end{equation}
		with
		$\gamma^{(b)}_{2}(x_{1})
		= \operatorname{Cov}\big(b(F_{2}(X_{2})),
		\mathbf{1}\{X_{1}\le x_{1}\}\big)$ a function of the rank
		$F_{1}(x_{1})$ alone, and symmetrically; while the same-coordinate
		brackets reproduce the one-dimensional distortion algebra of
		Section~\ref{sec:foundations} acting through the given coordinate,
		\begin{equation*}
			[L_{1}^{a}, L_{1}^{c}]\,f
			= \Big(c'(F_{1})\,\Gamma^{a}(F_{1})
			- a'(F_{1})\,\Gamma^{c}(F_{1})\Big)f,
			\qquad
			\Gamma^{a}(u) := \int_{0}^{u}a(s)\,ds.
		\end{equation*}
	\end{lemma}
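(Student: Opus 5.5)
The plan is to compute all brackets of the generating fields directly on the common chart and check that each lands in $\mathcal{H}_{f}$; by bilinearity this proves involutivity. First I would differentiate the general field $L_{2}^{b}f = b(F_{2})f$ along a perturbation $h$, exactly as in the proof of Theorem~\ref{thm:dependence}:
\begin{equation*}
	DL_{2}^{b}[f](h) = b(F_{2})\,h + b'(F_{2})\,H_{2}(x_{2};h)\,f,
\end{equation*}
with $H_{i}$ the marginal distribution-function shift. The term $b(F_{2})h$ uses nothing about $b = w-1$, so the earlier computation carries over once $w-1$ is replaced by $a$ and $b$.

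For the cross bracket, the multiplicative terms $b(F_{2})a(F_{1})f$ cancel in the difference. What remains is $H_{2}(x_{2};a(F_{1})f) = \mathbb{E}[a(F_{1}(X_{1}))\mathbf{1}\{X_{2}\le x_{2}\}] = \gamma^{(a)}_{1}(x_{2})$, where the covariance form uses $\mathbb{E}\,a(F_{1}(X_{1})) = \int_{0}^{1}a = 0$, by the universality principle~\eqref{eq:universality}. This gives the display~\eqref{eq:cross-bracket}.

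To place it in $\mathcal{H}_{f}$, I note that $x_{2}\mapsto \gamma^{(a)}_{1}(x_{2})$ depends only on $F_{2}(x_{2})$: the event $\{X_{2}\le x_{2}\}$ equals $\{F_{2}(X_{2})\le F_{2}(x_{2})\}$ almost surely, and $F_{2}$ is strictly increasing on $I_{2}$. So the bracket has the form $\beta(F_{2}) + \alpha(F_{1})$, with $\beta(u) = b'(u)\tilde\gamma(u)$ and $\alpha$ defined similarly.

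Mean-zero is automatic for both terms. Every bracket of vector fields tangent to the unit-mass hyperplane is tangent to it, and each term separately integrates to zero: $\int b'(F_{2})\gamma_{1}(x_{2})f\,dx = \int_{0}^{1}b'(u)\tilde\gamma(u)\,du$. This vanishes because $\tilde\gamma(0)=\tilde\gamma(1)=0$, so it equals $-\int b\,\tilde\gamma'$, and $\tilde\gamma'(u) = \mathbb{E}[a(U_{1})\mid U_{2}=u]$ times the unit rank density, whose pairing with $b$ is not obviously zero.

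Here lies the main obstacle. I would resolve it by observing that the tangent decomposition only needs $\alpha,\beta$ modulo constants, since the sum is mean-zero. A constant can be split between the two slots, so the bracket equals $(\alpha+c)(F_{1}) + (\beta-c)(F_{2})$ with both functions mean-zero.

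For the same-coordinate bracket, both fields act through $F_{1}$ alone. The computation reduces to the one-dimensional Gâteaux derivative of Proposition~\ref{prop:intertwine}: $H_{1}(x_{1};c(F_{1})f) = \int_{0}^{F_{1}}c = \Gamma^{c}(F_{1})$ by the pushforward identity. The multiplicative terms again cancel, which gives the stated formula. It lies in $\mathcal{H}_{f}$ because it is a function of $F_{1}$, and its mean vanishes since $\int_{0}^{1}(c'\Gamma^{a} - a'\Gamma^{c}) = 0$ after integrating by parts with $\Gamma^{a}(1)=\Gamma^{c}(1)=0$.

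Finally, I would state explicitly the regularity needed. One needs $a,b$ in $C^{1}$ for the formulas, with an $L^{2}$ density argument in the bounded-logarithm chart; I would check whether the paper tacitly restricts the fields to smooth generators.
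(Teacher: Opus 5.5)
Your proposal is correct and follows the paper's route. The cross bracket is the computation of Theorem~\ref{thm:dependence} with $w-1$ replaced by the mean-zero multipliers $a,b$. You get rank-dependence of $\gamma$ from $\{X_{2}\le x_{2}\}=\{F_{2}(X_{2})\le F_{2}(x_{2})\}$, where the paper uses $\gamma^{(b)}_{2}(x_{1})=\int_{0}^{F_{1}(x_{1})}(Pb)(s)\,ds$. The same-coordinate bracket comes, as in the paper, from the marginal shift $\Gamma^{a}(F_{1})$.

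One correction: delete the sentence asserting that each cross term separately integrates to zero. By your own integration by parts the two terms have means $\mp\langle a,Pb\rangle$, which is nonzero for suitable $a,b$ whenever the law is dependent. So your constant-splitting step is genuinely needed, not optional. The paper's phrase ``every multiplier in sight is a mean-zero function of one rank'' glosses over this point, and the paper handles it exactly as you do in the proof of Theorem~\ref{thm:frame-torsion}.
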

	
	\begin{proof}
		The cross bracket is the formula~\eqref{eq:bracket} with
		$w - 1$ replaced by the pair $(a, b)$, the proof of
		Theorem~\ref{thm:dependence} using only mean-zero rank multipliers;
		that $\gamma^{(b)}_{2}$ sees $x_{1}$ only through its rank is the
		representation
		$\gamma^{(b)}_{2}(x_{1})
		= \int_{0}^{F_{1}(x_{1})}\!\int_{0}^{1}b(v)\,c(s,v)\,dv\,ds$ in
		terms of the copula density $c$. For the same-coordinate bracket,
		the marginal shift produced by the perturbation $a(F_{1})f$ is
		$\Gamma^{a}(F_{1}(x_{1}))$, and antisymmetrization gives the
		display; every multiplier in sight is a mean-zero function of one
		rank, so all brackets lie in $\mathcal{H}$.
	\end{proof}
	
	An involutive distribution should integrate to a foliation, and here
	the leaves can be written in closed form.
	
	\begin{theorem}[Leaves and the complete invariant]\label{thm:leaves}
		Let $f, g \in \mathcal{D}_{+}(R)$. The following are equivalent:
		\begin{enumerate}[label=\normalfont(\roman*), leftmargin=*]
			\item $g$ is reachable from $f$ by finitely many coordinate
			derangetropy moves whose kernels are continuous and positive on
			the open unit interval, possibly degenerate at its endpoints;
			\item $g$ is reachable in at most three moves, one per coordinate
			and one revisit;
			\item $g = \varphi(x_{1})\,\psi(x_{2})\,f$ for continuous positive
			$\varphi, \psi$, a separable tilt;
			\item $f$ and $g$ have the same odds-ratio structure: for all
			$x, y \in R$,
			\begin{equation}
				\frac{f(x_{1},x_{2})\,f(y_{1},y_{2})}
				{f(x_{1},y_{2})\,f(y_{1},x_{2})}
				=
				\frac{g(x_{1},x_{2})\,g(y_{1},y_{2})}
				{g(x_{1},y_{2})\,g(y_{1},x_{2})},
				\label{eq:odds-ratio}
			\end{equation}
			equivalently, for twice continuously differentiable densities,
			$\partial_{1}\partial_{2}\log f
			= \partial_{1}\partial_{2}\log g$.
		\end{enumerate}
		The classes $\mathcal{T}(f) := \{\varphi\psi f\}$ thus foliate
		$\mathcal{D}_{+}(R)$ with tangent distribution $\mathcal{H}$; the
		interaction function $\partial_{1}\partial_{2}\log f$ is a complete
		leaf invariant; the independent laws form a single leaf, consisting
		of all products on $R$; and that leaf is the unique one on which all
		coordinate moves commute.
	\end{theorem}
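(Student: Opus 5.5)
The plan is to prove the cycle (i)$\Rightarrow$(iii)$\Rightarrow$(iv)$\Rightarrow$(iii)$\Rightarrow$(ii)$\Rightarrow$(i), and then read off the foliation statements.

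For (i)$\Rightarrow$(iii): a single coordinate move multiplies the current density by $v(F_{i}^{\mathrm{cur}}(x_{i}))$, where $F_{i}^{\mathrm{cur}}$ is the current $i$-th marginal distribution function. This factor is a continuous positive function of $x_{i}$ alone on the open interval $I_{i}$. Positivity holds because $v>0$ on $(0,1)$ and $F_{i}^{\mathrm{cur}}$ maps $I_{i}$ into $(0,1)$; endpoint degeneracy of $v$ is harmless since it is never evaluated there. A finite product of such factors has the form $\varphi(x_{1})\psi(x_{2})$. The implication (iii)$\Rightarrow$(iv) is immediate, because separable factors cancel in the cross ratio \eqref{eq:odds-ratio}. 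For (iv)$\Rightarrow$(iii), fix a base point $y\in R$ and set $h=g/f$. Equality of cross ratios gives $h(x_{1},x_{2})\,h(y_{1},y_{2})=h(x_{1},y_{2})\,h(y_{1},x_{2})$, so $h=\varphi(x_{1})\psi(x_{2})$ with $\varphi(x_{1})=h(x_{1},y_{2})$ and $\psi(x_{2})=h(y_{1},x_{2})/h(y_{1},y_{2})$. Both are continuous and positive since $f,g\in\mathcal{D}_{+}(R)$. In the $C^{2}$ case, $\partial_{1}\partial_{2}\log h=0$ on the rectangle integrates to $\log h=\alpha(x_{1})+\beta(x_{2})$, the rectangle being a product domain.

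The main step is (iii)$\Rightarrow$(ii): realizing a separable tilt in three moves with admissible kernels. First, move in coordinate $1$ with kernel $v_{1}$ chosen so that $v_{1}(F_{1}(x_{1}))\,f_{1}(x_{1})=\tilde g_{1}(x_{1})$, where $\tilde g_{1}$ is the first marginal of the normalized intermediate target $\varphi f$. This amounts to taking $v_{1}=\tilde g_{1}\circ Q_{1}\,/\,f_{1}\circ Q_{1}$. It is a probability density on $(0,1)$ by the pushforward identity, and it is continuous and positive on the open interval. The resulting density is $\varphi(x_{1})f/c$, since the move multiplies by a function of $x_{1}$ alone that matches marginals, and such a function is unique. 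Second, move in coordinate $2$ with $v_{2}$ determined by $\psi$ in the same way. This reaches $\varphi\psi f$ up to the constant, which the mass normalization fixes. The sticking point is that the second move acts through the current $x_{2}$-marginal, and it must yield exactly $\psi$ rather than something else. It does: because the multiplier depends on $x_{2}$ alone, choosing $v_{2}$ as the ratio of the target second marginal of $g$ to the current second marginal, composed with the current quantile, produces $g$ exactly. The two moves already suffice, and the ``revisit'' allowance in (ii) is slack. I would double-check whether the intended statement needs the third move for normalization when $\varphi$ is not integrable against $f$. In that case, take first a coordinate-$2$ move to a reweighting making $\varphi$ integrable, then coordinate $1$, then coordinate $2$ again. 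The implication (ii)$\Rightarrow$(i) is trivial.

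For the final assertions: the classes $\mathcal{T}(f)$ are the equivalence classes of (iv), hence partition $\mathcal{D}_{+}(R)$. Their tangent space at $f$ is $\{(\alpha(x_{1})+\beta(x_{2}))f\}$, which equals $\mathcal{H}_{f}$ after centering, consistent with Lemma~\ref{lem:involutive}. The interaction $\partial_{1}\partial_{2}\log f$ is a complete invariant by (iv). The independent laws are exactly those with vanishing odds-ratio structure, that is, the class of $f_{1}\otimes f_{2}$, which contains all products on $R$. On that leaf all coordinate moves commute, since the brackets \eqref{eq:cross-bracket} vanish (or directly, because marginals stay products). Off it, Theorem~\ref{thm:dependence} produces a noncommuting pair of moves, which gives uniqueness.
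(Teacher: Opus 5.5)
Your proof follows the paper's route: the cycle of implications, the base-point factorization for (iv)$\Rightarrow$(iii), the realization of a one-coordinate multiplier with finite mean as $v\circ F_i^{\mathrm{cur}}$, and the treatment of the product leaf through Theorem~\ref{thm:dependence}. The one thing to correct is your claim in (iii)$\Rightarrow$(ii) that two moves always suffice and the revisit is slack. That claim is false.

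A two-move path in the order $1,2$ has the form $f\to a(x_1)f\to a(x_1)b(x_2)f=g$, which forces $a=\lambda\varphi$. The first move is then admissible only if $\int\varphi f_1<\infty$. The opposite order likewise needs $\int\psi f_2<\infty$. Both conditions can fail while $\iint\varphi\psi f=1$. On $R=\mathbb{R}^2$ take $f\propto e^{-(x_1+x_2)^2}/(1+(x_1-x_2)^2)$, $\varphi=e^{x_1}$, and $\psi\propto e^{x_2}$. In the coordinates $s=x_1+x_2$, $d=x_1-x_2$, the product $e^{s}$ is integrable against $f$. But $e^{x_1}=e^{(s+d)/2}$ and $e^{x_2}=e^{(s-d)/2}$ are not, because of the Cauchy factor in $d$. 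So the third move is genuinely needed. Your two-move argument shows exactly that two moves suffice whenever $\int\varphi f_1<\infty$ or $\int\psi f_2<\infty$, which covers the paper's remark about bounded tilts.

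Your fallback ordering (coordinate $2$, then $1$, then $2$ again) is the right structure. However, you leave the first reweighting unspecified, and that choice is the whole content of the step. Take the bounded multiplier $b=\psi/(1+\psi)$:
\begin{itemize}
\item It has finite mean under $f_2$, so the first move is admissible.
\item $\varphi$ then has finite mean under the new first marginal, because $\iint\varphi\, b\, f\le\iint\varphi\psi f=1$.
\item The last multiplier $1+\psi$ has finite mean under the current second marginal, because $\iint(1+\psi)\,\varphi\, b\, f=\iint\varphi\psi f=1$.
\end{itemize}
The normalizing constants telescope to one, and $(1+\psi)\,b=\psi$ returns exactly $g$. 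This is the paper's bookkeeping device with the coordinates swapped: the paper starts with $\varphi/(1+\varphi)$ in coordinate $1$, then applies $\psi$, then $1+\varphi$. With this choice written in, your proof is complete.
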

	
	The proof is given in Appendix~\ref{app:proofs}. The three-move
	economy in part (ii) rests on a bookkeeping device: the bounded
	first multiplier $\varphi/(1+\varphi)$ always has finite mass, and
	the two remaining moves absorb $\psi$ and the factor $1+\varphi$,
	every intermediate mass being finite by construction; when the
	tilts are bounded, as in the transport theory below, two moves
	suffice. The interaction function of part (iv)
	is the local dependence function of Holland and
	Wang~\cite{holland1987}, the continuous limit of log-linear
	interaction in contingency tables.
	
	\begin{remark}[The conserved charge of rank dynamics]
		Theorem~\ref{thm:leaves} delimits exactly what the ordinal calculus
		can and cannot do in several dimensions: coordinate rank modulation
		moves the main effects $u(x_{1}) + v(x_{2})$ of $\log f$ at will and
		can never change the interaction. Since iteration, the continuous
		flow, the randomized cascades, and the carpet flow all act, in their
		coordinate-wise form, by separable tilts, every dynamics of this
		paper preserves every leaf: the interaction function is a conserved
		charge of rank dynamics, the two-dimensional companion of the
		conserved tail rates of Section~\ref{sec:flow}. What the tail
		charges are to a single law, the interaction is to a joint one: the
		part of the object that rank motion cannot touch.
	\end{remark}
	
	The second layer refines the bracket into a tensor attached to a
	canonical frame, and one operator drives everything.
	
	\begin{definition}[The gauge field]\label{def:gauge-field}
		For $f \in \mathcal{D}_{+}(R)$ with copula density $c$, let
		$P = P_{f} : L^{2}_{0}(0,1) \to L^{2}_{0}(0,1)$ be the conditional
		expectation between the rank variables $U = F_{1}(X_{1})$ and
		$V = F_{2}(X_{2})$,
		\begin{equation}
			(Pb)(u) := \mathbb{E}\big[b(V)\,\big|\,U = u\big]
			= \int_{0}^{1} c(u,v)\,b(v)\,dv,
			\label{eq:gauge-P}
		\end{equation}
		and $P^{*}$ the conditioning in the other direction. Its operator
		norm is the Hirschfeld--Gebelein--R\'enyi maximal correlation
		$\rho_{\max}(f) = \sup\,
		\operatorname{Corr}\big(a(U),\, b(V)\big)$, by Gebelein's
		theorem~\cite{gebelein1941,renyi1959}.
	\end{definition}
	
	\begin{lemma}[Exact gauge identity]\label{lem:gauge-identity}
		For every mean-zero $b$ and $a$,
		\begin{equation}
			\gamma^{(b)}_{2}(x_{1})
			= \int_{0}^{F_{1}(x_{1})}(Pb)(s)\,ds,
			\qquad
			\frac{d}{d\varepsilon}\Big|_{0}
			\Big(\text{marginal 2 of } \big(1 + \varepsilon a(F_{1})\big)f\Big)
			= f_{2}\cdot(P^{*}a)(F_{2}):
			\label{eq:gauge-identity}
		\end{equation}
		the cross-talk of the coordinate modulations, both in the bracket
		and at the level of marginals, is exactly the gauge
		field~\eqref{eq:gauge-P}. For the Gaussian copula of correlation
		$r$, Mehler's expansion diagonalizes $P$ in the Hermite basis with
		spectrum $\{r^{k}\}_{k \ge 1}$~\cite{lancaster1969}, so
		$\rho_{\max} = |r|$.
	\end{lemma}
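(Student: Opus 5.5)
The plan is to prove both identities in \eqref{eq:gauge-identity} by passing to rank coordinates. The key tool is the universality principle \eqref{eq:universality} in its bivariate form: the pair $(U,V) = (F_{1}(X_{1}),F_{2}(X_{2}))$ has the copula density $c$ on the unit square, with uniform marginals. Both sides of each identity will be rewritten as copula integrals, and the gauge field \eqref{eq:gauge-P} will then appear by Fubini.

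For the first identity, recall $\gamma^{(b)}_{2}(x_{1}) = \operatorname{Cov}\big(b(V),\mathbf{1}\{X_{1}\le x_{1}\}\big)$. Since $b$ is mean-zero and $V$ is uniform, $\mathbb{E}\,b(V) = \int_{0}^{1}b = 0$, so the covariance reduces to $\mathbb{E}\big[b(V)\mathbf{1}\{X_{1}\le x_{1}\}\big]$. Because $F_{1}$ is a continuous increasing bijection on the support, the event $\{X_{1}\le x_{1}\}$ coincides with $\{U\le F_{1}(x_{1})\}$. Hence
\begin{equation*}
\gamma^{(b)}_{2}(x_{1}) = \int_{0}^{F_{1}(x_{1})}\int_{0}^{1}b(v)\,c(s,v)\,dv\,ds = \int_{0}^{F_{1}(x_{1})}(Pb)(s)\,ds .
\end{equation*}
This is exactly the representation already used in the proof of Lemma~\ref{lem:involutive}. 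Fubini is justified because $b\in L^{2}$, $c\ge 0$ integrates to one, and $|b(V)|$ is integrable.

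For the second identity, the second marginal of $(1+\varepsilon a(F_{1}))f$ equals $f_{2}(x_{2}) + \varepsilon\int a(F_{1}(x_{1}))f(x_{1},x_{2})\,dx_{1}$. This is linear in $\varepsilon$, so the derivative is the $\varepsilon$-coefficient. Writing $f(x_{1},x_{2}) = c(F_{1}(x_{1}),F_{2}(x_{2}))f_{1}(x_{1})f_{2}(x_{2})$ (Sklar) and substituting $u = F_{1}(x_{1})$, the coefficient becomes
\begin{equation*}
f_{2}(x_{2})\int_{0}^{1}a(u)\,c\big(u,F_{2}(x_{2})\big)\,du = f_{2}(x_{2})\,(P^{*}a)\big(F_{2}(x_{2})\big),
\end{equation*}
since $P^{*}a(v) = \mathbb{E}[a(U)\mid V=v] = \int c(u,v)a(u)\,du$. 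A small point to check is that $P$ maps $L^{2}_{0}$ into $L^{2}_{0}$: this follows from $\int_{0}^{1}c(u,v)\,du = 1$ together with Jensen, which also shows $\|P\|\le 1$.

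For the Gaussian statement, Mehler's formula gives $c_{r}(\Phi(x),\Phi(y)) = \sum_{k\ge0} r^{k} He_{k}(x)He_{k}(y)/k!$. Taking orthonormal Hermite functions $h_{k}\circ\Phi^{-1}$ on rank space, this yields $P(h_{k}\circ\Phi^{-1}) = r^{k}\,h_{k}\circ\Phi^{-1}$. On $L^{2}_{0}$ the mode $k=0$ is removed, leaving spectrum $\{r^{k}\}_{k\ge1}$ with norm $|r|$. By Gebelein's theorem as recorded in Definition~\ref{def:gauge-field}, this norm is $\rho_{\max}$.

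The only real obstacle is analytic rather than algebraic: justifying the interchange of sum and integral in Mehler's expansion, and confirming completeness of the Hermite basis in $L^{2}(\mathcal N(0,1))$. For $|r|<1$ both are classical (cf.~\cite{lancaster1969}), and I would simply cite them. The cases $|r|=1$ lie outside $\mathcal{D}_{+}$ and are excluded.
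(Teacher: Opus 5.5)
Your proposal is correct and follows essentially the same route as the paper. The first identity comes from dropping the mean of $b(V)$, identifying $\{X_{1}\le x_{1}\}$ with $\{U\le F_{1}(x_{1})\}$, and conditioning on the uniform rank $U$; the second is the same copula computation without the indicator (your explicit Sklar substitution is that computation written out); and the Gaussian statement follows from Mehler's expansion of the copula density in normalized Hermite polynomials, with your extra checks ($P$ preserving $L^{2}_{0}$ with $\Vert P\Vert\le 1$, and convergence of Mehler's series for $|r|<1$) supplying details the paper leaves implicit.
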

	
	\begin{proof}
		Since $U$ is uniform,
		$\gamma^{(b)}_{2}(x_{1})
		= \mathbb{E}\big[b(V)\,\mathbf{1}\{U \le F_{1}(x_{1})\}\big]
		= \int_{0}^{F_{1}(x_{1})}\mathbb{E}[b(V)\,|\,U = s]\,ds$, and the
		marginal variation is the same computation without the indicator.
		Mehler's formula expands the Gaussian copula density in products of
		Hermite polynomials with coefficients $r^{k}/k!$, which is the
		stated diagonalization.
	\end{proof}
	
	\begin{theorem}[The leafwise parallelism and its torsion]\label{thm:frame-torsion}
		Fix $f$ with $\rho_{\max}(f) < 1$. Then:
		\begin{enumerate}[label=\normalfont(\roman*), leftmargin=*]
			\item the frame map
			$E_{f}(a,b) := \big(a(F_{1}) + b(F_{2})\big)f$ is a Banach
			isomorphism of $L^{2}_{0}\oplus L^{2}_{0}$ onto
			$T_{f}\mathcal{T}(f)$, by the two-sided bound
			\begin{equation}
				\big\Vert E_{f}(a,b)\big\Vert_{L^{2}(f)}^{2}
				= \Vert a\Vert^{2} + \Vert b\Vert^{2} + 2\langle a, Pb\rangle
				\;\ge\; \big(1 - \rho_{\max}\big)
				\big(\Vert a\Vert^{2} + \Vert b\Vert^{2}\big);
				\label{eq:frame-bound}
			\end{equation}
			the coordinate modulations are an absolute parallelism of the leaf,
			degenerating exactly at $\rho_{\max} = 1$, where the kernel of
			$E_{f}$ is $\{(a, -P^{*}a) : PP^{*}a = a\}$, as at the
			Fr\'echet--Hoeffding comonotone boundary;
			\item the structure functions of the parallelism, defined by
			$[E(a,0),\, E(0,b)]_{f} = E_{f}(\mathrm{T}^{1},\mathrm{T}^{2})$, are
			\begin{equation}
				\mathrm{T}^{1}(a,b)(u) = -\,a'(u)\int_{0}^{u}(Pb)(s)\,ds,
				\qquad
				\mathrm{T}^{2}(a,b)(v) = b'(v)\int_{0}^{v}(P^{*}a)(s)\,ds,
				\label{eq:torsion-components}
			\end{equation}
			modulo the mean-zero normalization; this torsion tensor is bilinear,
			depends on $f$ only through its copula, and vanishes on the
			trigonometric kernel family if and only if $X_{1}$ and $X_{2}$ are
			independent;
			\item for the Gaussian copula the Mehler expansion makes every
			component of the tensor~\eqref{eq:torsion-components} an explicit
			power series in $r$: the torsion is of first order in $r$ for
			generic directions, while for symmetric kernels the first Hermite
			term drops by parity and the leading contribution is the $k = 2$
			overlap, proportional to the cumulant~\eqref{eq:gaussian-constant},
			the overlap against the normalized Hermite polynomial being
			$\kappa_{2}/\sqrt2$; the
			coefficients of the expansion~\eqref{eq:gaussian-coefficients} are
			the sizes of $E_{f}(\mathrm{T}^{1},\mathrm{T}^{2})$ and of the
			finite-update commutator in the canonical kernel direction.
		\end{enumerate}
	\end{theorem}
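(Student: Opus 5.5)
Part (i) is a Hilbert-space computation in rank coordinates. By the universality principle~\eqref{eq:universality}, extended to the joint rank vector $(U,V)$ with law the copula $c$, we have
\[
\Vert (a(F_{1}) + b(F_{2}))f\Vert_{L^{2}(f)}^{2} = \mathbb{E}\big[(a(U)+b(V))^{2}\big],
\]
where $L^{2}(f)$ is read as the norm of the multiplier $h$ in $hf$. Expanding the square gives $\Vert a\Vert^{2} + \Vert b\Vert^{2} + 2\mathbb{E}[a(U)b(V)]$, since $U,V$ are uniform. The cross term is $\langle a, Pb\rangle$ by the tower property and~\eqref{eq:gauge-P}. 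Gebelein's theorem (Definition~\ref{def:gauge-field}) gives $|\langle a,Pb\rangle| \le \rho_{\max}\Vert a\Vert\Vert b\Vert$ for mean-zero $a,b$. Then $2\Vert a\Vert\Vert b\Vert \le \Vert a\Vert^{2}+\Vert b\Vert^{2}$ yields the lower bound~\eqref{eq:frame-bound}; the upper bound $(1+\rho_{\max})(\cdots)$ is the same estimate. So $E_{f}$ is bounded below, hence injective with closed range. Its range is $\mathcal{H}_{f}$, which is the tangent space of the leaf $\mathcal{T}(f)$ by Theorem~\ref{thm:leaves}. This gives a Banach isomorphism.

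For the degenerate case $\rho_{\max}=1$, equality $\mathbb{E}[(a(U)+b(V))^{2}]=0$ means $b(V) = -a(U)$ almost surely. Conditioning on $V$ forces $b = -P^{*}a$. Conditioning again on $U$ gives $a = PP^{*}a$, and conversely $PP^{*}a=a$ implies $\Vert a - P b'\Vert$ vanishes with $b'=P^{*}a$, since $\langle a,PP^{*}a\rangle = \Vert P^{*}a\Vert^{2}$. The comonotone copula supplies an instance, with $P$ the identity.

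Part (ii) reads off Lemma~\ref{lem:involutive} through Lemma~\ref{lem:gauge-identity}. Apply the cross bracket~\eqref{eq:cross-bracket} to $L_{1}^{a}, L_{2}^{b}$, and substitute $\gamma_{2}^{(b)}(x_{1}) = \int_{0}^{F_{1}(x_{1})}Pb$ and symmetrically $\gamma_{1}^{(a)}(x_{2}) = \int_{0}^{F_{2}(x_{2})}P^{*}a$. This exhibits the bracket as $(\mathrm{T}^{1}(F_{1}) + \mathrm{T}^{2}(F_{2}))f$ with the components in~\eqref{eq:torsion-components}. Each component need not be mean-zero; subtracting its mean and adding it to the other, which does not change the sum because the total is mean-zero as a tangent vector, is the stated normalization. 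Injectivity from (i) then makes $(\mathrm{T}^{1},\mathrm{T}^{2})$ well defined.

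The remaining properties of (ii) follow quickly. Bilinearity holds because $P$ is linear. Copula dependence holds because $P$ is built from $c$ alone. For the vanishing claim, independence gives $P=0$ on mean-zero functions, so the torsion vanishes. Conversely, vanishing on the trigonometric family reduces, exactly as in the converse of Theorem~\ref{thm:dependence}, to $\int_{0}^{u}Pb \equiv 0$ for a dense set of $b$. Hence $P=0$, so $\rho_{\max}=0$ and $X_{1},X_{2}$ are independent.

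Part (iii) inserts Mehler's diagonalization from Lemma~\ref{lem:gauge-identity} into~\eqref{eq:torsion-components}. Transport $b$ to Gaussian coordinates, $b(\Phi(y)) = \sum_{k\ge1}\beta_{k}\mathrm{He}_{k}(y)/\sqrt{k!}$. Then $Pb$ corresponds to $\sum_{k} r^{k}\beta_{k}\mathrm{He}_{k}/\sqrt{k!}$, a power series in $r$ whose linear term is $r\beta_{1}$, nonzero for generic $b$. For symmetric kernels, $w(\Phi(y))$ is even in $y$, so $\beta_{1}=0$ and the leading term is $r^{2}\beta_{2}$. Here $\beta_{2} = \mathbb{E}[(X^{2}-1)w(\Phi(X))]/\sqrt2 = \kappa_{2}/\sqrt2$ by~\eqref{eq:gaussian-constant}. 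Identifying the $L^{1}$ sizes with the constants in~\eqref{eq:gaussian-coefficients} is then the second-order expansion already carried out there, restated in frame coordinates.

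The main obstacle is the bookkeeping in (ii): keeping the mean-zero normalization consistent so that $\mathrm{T}$ is genuinely defined via $E_{f}^{-1}$, which is where $\rho_{\max}<1$ enters. A secondary difficulty is rigor in the converse density argument when $a'$ vanishes on the trigonometric kernels, which I would handle by choosing the phase $\alpha$ as in Theorem~\ref{thm:dependence}.
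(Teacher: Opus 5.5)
Your proposal is correct and follows the paper's proof essentially step for step. For (i) you use the same expansion of the quadratic form in rank coordinates, bounded by Gebelein's theorem, with Theorem~\ref{thm:leaves} supplying the leaf tangent. For (ii) you insert the gauge identity into the cross bracket and subtract means, the two constants cancelling because the bracket integrates to zero. For (iii) you use the Mehler--Hermite expansion, where parity kills the $k=1$ term and the $k=2$ overlap is $\kappa_{2}/\sqrt2$.

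The deviations are minor. You derive the degenerate kernel by conditioning twice rather than from the equality case of Cauchy--Schwarz. You also run the converse of (ii) directly on $\mathrm{T}^{1}$, instead of passing through injectivity of $E_{f}$ and Theorem~\ref{thm:dependence}. One slip: in the reverse kernel inclusion, what must vanish is $\mathbb{E}\big[(a(U)-(P^{*}a)(V))^{2}\big]$, not $\Vert a-Pb'\Vert$. Your identity $\langle a,PP^{*}a\rangle=\Vert P^{*}a\Vert^{2}$ is exactly what shows the correct quantity is zero.
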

	
	\begin{remark}[Dependence as torsion]
		Theorem~\ref{thm:frame-torsion} is the precise form of dependence as
		geometry that survives construction. Torsion has the right
		phenomenology: it is an antisymmetric bilinear tensor, functorial in
		the copula, exactly computable, vanishing precisely at independence,
		and controlled by the gauge field through explicit bounds such as
		$\Vert\mathrm{T}^{1}(a,b)\Vert
		\le \Vert a'\Vert_{\infty}\rho_{\max}\Vert b\Vert$ from the
		formula~\eqref{eq:torsion-components}. In Cartan's dictionary a
		parallelism with vanishing torsion makes the underlying space a Lie
		group, and that is exactly the content here: on the independence
		leaf the frame is the product of the two one-dimensional distortion
		groups, acting simply transitively coordinate by coordinate as in
		Theorem~\ref{thm:torsor}, and dependence is the failure of the
		two-dimensional theory to be the product of its one-dimensional
		shadows, measured tensorially by the
		components~\eqref{eq:torsion-components}.
	\end{remark}
	
	The third layer is transport. Let $\pi(f) = (f_{1}, f_{2})$ project
	a joint law to its marginal pair; the fiber over $(g_{1}, g_{2})$ is
	the Fr\'echet class of joint laws with those marginals, whose
	tangent directions at $f$ are the doubly mean-zero perturbations
	$\{hf : \mathbb{E}[h\,|\,x_{1}] = \mathbb{E}[h\,|\,x_{2}] = 0\}$,
	the copula directions. Wherever $\rho_{\max}(f) < 1$, declare the
	horizontal space at $f$ to be the leaf tangent
	$\mathcal{H}_{f}$: the \emph{derangetropy connection}.
	
	\begin{theorem}[Flatness, and Sinkhorn as parallel transport]\label{thm:flat-transport}
		Let $R$ be bounded and restrict to laws with $\log f$ bounded on
		$R$. Then:
		\begin{enumerate}[label=\normalfont(\roman*), leftmargin=*]
			\item wherever $\rho_{\max} < 1$ the splitting
			$T_{f}\mathcal{D}_{+} = V_{f}\oplus\mathcal{H}_{f}$ holds and
			$d\pi$ maps $\mathcal{H}_{f}$ isomorphically onto the base, by the
			marginal formula
			\begin{equation}
				d\pi\big(E_{f}(a,b)\big)
				= \Big(f_{1}\cdot(a + Pb)(F_{1}),\;
				f_{2}\cdot(b + P^{*}a)(F_{2})\Big)
				\label{eq:dpi}
			\end{equation}
			and invertibility of the block operator with entries
			$I, P, P^{*}, I$; the Ehresmann curvature of the connection
			vanishes identically, brackets of horizontal fields being
			horizontal by Lemma~\ref{lem:involutive};
			\item the leaves are global horizontal sections:
			$\pi$ restricted to a leaf is a bijection onto the base, and
			parallel transport of $f$ to target marginals $(g_{1},g_{2})$ is
			the Csisz\'ar $I$-projection of $f$ onto the target Fr\'echet
			class~\cite{csiszar1975}, the unique separable tilt of $f$ with
			those marginals, characterized by the Pythagorean identity
			\begin{equation}
				D\big(h \,\Vert\, f\big)
				= D\big(h \,\Vert\, f^{\rightarrow}\big)
				+ D\big(f^{\rightarrow} \Vert\, f\big)
				\qquad\text{for all $h$ in the target class};
				\label{eq:pythagoras}
			\end{equation}
			\item iterative proportional fitting is the alternation of the two
			derangetropy coordinate moves that match one marginal at a time; it
			converges to the parallel transport~\cite{csiszar1975,ruschendorf1995},
			and its linearization at the limit law contracts, per full cycle, at
			exactly the rate $\rho_{\max}^{2}$ of the limit, the squared
			cosine of the principal angle between the two rank subspaces, by von Neumann's
			alternating projection theorem~\cite{deutsch2001};
			\item there is no holonomy paradox: transport to fixed target
			marginals is order-independent by the uniqueness in part (ii),
			while the finite-update commutator of the
			display~\eqref{eq:gaussian-curvature} compares transports based at
			different intermediate laws; its coefficient is an order defect of
			the modulation groupoid, not a holonomy of the connection.
		\end{enumerate}
	\end{theorem}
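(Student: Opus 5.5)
The plan is to work throughout in the $L^{2}(f)$ chart fixed at the start of the section, where tangent vectors are $hf$ with $h \in L^{2}(f)$ mean-zero. I would pass to rank coordinates $(U,V)$, so that $L^{2}(f)$ becomes $L^{2}(c\,du\,dv)$ and the rank subspaces $\{a(U)\}$ and $\{b(V)\}$ are closed subspaces whose intersection is trivial when $\rho_{\max}<1$.

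\emph{Part (i).} The marginal formula~\eqref{eq:dpi} follows from Lemma~\ref{lem:gauge-identity}. The first marginal of $(a(F_{1})+b(F_{2}))f$ is $f_{1}\,\mathbb{E}[a(U)+b(V)\mid U]$ evaluated at $F_{1}$, which equals $f_{1}(a+Pb)(F_{1})$; the second marginal is symmetric. The block operator $\begin{pmatrix} I & P\\ P^{*} & I\end{pmatrix}$ on $L^{2}_{0}\oplus L^{2}_{0}$ is self-adjoint. It is bounded below by $1-\rho_{\max}$ because its quadratic form is exactly the one computed in the frame bound~\eqref{eq:frame-bound}, so it is invertible. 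Hence $d\pi$ restricted to $\mathcal{H}_{f}$ is an isomorphism onto the base tangent, which I identify with pairs $(f_{1}\alpha(F_{1}),f_{2}\beta(F_{2}))$ with $\alpha,\beta$ mean-zero.

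For the splitting $T_{f} = V_{f}\oplus\mathcal{H}_{f}$, I would decompose $hf$ by subtracting the unique horizontal vector with the same $d\pi$. The remainder has vanishing marginal variations, so it is vertical. The intersection $V_{f}\cap\mathcal{H}_{f}$ is zero because $d\pi$ is injective on $\mathcal{H}_{f}$. Finally, the Ehresmann curvature is the vertical part of the bracket of two horizontal lifts. Lemma~\ref{lem:involutive} places every such bracket in $\mathcal{H}$, so the curvature vanishes.

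\emph{Part (ii).} I would invoke Theorem~\ref{thm:leaves}: a leaf is $\{\varphi\psi f\}$. I need that for every target pair $(g_{1},g_{2})$ there is exactly one separable tilt of $f$ with those marginals. Existence and uniqueness are the Schrödinger/Sinkhorn scaling problem. On a bounded rectangle with $\log f$ bounded, Csiszár's theorem~\cite{csiszar1975} gives the $I$-projection $f^{\rightarrow}$ onto the target Fréchet class, of the form $\varphi\psi f$, together with the Pythagorean identity~\eqref{eq:pythagoras}. Uniqueness of the tilt then follows from strict convexity of $D(\cdot\Vert f)$: any separable tilt $h$ in the class satisfies $\log(h/f) = \log\varphi+\log\psi$, so $\int \log(h/f)\,d(h'-h) = 0$ for every $h'$ in the class, and this forces $h = f^{\rightarrow}$.

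Since the leaf is tangent to $\mathcal{H}$, horizontal curves stay in leaves. Parallel transport along any base curve therefore lands at the unique leaf point over the endpoint, namely $f^{\rightarrow}$. I would verify that every horizontal lift exists by using this leaf bijection directly, which avoids any ODE existence argument.

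\emph{Part (iii).} Each IPF half-step multiplies by $g_{1}/f_{1}$ in $x_{1}$. Written in rank form this is a derangetropy coordinate move, so IPF stays in the leaf, and its convergence is classical~\cite{csiszar1975,ruschendorf1995}. For the rate I would linearize at the limit law $f^{\rightarrow}$: a perturbation $h$ in $\mathcal{H}$ is updated by the first half-step as $h\mapsto h-\mathbb{E}[h\mid U]$, the projection onto the orthocomplement of the $U$-subspace, and the second half-step does the same for $V$.

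Restricted to $\mathcal{H}$, one cycle acts as $(a,b)\mapsto$ a map conjugate to $P^{*}P$ (equivalently $PP^{*}$). Its norm is $\Vert P\Vert^{2} = \rho_{\max}^{2}$, which is von Neumann/Deutsch's alternating projection rate, the squared cosine of the Friedrichs angle~\cite{deutsch2001}. This rate is attained along top singular vectors, which gives \emph{exactly} $\rho_{\max}^{2}$. Part (iv) is then immediate from the uniqueness in (ii).

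The main obstacle is the linearization in (iii): the $L^{2}(f)$ chart varies along the iteration. I would handle this by fixing the chart at the limit law and showing that the derivative of each half-step there equals the orthogonal projection, with the nonlinear remainder of second order.
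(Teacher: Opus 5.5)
This is essentially the paper's proof in every part: the marginal formula with invertibility of the block operator, flatness from Lemma~\ref{lem:involutive}, Csisz\'ar's $I$-projection for the leaf bijection, and IPF linearized as the product of the two complementary conditional-expectation projections; your coercivity proof of invertibility and your explicit construction of the vertical complement are tighter than the paper's Neumann-series and dimension-counting remarks. One wording fix in (iii): in frame coordinates the cycle is $(a,b)\mapsto(-Pb,\,P^{*}Pb)$, a triangular map rather than a conjugate of $P^{*}P$, whose one-cycle norm in $L^{2}(f^{\rightarrow})$ is $\rho_{\max}$, and it is its spectral radius, the asymptotic per-cycle rate, that equals $\rho_{\max}^{2}$, which is the sense in which ``exactly $\rho_{\max}^{2}$'' holds.
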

	
	\begin{remark}[What flatness means]
		Zero curvature is not the absence of geometry; it is the theorem
		that the dependence content of marginal transport is carried
		entirely by the foliation. Moving marginals never creates or
		destroys interaction, by part (iv) of Theorem~\ref{thm:leaves}, and
		this is why Sinkhorn scaling, Schr\"odinger bridges at a fixed
		reference, and log-linear margin adjustment are safe operations:
		they are parallel transport in a flat connection, moving main
		effects only. The maximal correlation now appears in three exact
		roles at once: degeneracy threshold of the frame, operator norm of
		the gauge field, and alternating-projection angle governing the
		Sinkhorn rate; quantitative exponential rates for iterative
		proportional fitting through the same principal-angle mechanism
		appear in the recent entropic optimal transport
		literature~\cite{carlier2022}. The one-dimensional theory enters as
		the fiber
		calculus: each single move of the alternation is a derangetropy
		operator in the current marginal chart, so classical iterative
		proportional fitting is, word for word, a trajectory in the
		modulation monoid of Section~\ref{sec:foundations}.
	\end{remark}
	
	\begin{proposition}[Dual flatness]\label{prop:dual-flatness}
		Each leaf is an exponential family whose sufficient statistics are
		the separable functions $u(x_{1}) + v(x_{2})$, hence e-flat in the
		sense of information geometry; each Fr\'echet fiber is convex, hence
		m-flat; the transport of Theorem~\ref{thm:flat-transport} is the
		e-projection onto the m-flat fiber, the dual transport is the
		m-projection onto the e-flat leaf, and both Pythagorean identities
		hold~\cite{csiszar1975,amari2016}. The $\alpha = \pm1$ connections
		of this dually flat pair are torsion-free and curvature-free; the
		tensor of Theorem~\ref{thm:frame-torsion} is the torsion of the
		\emph{frame}, a structure finer than the $\alpha$-connections, and
		it alone sees dependence.
	\end{proposition}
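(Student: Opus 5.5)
The plan is to verify the four assertions of Proposition~\ref{prop:dual-flatness} one at a time. Each reduces either to Theorem~\ref{thm:leaves}, to Theorem~\ref{thm:flat-transport}, or to the standard dually flat theory of exponential and mixture families~\cite{csiszar1975,amari2016}.

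\textbf{Leaves.} By Theorem~\ref{thm:leaves}(iii) the leaf through $f$ is $\mathcal{T}(f) = \{\varphi(x_{1})\psi(x_{2})f\}$. Writing $\varphi\psi = e^{u(x_{1})+v(x_{2})}$ and normalizing, the leaf becomes
\begin{equation*}
  \{\, e^{u(x_{1})+v(x_{2}) - \psi(u,v)} f \,\}.
\end{equation*}
This is an exponential family with base measure $f$, natural parameter $(u,v)$ ranging over a linear space, sufficient statistic the separable functions, and cumulant $\psi(u,v) = \log\int e^{u+v}f$. Because $\log$ of a member is affine in $(u,v)$, the leaf is an affine subspace in e-coordinates, which is e-flatness. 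For the fibers, the constraints fixing $(f_{1},f_{2})$ are linear in the density, so convex combinations stay in the Fr\'echet class; the class is therefore an affine subset in mixture coordinates, which is m-flatness.

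\textbf{Projections.} The e-projection statement is Theorem~\ref{thm:flat-transport}(ii) read through this dictionary. The transported law $f^{\rightarrow}$ is the unique point where the e-flat leaf meets the m-flat target fiber. The Pythagorean identity~\eqref{eq:pythagoras} is exactly Csisz\'ar's characterization of the I-projection onto a linear family. For the dual statement, fix a law $h$ in a fiber and minimize $D(h\Vert g)$ over $g$ in the leaf of $f$. This is maximum likelihood in the exponential family. Its stationarity conditions are moment matching of the sufficient statistics, $\mathbb{E}_{g}[u(X_{1})+v(X_{2})] = \mathbb{E}_{h}[u(X_{1})+v(X_{2})]$ for all $(u,v)$, that is, equality of marginals. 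The minimizer is therefore again the unique leaf point with $h$'s marginals, and the dual Pythagorean identity $D(h\Vert g) = D(h\Vert g^{*}) + D(g^{*}\Vert g)$ follows by expanding logarithms, because $\log g^{*} - \log g$ is separable and integrates equally against $h$ and $g^{*}$. Existence and uniqueness use the bounded-logarithm and bounded-$R$ hypotheses of Theorem~\ref{thm:flat-transport}, which keep the cumulant finite and strictly convex.

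\textbf{Connections.} For an exponential family the e-connection ($\alpha=1$) is the flat connection of the affine natural parameters. For a mixture family the m-connection ($\alpha=-1$) is the flat connection of the affine mixture parameters. Both have vanishing curvature and torsion, since they are the Levi-Civita-free affine connections of global affine charts; this is standard~\cite{amari2016}. What remains is to contrast them with the frame of Theorem~\ref{thm:frame-torsion}. The frame vectors $E_{f}(a,0)$ and $E_{f}(0,b)$ are \emph{rank} modulations, with $a$ and $b$ evaluated at $F_{1}$ and $F_{2}$ of the current law. They are therefore not constant in the e-chart, because the ranks move with $f$. Their bracket~\eqref{eq:cross-bracket} is nonzero, and by Theorem~\ref{thm:frame-torsion}(ii) it vanishes on the trigonometric family iff independence holds. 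The $\alpha$-connections are curvature- and torsion-free on every leaf regardless of dependence, so only the frame torsion detects dependence.

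\textbf{Main obstacle.} I expect the delicate point to be the infinite-dimensional rigor of the dual m-projection: existence of the minimizer in the leaf. I would handle it as in Theorem~\ref{thm:flat-transport}(ii), via Csisz\'ar's I-divergence geometry with uniformly bounded log-densities, rather than proving it afresh.
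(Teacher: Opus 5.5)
Your proposal is correct and follows the paper's proof. The leaf is the exponential family $\{e^{u(x_{1})+v(x_{2})}f/Z\}$ by Theorem~\ref{thm:leaves}(iii), the Fr\'echet fibers are closed under mixtures, the projections and Pythagorean identities are Csisz\'ar's, the $\alpha=\pm1$ connections are flat and torsion-free in their affine charts, and the frame torsion is the separate structure computed in Theorem~\ref{thm:frame-torsion}. Your one addition is to derive the dual m-projection by moment matching and both Pythagorean identities directly from the separability of $\log(g^{*}/g)$ together with equality of marginals; this is a correct elementary check of what the paper only cites.
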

	
	\begin{proof}
		A leaf is $\{e^{u(x_{1})+v(x_{2})}f/Z\}$ by
		Theorem~\ref{thm:leaves}(iii), an exponential family with separable
		statistics; Fr\'echet classes are closed under mixtures; the
		projection identifications and the Pythagorean identities are
		Csisz\'ar's, in Amari's language the duality of e- and m-flatness;
		and dually flat pairs carry torsionless flat $\alpha$-connections
		by construction. The frame $E_{f}$ is an additional rigidity, and
		its structure functions were computed in
		Theorem~\ref{thm:frame-torsion}.
	\end{proof}
	
	In $n$ dimensions the three layers persist, and they separate the
	notions of independence.
	
	\begin{theorem}[The interaction potential in $n$ dimensions]\label{thm:n-leaves}
		On $\mathcal{D}_{+}(R)$ with $R = \prod_{i} I_{i}$ bounded and
		$\log f$ bounded, the coordinate modulation distribution integrates
		to the foliation by separable tilting classes
		$\{\prod_{i}\varphi_{i}(x_{i})\,f\}$, each law reachable from each
		leaf-mate in at most $n$ moves, one per coordinate; the complete
		leaf invariant is the interaction potential, the class of $\log f$
		modulo additive functions of single coordinates; the product laws
		form the unique leaf on which all coordinate actions commute; and
		marginal-matching transport is again flat, realized by
		multidimensional iterative proportional fitting with local cycle
		rate governed by the principal angles among the $n$ rank subspaces,
		through the operator matrix of pairwise conditional expectations
		$(P_{ij})$.
	\end{theorem}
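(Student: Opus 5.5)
The plan is to generalize the two-dimensional arguments of Theorem~\ref{thm:leaves}, Lemma~\ref{lem:involutive} and Theorem~\ref{thm:flat-transport} coordinate by coordinate, organized in four steps: the leaves, the invariant, the commuting leaf, and the transport.

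\emph{Leaves and reachability.} A single move in coordinate $i$ with a kernel continuous and positive on $(0,1)$ multiplies $f$ by $v(F_{i}(x_{i}))$, which is a continuous positive function of $x_{i}$ alone. Any finite composition is therefore a separable tilt $\prod_{i}\varphi_{i}(x_{i})f$, so reachable laws lie in the tilting class.

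For the converse, take $g=\prod_{i}\varphi_{i}f$ with $\log f$ and $\log g$ bounded on the bounded box $R$. Then each $\varphi_{i}$ can be normalized to be bounded above and below. Apply the moves one coordinate at a time. At step $i$ the current law is $h_{i-1}=\prod_{j<i}\varphi_{j}f/Z_{i-1}$. Choose the kernel $v_{i}:=(\varphi_{i}\circ Q^{(i)}_{h_{i-1}})\cdot Z_{i-1}/Z_{i}$ read in the $i$-th marginal rank of $h_{i-1}$. This kernel is continuous and positive, and it has unit mass by the universality principle~\eqref{eq:universality} applied to that marginal. Bounded tilts make every normalizing constant finite, so $n$ moves suffice; this is the "two moves when the tilts are bounded" remark, now one per coordinate.

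\emph{The invariant.} Tilts change $\log f$ by $\sum_{i}u_{i}(x_{i})$, so the class of $\log f$ modulo single-coordinate additive functions is preserved. Conversely, suppose $\log g-\log f$ is additive. Exponentiating it gives the tilt directly, after continuity of each summand is obtained by evaluating at fixed base points. Hence this class is a complete leaf invariant. The tangent distribution is $\{(\sum_{i}a_{i}(F_{i}))f\}$, and it is involutive by the pairwise bracket computation of Lemma~\ref{lem:involutive}: each pair $(i,j)$ only involves the bivariate marginal, and brackets land in the span. So the foliation is exhibited in closed form and no Frobenius theorem is needed.

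\emph{The commuting leaf.} If $f=\prod f_{i}$, each coordinate move leaves the other marginals fixed. Every move is then a fixed multiplier, and multipliers commute. Conversely, suppose all coordinate actions commute on a leaf. Then they commute at each point, and for every pair $(i,j)$ the bracket formula~\eqref{eq:bracket} for the bivariate marginal $f_{ij}$ vanishes for the full trigonometric family. Theorem~\ref{thm:dependence} then gives pairwise independence of each $(X_{i},X_{j})$ along the whole leaf.

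Pairwise independence is the \textbf{main obstacle}, because it is strictly weaker than mutual independence. The plan is to exploit commutation everywhere on the leaf rather than at one point. Tilting by an arbitrary bounded $\varphi_{k}(x_{k})$ stays in the leaf, so $(X_{i},X_{j})$ remains independent under every $\prod_{k}\varphi_{k}f$. Differentiating in $\varphi_{k}$ at $f$ gives $\mathrm{Cov}_{f}(\phi(X_{i})\psi(X_{j}),\chi(X_{k}))=0$ for all test functions, together with independence of the pair. Iterating over larger groups of coordinates, since any number of tilts can be applied, yields factorization of all higher mixed moments. By the density of products of trigonometric functions this gives $f=\prod f_{i}$; concretely, the interaction potential is additive, so the leaf is the product leaf.

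\emph{Flat transport.} The horizontal space is the leaf tangent. The analogue of~\eqref{eq:dpi} is $d\pi(E_{f}(a))_{i}=f_{i}\cdot(\sum_{j}P_{ij}a_{j})(F_{i})$ with $P_{ii}=I$, so horizontality reduces to invertibility of the operator matrix $(P_{ij})$ on $\bigoplus L^{2}_{0}$. That matrix is the Gram operator of the rank subspaces, and it is invertible exactly when those subspaces are in direct topological sum, that is, when the principal angles are nonzero. Flatness follows from involutivity. Uniqueness of the separable tilt with prescribed marginals is Csisz\'ar's $I$-projection onto the Fr\'echet class~\cite{csiszar1975}. Finally, IPF alternates the $n$ marginal-matching coordinate moves, and its linearization is a product of $n$ orthogonal projections onto the complements of the rank subspaces. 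The cyclic version of von Neumann's alternating projection theorem~\cite{deutsch2001} then gives a local cycle rate governed by the principal angles, that is, by $(P_{ij})$.
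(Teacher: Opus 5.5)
Your proposal is correct. For reachability (one move per coordinate, with normalizers telescoping under bounded tilts), the leaf invariant, involutivity and transport, it follows the paper's proof closely.

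The step where you genuinely differ is uniqueness of the commuting leaf. The paper settles it in one line: on any other leaf ``some pair fails to commute or some higher interaction survives,'' citing the argument of Theorem~\ref{thm:pairwise-torsion}. But part~(ii) of that theorem says that at a pairwise independent, jointly dependent law every bracket vanishes. So the second branch of the paper's disjunction produces no non-commuting pair, and the paper's argument does not by itself rule such leaves out.

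Your route closes this. You read commutation as holding at every point of the leaf, which gives pairwise independence under every separable tilt. Pairwise independence that survives all such tilts forces a product. This gives a complete argument, and it makes explicit that the statement must be read leafwise: for $n\ge3$, commutation at a single point is strictly weaker. At a smoothed parity triple all brackets vanish, while a suitable tilt in $x_{3}$ alone makes $(X_{1},X_{2})$ dependent and switches the bracket on.

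The induction is cleanest with finite tilts rather than derivatives:
\begin{itemize}
\item For $|S|\ge3$, pick $i,j\in S$ and put $T=S\setminus\{i,j\}$.
\item Tilt by $\chi=\prod_{l\in T}\chi_{l}(x_{l})$, with each $\chi_{l}$ continuous and bounded above and below.
\item Combine independence of $(X_{i},X_{j})$ under $\chi f/\mathbb{E}_{f}\chi$ with the factorizations already established on $\{i\}\cup T$ and on $T$. This gives $\mathbb{E}_{f}\prod_{l\in S}\chi_{l}(X_{l})=\prod_{l\in S}\mathbb{E}_{f}\chi_{l}(X_{l})$.
\item Multilinearity then removes the positivity restriction on the $\chi_{l}$.
\end{itemize}

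One precision in the transport step. For $n\ge3$, nonzero pairwise principal angles do not make the Gram operator $(P_{ij})$ invertible: three distinct lines in a plane are pairwise at positive angles yet linearly dependent. So your ``that is'' is not an equivalence.

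No angle hypothesis is actually needed. On a bounded box with $\log f$ bounded, $f\ge c\prod_{i}f_{i}$ for some $c>0$. Since mean-zero rank functions are orthogonal under the product law, this gives $\bigl\Vert\sum_{i}a_{i}(F_{i})\bigr\Vert_{L^{2}(f)}^{2}\ge c\sum_{i}\Vert a_{i}\Vert^{2}$. Hence $(P_{ij})$ is bounded below, the splitting holds, and the sum of the rank subspaces is closed, which is exactly what the cyclic projection theorem needs for a geometric cycle rate.
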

	
	\begin{theorem}[Torsion sees pairs; the leaf sees everything]\label{thm:pairwise-torsion}
		Let $f \in \mathcal{D}_{+}(R)$, $n \ge 3$, with all pairwise
		maximal correlations below one. Then:
		\begin{enumerate}[label=\normalfont(\roman*), leftmargin=*]
			\item the torsion decomposes into pairwise blocks: the bracket
			$[L_{i}^{a}, L_{j}^{b}]$ depends only on the $(i,j)$-marginal
			copula, with the components~\eqref{eq:torsion-components}, and the
			$(i,j)$-block vanishes on the trigonometric family precisely when
			$X_{i}$ and $X_{j}$ are independent, so the torsion table detects
			exactly the pairwise independence graph;
			\item torsion is not a complete dependence invariant: for pairwise
			independent but jointly dependent laws, such as smoothed parity
			triples, every torsion block vanishes while the law lies off the
			product leaf, its interaction potential containing an irreducible
			three-way term detected by the three-dimensional odds ratio;
			\item joint independence is equivalent to the vanishing of the leaf
			invariant alone: $f$ is a product precisely when its interaction
			potential vanishes.
		\end{enumerate}
	\end{theorem}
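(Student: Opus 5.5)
The proof splits along the three parts of the statement, and each part reduces to a two-dimensional result already established.

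For part (i), I would fix $i\neq j$ and repeat the computation of Theorem~\ref{thm:dependence} for the fields $L_i^{a}$, $L_j^{b}$ of display~\eqref{eq:coordinate-fields}, extended coordinatewise. Differentiating $L_j^{b}f=b(F_j)f$ along a perturbation $h$ gives
\[
DL_j^{b}[f](h)=b(F_j)\,h+b'(F_j)\,H_j(x_j;h)\,f,
\qquad
H_j(x_j;h)=\int_{\{y_j\le x_j\}}h\,dy .
\]
The only cross term is $H_j(x_j;a(F_i)f)=\mathbb{E}[a(U_i)\mathbf 1\{X_j\le x_j\}]$. This is an expectation over the pair $(X_i,X_j)$ alone, so the other coordinates integrate out. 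By Lemma~\ref{lem:gauge-identity}, applied to the $(i,j)$-marginal, it equals $\int_0^{F_j(x_j)}(P_{ji}a)(s)\,ds$. Hence the bracket is the torsion~\eqref{eq:torsion-components} with $P$ replaced by $P_{ij}$, the conditional expectation built from the $(i,j)$-copula. The pairwise maximal-correlation hypothesis is what makes the frame bound~\eqref{eq:frame-bound} available blockwise.

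The detection claim then follows verbatim from the converse argument of Theorem~\ref{thm:dependence}, applied to the pair $(X_i,X_j)$. Vanishing on the whole trigonometric family forces $\mathrm{Cov}(h(U_j),\mathbf 1\{X_i\le x\})=0$ for every $h\in L^2$ and every $x$, which gives independence of $X_i$ and $X_j$. Conversely, pairwise independence gives $P_{ij}=0$, so the block vanishes.

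For part (ii), I would build an explicit example. Take $n=3$ on $[0,1]^3$ and set
\[
f(x)=1+\varepsilon\,\prod_{i=1}^3 s(x_i),\qquad \int_0^1 s=0,\quad |\varepsilon|\,\|s\|_\infty^3<1,
\]
for instance with $s(t)=\cos(2\pi t)$. This is a smoothed parity triple. Every two-dimensional marginal is identically $1$, so every $P_{ij}=0$ and all torsion blocks vanish by part (i). It remains to show that $f$ lies off the product leaf. I would compute the mixed derivative $\partial_1\partial_2\partial_3\log f$ and show it is not identically zero; at first order in $\varepsilon$ it equals $\varepsilon\, s'(x_1)s'(x_2)s'(x_3)$. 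This nonvanishing is equivalent to a nontrivial three-dimensional odds ratio, the cross ratio over the eight vertices of a box. Such a term cannot be removed by modulo-additive adjustments, because additive functions of single coordinates have vanishing mixed third derivative. Theorem~\ref{thm:n-leaves} then identifies the leaf by this invariant.

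For part (iii), I would invoke Theorem~\ref{thm:n-leaves}: the product laws form a single leaf, and the complete leaf invariant is the class of $\log f$ modulo sums of single-coordinate functions. If $f=\prod f_i$, then $\log f$ is additive, so its interaction potential vanishes. Conversely, if $\log f=\sum u_i(x_i)$, then $f$ factorizes, and normalization makes the factors the marginals.

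The step I expect to be the main obstacle is the exactness claim in part (ii). A first-order computation in $\varepsilon$ is only suggestive, so I would need to show that $\log(1+\varepsilon\prod s(x_i))$ is not additive for any nonzero $\varepsilon$. I would settle it by evaluating the three-dimensional odds ratio at a concrete box, with corners where $s$ takes values $\pm1$. This yields a ratio of the form $\bigl((1+\varepsilon)/(1-\varepsilon)\bigr)^4$, which differs from $1$ whenever $\varepsilon\neq0$.
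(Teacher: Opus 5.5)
Your proposal is correct and follows essentially the paper's own argument. In part (i) the cross terms are covariances of rank functions of the pair $(X_i,X_j)$, so the two-dimensional characterization applies. In part (ii) the three-dimensional odds ratio is invariant along leaves and equals one on products, and part (iii) restates the completeness of the leaf invariant. Your explicit law $1+\varepsilon\prod_i s(x_i)$, with the exact odds-ratio computation, makes concrete the paper's ``smoothed parity triple''.

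One small fix is needed. For $s(t)=\cos(2\pi t)$ the value $s=+1$ occurs only at $t\in\{0,1\}$, which lie outside the open cube where \eqref{eq:odds-ratio} is asserted. Evaluate the alternating product at interior points with $s=\pm c$, $c>0$ (e.g.\ $t=\tfrac18,\tfrac38$). This gives $\bigl((1+\varepsilon c^{3})/(1-\varepsilon c^{3})\bigr)^{4}\neq 1$.
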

	
	The proofs are given in Appendix~\ref{app:proofs}. The gap between
	parts (i) and (iii) locates joint dependence in the leaf layer, not
	the torsion layer: what three-way interaction adds to pairwise
	dependence is invisible to every bracket and conserved by every
	coordinate dynamics.
	
	\begin{remark}[Rosenblatt frames and the interaction flag]
		Two refinements connect the geometry back to the Rosenblatt
		charts~\eqref{eq:rosenblatt}. First, the chart changes between
		orderings compose by an exact cocycle, so the bundle of orderings is
		itself flat: nothing dependence-like lives in the transition maps
		alone, which is consistent with the rigidity noted in
		Section~\ref{sec:multivariate} that rank-local multiplications in
		two charts commute. Second, modulating by conditional ranks, the
		deeper entries of the chart~\eqref{eq:rosenblatt}, enlarges the
		modulation algebra; a full chart already reweights arbitrarily, so
		the enlarged orbit is all of $\mathcal{D}_{+}(R)$, and the geometry
		of this section is exactly the geometry of the marginal subalgebra.
		The filtration from marginal through conditional depth to full
		defines a flag of foliations whose successive invariants are the
		higher interaction terms of $\log f$, in the sense of functional
		analysis-of-variance decompositions; resolving that flag into a
		graded interaction complex, with a torsion tensor at each stage
		detecting conditional independence graphs, is in our view the right
		completion of the multivariate program.
	\end{remark}
	
	\section{Conclusion}\label{sec:conclusion}
	
	Derangetropy operators are the transformations of absolutely
	continuous laws that act through rank alone, and the rigidity
	theorem shows that the family is found rather than built: it
	exhausts the monotone-equivariant maps of densities. From that
	classification one canonical kernel generates a tightly connected
	theory. Its update costs one universal bit; its amplitude lift
	turns modulation into unitary geometry; its iteration and flow
	condense every law onto its median with universal limit laws, and
	under diffusion the theory becomes an overdamped sine--Gordon
	equation with the secant law as its unique, globally stable kink.
	The projective half gathers into the coadjoint geometry of the
	Virasoro algebra at central charge one, a normalization rather
	than a dynamical charge, where the Cauchy family is
	the exceptional orbit and condensation a no-hair relaxation onto
	it. The stochastic axis turns the cascade into an exact sampler
	and, at renewed frequencies, a multiplicative chaos on the
	distortion group; the unitary axis weaves quantum carpets whose
	fractality, established for arbitrary rough seeds through a strong
	law for the critical Sobolev mass with a sharp Wiener constant,
	addresses the density form of the Talbot problem that the revival
	arithmetic had left open. In several
	dimensions, dependence resolves into a conserved interaction, a
	torsion tensor governed by maximal correlation, and a flat Sinkhorn
	transport: complexity appears throughout as geometry, priced in
	exactly computable constants.
	
	Several directions remain open. A closed form for the entropy of
	the Koenigs limit law would complete the divergence ledger of
	iteration. The coadjoint geometry asks whether the probability cone
	carries a symplectic structure restricting to the Kirillov form,
	and whether a boundary Korteweg--de Vries theory exists with the
	conserved endpoint weights as spectral data. The
	reaction--diffusion theory poses the coarsening exponents of its
	terraces and the front-speed asymptotics at the depinning
	threshold. The random theory asks for its behavior beyond the
	second-moment regime and for the conjectured convergence of the
	lacunary chaos to Gaussian multiplicative chaos. The strong law for
	the critical mass invites extension to genuinely multidimensional
	data, where hyperplane collision classes obstruct the arithmetic
	and where the field-level theory has recently
	advanced~\cite{erdogan2024}, and to the nonlinear Schr\"odinger
	density. And the geometry of
	dependence points to a graded interaction complex of
	conditional-rank foliations, with stagewise torsions detecting
	conditional independence. We expect the amplitude representation,
	and the exact solvability that runs through every section, to
	remain the guiding instruments.
	
	\backmatter
	
	\begin{appendices}
		
		\section{Proofs}\label{app:proofs}
		
		This appendix states formally, and proves in detail, the results used
		in the body of the paper whose proofs are routine or computational.
		
		\begin{lemma}[Well-posedness]\label{lem:wellposed}
			Let $w \in \mathcal{W}$ and $f \in \mathcal{D}$. Then:
			(i) $\rho_{w}[f] \in \mathcal{D}$;
			(ii) if $w$ is positive on $(0,1)$ then
			$\operatorname{supp}\rho_{w}[f] = \operatorname{supp}f$, if in
			addition $w \in C^{k}$ and $f \in C^{k}$ then
			$\rho_{w}[f] \in C^{k}$, and if $w(1-z) = w(z)$ and $f$ is symmetric
			about $m$ then so is $\rho_{w}[f]$; in particular
			$\rho_{w}(\mathcal{D}_{+}) \subseteq \mathcal{D}_{+}$ for the three
			type kernels;
			(iii) $\|\rho_{w}[f]\|_{p} \le \|w\|_{\infty}\|f\|_{p}$ for
			$1 \le p \le \infty$, and for Lipschitz $w$,
			\begin{equation}
				\big\|\rho_{w}[f] - \rho_{w}[g]\big\|_{1}
				\;\le\; \Big(\|w\|_{\infty} + \tfrac12\,\mathrm{Lip}(w)\Big)
				\|f-g\|_{1}.
				\label{eq:tv-bound}
			\end{equation}
		\end{lemma}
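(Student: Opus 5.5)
Part (i) is the pushforward identity. Since $F$ is absolutely continuous, nondecreasing, with limits $0$ and $1$, and $f\,d\lambda$ pushes forward to Lebesgue measure on $[0,1]$, we have $\int_{\mathbb{R}} w(F)f\,d\lambda=\int_0^1 w\,dz=1$. Nonnegativity and measurability are immediate. Integrability of $w(F)f$ follows from the same identity applied to $|w|=w$. No strict monotonicity is needed, because only the image measure of $F$ enters.

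For (ii), positivity of $w$ on $(0,1)$ gives $w(F(x))>0$ wherever $0<F(x)<1$. It remains to handle points of $\{f>0\}$ where $F\in\{0,1\}$. We argue that $\{F=0\}\cap\{f>0\}$ is null: on it, $F$ is constant on the set $\{F=0\}$, an interval $(-\infty,a]$ with $\int_{-\infty}^{a}f=0$. The same holds at $F=1$. Hence $\{\rho_w f>0\}=\{f>0\}$ up to null sets, and the supports agree.

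Smoothness follows because $F\in C^{k+1}$ when $f\in C^k$, and compositions and products of $C^k$ functions are $C^k$. On $\mathcal{D}_+$ the relevant composition is $w\circ F$ with $F$ taking values in the open interval, so interior smoothness of $w$ suffices; this covers the Type I and II kernels. For symmetry about $m$, $F(2m-x)=1-F(x)$, so $w(F(2m-x))=w(1-F(x))=w(F(x))$, and the product stays symmetric. Continuity, strict positivity on the support interval and vanishing outside are preserved, which gives $\rho_w(\mathcal{D}_+)\subseteq\mathcal{D}_+$.

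For (iii), the $L^p$ bound is the pointwise bound $|w(F)f|\le\|w\|_\infty|f|$. For the $L^1$ Lipschitz estimate, split
\begin{equation*}
w(F_f)f-w(F_g)g=w(F_f)(f-g)+\big(w(F_f)-w(F_g)\big)g .
\end{equation*}
The first term has $L^1$ norm at most $\|w\|_\infty\|f-g\|_1$. The second is bounded pointwise by $\mathrm{Lip}(w)|F_f-F_g|\,g\le \mathrm{Lip}(w)\|F_f-F_g\|_\infty g$, and integrating against $g$ gives $\mathrm{Lip}(w)\|F_f-F_g\|_\infty$.

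The one nontrivial point is the sharp factor $\tfrac12$. Because $f-g$ has zero total mass, $F_f(x)-F_g(x)=\int_{-\infty}^x(f-g)$ equals both the positive-part mass minus the negative-part mass on $(-\infty,x]$ and the negative of the same on $(x,\infty)$. Hence $|F_f-F_g|\le\tfrac12\|f-g\|_1$ uniformly, since the positive and negative parts of $f-g$ each have mass $\tfrac12\|f-g\|_1$. Combining the two terms yields \eqref{eq:tv-bound}.

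The only mildly delicate step is the null-set argument for the support in (ii); everything else is direct.
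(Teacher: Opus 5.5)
Your proof is correct and follows the paper's argument essentially step for step: the pushforward identity for (i), positivity of $w\circ F$ together with the reflection $F(2m-x)=1-F(x)$ for (ii), and the same two-term splitting with the bound $\|F_f-F_g\|_\infty\le\tfrac12\|f-g\|_1$ for (iii). Your null-set treatment of the points of $\{f>0\}$ where $F\in\{0,1\}$ is slightly more careful than the paper's appeal to the interior of the support.
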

		
		\begin{proof}
			(i) Non-negativity is clear from the product form. For normalization,
			note that $F$ is continuous (being the integral of an $L^{1}$
			function), so for $X \sim f$ the random variable $F(X)$ satisfies
			$\mathbb{P}(F(X) \le z) = \mathbb{P}(X \le Q(z)) = z$ for every $z$ in
			the range of $F$, where $Q(z) := \inf\{x : F(x) \ge z\}$ is the
			generalized inverse, the event identity holding up to $f$-null
			sets, and hence $F(X)$ is uniform on $(0,1)$; no strict
			monotonicity of $F$ is required, since intervals where $F$ is constant
			carry no mass of $f$. Therefore
			\begin{equation}
				\int_{\mathbb{R}} w\big(F(x)\big)f(x)\,dx
				= \mathbb{E}\big[w(F(X))\big]
				= \int_{0}^{1}w(z)\,dz = 1.
				\label{eq:normalization}
			\end{equation}
			(ii) On the interior of the support, $0 < F < 1$, so positivity of $w$
			on $(0,1)$ gives $\rho_{w}[f] > 0$ exactly where $f > 0$. If
			$f \in C^{k}$ then $F \in C^{k+1}$, so $w(F) \in C^{k}$ and the
			product is $C^{k}$. If $f(2m - x) = f(x)$ for all $x$ then
			$F(2m-x) = 1 - F(x)$, and symmetry of $w$ gives
			$w(F(2m-x)) = w(1-F(x)) = w(F(x))$, so the product is symmetric. The
			three type kernels are positive on $(0,1)$, smooth there, and
			symmetric, so all three claims apply.
			(iii) The $L^{p}$ bound is immediate from
			$0 \le w(F) \le \|w\|_{\infty}$. For the total-variation
			bound~\eqref{eq:tv-bound}, write
			\begin{align*}
				\big|w(F_{f})f - w(F_{g})g\big|
				&\le w(F_{f})\,|f - g| + \big|w(F_{f}) - w(F_{g})\big|\,g\\
				&\le \|w\|_{\infty}|f-g|
				+ \mathrm{Lip}(w)\,\|F_{f}-F_{g}\|_{\infty}\,g,
			\end{align*}
			and note that, since $\int(f-g)\,d\lambda = 0$,
			\begin{equation}
				\big|F_{f}(x) - F_{g}(x)\big|
				= \Big|\int_{-\infty}^{x}(f-g)\Big|
				= \Big|\int_{x}^{\infty}(f-g)\Big|
				\;\le\; \tfrac12\int_{\mathbb{R}}|f-g|,
			\end{equation}
			the last step because the two displayed integrals are equal in
			absolute value and sum, in absolute value, to at most $\|f-g\|_{1}$.
			Integrating the pointwise bound gives the estimate~\eqref{eq:tv-bound}.
		\end{proof}
		
		\begin{lemma}[Information identities]\label{lem:kl}
			For every $f \in \mathcal{D}$, the identities~\eqref{eq:kl-identities}
			hold:
			$D(f\|\rho[f]) = \log 2$ and $D(\rho[f]\|f) = 1 - \log 2$. Moreover
			the binary observation of the relation~\eqref{eq:bayes} has
			equivocation $H(Y \mid X) = 2\log 2 - 1$ and mutual information
			$I(X;Y) = 1 - \log 2$, and all three constants are unchanged when
			$\sin^{2}(\pi F)$ is replaced by $\sin^{2}(k\pi F)$.
		\end{lemma}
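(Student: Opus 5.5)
The plan is to reduce every quantity to a rank integral over $(0,1)$ via the universality principle~\eqref{eq:universality}, and then evaluate those integrals with the Fourier series of $\log\sin(\pi z)$ quoted in Section~\ref{sec:canonical}.

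\emph{Reduction to rank space.} Since $\rho[f]/f = w(F)$ on $\{f>0\}$, we have
\begin{equation*}
D(f\|\rho[f]) = -\mathbb{E}_{f}\log w(F(X)) = -\int_{0}^{1}\log\big(2\sin^{2}\pi z\big)\,dz,
\end{equation*}
\begin{equation*}
D(\rho[f]\|f) = \mathbb{E}_{f}\big[w(F)\log w(F)\big] = \int_{0}^{1}2\sin^{2}(\pi z)\log\big(2\sin^{2}\pi z\big)\,dz.
\end{equation*}
Both equalities hold for arbitrary $f\in\mathcal{D}$, because Lemma~\ref{lem:wellposed}(i) gives uniformity of $F(X)$ with no strict-monotonicity assumption. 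Integrability is also no issue: $\log\sin$ has only logarithmic singularities, and $w\log w$ is bounded.

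\emph{Evaluation.} Integrating the series $\log\sin(\pi z) = -\log 2-\sum_{n\ge1}\cos(2\pi nz)/n$ termwise, which is legitimate by $L^{2}$ convergence against bounded functions, gives $\int_{0}^{1}\log\sin\pi z\,dz=-\log2$. Hence the first integral is $-(\log 2 - 2\log2)=\log 2$.

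For the second integral, write $2\sin^{2}\pi z = 1-\cos 2\pi z$. Then
\begin{equation*}
\int_{0}^{1}w\log w = \log 2\int_{0}^{1}w + 2\int_{0}^{1}(1-\cos2\pi z)\log\sin\pi z\,dz.
\end{equation*}
The first term equals $\log 2$. In the second, the constant part contributes $-\log2$, and by orthogonality only the $n=1$ term survives in $-\int_{0}^{1}\cos(2\pi z)\log\sin\pi z\,dz = \tfrac12$. The second term is therefore $2(-\log 2+\tfrac12)$, and the total is $1-\log 2$.

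\emph{Mutual information and equivocation.} The complementary posterior uses the kernel $2\cos^{2}\pi z$, which is the image of $w$ under the substitution $z\mapsto z+\tfrac12$ mod $1$. So $D(\rho_{w_{1,1/2}}[f]\|f)$ is the same integral, namely $1-\log 2$. Since $\mathbb{P}(Y=1)=\tfrac12$, the decomposition $I(X;Y)=\sum_y \mathbb{P}(Y=y)\,D(f(\cdot\mid y)\|f)$ gives $1-\log 2$. Then $H(Y\mid X)=H(Y)-I(X;Y)=\log2-(1-\log2)=2\log2-1$. This agrees with a direct evaluation of $\int_0^1 h_{2}(\sin^{2}\pi z)\,dz$, which reduces to the same two integrals.

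\emph{Harmonic invariance.} For every function $\Phi$, $\int_{0}^{1}\Phi(\sin^{2}k\pi z)\,dz=\int_{0}^{1}\Phi(\sin^{2}\pi u)\,du$. This follows by substituting $u=kz$ and splitting $(0,k)$ into $k$ unit periods, each of which gives the same integral by $1$-periodicity. All three constants are integrals of this form, so they are unchanged when $\sin^{2}(\pi F)$ is replaced by $\sin^{2}(k\pi F)$.

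The only step needing care is the termwise integration of the logarithmic Fourier series. It is routine, since $\log\sin\pi z\in L^{2}(0,1)$ and it is paired only with bounded trigonometric polynomials.
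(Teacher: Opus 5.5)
Your proof is correct and follows the paper's route: both divergences are reduced to rank integrals by universality, those integrals are evaluated with the Fourier series of $\log\sin(\pi z)$ (only the constant term and the $n=1$ resonance survive), and harmonic invariance comes from periodicity. The one difference is the order of the last step: you obtain $I(X;Y)$ first from the mixture formula via the half-phase shift and then deduce $H(Y\mid X)$, whereas the paper uses the same shift to evaluate the equivocation directly as $-2\int_0^1\sin^2(\pi z)\log\sin^2(\pi z)\,dz$, then deduces $I(X;Y)$ and checks it against the mixture formula.
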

		
		\begin{proof}
			By the universality principle~\eqref{eq:universality}, both
			divergences reduce to rank integrals:
			\begin{equation}
				\begin{aligned}
					D\big(f\,\|\,\rho[f]\big)
					&= -\int_{\mathbb{R}} f\,\log w(F)\,d\lambda
					= -\int_{0}^{1}\log w(z)\,dz,\\
					D\big(\rho[f]\,\|\,f\big)
					&= \int_{0}^{1} w(z)\log w(z)\,dz.
				\end{aligned}
				\label{eq:kl-reduction}
			\end{equation}
			The Fourier expansion
			$\log\sin(\pi z) = -\log2 - \sum_{n\ge1}n^{-1}\cos(2\pi nz)$,
			convergent in $L^{2}(0,1)$ and pointwise on $(0,1)$, gives first
			\begin{equation}
				\int_{0}^{1}\log\big(2\sin^{2}(\pi z)\big)\,dz
				= \log 2 + 2\int_{0}^{1}\log\sin(\pi z)\,dz
				= \log 2 - 2\log 2 = -\log 2,
			\end{equation}
			since every cosine integrates to zero; this proves
			$D(f\|\rho[f]) = \log 2$. Next, using
			$w(z) = 1 - \cos(2\pi z)$ and the orthogonality relations
			$\int_{0}^{1}\cos(2\pi nz)\cos(2\pi mz)\,dz = \tfrac12\delta_{nm}$,
			termwise integration of the expansion against the bounded factor
			$1 - \cos(2\pi z)$ retains only the constant term and the $n = 1$
			resonance:
			\begin{equation}
				\int_{0}^{1}\cos(2\pi z)\log\sin(\pi z)\,dz = -\tfrac12,
			\end{equation}
			whence
			\begin{align}
				\int_{0}^{1} w\log w\,dz
				&= \int_{0}^{1}\big(1-\cos 2\pi z\big)
				\big(\log 2 + 2\log\sin(\pi z)\big)\,dz \nonumber\\
				&= \log 2 + 2\Big[\big(-\log 2\big) - \big(-\tfrac12\big)\Big]
				= 1 - \log 2.
			\end{align}
			For the equivocation, the substitution $z \mapsto z + \tfrac12$
			exchanges $\sin^{2}$ and $\cos^{2}$, so
			\begin{equation}
				\int_{0}^{1}h_{2}\big(\sin^{2}\pi z\big)\,dz
				= -2\int_{0}^{1}\sin^{2}(\pi z)\log\sin^{2}(\pi z)\,dz
				= -2\Big(\tfrac12 - \log 2\Big) = 2\log 2 - 1,
			\end{equation}
			using $\int_{0}^{1}\sin^{2}(\pi z)\log\sin(\pi z)\,dz
			= \tfrac12[(-\log2) - (-\tfrac12)] = \tfrac14 - \tfrac12\log2$
			twice. Since $H(Y) = \log 2$, the mutual information is
			$I(X;Y) = H(Y) - H(Y\mid X) = \log 2 - (2\log2 - 1) = 1 - \log 2$,
			which agrees with the mixture formula~\eqref{eq:mutual-info}. Finally,
			replacing $z$ by $kz \bmod 1$ leaves every integral above unchanged,
			by periodicity of the integrands and the substitution rule, which
			proves independence of the harmonic order.
		\end{proof}
		
		\begin{lemma}[Fisher transformation law]\label{lem:fisher-law}
			For bounded $f \in \mathcal{D}_{+}\cap C^{1}$ with
			$\mathcal{I}(f) < \infty$, the identity~\eqref{eq:fisher-law} holds,
			and with it the two-sided bound~\eqref{eq:fisher-bounds}.
		\end{lemma}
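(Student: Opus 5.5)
The plan is a direct computation followed by a single integration by parts that removes the cross term. Write $g := \rho_{w_{\mathrm{III}}}[f] = w(F)\,f$ with $w(z) = 2\sin^{2}(\pi z)$. Since $f \in C^{1}$ and $F' = f$, we have $g \in C^{1}$ on the interior of the support, and
\begin{equation*}
g' = w'(F)\,f^{2} + w(F)\,f', \qquad w'(z) = 2\pi\sin(2\pi z).
\end{equation*}
On the interior, $0<F<1$ gives $w(F)>0$ and $g>0$. Expanding $(g')^{2}/g$ will produce three terms:
\begin{equation*}
\frac{(g')^{2}}{g}
= \frac{w'(F)^{2}}{w(F)}\,f^{3} + 2\,w'(F)\,f\,f' + w(F)\,\frac{(f')^{2}}{f}.
\end{equation*}
The first coefficient simplifies by the elementary identity $w'^{2}/w = 8\pi^{2}\cos^{2}(\pi z)$, so the first term is $8\pi^{2}\cos^{2}(\pi F)\,f^{3}$. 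The last term is exactly the integrand $2\sin^{2}(\pi F)(f'/f)^{2}\,f$ of the second expectation in the law~\eqref{eq:fisher-law}.

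The key step is the cross term $4\pi\sin(2\pi F)\,f f'$. I will write it as $2\pi\sin(2\pi F)\,(f^{2})'$ and integrate by parts. Using $\frac{d}{dx}\sin(2\pi F) = 2\pi\cos(2\pi F)\,f$, this gives
\begin{equation*}
\int 4\pi\sin(2\pi F)\,f f'\,dx
= \Big[2\pi\sin(2\pi F)\,f^{2}\Big]_{\partial} - 4\pi^{2}\int\cos(2\pi F)\,f^{3}\,dx.
\end{equation*}
Adding this to the first term and using $8\pi^{2}\cos^{2}\theta - 4\pi^{2}\cos 2\theta = 4\pi^{2}$, the $F$-dependence cancels exactly. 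What remains is $4\pi^{2}\int f^{3} = 4\pi^{2}\,\mathbb{E}_{f}[f(X)^{2}]$, which is the first term of~\eqref{eq:fisher-law}.

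The only delicate point, and the step I expect to need care, is justifying the integration by parts, both the vanishing of the boundary term and the absolute integrability of each piece. For the boundary term: $f$ is bounded, and $\sin(2\pi F)\to 0$ at both ends of the support, since $F\to 0$ and $F\to 1$ there. So the term vanishes, provided I integrate over $[a_{n},b_{n}]$ exhausting the support and pass to the limit. For integrability:
\begin{itemize}
\item $\int f^{3} \le \|f\|_{\infty}^{2} < \infty$;
\item $\int w(F)(f')^{2}/f \le 2\,\mathcal{I}(f) < \infty$;
\item the cross term is dominated by Cauchy--Schwarz, $\int |f f'| \le (\int f^{3})^{1/2}\,\mathcal{I}(f)^{1/2}$.
\end{itemize}
Hence all three pieces converge absolutely, and the truncated identities pass to the limit by dominated convergence.

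The two-sided bound~\eqref{eq:fisher-bounds} then follows immediately from $0 \le 2\sin^{2}(\pi F) \le 2$, applied in the second expectation.
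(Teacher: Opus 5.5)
Your proof is correct and follows the paper's argument essentially line for line: the same three-term expansion of $\int (g')^{2}/g$, the same integration by parts of the cross term against $d(f^{2})$ with the boundary term killed by boundedness of $f$ and $\sin(2\pi F)\to 0$, and the same cancellation $8\pi^{2}\cos^{2}\theta - 4\pi^{2}\cos 2\theta = 4\pi^{2}$. Your integrability checks, namely the Cauchy--Schwarz bound $\int|ff'| \le (\int f^{3})^{1/2}\,\mathcal{I}(f)^{1/2}$ and the exhaustion-plus-dominated-convergence step, supply rigor that the paper leaves implicit.
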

		
		\begin{proof}
			Write $\nu = \rho[f] = 2\sin^{2}(\pi F)f$. On the support,
			\begin{equation}
				\frac{\nu'}{\nu}
				= \frac{f'}{f} + \frac{w'(F)}{w(F)}\,f
				= \frac{f'}{f} + 2\pi\cot(\pi F)\,f,
			\end{equation}
			using $w'(z)/w(z) = 2\pi\sin(2\pi z)/2\sin^{2}(\pi z)
			= 2\pi\cot(\pi z)$. Expanding
			$\mathcal{I}(\nu) = \int\nu\,(\nu'/\nu)^{2}$ gives three terms,
			\begin{equation}
				\mathcal{I}(\nu)
				= \underbrace{\int 2\sin^{2}(\pi F)\Big(\frac{f'}{f}\Big)^{2} f}_{T_{1}}
				+ \underbrace{4\pi\int \sin(2\pi F)\,f f'}_{T_{2}}
				+ \underbrace{8\pi^{2}\int \cos^{2}(\pi F)\,f^{3}}_{T_{3}},
			\end{equation}
			where the middle term used
			$2\sin^{2}(\pi F)\cdot 2\pi\cot(\pi F) = 2\pi\sin(2\pi F)$. The term
			$T_{2}$ integrates by parts against $d(f^{2}/2)$:
			\begin{equation}
				T_{2}
				= 2\pi\Big[\sin(2\pi F)\,f^{2}\Big]_{-\infty}^{\infty}
				- 4\pi^{2}\int \cos(2\pi F)\,f^{3},
			\end{equation}
			and the boundary term vanishes because $f$ is bounded and
			$\sin(2\pi F) \to 0$ at the support endpoints, where $F \to 0$ or
			$1$. Since $8\pi^{2}\cos^{2}(\pi F) - 4\pi^{2}\cos(2\pi F)
			= 4\pi^{2}$, the sum $T_{2} + T_{3}$ collapses to
			$4\pi^{2}\int f^{3} = 4\pi^{2}\,\mathbb{E}_{f}[f^{2}]$, which is the
			identity~\eqref{eq:fisher-law}. The bounds follow from
			$0 \le 2\sin^{2}(\pi F) \le 2$ applied to $T_{1}$.
		\end{proof}
		
		\begin{lemma}[Dirichlet decomposition]\label{lem:dirichlet}
			Let $u \in H^{1}(0,1)$ be real, let
			$f \in \mathcal{D}_{+}\cap C^{2}$, and set
			$\psi = (u\circ F)\sqrt{F'}$. Assume the three integrals in the
			decomposition~\eqref{eq:dirichlet-decomposition} are finite and that
			the boundary term $u(F)^{2}f'$ vanishes at the endpoints of the
			support. Then the decomposition~\eqref{eq:dirichlet-decomposition}
			holds, and for $u \equiv \mathbf{1}$ both sides equal
			$\tfrac18\mathcal{I}(f)$.
		\end{lemma}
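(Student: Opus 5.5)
The plan is a direct pointwise computation on the interior of the support, then one integration by parts. Write $g := \sqrt f = \sqrt{F'}$ and $U := u\circ F$, so that $\psi = U g$, and on the support
\begin{equation*}
	\psi' = u'(F)\,f\,g + U g'.
\end{equation*}
Squaring and halving gives three pieces:
\begin{equation*}
	\tfrac12|\psi'|^{2}
	= \tfrac12 u'(F)^{2} f^{3}
	+ u'(F)\,U\,f\,g g'
	+ \tfrac12 U^{2}(g')^{2}.
\end{equation*}
The first piece, after the substitution $z = F(x)$ with $f\,dx = dz$, is $\tfrac12\int_0^1 |u'|^2\mathfrak f^2\,dz$. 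This is exactly the first term of the decomposition~\eqref{eq:dirichlet-decomposition}.

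For the cross term we use $gg' = \tfrac12 f' = \tfrac12 F''$ and $\frac{d}{dx}U = u'(F) f$. It then becomes
\begin{equation*}
	\tfrac12\,U\,\tfrac{dU}{dx}\,F''
	= \tfrac14\,\tfrac{d}{dx}(U^{2})\,F''.
\end{equation*}
We rewrite this as $\tfrac{d}{dx}\big[\tfrac14 U^2F''\big] - \tfrac14 U^2 F'''$, which isolates the exact derivative named in the text. It remains to check that $-\tfrac14U^2F''' + \tfrac12U^2(g')^2$ equals the potential term $U^2 g^2\big(-g''/(2g)\big) = -\tfrac12 U^2 g g''$. Differentiating $f = g^2$ twice gives $F''' = f'' = 2(g')^2 + 2gg''$, so
\begin{equation*}
	-\tfrac14 F''' + \tfrac12 (g')^2 = -\tfrac12 g g''.
\end{equation*}
This is the required pointwise identity. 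Integrating over the support, the total derivative contributes the boundary values of $\tfrac14 u(F)^2 f'$, which vanish by hypothesis. Finiteness of the three integrals justifies splitting the integral term by term.

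For $u\equiv\mathbf 1$ the first term drops out, and the potential term is $\int -\tfrac12 \sqrt f(\sqrt f)''\,dx$. Integrating by parts once more (the boundary term is again $\tfrac14 f'$, assumed to vanish) turns it into $\tfrac12\int ((\sqrt f)')^2 = \tfrac12\int \tfrac{(f')^2}{4f} = \tfrac18\mathcal I(f)$. Directly, the left side is $\tfrac12\int((\sqrt f)')^2$, the same quantity. The main care point is regularity: $u\in H^1$ only, so $U$ is absolutely continuous on compacts of the support and the product rule holds a.e. I would carry out the integration on compact subintervals $[a,b]$ of the support and let $a,b$ tend to the endpoints. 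The boundary hypothesis controls the limits of $\tfrac14U^2F''$, and finiteness of the integrals gives convergence of the remaining terms by monotone/dominated convergence.
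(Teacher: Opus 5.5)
Your proposal is correct and follows the paper's proof. You derive the same pointwise identity, writing it via $g=\sqrt f$ and $f''=2(g')^{2}+2gg''$ where the paper uses the equivalent formula for $(\sqrt{F'})''/\sqrt{F'}$; you then integrate with the boundary hypothesis killing $\tfrac14 u(F)^{2}F''$, and treat $u\equiv\mathbf 1$ by one further integration by parts. Your compact-exhaustion argument for $u\in H^{1}$ spells out a limiting step that the paper leaves implicit.
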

		
		\begin{proof}
			Differentiating $\psi = (u\circ F)\sqrt{F'}$,
			\begin{equation}
				\tfrac12\psi'^{2}
				= \tfrac12 u'^{2}F'^{3} + \tfrac12 u u' F' F''
				+ \tfrac18 u^{2}\frac{F''^{2}}{F'},
			\end{equation}
			with all functions of $z$ evaluated at $F(x)$. Using
			$(\sqrt{F'})''/\sqrt{F'} = \tfrac12 F'''/F' - \tfrac14(F''/F')^{2}$, a
			direct computation verifies the pointwise identity
			\begin{equation}
				\tfrac12\psi'^{2} - \tfrac12 u'^{2}F'^{3}
				+ \frac{(\sqrt{F'})''}{2\sqrt{F'}}\,u^{2}F'
				= \frac{d}{dx}\Big[\tfrac14\,u(F)^{2}F''\Big]:
				\label{eq:pointwise-energy}
			\end{equation}
			indeed the derivative on the right expands to
			$\tfrac12 uu'F'F'' + \tfrac14 u^{2}F'''$, the term
			$\tfrac12 uu'F'F''$ matches the cross term on the left, and the
			remaining terms match by the displayed formula for
			$(\sqrt{F'})''/\sqrt{F'}$. Integrating the
			identity~\eqref{eq:pointwise-energy} over $\mathbb{R}$, the exact
			derivative contributes zero, by the boundary hypothesis of the
			lemma, the term $\int\tfrac12 u'^{2}F'^{3}dx$ becomes
			$\tfrac12\int_{0}^{1}u'^{2}\mathfrak{f}^{2}dz$ by the substitution
			$z = F(x)$, and $-\tfrac{(\sqrt{F'})''}{2\sqrt{F'}}\cdot F'
			= -\tfrac{(\sqrt f)''}{2\sqrt f}\,|\psi|^{2}/u(F)^{2}\cdot u(F)^{2}$
			identifies the potential term, giving the
			decomposition~\eqref{eq:dirichlet-decomposition}. For
			$u \equiv \mathbf{1}$, $\psi = \sqrt f$ and
			\begin{equation}
				\int_{\mathbb{R}}-\frac{(\sqrt f)''}{2\sqrt f}\,f\,dx
				= \tfrac12\int_{\mathbb{R}}\big((\sqrt f)'\big)^{2}dx
				= \tfrac18\,\mathcal{I}(f),
			\end{equation}
			by one integration by parts and
			$\mathcal{I}(f) = 4\int((\sqrt f)')^{2}$.
		\end{proof}
		
		\begin{lemma}[Rank uncertainty moments]\label{lem:uncertainty}
			For the ladder states~\eqref{eq:eigenbasis},
			$\mathbb{E}[Z] = \tfrac12$,
			$\operatorname{Var}(Z) = \tfrac{1}{12} - \tfrac{1}{2\pi^{2}k^{2}}$,
			$\langle P\rangle = 0$, and
			$\langle P^{2}\rangle = k^{2}/4$, so the values displayed in
			equation~\eqref{eq:ladder-moments} hold.
		\end{lemma}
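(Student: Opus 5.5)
The plan is to work entirely in $L^{2}(0,1)$ with the ladder state $\varphi_{k}(z) = \sqrt2\sin(k\pi z)$, so that $|\varphi_{k}|^{2} = 2\sin^{2}(k\pi z) = 1 - \cos(2\pi kz)$. The position moments then reduce to integrating $z$ and $z^{2}$ against $1 - \cos(2\pi kz)$, and the momentum moments reduce to derivatives of the eigenfunction.

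For the mean, $\mathbb{E}[Z] = \tfrac12$ follows at once from the symmetry $|\varphi_{k}(1-z)|^{2} = |\varphi_{k}(z)|^{2}$. Alternatively, $\int_{0}^{1} z\cos(2\pi kz)\,dz = 0$ by a single integration by parts: the boundary term $z\sin(2\pi kz)/(2\pi k)$ vanishes, and the remaining integral of $\sin$ over whole periods is zero.

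For the second moment, integrating by parts twice gives $\int_{0}^{1} z^{2}\cos(2\pi kz)\,dz = 1/(2\pi^{2}k^{2})$. The boundary term after the second integration is $2z\cos(2\pi kz)/(2\pi k)^{2}$ evaluated between $0$ and $1$, which equals $2/(4\pi^{2}k^{2})$. Hence
\[
\mathbb{E}[Z^{2}] = \tfrac13 - \tfrac{1}{2\pi^{2}k^{2}},
\qquad
\operatorname{Var}(Z) = \tfrac{1}{12} - \tfrac{1}{2\pi^{2}k^{2}}.
\]

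For the momentum $P = -\tfrac{i}{2\pi}\partial_{z}$, the state is real and vanishes at both ends, so
\[
\langle P\rangle = -\tfrac{i}{2\pi}\int_{0}^{1}\varphi_{k}\varphi_{k}'\,dz = -\tfrac{i}{4\pi}\big[\varphi_{k}^{2}\big]_{0}^{1} = 0.
\]
For the second moment I would avoid applying $P^{2}$ directly, since $P$ is not essentially self-adjoint on the interval. Instead I take $\langle P^{2}\rangle := \|P\varphi_{k}\|^{2} = \|\varphi_{k}'\|_{2}^{2}/(4\pi^{2})$, in line with the convention stated before the uncertainty inequality~\eqref{eq:uncertainty}. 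Then $\|\varphi_{k}'\|^{2} = 2k^{2}\pi^{2}\int_{0}^{1}\cos^{2}(k\pi z)\,dz = k^{2}\pi^{2}$, which is also the eigenvalue identity used in Theorem~\ref{thm:variational}. This gives $\langle P^{2}\rangle = k^{2}/4$, and hence $\Delta P = k/2$.

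The only delicate point is this interpretation of $\langle P^{2}\rangle$ as $\|P\psi\|^{2}$ for a boundary-vanishing state; everything else is elementary integration. Substituting the results into~\eqref{eq:ladder-moments} completes the proof. As a numerical check, at $k=1$ the uncertainty product is $\tfrac12\sqrt{1/12 - 1/(2\pi^{2})} \approx 0.090$.
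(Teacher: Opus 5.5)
Your proposal is correct and follows the paper's proof essentially step for step. You use the symmetry of $|\varphi_k|^2$ for the mean, two integrations by parts giving $\int_0^1 z^2\cos(2\pi kz)\,dz = 1/(2\pi^2k^2)$, and $\langle P^2\rangle = \|\varphi_k'\|_2^2/(4\pi^2) = k^2/4$, exactly as the paper does. Your justification of $\langle P\rangle = 0$ through the vanishing boundary term $[\varphi_k^2]_0^1$ is slightly more careful than the paper's appeal to realness of the state. For these Dirichlet eigenfunctions, reading $\langle P^2\rangle$ as $\langle\varphi_k, P^2\varphi_k\rangle$ or as $\|P\varphi_k\|^2$ gives the same value, so the point you flag as delicate causes no trouble.
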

		
		\begin{proof}
			$\mathbb{E}[Z] = \int_{0}^{1}z\,2\sin^{2}(k\pi z)\,dz = \tfrac12$ by
			the symmetry $z \mapsto 1-z$ of $|\varphi_{k}|^{2}$. For the second
			moment, two integrations by parts give
			$\int_{0}^{1}z^{2}\cos(2k\pi z)\,dz = \tfrac{1}{2\pi^{2}k^{2}}$,
			whence
			\begin{equation}
				\mathbb{E}[Z^{2}]
				= \int_{0}^{1}z^{2}\big(1-\cos 2k\pi z\big)\,dz
				= \tfrac13 - \tfrac{1}{2\pi^{2}k^{2}},
				\qquad
				\operatorname{Var}(Z) = \tfrac{1}{12} - \tfrac{1}{2\pi^{2}k^{2}}.
			\end{equation}
			$\langle P\rangle = 0$ for any real state, and
			$\langle P^{2}\rangle = \tfrac{1}{4\pi^{2}}
			\int_{0}^{1}(\varphi_{k}')^{2}
			= \tfrac{1}{4\pi^{2}}\cdot k^{2}\pi^{2} = \tfrac{k^{2}}{4}$.
		\end{proof}
		
		We now prove the universal limit law. For convenience we restate the
		theorem in full.
		
		\renewcommand{\restatename}{Theorem~\ref{thm:limit-law}}%
		\begin{restatement}[Universal limit law]
			Let $K$ be the Koenigs linearizer of $A$ at $\tfrac12$: the unique
			increasing solution of the Schr\"oder equation
			$K(2u) = A(K(u))$, $K(0)=\tfrac12$, $K'(0)=1$; then $K$ extends to a
			real-analytic increasing bijection $\mathbb{R}\to(0,1)$ and is the
			distribution function of a symmetric law $\mathcal{K}$ with analytic
			density. For every $f \in \mathcal{D}_{+}$,
			\begin{equation*}
				2^{n} f(m)\,(X_{n} - m) \;\xrightarrow{\ d\ }\; \mathcal{K},
			\end{equation*}
			and if $\mathbb{E}|X|^{\varepsilon} < \infty$ for some
			$\varepsilon>0$, all moments converge; in particular
			\begin{equation*}
				\operatorname{Var}(X_{n})
				\;\sim\; \sigma_{\mathcal{K}}^{2}\, f(m)^{-2}\, 4^{-n},
				\qquad \sigma^{2}_{\mathcal{K}} \approx 0.166.
			\end{equation*}
			Moreover, with $g = 1-K$, the symmetry of $A$ gives the exact tail
			recursion $g(2u) = A(g(u))$, whence
			\begin{equation*}
				-\log\big(1 - K(u)\big)
				= u^{\log_{2}3}\,\Theta(\log_{2}u)\big(1 + O(u^{-\log_{2}3})\big)
			\end{equation*}
			for a positive continuous $1$-periodic $\Theta$: the tails of
			$\mathcal{K}$ are compressed-exponential with exponent
			$\log_{2}3$, lighter than exponential and heavier than Gaussian, the
			exponent being the ratio of the logarithms of the endpoint degree and
			the interior multiplier; consistently, the excess kurtosis of
			$\mathcal{K}$ is positive ($\approx 0.164$).
		\end{restatement}
		
		\begin{proof}
			The proof proceeds in six steps.
			
			\emph{Step 1: construction and extension of $K$.} The interior fixed
			point $\tfrac12$ of $A$ is repelling with multiplier
			$A'(\tfrac12) = 2$, by the expansion~\eqref{eq:interior-germ}. The
			inverse branch $A^{-1}$ fixes $\tfrac12$ with multiplier $\tfrac12$,
			so Koenigs' theorem~\cite{koenigs1884,milnor2006} provides a unique
			germ $K$ with $K(0) = \tfrac12$, $K'(0) = 1$, analytic near $0$,
			solving the Schr\"oder equation
			\begin{equation*}
				K(2u) = A\big(K(u)\big).
			\end{equation*}
			The functional equation itself extends $K$: define
			\begin{equation*}
				K(u) := A^{\circ j}\big(K(2^{-j}u)\big)
			\end{equation*}
			for any $j$ large enough
			that $2^{-j}u$ lies in the domain of the germ; consistency of the
			definition across choices of $j$ is exactly the Schr\"oder equation,
			and analyticity propagates since $A$ is analytic. Monotonicity
			propagates by the chain rule, since $A' > 0$ on $(0,1)$ and $K' > 0$
			near $0$; thus $K$ is a real-analytic increasing map
			$\mathbb{R} \to (0,1)$. Its limits at $\pm\infty$ are $1$ and $0$
			because the orbit of any interior point of $(\tfrac12,1)$ under $A$
			increases to $1$, and symmetrically; hence $K$ is the distribution
			function of a law $\mathcal{K}$ with analytic density, and the
			symmetry $K(-u) = 1 - K(u)$ follows from $A(1-z) = 1 - A(z)$ together
			with uniqueness of the Koenigs germ.
			
			\emph{Step 2: locally uniform convergence.} Set
			\begin{equation*}
				K_{n}(u) = A^{\circ n}\big(\tfrac12 + 2^{-n}u\big).
			\end{equation*}
			On a neighborhood of
			$0$, $K_{n} \to K$ uniformly by the Koenigs linearization; globally,
			write $K_{n}(u) = A^{\circ j}\big(K_{n-j}(2^{-j}u)\big)$ and let
			$n \to \infty$ for fixed $j$, using continuity of $A^{\circ j}$.
			Moreover the concavity bound $A(\tfrac12 + v) \le \tfrac12 + 2v$ for
			$v \ge 0$ gives $K_{n+1}(u) \le K_{n}(u)$ for $u \ge 0$, so the
			convergence is monotone on each half-line and
			$K_{n} \ge K$ for $u \ge 0$.
			
			\emph{Step 3: weak convergence of the normalized iterates.} With
			$Y_{n} = 2^{n}f(m)(X_{n} - m)$,
			\begin{equation}
				\mathbb{P}(Y_{n} \le u)
				= F_{n}\Big(m + \frac{2^{-n}u}{f(m)}\Big)
				= A^{\circ n}\Big(F\Big(m + \frac{2^{-n}u}{f(m)}\Big)\Big),
			\end{equation}
			and differentiability of $F$ at $m$ with $F(m) = \tfrac12$,
			$F'(m) = f(m)$ gives
			$F(m + 2^{-n}u/f(m)) = \tfrac12 + 2^{-n}u + o(2^{-n})$; by Step 2 and
			the locally uniform convergence,
			$\mathbb{P}(Y_{n}\le u) \to K(u)$ at every $u$.
			
			\emph{Step 4: moment convergence.} Assume
			$\mathbb{E}|X|^{\varepsilon} < \infty$. Fix $p \ge 1$; since early
			iterates may lack higher moments when only an
			$\varepsilon$-moment is assumed, it suffices to show
			$\sup_{n \ge N}\mathbb{E}[|Y_{n}|^{p+1}] < \infty$ for some
			$N = N(p)$, which the far-range estimate below supplies. Choose
			$\eta > 0$ and $c > 0$ with $f \ge c$ on $[m-\eta, m+\eta]$, so that
			both tails are covered, treat the right tail as follows and the left
			symmetrically, and split the
			tail integral $\mathbb{E}|Y_{n}|^{p+1}
			= (p+1)\int_{0}^{\infty}u^{p}\,\mathbb{P}(|Y_{n}| > u)\,du$ at
			$u_{n} = 2^{n}c\,\eta$. On the near range $u \le u_{n}$, monotonicity
			of $F$ together with Step 2 gives
			\begin{equation*}
				\mathbb{P}(Y_{n} > u)
				\;\le\; 1 - A^{\circ n}\Big(\tfrac12 + \frac{2^{-n}cu}{f(m)}\Big)
				= 1 - K_{n}\Big(\frac{cu}{f(m)}\Big)
				\;\le\; 1 - K\Big(\frac{cu}{f(m)}\Big)
				\qquad (u \ge 0),
			\end{equation*}
			and the tail estimate of Step 5 makes $u^{p}(1-K(cu/f(m)))$
			integrable on $(0,\infty)$, uniformly in $n$. On the far range
			$u > u_{n}$, use the exact tail transport
			$1 - F_{n}(m+t) = A^{\circ n}\big(1 - F(m+t)\big)$ together with the
			cubic bound
			\begin{equation*}
				A(s) \le 7s^{3} \qquad (s \in [0,1]),
			\end{equation*}
			which iterates to
			\begin{equation*}
				A^{\circ n}(s) \le 7^{-1/2}\big(\sqrt7\,s\big)^{3^{n}}
				\qquad\text{for } s < 7^{-1/2}.
			\end{equation*}
			The relevant arguments satisfy
			$s = 1 - F(m+t) \le 1 - F(m + c\eta/f(m)) < 1$, so, since interior
			orbits of $A$ converge to the endpoint, there is a fixed $j_{0}$ with
			$A^{\circ j_{0}}(s) < 7^{-1/2}$ on this range; applying the iterated
			bound after these $j_{0}$ initial steps changes only constants.
			Combined with the Markov bound
			$1 - F(m+t) \le C_{\varepsilon}t^{-\varepsilon}$, the far-range
			contribution is
			$O(2^{n(p+1)}\cdot \theta^{3^{n}})$ for some $\theta < 1$, which
			vanishes. Hence all moments converge to those of $\mathcal{K}$, and
			in particular the variance
			asymptotics~\eqref{eq:variance-collapse} holds.
			
			\emph{Step 5: tails of $\mathcal{K}$.} With $g = 1 - K$, the symmetry
			of $A$ turns the Schr\"oder equation into the exact tail recursion
			$g(2u) = A(g(u))$. By the endpoint germ~\eqref{eq:endpoint-germ}
			there are constants $0 < c_{1} \le c_{2}$ with
			$c_{1}\varepsilon^{3} \le A(\varepsilon) \le c_{2}\varepsilon^{3}$
			for small $\varepsilon > 0$, so $y = \log g$ satisfies the affine
			recursion
			\begin{equation*}
				y(2u) = 3\,y(u) + O(1).
			\end{equation*}
			Consequently
			\begin{equation*}
				L(u) := -\lim_{k\to\infty} 3^{-k}\,y\big(2^{k}u\big)
			\end{equation*}
			exists, is continuous
			and positive, and satisfies $L(2u) = 3L(u)$, which forces
			$L(u) = u^{\log_{2}3}\,\Theta(\log_{2}u)$ with $\Theta$ positive,
			continuous, and $1$-periodic; telescoping the $O(1)$ errors gives
			$|-y - L| = O(1)$. Since $\Theta$ is continuous, periodic, and
			positive, hence bounded away from $0$ and $\infty$, one has
			$L(u) \asymp u^{\log_{2}3}$, and the additive $O(1)$ error is
			equivalent to the relative form stated in the tail
			law~\eqref{eq:tail-law}. The positive excess kurtosis is consistent
			with the exponent lying below $2$.
			
			\emph{Step 6: entropy of the normalized iterates.} The densities
			$K_{n}'$ satisfy the uniform bound
			\begin{equation*}
				K_{n}'(u)
				= 2^{-n}\big(A^{\circ n}\big)'\big(\tfrac12 + 2^{-n}u\big)
				\;\le\; 1,
			\end{equation*}
			since $(A^{\circ n})' = \prod_{j<n}A'\circ A^{\circ j} \le 2^{n}$,
			and $K_{n}' \to K'$ locally uniformly, by the product representation
			of $K_{n}'$ and the quadratic control of $A'$ near the fixed point.
			For the entropies, split at $|u| = U$: on the compact part the
			integrands $-K_{n}'\log K_{n}'$ converge uniformly, while on the
			tails the elementary bound $|x\log x| \le \tfrac{2}{e}\sqrt{x}$ for
			$0 \le x \le 1$ and the Cauchy--Schwarz inequality give
			\begin{equation*}
				\int_{|u|>U}\big|K_{n}'\log K_{n}'\big|\,du
				\;\le\; \frac{2}{e}
				\Big(\int\big(1+u^{2}\big)K_{n}'\,du\Big)^{1/2}
				\Big(\int_{|u|>U}\frac{du}{1+u^{2}}\Big)^{1/2},
			\end{equation*}
			which is uniformly small in $n$ by the uniform second-moment bound of
			Step~4; note that for $K_{n}$ this bound is the uniform-law instance
			of Step~4, so the divergence
			asymptotics~\eqref{eq:cstar}, being law-independent, carries no
			moment hypothesis. Hence $H(K_{n}) \to H(\mathcal{K})$.
		\end{proof}
		
		\begin{lemma}[Flow well-posedness and entropy production]\label{lem:flow-wellposed}
			For every $f_{0} \in \mathcal{D}$ the evolution
			equation~\eqref{eq:flow} has a unique global mild solution in
			$L^{1}(\mathbb{R})$, which remains in $\mathcal{D}$ and preserves
			$\mathcal{D}_{+}$, symmetry, and the median. If moreover
			$f_{0} \in \mathcal{D}_{+}$ and the rank profile
			$\mathfrak{f}_{0} = f_{0}\circ Q_{0}$ satisfies
			\begin{equation}
				\int_{0}^{1}\big|\log \mathfrak{f}_{0}(z)\big|\,dz < \infty,
				\qquad\text{equivalently}\qquad
				\mathbb{E}_{f_{0}}\big|\log f_{0}(X)\big| < \infty,
				\label{eq:entropy-hypothesis}
			\end{equation}
			then the entropy law~\eqref{eq:entropy-law} holds:
			$H(f_{t}) = -t + \log(4/f_{0}(m)) + o(1)$; the discrete analogue
			holds in the sharper form
			$H(f_{n}) = -n\log 2 + H(\mathcal{K}) - \log f_{0}(m) + o(1)$, with
			$\mathcal{K}$ the Koenigs law of Theorem~\ref{thm:limit-law}, so in
			particular $H(f_{n+1}) - H(f_{n}) \to -\log 2$.
		\end{lemma}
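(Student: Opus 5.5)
Well-posedness and the entropy law are separate tasks; I would prove them in that order. For the first, I would work at the level of distribution functions. By the transport formula~\eqref{eq:transport}, the equation~\eqref{eq:flow} is equivalent to the autonomous scalar ODE $\partial_{t}F = \varphi(F)$ with $\varphi(u) = -\sin(2\pi u)/(2\pi)$, solved pointwise in $x$. Since $\varphi$ is globally Lipschitz, each $x$ carries a unique global trajectory, and $0,\tfrac12,1$ are equilibria, so $[0,\tfrac12]$ and $[\tfrac12,1]$ are invariant. Uniqueness of ODE solutions makes the flow map $\Phi_{t}$ of $\partial_{t}u = \varphi(u)$ an increasing analytic diffeomorphism of $[0,1]$ fixing $0,\tfrac12,1$, with $\Phi_{t}' >0$ bounded. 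Hence $F_{t} = \Phi_{t}\circ F_{0}$ is again an absolutely continuous distribution function; in particular $\Phi_{t}$ is explicit by Theorem~\ref{thm:flow}. Its density is $f_{t} = \Phi_{t}'(F_{0})f_{0} = \rho_{\Phi_{t}'}[f_{0}]$, which is a derangetropy image of $f_{0}$, so mass, positivity, support, $\mathcal{D}_{+}$, symmetry and the median are all preserved by Lemma~\ref{lem:wellposed}; the kernel $\Phi_{t}'$ is positive, smooth and symmetric because $\varphi(1-u) = -\varphi(u)$. To see that this is the mild solution in $L^{1}$, I would note that the right side $f\mapsto \rho[f]-f$ is globally Lipschitz on $\mathcal{D}$ in $L^{1}$ (by~\eqref{eq:tv-bound}, constant $2+\pi$). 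Picard iteration in $C([0,T];L^{1})$ then gives existence and uniqueness, and the explicit $f_{t}$ is a solution, so the two coincide.

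For the continuous entropy law, I would use the universality principle to write $H(f_{t}) = -\int f_{t}\log f_{t} = -\int_{0}^{1}\log\mathfrak{f}_{t}(y)\,dy$, where $\mathfrak{f}_{t} = f_{t}\circ Q_{t}$. Since $Q_{t} = Q_{0}\circ\Phi_{t}^{-1}$, one has $\mathfrak{f}_{t}(y) = \Phi_{t}'(z)\,\mathfrak{f}_{0}(z)$ with $z=\Phi_{t}^{-1}(y)$. Changing variables $y=\Phi_{t}(z)$ then gives
\[
H(f_{t}) = -\int_{0}^{1}\Phi_{t}'(z)\big[\log\Phi_{t}'(z) + \log\mathfrak{f}_{0}(z)\big]\,dz .
\]
The first term is $-\int \Phi_{t}'\log\Phi_{t}'$, which is the entropy of the law with distribution function $\Phi_{t}$ on $[0,1]$. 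Using $\tan(\pi(\Phi_{t}(z)-\tfrac12)) = e^{t}\tan(\pi(z-\tfrac12))$, I would rescale $z = \tfrac12 + e^{-t}v$; this gives $-\int\Phi_{t}'\log\Phi_{t}' = -t + H(\text{limit profile}) + o(1)$. Here the limit profile has distribution function $\tfrac12+\pi^{-1}\arctan(\pi v)$, which is $\mathrm{Cauchy}(0,1/\pi)$, with entropy $\log(4\pi\cdot\pi^{-1}) = \log 4$, matching Theorem~\ref{thm:invariant}. The second term is $-\int \log\mathfrak{f}_{0}\,d\Phi_{t}$, and $d\Phi_{t}$ converges weakly to $\delta_{1/2}$, so this term should tend to $-\log\mathfrak{f}_{0}(\tfrac12) = -\log f_{0}(m)$. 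Adding the two gives $-t+\log(4/f_{0}(m))+o(1)$.

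The main obstacle is this last limit, because $\log\mathfrak{f}_{0}$ is only integrable and is unbounded near the endpoints, so weak convergence alone does not suffice. I would split $[0,1]$ into a neighbourhood of $\tfrac12$, where continuity of $\mathfrak{f}_{0}>0$ applies, and the remainder. On the remainder, the explicit formula gives $\Phi_{t}'(z)\le C_{\delta}e^{-t}$ uniformly for $|z-\tfrac12|\ge\delta$, since $\Phi_{t}' = e^{t}(1+V^{2})/(1+e^{2t}V^{2})$ with $V = \tan(\pi(z-\tfrac12))$ bounded away from $0$, and this quantity is bounded by $(1+V^{2})e^{-t}/V^{2}$. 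Hypothesis~\eqref{eq:entropy-hypothesis} then makes that contribution $O(e^{-t})$. The same splitting, with the uniform-integrability bound $|x\log x|\lesssim\sqrt{x}$ and the Cauchy tail, justifies the first-term limit exactly as in Step~6 of the proof of Theorem~\ref{thm:limit-law}.

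The discrete statement follows by the identical decomposition with $\Phi_{t}$ replaced by $A^{\circ n}$. The first term is $-\int(A^{\circ n})'\log(A^{\circ n})' = -n\log 2 + H(K_{n})$ by~\eqref{eq:divergence-entropy}, and $H(K_{n})\to H(\mathcal{K})$ by Step~6. The second term tends to $-\log f_{0}(m)$ by the same splitting, now using that $(A^{\circ n})'\to0$ uniformly away from $\tfrac12$; this holds because orbits are superattracted to the endpoints, where $A'$ vanishes. Together these give $H(f_{n}) = -n\log 2 + H(\mathcal{K}) - \log f_{0}(m) + o(1)$, and taking differences yields $H(f_{n+1}) - H(f_{n})\to-\log 2$.
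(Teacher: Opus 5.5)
Your proof follows the paper's own route. The time-$t$ map is the derangetropy operator with kernel $\Phi_t'=A_t'$, which gives existence and the preservation properties. Uniqueness comes from the $L^{1}$-Lipschitz bound of Lemma~\ref{lem:wellposed}. The entropy splits as $-\int_0^1\Phi_t'\log\Phi_t'\,dz-\int_0^1\Phi_t'\log\mathfrak{f}_0\,dz$. The first integral is rescaled at $z=\tfrac12+e^{-t}v$ onto $\mathrm{Cauchy}(0,1/\pi)$, and the second is split at $|z-\tfrac12|=\delta$ using $\Phi_t'\le e^{-t}/\sin^{2}(\pi\delta)$. The discrete case runs through \eqref{eq:divergence-entropy} and Step~6.

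One justification fails as written: the tail control for the rescaled entropy cannot be done ``exactly as in Step~6''. Put $g_\varepsilon(v)=\varepsilon\,\Phi_t'(\tfrac12+\varepsilon v)$ with $\varepsilon=e^{-t}$. In Step~6 the bound $|x\log x|\le\tfrac{2}{e}\sqrt{x}$ is paired by Cauchy--Schwarz with a uniform second moment. The Cauchy limit has no second moment: $\int(1+v^{2})g_\varepsilon\,dv\asymp e^{t}$. Pairing $\sqrt{x}$ with the Cauchy tail directly also fails. It only gives $\sqrt{g_\varepsilon}\lesssim 1/|v|$, whose integral over $U<|v|<1/(2\varepsilon)$ grows like $t$, so it does not yield tails that are uniformly small in $\varepsilon$.

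The fix is the domination the paper uses. On the whole range $|\varepsilon v|\le\tfrac12$ one has $g_\varepsilon\le\min\big(1,1/(4v^{2})\big)$. Since $x\mapsto x|\log x|$ is increasing on $(0,e^{-1})$, this gives $|g_\varepsilon\log g_\varepsilon|\le e^{-1}$ for $|v|\le1$ and $|g_\varepsilon\log g_\varepsilon|\le \log(4v^{2})/(4v^{2})$ for $|v|\ge1$. This bound is integrable and independent of $t$, so dominated convergence gives $H(g_\varepsilon)\to\log 4$. Alternatively, use $|x\log x|\le C_\eta x^{1-\eta}$ with $\eta<\tfrac12$.

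A smaller point concerns well-posedness. Picard iteration in $C([0,T];L^{1})$ does not by itself give existence. The iterates $f_0+\int_0^t\big(\rho[f^{(k)}]-f^{(k)}\big)\,ds$ can lose positivity and leave $\mathcal{D}$, where the Lipschitz estimate was proved, and $\mathcal{D}$ is not open in $L^{1}$. You already have the explicit solution, so existence is settled. Uniqueness then follows by Gr\"onwall between two $\mathcal{D}$-valued solutions. The Lipschitz constant of $f\mapsto\rho[f]-f$ is $3+\pi$, not $2+\pi$.
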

		
		\begin{proof}
			Existence is established by the explicit rank construction rather
			than by abstract iteration, since $\mathcal{D}$ is not open in
			$L^{1}$: with $A_{t}$ the increasing bijection of $[0,1]$ defined by
			$\tan(\pi A_{t}(z)) = e^{-t}\tan(\pi z)$ on the correct branches, the
			family $f_{t} := A_{t}'(F_{0})\,f_{0}$ satisfies the evolution
			equation~\eqref{eq:flow} pointwise, by the semigroup identity
			$A_{t+s} = A_{s}\circ A_{t}$ and the transport
			formula~\eqref{eq:transport}, and is a global solution issuing from
			$f_{0}$. Uniqueness among mild $\mathcal{D}$-valued solutions follows
			from the total-variation bound~\eqref{eq:tv-bound}: the vector field
			$f \mapsto \rho[f] - f$ is Lipschitz on $\mathcal{D}$ with constant
			at most $1 + \|w\|_{\infty} + \tfrac12\mathrm{Lip}(w) = 3 + \pi$, so
			the difference of two solutions obeys a Gr\"onwall inequality, which
			requires no openness of the state space. Mass is conserved because
			$\int(\rho[f]-f)\,d\lambda = 0$ by the normalization
			identity~\eqref{eq:normalization}. Positivity follows from the
			multiplicative form of the equation along each fixed $x$: the
			solution satisfies
			\begin{equation*}
				f(x,t)
				= f_{0}(x)\,\exp\Big(\int_{0}^{t}
				\big(w(F_{s}(x)) - 1\big)\,ds\Big),
			\end{equation*}
			which is positive wherever $f_{0}$ is, and this same formula shows
			that support, continuity, and hence membership in $\mathcal{D}_{+}$
			are preserved. The median is conserved because
			$z = \tfrac12$ is a zero of the drift in the
			rank equation~\eqref{eq:flow}, and symmetry is preserved by
			Lemma~\ref{lem:wellposed}(ii) applied to the generator.
			
			For the entropy, observe first that the time-$t$ map of the flow is
			itself a derangetropy operator: by the exact
			solution~\eqref{eq:exact-solution}, $F_{t} = A_{t}\circ F_{0}$ where
			$A_{t}$ is the increasing bijection of $[0,1]$ with
			$\tan(\pi A_{t}(z)) = e^{-t}\tan(\pi z)$ on the correct branches, and
			its derivative
			\begin{equation*}
				w_{t}(z) = A_{t}'(z)
				= \frac{e^{-t}}{\cos^{2}(\pi z) + e^{-2t}\sin^{2}(\pi z)}
			\end{equation*}
			is a kernel, so $f_{t} = \rho_{w_{t}}[f_{0}]$. By the universality
			principle the entropy therefore splits exactly into a
			law-independent and a law-dependent rank integral:
			\begin{equation*}
				H(f_{t})
				= -\int_{0}^{1} w_{t}\log w_{t}\,dz
				\;-\; \int_{0}^{1} w_{t}(z)\,\log\mathfrak{f}_{0}(z)\,dz.
			\end{equation*}
			For the first integral, substitute $z = \tfrac12 + e^{-t}v$: with
			$\varepsilon = e^{-t}$,
			\begin{equation*}
				\varepsilon\,w_{t}\big(\tfrac12+\varepsilon v\big)
				= \frac{\varepsilon^{2}}
				{\sin^{2}(\pi\varepsilon v) + \varepsilon^{2}\cos^{2}(\pi\varepsilon v)}
				\;\longrightarrow\; \frac{1}{1+\pi^{2}v^{2}},
			\end{equation*}
			the Cauchy density of scale $1/\pi$, locally uniformly and under
			the two-sided domination
			\begin{equation*}
				g_{\varepsilon}(v)
				:= \varepsilon\,w_{t}\big(\tfrac12+\varepsilon v\big)
				\;\le\; \min\Big(1,\ \frac{1}{4v^{2}}\Big)
				\qquad \big(|\varepsilon v| \le \tfrac12\big),
			\end{equation*}
			since the denominator of $g_{\varepsilon}$ is at least
			$\varepsilon^{2}$ and, by the chord bound $\sin(\pi u) \ge 2u$ on
			$[0,\tfrac12]$, at least $4\varepsilon^{2}v^{2}$; the dominating
			function is integrable and, since $x \mapsto x|\log x|$ is
			increasing near zero, dominates $|g_{\varepsilon}\log
			g_{\varepsilon}|$ by an integrable function as well, so the entropy
			integral converges and the first integral equals
			$-t + H(\mathrm{Cauchy}(1/\pi)) + o(1) = -t + \log 4 + o(1)$. For the
			second integral, $w_{t}\,dz$ concentrates at $z = \tfrac12$ at scale
			$e^{-t}$, while on the far region $|z - \tfrac12| \ge \delta$ one has
			$\cos^{2}(\pi z) \ge \sin^{2}(\pi\delta)$ and hence the uniform bound
			\begin{equation*}
				w_{t}(z) \;\le\; \frac{e^{-t}}{\sin^{2}(\pi\delta)}.
			\end{equation*}
			Split the integral at $|z - \tfrac12| = \delta$. On the near part,
			$\mathfrak{f}_{0}$ is continuous and positive at $\tfrac12$
			(automatic for $f_{0} \in \mathcal{D}_{+}$), hence bounded there for
			small $\delta$, and the concentration of $w_{t}$ gives convergence to
			$\log\mathfrak{f}_{0}(\tfrac12) = \log f_{0}(m)$; the far part is
			$O\big(e^{-t}\int_{0}^{1}|\log\mathfrak{f}_{0}|\,dz\big) \to 0$ by
			the displayed bound and the
			hypothesis~\eqref{eq:entropy-hypothesis}. Altogether
			\begin{equation*}
				H(f_{t}) = -t + \log 4 - \log f_{0}(m) + o(1),
			\end{equation*}
			which is the entropy law~\eqref{eq:entropy-law}, and which is
			consistent with the Lorentzian limit~\eqref{eq:lorentz-limit}, since
			the Cauchy law of scale $\gamma = 1/\pi f_{0}(m)$ has entropy
			$\log(4\pi\gamma) = \log(4/f_{0}(m))$. For the discrete statement, write $w_{n} := (A^{\circ n})'$, so
			that $f_{n} = \rho_{w_{n}}[f_{0}]$ by the composition law and the
			entropy splits as before:
			\begin{equation*}
				H(f_{n})
				= -\int_{0}^{1}w_{n}\log w_{n}\,dz
				- \int_{0}^{1}w_{n}(z)\,\log\mathfrak{f}_{0}(z)\,dz.
			\end{equation*}
			The first integral is exactly the negative of the divergence
			$D(f_{n}\Vert f_{0})$, since the rank of $f_{0}$ pushes $f_{0}$
			forward to the uniform law, and the exact
			identity~\eqref{eq:divergence-entropy} together with the entropy
			convergence $H(K_{n}) \to H(\mathcal{K})$ of Step 6 of the proof of
			Theorem~\ref{thm:limit-law} gives
			\begin{equation*}
				-\int_{0}^{1}w_{n}\log w_{n}\,dz
				= -n\log 2 + H(K_{n})
				= -n\log 2 + H(\mathcal{K}) + o(1).
			\end{equation*}
			For the second integral, the measures $w_{n}\,dz$ concentrate at
			the median rank, since
			$\int_{|z-1/2|\le\delta}w_{n}\,dz
			= A^{\circ n}(\tfrac12+\delta) - A^{\circ n}(\tfrac12-\delta)
			\to 1$, while on the far region the orbit monotonicity
			$A^{\circ j}(z) \le A^{\circ j}(\tfrac12-\delta) =: u_{j}
			\downarrow 0$ for $z \le \tfrac12 - \delta$, with $w$ increasing on
			$[0,\tfrac12]$, gives the uniform geometric bound
			\begin{equation*}
				\sup_{z \le \frac12-\delta}\, w_{n}(z)
				\;\le\; \prod_{j=0}^{n-1}w(u_{j})
				\;\le\; C_{\delta}\,2^{-(n-n_{0}(\delta))},
			\end{equation*}
			because $w(u_{j}) \le \tfrac12$ once $u_{j} \le \tfrac16$, and
			symmetrically on the right; hence, exactly as in the continuous
			case, the near part converges to
			$\log\mathfrak{f}_{0}(\tfrac12) = \log f_{0}(m)$ and the far part
			vanishes by the hypothesis~\eqref{eq:entropy-hypothesis}.
			Altogether
			\begin{equation*}
				H(f_{n}) = -n\log 2 + H(\mathcal{K}) - \log f_{0}(m) + o(1),
			\end{equation*}
			which is the sharper discrete law, and differencing gives
			$H(f_{n+1}) - H(f_{n}) \to -\log 2$.
		\end{proof}
		
		\begin{lemma}[Fisher metric of the Cauchy family]\label{lem:cauchy-metric}
			For the family $\mathrm{Cauchy}(m,\gamma)$ with density
			$f_{m,\gamma}(x) = \frac{\gamma}{\pi\left[(x-m)^{2}+\gamma^{2}\right]}$,
			the Fisher information matrix is
			$I_{mm} = I_{\gamma\gamma} = 1/2\gamma^{2}$, $I_{m\gamma} = 0$, so the
			Fisher--Rao metric is the hyperbolic
			metric~\eqref{eq:hyperbolic-metric}.
		\end{lemma}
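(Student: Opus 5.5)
The plan is to compute the score functions directly and reduce every Fisher integral to an elementary rational integral through the substitution $x - m = \gamma\tan\theta$. Write $r = x - m$ and $\ell = \log f_{m,\gamma} = \log\gamma - \log\pi - \log(r^{2}+\gamma^{2})$. Differentiating in the two parameters gives the scores
\begin{equation*}
	\partial_{m}\ell = \frac{2r}{r^{2}+\gamma^{2}},
	\qquad
	\partial_{\gamma}\ell = \frac{1}{\gamma} - \frac{2\gamma}{r^{2}+\gamma^{2}}
	= \frac{r^{2}-\gamma^{2}}{\gamma\,(r^{2}+\gamma^{2})}.
\end{equation*}
The Fisher entries are then $I_{ij} = \mathbb{E}[\partial_{i}\ell\,\partial_{j}\ell]$. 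No regularity issue arises, because both scores are bounded and the density is smooth in the parameters.

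Under the substitution, $f\,dx = d\theta/\pi$ on $(-\pi/2,\pi/2)$, so $\theta$ is uniform. The scores take the forms $\partial_{m}\ell = \sin(2\theta)/\gamma$ and $\partial_{\gamma}\ell = -\cos(2\theta)/\gamma$, using $r/(r^{2}+\gamma^{2}) = \sin\theta\cos\theta/\gamma$ and $(r^{2}-\gamma^{2})/(r^{2}+\gamma^{2}) = -\cos 2\theta$. The three entries are therefore $\gamma^{-2}$ times the averages of $\sin^{2}2\theta$, of $\cos^{2}2\theta$, and of $-\sin2\theta\cos2\theta$ over a uniform angle on an interval of length $\pi$. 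These averages are $\tfrac12$, $\tfrac12$ and $0$, which gives $I_{mm} = I_{\gamma\gamma} = 1/(2\gamma^{2})$ and $I_{m\gamma} = 0$.

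Finally, I read off $ds^{2} = (dm^{2}+d\gamma^{2})/(2\gamma^{2})$. This is exactly the metric~\eqref{eq:hyperbolic-metric}, and its curvature statement was already addressed in the proof of Theorem~\ref{thm:invariant}.

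There is no real obstacle. The only point that needs care is the sign and simplification of the $\gamma$-score. That sign is what makes the cross term an odd function of $r$, or equivalently of $\theta$, and so forces it to vanish. As an alternative check, $I_{m\gamma} = 0$ also follows directly from the oddness of $\partial_{m}\ell$ in $r$ and the evenness of $\partial_{\gamma}\ell$.
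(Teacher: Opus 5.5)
Your proof is correct and matches the paper's argument essentially step for step: the substitution $x-m=\gamma\tan\theta$ makes $\theta$ uniform on $(-\tfrac{\pi}{2},\tfrac{\pi}{2})$, turns the scores into $\sin 2\theta/\gamma$ and $-\cos 2\theta/\gamma$, and reduces the three entries to elementary trigonometric averages. One small correction: the cross term vanishes because $\partial_{m}\ell$ is odd in $r$ and $\partial_{\gamma}\ell$ is even, not because of the sign of the $\gamma$-score, exactly as your own alternative check shows.
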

		
		\begin{proof}
			Substitute $x - m = \gamma\tan\theta$ with
			$\theta \in (-\tfrac\pi2, \tfrac\pi2)$; the law of $\theta$ is uniform
			with density $1/\pi$. The scores become
			\begin{align*}
				\partial_{m}\log f
				&= \frac{2(x-m)}{(x-m)^{2}+\gamma^{2}}
				= \frac{2\sin\theta\cos\theta}{\gamma}
				= \frac{\sin 2\theta}{\gamma},\\
				\partial_{\gamma}\log f
				&= \frac1\gamma - \frac{2\gamma}{(x-m)^{2}+\gamma^{2}}
				= -\frac{\cos 2\theta}{\gamma}.
			\end{align*}
			Hence
			$I_{mm} = \frac{1}{\pi\gamma^{2}}
			\int_{-\pi/2}^{\pi/2}\sin^{2}2\theta\,d\theta = \frac{1}{2\gamma^{2}}$,
			$I_{\gamma\gamma} = \frac{1}{\pi\gamma^{2}}
			\int_{-\pi/2}^{\pi/2}\cos^{2}2\theta\,d\theta = \frac{1}{2\gamma^{2}}$,
			and $I_{m\gamma} = 0$ because the cross integrand
			$\sin2\theta\cos2\theta$ is odd. A metric
			$a(dm^{2}+d\gamma^{2})/\gamma^{2}$ on the upper half-plane has
			constant Gaussian curvature $-1/a$; here $a = \tfrac12$, giving
			curvature $-2$, vertical lines are geodesics, and the curve
			$\gamma(t) = \gamma_{0}e^{-t}$ has speed
			$\sqrt{a}\,|\dot\gamma|/\gamma = 1/\sqrt2$.
		\end{proof}
		
		\begin{lemma}[Curvature computations]\label{lem:curvature-comps}
			(i) The logistic law $F(x) = (1+e^{-ax})^{-1}$ and the exponential
			law of rate $a$ have constant Schwarzian $\{F,x\} = -a^{2}/2$.
			(ii) The standard Gaussian has $\{F,x\} = -1 - x^{2}/2$.
			(iii) A $C^{3}$ law with $f(x) = c_{0}x^{-p}(1+o(1))$ and derivatives
			of matching order has
			$\{F,x\} = \tfrac{p(2-p)}{2x^{2}}(1+o(1))$, and hence the curvature
			asymptotics~\eqref{eq:singularity}.
			(iv) The positive-curvature laws of
			Theorem~\ref{thm:trichotomy} satisfy
			$\{F,x\} = +2\pi^{2}(F')^{2}$, and the hyperbolic secant
			steady state~\eqref{eq:sech} satisfies the closure
			identity~\eqref{eq:sech-closure}.
		\end{lemma}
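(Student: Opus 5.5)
The plan is to treat each part by direct computation from the definition $\{F,x\} = F'''/F' - \tfrac32(F''/F')^{2}$. It is convenient to rewrite it in terms of $g = \log f$: since $F' = f$, we have $F''/F' = g'$ and hence $\{F,x\} = g'' - \tfrac12 (g')^{2}$.

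For (i), the logistic law has $f = aF(1-F)$, so $g' = a(1-2F)$ and $g'' = -2a f = -2a^{2}F(1-F)$. Then
\begin{equation*}
g'' - \tfrac12(g')^{2} = -2a^{2}F(1-F) - \tfrac12 a^{2}(1-2F)^{2} = -\tfrac{a^{2}}{2},
\end{equation*}
because $4F(1-F) + (1-2F)^{2} = 1$. For the exponential law, $g' = -a$ and $g'' = 0$, so $\{F,x\} = -a^{2}/2$ on the support.

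For (ii), the Gaussian has $g = -x^{2}/2 + \mathrm{const}$, so $\{F,x\} = -1 - x^{2}/2$ immediately.

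For (iii), the smooth regular-variation hypothesis gives $f'/f = -p/x\,(1+o(1))$ and $f''/f = p(p+1)/x^{2}\,(1+o(1))$. Since $g'' = f''/f - (f'/f)^{2}$, we get
\begin{equation*}
g'' - \tfrac12(g')^{2} = \frac{f''}{f} - \frac32\Big(\frac{f'}{f}\Big)^{2} = \frac{p(p+1) - \tfrac32p^{2}}{x^{2}}\,(1+o(1)) = \frac{p(2-p)}{2x^{2}}\,(1+o(1)).
\end{equation*}
This step needs care: we cannot add the $o(1)$ terms naively, because the leading coefficients might cancel. They cancel only when $p = 2$, and that case is excluded in the singularity theorem. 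For $p \neq 2$ the relative error is therefore legitimate.

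Next divide by $f^{2} \sim c_{0}^{2}x^{-2p}$. This gives $R \sim \frac{p(2-p)}{2c_{0}^{2}}x^{2p-2}$, which is the asymptotics~\eqref{eq:singularity}.

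For (iv), use the cocycle law with the uniformizer $h = \tanh(\pi(F-\tfrac12))$, whose Schwarzian in $F$ is $-2\pi^{2}$ (as in the proof of Theorem~\ref{thm:trichotomy}). Since $h$ is Möbius in $x$, we get
\begin{equation*}
0 = \{h,x\} = -2\pi^{2}(F')^{2} + \{F,x\}.
\end{equation*}
Alternatively, one can verify this directly on the symmetric representative~\eqref{eq:desitter-density}, where $g' = 2(x-m)/(\gamma^{2}-(x-m)^{2})$.

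The sech closure is immediate from $\sin(\pi F^{*}) = \operatorname{sech}(x/\sqrt D)$, which follows from the Gudermannian identity $\sin(\tfrac\pi2 + \mathrm{gd}\,y) = \cos(\mathrm{gd}\,y) = \operatorname{sech}y$. Indeed, this gives
\begin{equation*}
2\sin^{2}(\pi F^{*}) = 2\operatorname{sech}^{2} = 2\pi^{2}D f^{*2}.
\end{equation*}

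The only step with any real content is the error bookkeeping in (iii).
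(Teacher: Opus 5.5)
Your proposal is correct and follows the paper's proof. Parts (ii)--(iv) are the same computations: your $g''-\tfrac12(g')^{2}$ with $g=\log f$ is exactly the paper's $f''/f-\tfrac32(f'/f)^{2}$, and (iv) is the same cocycle argument with the $\tanh$ uniformizer; the only cosmetic difference is in (i), where for the logistic law the paper applies the cocycle law to the M\"obius map $u\mapsto u/(1+u)$ with $u=e^{ax}$, reducing to $\{e^{ax},x\}=-a^{2}/2$, rather than computing directly in $\log f$ as you do. Your $p\neq2$ caveat in (iii) is the right reading of the statement; the paper's proof also treats the $p=2$ second-order tail and checks the steady-state equation via $\operatorname{sech}''=\operatorname{sech}-2\operatorname{sech}^{3}$, but the lemma as stated requires neither.
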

		
		\begin{proof}
			(i) With $u = e^{ax}$, $F = u/(1+u)$ is a M\"obius function of $u$,
			so by the cocycle law $\{F,x\} = \{u,x\} = \{e^{ax},x\}$, and a
			direct computation gives $\{e^{ax},x\} = -a^{2}/2$; the exponential
			law $F = 1 - e^{-ax}$ is M\"obius in $e^{-ax}$ and yields the same
			value.
			(ii) For $F' = \phi$ the standard normal density,
			$\phi'/\phi = -x$ and $\phi''/\phi = x^{2}-1$, so
			\begin{equation*}
				\{F,x\}
				= \frac{\phi''}{\phi} - \frac32\Big(\frac{\phi'}{\phi}\Big)^{2}
				= (x^{2}-1) - \frac32 x^{2}
				= -1 - \frac{x^{2}}{2}.
			\end{equation*}
			(iii) With $F' = f \sim c_{0}x^{-p}$ in the smooth sense,
			$F''/F' \sim -p/x$ and $F'''/F' \sim p(p+1)/x^{2}$, so
			\begin{equation*}
				\{F,x\}
				\sim \frac{p(p+1) - \tfrac32 p^{2}}{x^{2}}
				= \frac{p(2-p)}{2x^{2}};
			\end{equation*}
			dividing by $f^{2} \sim c_{0}^{2}x^{-2p}$ gives the
			asymptotics~\eqref{eq:singularity}. At $p = 2$ the leading
			coefficient vanishes, and a direct expansion of the Schwarzian for
			the second-order tail~\eqref{eq:second-order-tail} gives
			\begin{equation*}
				\{F,x\} = \alpha b(\alpha-1)\,x^{-2-\alpha}\big(1+o(1)\big),
			\end{equation*}
			whence the second-order curvature
			asymptotics~\eqref{eq:second-order-curvature}.
			(iv) For $\tanh(\pi(F-\tfrac12)) = (x-m)/\gamma$ the cocycle law with
			$\{\tanh(\pi(F-\tfrac12)),F\} = -2\pi^{2}$ gives
			$0 = -2\pi^{2}(F')^{2} + \{F,x\}$, as claimed. For the hyperbolic
			secant profile of the steady state~\eqref{eq:sech},
			$\sin(\pi F^{*}) = \operatorname{sech}(x/\sqrt D)$ by the derivative
			formula $\mathrm{gd}'(s) = \operatorname{sech}(s)$ and
			$\cos(\mathrm{gd}(s)) = \operatorname{sech}(s)$; squaring and using
			$f^{*} = \operatorname{sech}(x/\sqrt D)/\pi\sqrt D$ yields
			\begin{equation*}
				2\sin^{2}(\pi F^{*})
				= 2\operatorname{sech}^{2}\!\Big(\frac{x}{\sqrt D}\Big)
				= 2\pi^{2}D\, f^{*2},
			\end{equation*}
			which is the
			closure~\eqref{eq:sech-closure}; substituting the closure into the
			stationary form of the
			equation~\eqref{eq:flow-diffusion} and using
			$(\operatorname{sech})'' = \operatorname{sech}
			- 2\operatorname{sech}^{3}$ verifies the steady state.
		\end{proof}

		\begin{lemma}[Symmetric square]\label{lem:symsq}
			Let $U \in C^{0}(0,1)$ and let $\psi_{1}, \psi_{2}$ be solutions of
			the Hill equation $\psi'' + \tfrac12 U\psi = 0$ with nonzero
			Wronskian $W$. Then every product $\psi_{i}\psi_{j}$ solves the
			symmetric-square equation~\eqref{eq:symmetric-square}, the three
			products $\psi_{1}^{2}, \psi_{1}\psi_{2}, \psi_{2}^{2}$ form a basis
			of its solution space, and
			$\mathcal{L}_{U}\xi = \delta_{\xi}U$ in the notation of the
			infinitesimal coadjoint action~\eqref{eq:inf-coadjoint} at central
			charge one.
		\end{lemma}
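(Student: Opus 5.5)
The plan is a direct computation in three parts: the product identity, a dimension count, and a comparison with the coadjoint formula. Write $q := \tfrac12 U$, so the Hill equation reads $\psi'' = -q\psi$. For solutions $\psi_{i}, \psi_{j}$ set $\xi = \psi_{i}\psi_{j}$. Then
\begin{equation*}
	\xi' = \psi_{i}'\psi_{j} + \psi_{i}\psi_{j}',
	\qquad
	\xi'' = 2\psi_{i}'\psi_{j}' - 2q\,\xi,
\end{equation*}
and differentiating once more gives
\begin{equation*}
	\xi''' = 2(\psi_{i}''\psi_{j}' + \psi_{i}'\psi_{j}'') - 2q'\xi - 2q\xi'
	= -2q\xi' - 2q'\xi - 2q\xi' = -4q\xi' - 2q'\xi .
\end{equation*}
In terms of $U$ this is $\xi''' + 2U\xi' + U'\xi = 0$, which is the symmetric-square equation~\eqref{eq:symmetric-square}. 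One point needs care: $U$ is only assumed continuous, so $U'$ need not exist. To cover this I will read the equation in the integrated form
\begin{equation*}
	\xi''' + 2U\xi' + U'\xi = \xi''' + \big(U\xi\big)' + U\xi' ,
\end{equation*}
or equivalently as $\tfrac12\xi'' + U\xi - (\psi_{i}'\psi_{j}')$-type identities. I will also note that the computation above never actually differentiates $q$ on its own: the combination $(q\xi)'$ appears as the derivative of $\xi'' - 2\psi_{i}'\psi_{j}'$, which is $C^{1}$.

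For the basis claim, first check linear independence. Suppose $a\psi_{1}^{2} + b\psi_{1}\psi_{2} + c\psi_{2}^{2} \equiv 0$. On any interval where $\psi_{2} \neq 0$, divide by $\psi_{2}^{2}$ and set $r = \psi_{1}/\psi_{2}$, whose derivative $r' = -W/\psi_{2}^{2}$ never vanishes. This gives $ar^{2} + br + c \equiv 0$ on an interval where $r$ is strictly monotone, hence nonconstant. A quadratic polynomial vanishing on the infinite set of values of $r$ must be the zero polynomial, so $a = b = c = 0$. Next, the solution space has dimension exactly three. This is standard for a linear third-order equation with continuous coefficients, provided it is posed as a first-order system in $(\xi, \xi', \xi'' + 2U\xi)$ so that $U'$ is not needed. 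Three independent solutions therefore form a basis.

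Finally, the coadjoint identification is a comparison of formulas. By~\eqref{eq:inf-coadjoint} at $c = 1$,
\begin{equation*}
	\delta_{\xi}U = \xi U' + 2\xi' U + \xi''' ,
\end{equation*}
and this coincides term by term with $\mathcal{L}_{U}\xi$. The only step I expect to need real care is the regularity issue for $U \in C^{0}$, namely choosing the weak or system formulation so that the dimension count and the meaning of $U'\xi$ are rigorous. I will adopt the system formulation above and make it explicit in the proof.
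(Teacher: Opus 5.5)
Your product computation and the term-by-term comparison with $\delta_{\xi}U$ are the same as in the paper; the paper writes $q=-U/2$ where you write $q=U/2$, and the algebra is otherwise identical. For the basis claim you take a different route. The paper computes the Wronskian-type determinant of $\psi_{1}^{2},\psi_{1}\psi_{2},\psi_{2}^{2}$ directly and obtains $2W^{3}\neq 0$. You instead use that $r=\psi_{1}/\psi_{2}$ is strictly monotone on an interval where $\psi_{2}\neq0$, so a quadratic in $r$ vanishing there must be the zero polynomial. Both are valid. Yours gives linear independence as functions with no determinant bookkeeping; the paper's gives the quantitative value of the Wronskian. Both still need the dimension count, which you state and the paper leaves implicit.

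The part of your proposal that fails as written is the regularity patch. The paper does not attempt this at all: it differentiates $q$ even though $U\in C^{0}$. Your patch has a factor of two wrong throughout. Since
\[
\xi''=2\psi_{i}'\psi_{j}'-U\xi ,
\]
the quantity that is $C^{1}$ on products is $\xi''+U\xi=2\psi_{i}'\psi_{j}'$ (in your notation $\xi''+2q\xi$). By contrast, $\xi''-2\psi_{i}'\psi_{j}'=-U\xi$ is merely continuous. The parenthetical identity likewise should read $\tfrac12\xi''+\tfrac12U\xi=\psi_{i}'\psi_{j}'$, not $\tfrac12\xi''+U\xi$.

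More importantly, your proposed third component $\xi''+2U\xi$ has formal derivative $U'\xi$ along solutions, so that system still requires $U'$. The correct quasi-derivative form of the equation is $(\xi''+U\xi)'=-U\xi'$, that is, the system
\[
y_{0}'=y_{1},\qquad y_{1}'=y_{2}-Uy_{0},\qquad y_{2}'=-Uy_{1},\qquad (y_{0},y_{1},y_{2})=(\xi,\xi',\xi''+U\xi).
\]
This system has continuous coefficients and a three-dimensional solution space, and it agrees with $\xi'''+2U\xi'+U'\xi=0$ whenever $U\in C^{1}$. Every product satisfies it, because $(2\psi_{i}'\psi_{j}')'=-U(\psi_{i}\psi_{j})'$ follows directly from the Hill equation without differentiating $U$. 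With this correction your argument is complete, and it is more rigorous than the paper's formal computation.
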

		
		\begin{proof}
			Write $q = -U/2$, so $\psi_{i}'' = q\psi_{i}$. For
			$\xi = \psi_{i}\psi_{j}$,
			\begin{equation*}
				\xi' = \psi_{i}'\psi_{j} + \psi_{i}\psi_{j}',
				\qquad
				\xi'' = 2q\xi + 2\psi_{i}'\psi_{j}',
				\qquad
				\xi''' = 2q'\xi + 2q\xi' + 2q\big(\psi_{i}\psi_{j}' +
				\psi_{i}'\psi_{j}\big)
				= 2q'\xi + 4q\xi',
			\end{equation*}
			which is $\xi''' + 2U\xi' + U'\xi = 0$. For independence, a direct
			computation of the Wronskian-type determinant of the three products
			at a point where $\psi_{2} \neq 0$ gives $2W^{3} \neq 0$. The
			identification with the coadjoint variation is the coincidence of
			the two displayed expressions
			$\xi''' + 2U\xi' + U'\xi$ and $\xi U' + 2\xi' U + \xi'''$.
		\end{proof}
		
		\renewcommand{\restatename}{Theorem~\ref{thm:weights}}%
		\begin{restatement}[Tails as conformal weights]
			Let $f \in \mathcal{D}_{+}^{2}$ and suppose the rank profile
			satisfies
			$\mathfrak{f}(z) = (1-z)^{2\beta}L\big(\tfrac{1}{1-z}\big)$ near
			$z = 1$, with $L$ slowly varying with smooth asymptotics, meaning
			that $L$ is eventually smooth with
			$s^{k}L^{(k)}(s)/L(s) \to 0$ for $k = 1, 2$, so that the asymptotic
			relations below may be differentiated twice; this covers
			the regularly varying tails of Theorem~\ref{thm:singularity}, for
			which $\beta = \tfrac{p}{2(p-1)}$, and exponential-or-faster tails,
			for which $\beta = \tfrac12$. Then
			\begin{equation*}
				U_{f}(z) = \frac{c_{+} + o(1)}{(1-z)^{2}},
				\qquad
				c_{+} = 2\beta(1-\beta)
				= \frac{p(p-2)}{2(p-1)^{2}} \ \text{ in the power case},
			\end{equation*}
			and symmetrically at the left endpoint. In particular:
			\begin{enumerate}[label=\normalfont(\roman*), leftmargin=*]
				\item always $c_{\pm} \le \tfrac12$, and no law whatsoever can
				satisfy $U \ge c\,(1-z)^{-2}$ near an endpoint with $c > \tfrac12$;
				\item $c_{+} = 0$ exactly for quadratic tails; $c_{+} = \tfrac12$
				for exponential-or-faster tails; $c_{+} \in (0, \tfrac12)$ for
				$p > 2$; and $c_{+} < 0$ for $1 < p < 2$;
				\item the weights $c_{\pm}$ are invariants of the boundary-regular
				coadjoint action: every distortion extending to a
				$C^{3}$-diffeomorphism of the closed interval with nonvanishing
				endpoint derivatives, and every flow of a kernel that is $C^{2}$
				up to the endpoints, preserves them; boundary regularity
				is essential, since the interior distortion $B(z) = z^{2}$ moves the
				left weight of the uniform law from $0$ to $-\tfrac32$.
			\end{enumerate}
		\end{restatement}
		
		\begin{proof}[Proof of Theorem~\ref{thm:weights}]
			Write $\psi = \sqrt{\mathfrak{f}} = (1-z)^{\beta}\ell$ with $\ell$
			slowly varying with smooth asymptotics; the smoothness hypotheses
			permit differentiation of the asymptotic relations, and
			\begin{equation*}
				U = -\,\frac{2\psi''}{\psi}
				= -2\beta(\beta - 1)\,(1-z)^{-2}\big(1 + o(1)\big)
				= \frac{2\beta(1-\beta) + o(1)}{(1-z)^{2}},
			\end{equation*}
			slowly varying corrections entering at lower order. The exponent:
			for a power tail $f \sim c_{0}x^{-p}$,
			$1 - F \sim c_{0}x^{1-p}/(p-1)$, so
			$x \sim \big((p-1)(1-z)/c_{0}\big)^{1/(1-p)}$ and
			$\mathfrak{f} = f\circ Q \sim
			\mathrm{const}\cdot(1-z)^{p/(p-1)}$, giving $2\beta = p/(p-1)$ and
			the coefficient of the formula~\eqref{eq:weight}; for an
			exponential-type tail, $1 - F \sim f/a$, so
			$\mathfrak{f} \sim a(1-z)$ and $\beta = \tfrac12$, and the same
			reduction covers faster tails through the slowly varying factor.
			
			For part (i), the function $\beta \mapsto 2\beta(1-\beta)$ attains
			its maximum $\tfrac12$ at $\beta = \tfrac12$. For the absolute
			statement, suppose $U \ge c\,(1-z)^{-2}$ near $z = 1$ with
			$c > \tfrac12$. The Euler equation
			$\psi'' + \tfrac{c}{2}(1-z)^{-2}\psi = 0$ has indicial roots
			$\tfrac12\big(1 \pm \sqrt{1 - 2c}\big)$, which are non-real for
			$c > \tfrac12$, so every solution of the Euler equation oscillates
			near the endpoint; by Sturm comparison~\cite{hartman1982}, so does
			every solution of the Hill equation of $U$, contradicting the
			disconjugacy required by Theorem~\ref{thm:realization}. This is the
			optimality of the Hardy inequality on the interval. Part (ii) is
			the formula $c_{+} = 2\beta(1-\beta)$ with
			$\beta = \tfrac{p}{2(p-1)}$ decreasing from infinity to $\tfrac12$
			as $p$ runs over $(1,\infty)$ and passing through $1$ at $p = 2$.
			For part (iii), under a distortion $B$ that is $C^{3}$ up to the
			endpoint with $B(1) = 1$ and $B'(1) > 0$, one has
			$B(z) = 1 - B'(1)(1-z) + O((1-z)^{2})$, so
			\begin{equation*}
				\big(U\circ B\big)(B')^{2}
				\sim c_{+}\,\big(B'(1)(1-z)\big)^{-2}\,B'(1)^{2}
				= c_{+}\,(1-z)^{-2},
			\end{equation*}
			while the Schwarzian $\{B\}$ stays bounded near the endpoint; hence
			the leading coefficient is unchanged, and kernel flows are curves of
			such maps.
		\end{proof}
		
		\begin{lemma}[Circular realization]\label{lem:circular}
			A continuous one-periodic $U$ is the rank Hill potential of a
			circular law if and only if the Hill equation
			$\psi'' + \tfrac12 U\psi = 0$ has a positive one-periodic solution,
			if and only if zero is the lowest periodic eigenvalue of the
			operator $-\,d^{2}/dz^{2} - \tfrac12 U$ on the rank circle. The
			circumference is recovered as $\ell = \int_{0}^{1}\psi^{-2}$ once
			the scale of $\psi$ is fixed, the
			law as $f = \psi^{2}\circ F$, and unit mass is automatic as in the
			identity~\eqref{eq:automatic-mass}.
		\end{lemma}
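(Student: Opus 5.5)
The proof transports the interval argument of Proposition~\ref{prop:hill} and Theorem~\ref{thm:realization} to the rank circle, then identifies positivity of a periodic solution with the spectral condition.

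\emph{Step 1: law to positive periodic solution.} Let $f$ be a circular law with lifted quantile $Q$, so $Q(z+1) = Q(z) + \ell$. Set $\mathfrak{f} = f\circ Q = 1/Q'$. Because $Q'$ is one-periodic, $\mathfrak{f}$ is positive, one-periodic and $C^{2}$. The computation in the proof of Proposition~\ref{prop:hill} is purely local, so $\psi = \sqrt{\mathfrak{f}}$ satisfies $\psi'' + \tfrac12 U_{f}\psi = 0$.

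\emph{Step 2: positive periodic solution to law.} Let $\psi > 0$ be a one-periodic solution. Put $Q(z) = \int_{0}^{z}\psi^{-2}$. Then $Q$ is an increasing $C^{3}$ diffeomorphism of $\mathbb{R}$, and periodicity of $\psi^{-2}$ gives
\begin{equation*}
  Q(z+1) = Q(z) + \ell, \qquad \ell = \int_{0}^{1}\psi^{-2}.
\end{equation*}
Hence $F = Q^{-1}$ satisfies $F(x+\ell) = F(x) + 1$, and $f = \psi^{2}\circ F$ is $\ell$-periodic, positive and $C^{2}$. Its mass over one period is computed by the change of variables in \eqref{eq:automatic-mass}, $\int_{0}^{\ell} f\,dx = \int_{0}^{1}\psi^{2}\psi^{-2}\,dz = 1$. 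Proposition~\ref{prop:hill} gives $U_{f} = -2\psi''/\psi = U$. Rescaling $\psi \mapsto c\psi$ changes $\ell$ to $\ell/c^{2}$ and dilates the law. Changing the base point rotates the law. This accounts for the stated fiber and shows that $\ell$ is recovered once the scale of $\psi$ is fixed.

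\emph{Step 3: spectral equivalence.} This is the only nonlocal point, and I expect it to be the main obstacle. The operator $H = -d^{2}/dz^{2} - \tfrac12 U$ on $L^{2}(\mathbb{R}/\mathbb{Z})$ has compact resolvent. Its ground state is simple and can be chosen strictly positive, by the Perron--Frobenius property of the heat semigroup on the circle or equivalently by the classical oscillation theorem for periodic Hill equations~\cite{magnus1966}.

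For the forward direction, suppose $\psi > 0$ is a periodic solution, so $H\psi = 0$. Every other eigenfunction is orthogonal to the positive ground state and therefore changes sign. Since $\psi$ does not change sign, it must be the ground state, so $0$ is the lowest eigenvalue.

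For the reverse direction, suppose $0$ is the lowest periodic eigenvalue. Its eigenfunction can be taken positive, and it is a positive periodic solution of the Hill equation.

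To avoid citing Perron--Frobenius, one can argue variationally instead. If $\phi$ minimizes the Rayleigh quotient, then so does $|\phi|$. Then $|\phi|$ is a nonnegative solution, and by ODE uniqueness it cannot vanish at any point without vanishing identically, so $|\phi| > 0$.

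Together with Steps 1 and 2 this closes the chain of equivalences.
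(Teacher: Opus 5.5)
Your proof is correct and follows the paper's own route. The forward direction uses the local amplitude identity of Proposition~\ref{prop:hill}; the converse uses the quasi-periodic reconstruction $Q(z+1)=Q(z)+\int_{0}^{1}\psi^{-2}$ with mass fixed by the change of variables; and the spectral equivalence uses the positivity of the periodic ground state together with the fact that higher eigenfunctions must change sign. Your orthogonality and variational $|\phi|$ arguments simply prove the Sturm--Liouville facts that the paper cites from~\cite{magnus1966}.
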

		
		\begin{proof}
			If $f$ is a circular law, $\psi = \sqrt{f\circ Q}$ is one-periodic,
			positive, and solves the Hill equation by
			Proposition~\ref{prop:hill}. Conversely, a positive periodic
			solution reconstructs the lifted quantile through the
			formula~\eqref{eq:reconstruction}, with
			$Q(z+1) = Q(z) + \int_{0}^{1}\psi^{-2}$, hence a circular law of the
			stated circumference, unit mass on one period following as in the
			identity~\eqref{eq:automatic-mass}. For the spectral statement: the
			periodic ground state of a Hill operator is positive and simple,
			and conversely a positive periodic solution at energy zero must be
			the ground state, eigenfunctions of higher periodic eigenvalues
			changing sign~\cite{magnus1966}.
		\end{proof}
		
		\renewcommand{\restatename}{Theorem~\ref{thm:sync}}%
		\begin{restatement}[Synchronization and exact sampling]
			For the canonical shape $w = 2\sin^{2}(\pi\cdot)$:
			\begin{enumerate}[label=\normalfont(\roman*), leftmargin=*]
				\item for every $z$, the rank orbit $z_{n}$ converges almost surely
				to a limit in $\{0,1\}$ with $\mathbb{P}(z_{\infty} = 1) = z$, and
				the conditional variance identity
				\begin{equation*}
					\operatorname{Var}\big(z_{n+1} \,\big|\, \mathcal{F}_{n}\big)
					= \frac{\sin^{2}(\pi z_{n})}{2\pi^{2}},
					\qquad
					\sum_{n \ge 0}\mathbb{E}\big[\sin^{2}(\pi z_{n})\big]
					= 2\pi^{2}\,z(1-z),
				\end{equation*}
				holds exactly;
				\item almost surely there is a random threshold $\tau$ such that
				$S_{n}(z) \to 0$ for all $z < \tau$ and $S_{n}(z) \to 1$ for all
				$z > \tau$, and $\tau$ is uniformly distributed on $(0,1)$;
				\item for every $f \in \mathcal{D}_{+}$, almost surely
				\begin{equation*}
					f_{n} \;\xrightarrow{\ w^{*}\ }\; \delta_{Q(\tau)},
					\qquad\text{and}\qquad
					Q(\tau) \sim f:
				\end{equation*}
				the cascade condenses the law onto a single random atom whose
				distribution is the initial law itself;
				\item for every $z \in (0,1)$, almost surely
				$\tfrac1n\log\min(z_{n}, 1 - z_{n}) \to -\log 2$: absorption
				proceeds at one bit per step.
			\end{enumerate}
		\end{restatement}
		
		Parts (i)--(iii) are proved in Section~\ref{sec:random}; the
		argument below supplies part (iv).
		
		\begin{proof}[Proof of Theorem~\ref{thm:sync}(iv)]
			Write $W_{n} := w(\alpha_{n}) = 2\sin^{2}(\pi\alpha_{n})$, independent
			copies of the step law $W$. The canonical kernel has Lipschitz
			constant $2\pi$, so averaging the bound
			$|w(s+\alpha) - w(\alpha)| \le 2\pi s$ over $s \in [0, z]$ gives
			\begin{equation*}
				\Big|\,\frac{A_{\alpha}(z)}{z} - w(\alpha)\Big| \;\le\; \pi z,
				\qquad z \in (0, 1],
			\end{equation*}
			and the orbit, which stays in $(0,1)$ for all time, satisfies the
			perturbed multiplicative recursion
			\begin{equation*}
				\log z_{n+1} = \log z_{n} + \log\big(W_{n+1} + \eta_{n}\big),
				\qquad |\eta_{n}| \le \pi z_{n}.
			\end{equation*}
			We prove $\tfrac1n\log z_{n} \to -\log 2$ almost surely on the
			absorption event $\{z_{\infty} = 0\}$; on the complementary event
			$\{z_{\infty} = 1\}$, which exhausts the remaining probability by
			part (i), the same argument applies to the reflected orbit
			$1 - z_{n}$, a canonical cascade with phases $-\alpha_{j}$ by
			evenness and periodicity of the kernel; and on each event
			$\min(z_{n}, 1 - z_{n})$ eventually coincides with the vanishing
			coordinate.
			
			First, a crude rate. Fix $\varepsilon \in (0, \tfrac12)$. On the
			absorption event there is a finite random $N$ with
			$z_{n} \le \varepsilon$ for $n \ge N$, whence
			$\log z_{n+1} - \log z_{n} \le \log(W_{n+1} + \pi\varepsilon)$ for
			$n \ge N$; the summands on the right are independent, identically
			distributed, and bounded, so the strong law of large numbers yields
			\begin{equation*}
				\limsup_{n}\ \frac1n\log z_{n}
				\;\le\; \mathbb{E}\log\big(W + \pi\varepsilon\big)
				\;\xrightarrow[\ \varepsilon \downarrow 0\ ]{}\;
				\mathbb{E}\log W = -\log 2,
			\end{equation*}
			the convergence by monotone convergence and the limiting mean the
			rank integral computed in Lemma~\ref{lem:kl}. In particular, for
			each $\delta \in (0, \log 2)$, almost surely
			$z_{n} \le e^{-n(\log 2 - \delta)}$ for all large $n$.
			
			Second, no multiplier is anomalously small: since
			$\mathbb{P}(W \le t) = \tfrac{2}{\pi}\arcsin\sqrt{t/2}
			\le \sqrt{t/2}$, the probabilities
			$\mathbb{P}(W_{n} \le n^{-4})$ are summable, and by the
			Borel--Cantelli lemma, almost surely $W_{n} > n^{-4}$ for all large
			$n$.
			
			On the intersection of these events the perturbation is summable
			relative to the multiplier,
			\begin{equation*}
				\frac{|\eta_{n}|}{W_{n+1}}
				\;\le\; \pi\,(n+1)^{4}\,e^{-n(\log 2 - \delta)}
				\qquad\text{for all large } n,
			\end{equation*}
			so writing
			$\log(W_{n+1} + \eta_{n}) = \log W_{n+1} + \zeta_{n}$ with
			$|\zeta_{n}| \le 2|\eta_{n}|/W_{n+1}$ once the ratio drops below
			$\tfrac12$, the error series $\sum_{n}|\zeta_{n}|$ converges almost
			surely. The strong law of large numbers for the integrable steps
			$\log W_{n}$, of mean $-\log 2$, then gives
			\begin{equation*}
				\frac1n\log z_{n}
				= \frac1n\log z_{0}
				+ \frac1n\sum_{j=1}^{n}\log W_{j}
				+ \frac1n\sum_{j<n}\zeta_{j}
				\;\longrightarrow\; -\log 2
			\end{equation*}
			almost surely, which is the assertion.
		\end{proof}
		
		\renewcommand{\restatename}{Theorem~\ref{thm:chaos}}%
		\begin{restatement}[Nondegenerate chaos]
			Under the subcriticality condition~\eqref{eq:subcritical}:
			\begin{enumerate}[label=\normalfont(\roman*), leftmargin=*]
				\item for every $z$, $D_{n}(z)$ is a mean-one martingale, and
				$\mu_{n} \to \mu$ almost surely weakly for a random Borel measure
				$\mu$ on $[0,1]$;
				\item the two-point function is log-correlated: uniformly in $n$
				and in pairs with $|z - z'| \le \tfrac12$,
				\begin{equation*}
					\mathbb{E}\big[D_{n}(z)\,D_{n}(z')\big]
					\;\le\; C_{b,\lambda}\,|z - z'|^{-\bar\gamma^{2}},
				\end{equation*}
				together with a power-law lower bound, uniform in $n$, governed by
				the complementary distortion constant $\ln b + \Lambda$; in the
				weak-tempering regime the lower exponent is
				positive and matches the
				exponent~\eqref{eq:effective-temperature} to leading order;
				\item $\sup_{n}\mathbb{E}[\mu_{n}(I)^{2}]
				\le C_{b,\lambda}\,|I|^{2-\bar\gamma^{2}}$ for every
				interval $I$, so the martingale $\mu_{n}(I)$ is uniformly
				integrable,
				$\mathbb{E}\,\mu(I) = |I|$ for every interval and
				$\mathbb{P}(\mu \neq 0) = 1$;
				\item almost surely $\mu$ has full support, no atoms, and vanishes
				on every Borel set of Hausdorff dimension less than
				$1 - \bar\gamma^{2}$;
				\item $S_{\infty}(z) := \mu([0,z])$ is almost surely a strictly
				increasing H\"older homeomorphism of $[0,1]$, with any exponent
				below $(1-\bar\gamma^{2})/2$.
			\end{enumerate}
		\end{restatement}
		
		\begin{proof}[Proof of Theorem~\ref{thm:chaos}]
			Throughout, $\Delta := |z - z'|$ with $0 < \Delta \le \tfrac12$,
			$\delta_{j} := |S_{j}(z') - S_{j}(z)|$, and
			$\|\delta\| := \min(\delta, 1 - \delta)$ is the distance of a
			separation to the nearest integer; all constants depend only on $b$
			and $\lambda$. Applying the distortion
			bound~\eqref{eq:distortion-bound} to the pair and to the
			complementary pair of arcs gives
			\begin{equation}
				\|\delta_{j}\| \;\ge\; e^{-j\Lambda}\,\Delta
				\qquad\text{and}\qquad
				\delta_{j} \;\le\; e^{j\Lambda}\,\Delta.
				\label{eq:separation-bounds}
			\end{equation}
			
			We first record the pair identity~\eqref{eq:pair-identity} and its
			block average. For fixed $k$ and uniform $\alpha$, expanding the
			product of the two tempered kernels and using that
			$\cos(2\pi k(y+\alpha))$, $\cos(2\pi k(y'+\alpha))$, and the
			sum-frequency term average to zero over the phase, only the
			difference-frequency term survives, with the factor
			$\tfrac{\lambda^{2}}{2}\cos(2\pi k(y - y'))$; this is the
			identity~\eqref{eq:pair-identity}. Conditioning successively on
			$\mathcal{F}_{n-1}, \ldots, \mathcal{F}_{0}$ therefore gives the
			exact formula
			\begin{equation}
				\mathbb{E}\big[D_{n}(z)\,D_{n}(z')\big]
				= \mathbb{E}\prod_{j=1}^{n}
				\Big(1 + \frac{\lambda^{2}}{2}\,r_{j}(\delta_{j-1})\Big),
				\qquad
				r_{j}(\delta) := \frac{1}{b^{j}}
				\sum_{k=b^{j}}^{2b^{j}-1}\cos(2\pi k\delta),
				\label{eq:exact-twopoint}
			\end{equation}
			and the geometric-sum bound
			$\big|\sum_{k=a}^{a+m-1}e^{2\pi i k\delta}\big|
			\le |\sin\pi\delta|^{-1} \le (2\|\delta\|)^{-1}$ yields
			\begin{equation}
				|r_{j}(\delta)| \;\le\;
				\min\Big(1,\ \frac{b^{-j}}{2\|\delta\|}\Big).
				\label{eq:dirichlet}
			\end{equation}
			
			For the upper bound of part (ii), combine the
			inequality~\eqref{eq:dirichlet} with the first separation
			bound of the display~\eqref{eq:separation-bounds} and
			$1 + x \le e^{x}$: each factor of the
			formula~\eqref{eq:exact-twopoint} is at most
			\begin{equation*}
				\exp\Big(\frac{\lambda^{2}}{2}\,
				\min\Big(1,\ \frac{e^{j\Lambda}b^{-j}}{2\Delta}\Big)\Big).
			\end{equation*}
			Let
			$j^{*} := \lceil \ln(1/\Delta)/(\ln b - \Lambda)\rceil$. For
			$j \le j^{*}$ bound the exponent by $\tfrac{\lambda^{2}}{2}$; for
			$j > j^{*}$,
			\begin{equation*}
				\frac{e^{j\Lambda}b^{-j}}{2\Delta}
				= \frac{e^{-j(\ln b - \Lambda)}}{2\Delta}
				\;\le\; \tfrac12\,e^{-(j - j^{*})(\ln b - \Lambda)},
			\end{equation*}
			a summable geometric tail. The total exponent is therefore at most
			$\tfrac{\lambda^{2}}{2}\,j^{*} + C$, which is the two-point
			bound~\eqref{eq:twopoint} with the
			exponent~\eqref{eq:effective-temperature}, uniformly in $n$. By
			periodicity of the kernels the same argument runs with separations
			measured on the rank circle, so for $\Delta > \tfrac12$ the bound
			holds with $1 - \Delta$ in place of $\Delta$; both forms are used
			below.
			
			For the lower bound, fix $t \in (0,1)$. If
			$4\pi b^{j}e^{j\Lambda}\Delta \le \arccos t$, then by the second
			separation bound of the display~\eqref{eq:separation-bounds} every
			frequency $k < 2b^{j}$ of the $j$-th block satisfies
			$2\pi k\delta_{j-1} \le \arccos t$, so
			$r_{j}(\delta_{j-1}) \ge t$ pathwise; this holds for all
			$j \le j_{t} := \lfloor \ln(c_{t}/\Delta)/(\ln b + \Lambda)\rfloor$
			with $c_{t} := \arccos(t)/(4\pi)$. For
			$j_{t} < j \le j^{*}$ use the crude bound
			$1 + \tfrac{\lambda^{2}}{2}r_{j} \ge 1 - \tfrac{\lambda^{2}}{2}
			\ge \tfrac12$, and for $j > j^{*}$ the geometric tail above bounds
			the product below by a constant. Hence, uniformly in $n$,
			\begin{equation}
				\begin{gathered}
					\mathbb{E}\big[D_{n}(z)\,D_{n}(z')\big]
					\;\ge\; c_{t}'\,\Delta^{-\gamma_{-}^{2}(t)},\\
					\gamma_{-}^{2}(t)
					= \frac{\log\big(1 + t\tfrac{\lambda^{2}}{2}\big)}{\ln b + \Lambda}
					- \log\Big(\frac{1}{1 - \tfrac{\lambda^{2}}{2}}\Big)
					\Big(\frac{1}{\ln b - \Lambda} - \frac{1}{\ln b + \Lambda}\Big),
				\end{gathered}
				\label{eq:lower-exponent}
			\end{equation}
			the second term the cost of the window of scales that the
			worst-case distortion leaves ambiguous. Since
			$\Lambda = \Lambda(\lambda)$ vanishes linearly as
			$\lambda \to 0$, the window cost is of third order while the
			leading term is $\tfrac{\lambda^{2}}{2(\ln b + \Lambda)}$, so the
			exponent~\eqref{eq:lower-exponent} is governed by the complementary
			distortion constant $\ln b + \Lambda$ and matches the
			exponent~\eqref{eq:effective-temperature} to leading order in the
			weak-tempering limit, as asserted.
			
			Part (i). For fixed $y$ the phase average of
			$\cos(2\pi K(y + \alpha))$ vanishes at every integer frequency, so
			$\mathbb{E}[v_{j}(y)\,|\,\mathcal{F}_{j-1}, K_{j}] = 1$ pathwise:
			each $D_{n}(z)$ is a mean-one martingale and each rank orbit
			$S_{n}(z)$ a bounded martingale. Applying martingale convergence
			simultaneously to all rational ranks gives an almost sure
			nondecreasing limit function on the rationals; its right-continuous
			extension $S_{\infty}$ has $S_{\infty}(0) = 0$ and
			$S_{\infty}(1) = 1$, since every $A_{v_{j}}$ fixes the endpoints,
			and defines a random probability measure $\mu$ with
			$\mu([0, z]) = S_{\infty}(z)$ at continuity points. Convergence of
			the distribution functions $S_{n}$ to $S_{\infty}$ on a dense set
			is weak convergence $\mu_{n} \to \mu$, almost surely.
			
			Continuity and H\"older regularity, toward parts (iv) and (v). For
			$z < z'$ with $L := z' - z \le \tfrac12$, integrating the two-point
			bound~\eqref{eq:twopoint} over $[z, z']^{2}$ gives, uniformly in
			$n$,
			\begin{equation*}
				\mathbb{E}\big[(S_{n}(z') - S_{n}(z))^{2}\big]
				= \int_{z}^{z'}\!\!\int_{z}^{z'}
				\mathbb{E}\big[D_{n}(u)D_{n}(v)\big]\,du\,dv
				\;\le\; C\,L^{2 - \bar\gamma^{2}},
			\end{equation*}
			and the same bound holds trivially for $L > \tfrac12$ after
			enlarging $C$. By Fatou's lemma the limit increments obey the same
			moment bound, and since $2 - \bar\gamma^{2} > 1$ under the
			subcriticality condition~\eqref{eq:subcritical}, the Kolmogorov
			continuity criterion applies to $S_{\infty}$ restricted to the
			rationals, giving an almost surely H\"older-continuous modification
			of every exponent below $\tfrac12(1 - \bar\gamma^{2})$; a monotone
			function whose restriction to a dense set is H\"older obeys the
			same H\"older bound outright, so $S_{\infty}$ itself is continuous
			with this exponent, and $\mu$ has no atoms.
			
			Part (iii). The two-point bound~\eqref{eq:twopoint}, in its two
			circle-distance forms, is integrable over the square since
			$\bar\gamma^{2} < 1$, so
			$\sup_{n}\mathbb{E}[\mu_{n}(I)^{2}] < \infty$ for every interval
			$I$: the martingale $\mu_{n}(I)$ is uniformly integrable and
			converges in $L^{1}$ as well as almost surely, and since the
			endpoints of $I$ are almost surely not atoms of $\mu$, the limit is
			$\mu(I)$ and $\mathbb{E}\,\mu(I) = |I|$. That
			$\mathbb{P}(\mu \neq 0) = 1$ holds in the strongest form: the
			total mass is identically one, in keeping with the automatic-mass
			identity~\eqref{eq:automatic-mass} theme, because every $\mu_{n}$
			is a probability measure.
			
			Strict monotonicity, toward parts (iv) and (v). Fix $z < z'$ and
			let $d_{n} := S_{n}(z') - S_{n}(z) \in (0, 1)$. Integrating the
			kernel~\eqref{eq:lacunary-kernel} over the current image interval
			and converting the difference of sines to a product gives the exact
			recursion
			\begin{equation}
				d_{n+1} = d_{n}\Big(1 + \lambda\,\sigma_{n}\cos\theta_{n+1}\Big),
				\qquad
				\sigma_{n} := \frac{\sin(\pi K_{n+1} d_{n})}{\pi K_{n+1} d_{n}},
				\label{eq:pair-recursion}
			\end{equation}
			where, given $\mathcal{F}_{n}$ and $K_{n+1}$, the angle
			$\theta_{n+1}$ is uniform on $[0, 2\pi)$, by the same
			re-uniformization as in the chain~\eqref{eq:separation}. The
			logarithm of the resolution variable $b^{n}d_{n}$,
			$X_{n} := \ln(b^{n}d_{n})$, has conditional drift
			\begin{equation*}
				\begin{gathered}
					\mathbb{E}\big[X_{n+1} - X_{n}\,\big|\,
					\mathcal{F}_{n}, K_{n+1}\big]
					= \ln b + \varphi(\lambda\sigma_{n}),\\
					\varphi(a) := \frac{1}{2\pi}\int_{0}^{2\pi}
					\log(1 + a\cos\theta)\,d\theta
					= \log\frac{1 + \sqrt{1 - a^{2}}}{2}.
				\end{gathered}
			\end{equation*}
			Since $|\sigma_{n}| \le 1$ and $\varphi$ is even and decreasing in
			$|a|$, the drift is at least $\ln b - c(\lambda)$ with
			$c(\lambda) := \log\big(2/(1 + \sqrt{1 - \lambda^{2}})\big)$; and
			$c(\lambda) \le \Lambda$, since squaring the equivalent inequality
			$2(1 - \lambda) \le 1 + \sqrt{1 - \lambda^{2}}$ reduces it to
			$5\lambda^{2} \le 4\lambda$ when
			$\lambda \le \tfrac12$ and it is trivial otherwise. The
			subcriticality condition~\eqref{eq:subcritical} thus makes the
			drift at least $2\delta_{0} := \ln b - c(\lambda) > 0$, uniformly
			in the state; only this first half of the condition is used here,
			and in the weaker form $c(\lambda) < \ln b$: resolution outruns the
			worst possible contraction of the tempered kernels. The increments
			of $X_{n}$ are bounded, so by the strong law for martingale
			differences, almost surely $X_{n} \ge \delta_{0}n$ for all large
			$n$. But then
			$|\sigma_{n}| \le (\pi K_{n+1}d_{n})^{-1}
			\le (\pi b)^{-1}e^{-X_{n}}$ decays geometrically, the logarithmic
			increments of the recursion~\eqref{eq:pair-recursion} become
			absolutely summable, and $\log d_{n}$ converges almost surely to a
			finite limit: $S_{\infty}(z') - S_{\infty}(z) = \lim d_{n} > 0$.
			Applying this to all rational pairs and invoking continuity,
			$S_{\infty}$ is almost surely strictly increasing on $[0, 1]$, so
			$\mu$ has full support, and $S_{\infty}$ is a homeomorphism with
			the H\"older exponent obtained above, which is part (v).
			
			Part (iv), dimension. For $s < 1 - \bar\gamma^{2}$, weak
			convergence of $\mu_{n}\times\mu_{n}$, lower semicontinuity of the
			kernel $|u - v|^{-s}$, and Fatou's lemma give
			\begin{equation*}
				\mathbb{E}\int_{0}^{1}\!\!\int_{0}^{1}
				\frac{\mu(du)\,\mu(dv)}{|u - v|^{s}}
				\;\le\; \liminf_{n}\ \int_{0}^{1}\!\!\int_{0}^{1}
				\frac{\mathbb{E}[D_{n}(u)D_{n}(v)]}{|u - v|^{s}}\,du\,dv
				\;<\; \infty,
			\end{equation*}
			the finiteness because $s + \bar\gamma^{2} < 1$ makes the diagonal
			singularity integrable and the antidiagonal one is handled by the
			circle-distance form of the bound~\eqref{eq:twopoint}. Thus almost
			surely $\mu$ has finite $s$-energy simultaneously for a sequence
			$s \uparrow 1 - \bar\gamma^{2}$, and a measure of finite
			$s$-energy vanishes on every Borel set of Hausdorff dimension less
			than $s$~\cite{mattila1995}, which completes part (iv) and the
			proof.
		\end{proof}
		
		\begin{lemma}[Oscillation calculus]\label{lem:osc-calculus}
			Let $g$ be continuous and real on an interval, $\delta = 2^{-j}$,
			and, for $\delta \le |I|$, let $\Sigma_{g}(\delta, I)$ denote the
			sum of the oscillations of $g$ over the intersections with $I$ of
			the $\delta$-grid cells meeting the interval $I$.
			Then the number $N_{\delta}(I)$ of $\delta$-boxes meeting the graph
			of $g|_{I}$ satisfies
			\begin{equation}
				\frac{1}{\delta}\,\Sigma_{g}(\delta, I)
				\;\le\; N_{\delta}(I)
				\;\le\; 2\Big(\frac{|I|}{\delta} + 2\Big)
				+ \frac{1}{\delta}\,\Sigma_{g}(\delta, I),
				\label{eq:box-sandwich}
			\end{equation}
			so a bound $g \in C^{\alpha}$ gives upper box dimension at most
			$2 - \alpha$, while
			$\Sigma_{g}(\delta_{i}, I) \ge c\,\delta_{i}^{\,\sigma - 1}$ along
			any sequence $\delta_{i} \to 0$ gives dimension at least
			$2 - \sigma$. Moreover, for $2$-periodic $g$, dyadic $M$, and
			concentric intervals $I' \subset I$ with
			$\operatorname{dist}(I', \complement I) \ge \delta_{0} > 0$,
			\begin{equation}
				\big\Vert P_{M}\,g\big\Vert_{L^{1}(I')}
				\;\le\; C\,\delta\,\Sigma_{g}(\delta, I)
				+ C_{A}\,\Vert g\Vert_{\infty}\,(M\delta_{0})^{1-A},
				\qquad \delta = 1/M,
				\label{eq:l1-oscillation}
			\end{equation}
			for every $A$, where $P_{M}$ is any smooth dyadic block at
			frequency scale $M$, in the amplitude or the density convention.
		\end{lemma}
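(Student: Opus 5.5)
The proof naturally splits into three parts: the box sandwich, the two dimension consequences, and the localized $L^{1}$ block estimate. For the sandwich~\eqref{eq:box-sandwich}, I will work column by column. Fix a $\delta$-grid column $C$ meeting $I$, and let $\omega_{C}$ be the oscillation of $g$ on $C\cap I$. Because $g$ is continuous, its graph over $C\cap I$ is connected. Its vertical projection is therefore an interval of length $\omega_{C}$.

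This gives a count in each direction for column $C$:
\begin{itemize}
\item a lower bound of at least $\omega_{C}/\delta$ boxes, since covering an interval of length $\omega_{C}$ needs that many;
\item an upper bound of at most $\omega_{C}/\delta+2$ boxes.
\end{itemize}
The number of columns meeting $I$ is at most $|I|/\delta+2$. Summing over columns then gives both inequalities, with the factor $2$ in front of the column count absorbing the two end boxes per column.

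The dimension consequences follow directly from the sandwich. If $g\in C^{\alpha}$, then each $\omega_{C}\le C\delta^{\alpha}$, so $\Sigma_{g}\lesssim |I|\delta^{\alpha-1}$ and hence $N_{\delta}\lesssim\delta^{\alpha-2}$. For the lower bound along $\delta_{i}$, the hypothesis gives $N_{\delta_{i}}\ge c\delta_{i}^{\sigma-2}$. Taking $\limsup\log N_{\delta}/\log(1/\delta)$ in each case yields upper box dimension at most $2-\alpha$ and at least $2-\sigma$, respectively.

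For~\eqref{eq:l1-oscillation}, I write $P_{M}g=\phi_{M}*g$, where $\phi_{M}(x)=M\phi(Mx)$ is a Schwartz kernel with $\int\phi=0$. This representation holds both in the amplitude convention and in the density convention: each is a smooth dyadic multiplier vanishing at zero frequency, periodized on the circle. I then split the kernel as $\phi_{M}=\phi_{M}\mathbf 1_{|y|<\delta_{0}}+\text{rest}$.

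The far part is bounded by $\|g\|_{\infty}\int_{|y|\ge\delta_{0}}|\phi_{M}|\le C_{A}\|g\|_{\infty}(M\delta_{0})^{-A}$. Integrating over $I'$ costs at most a factor of $|I'|$. The periodized tail is summable for the same reason. This produces the second term, and the stated exponent $1-A$ is weaker than what this gives.

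For the near part with $x\in I'$, the vanishing mean lets me write
\[
\int_{|y|<\delta_{0}}\phi_{M}(y)\big(g(x-y)-g(x)\big)\,dy,
\]
up to a correction $g(x)\int_{|y|\ge\delta_{0}}\phi_{M}$, which is already of tail size. Every point $x-y$ in this integral lies in $I$. I then decompose $|y|$ into shells $k\delta\le|y|<(k+1)\delta$. On each shell, $|g(x-y)-g(x)|$ is bounded by the sum of the oscillations over the at most $k+2$ grid cells between $x$ and $x-y$, while $|\phi_{M}|\lesssim M(1+k)^{-A-2}$.

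Integrating in $x$ over $I'$ is where the bookkeeping has to be done carefully. Each cell $C$ is charged by the points $x$ within distance $(k+1)\delta$ of it, a set of measure $O(k\delta)$. The near part is therefore at most
\[
\sum_{k}M\delta\,(1+k)^{-A-2}\,k\,\delta\,(k+2)\,\Sigma_{g}(\delta,I),
\]
and this sum is $\le C\delta\,\Sigma_{g}(\delta,I)$ once $A>1$. The step I expect to need the most care is the overlap count in this charging argument, and checking that the density-convention block, which lives at frequencies $[2N,4N)$, has a kernel with the same Schwartz decay and vanishing mean.
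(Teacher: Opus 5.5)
Your proposal is correct and follows the paper's proof essentially step for step: a column-by-column count (at least $\omega_{C}/\delta$ and at most $\omega_{C}/\delta+2$ boxes per column) gives the sandwich, and for the $L^{1}$ bound a mean-zero, rapidly decaying convolution kernel is split at $|y|=\delta_{0}$, with the near part controlled by chaining grid-cell oscillations inside $I$ and the far part by $\Vert g\Vert_{\infty}$. The paper packages your shell count as the single estimate $\Vert g(\cdot-y)-g\Vert_{L^{1}(I')}\le C(\delta+|y|)\,\Sigma_{g}(\delta,I)$, integrated against the first moment $\int|y|\,|K_{M}(y)|\,dy\lesssim 1/M$. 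In your Fubini step the correct charge per shell is $(k+2)\delta\,\Sigma_{g}(\delta,I)$: multiplying the charging measure by the cell count $k+2$ as well double counts, and $O(k\delta)$ should read $O((k+1)\delta)$ so that the $k=0$ shell is covered. Both slips only loosen the bound, which is harmless given the Schwartz decay.
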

		
		\begin{proof}
			Over a grid cell the graph meets at least
			$\operatorname{osc}/\delta$ and at most
			$2 + \operatorname{osc}/\delta$ vertical boxes, which is the
			sandwich~\eqref{eq:box-sandwich}, a standard box-counting
			argument~\cite{falconer2014}, and the two dimension statements
			follow by taking logarithms. For the
			inequality~\eqref{eq:l1-oscillation}, realize the block as a
			convolution $P_{M}g = g * K_{M}$ with a kernel of integral zero and
			rapid decay, $|K_{M}(y)| \le C_{A}M(1 + M|y|)^{-A}$. Then
			\begin{equation*}
				|P_{M}g(x)| \le \int |K_{M}(y)|\,\big|g(x-y) - g(x)\big|\,dy,
			\end{equation*}
			and integrating over $x \in I'$: a shift by $|y| \le \delta$
			changes $g$ by at most the oscillations of the two grid cells
			involved, so
			$\Vert g(\cdot - y) - g\Vert_{L^{1}(I')} \le
			2\delta\,\Sigma_{g}(\delta, I)$; a shift by
			$\delta \le |y| \le \delta_{0}$ is a chain of at most
			$2|y|/\delta$ such steps inside $I$, so the corresponding integral
			is at most
			$4\,\Sigma_{g}(\delta, I)\int |y|\,|K_{M}(y)|\,dy
			\le C\delta\,\Sigma_{g}(\delta, I)$; and the far tail
			$|y| > \delta_{0}$ contributes at most
			$2\Vert g\Vert_{\infty}\int_{|y|>\delta_{0}}|K_{M}|
			\le C_{A}\Vert g\Vert_{\infty}(M\delta_{0})^{1-A}$.
		\end{proof}
		
		\begin{lemma}[Chart transport]\label{lem:chart-transport}
			Let $f \in \mathcal{D}_{+}$ be continuously differentiable and
			positive on the interior of its support, let $J$ be a compact
			interval there, and let $g$ be continuous near $F(J)$. For any
			positive $h \in C^{1}(J)$, the graphs of $g|_{F(J)}$ and of
			$(g\circ F)\,h|_{J}$ have the same upper box dimension whenever
			either exceeds one.
		\end{lemma}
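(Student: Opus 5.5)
The argument rests on the box-counting sandwich of Lemma~\ref{lem:osc-calculus}, applied first to $G_{1} := g|_{F(J)}$ and then to $G_{2} := (g\circ F)\,h|_{J}$. The transfer between the two graphs goes through the chart $F$. On the compact interval $J$ the density $f$ is continuous and positive, so $F$ is a $C^{1}$ diffeomorphism of $J$ onto the compact interval $F(J)$, with $0 < c \le F' \le C$ there. Hence $F$ and $Q$ are bi-Lipschitz between $J$ and $F(J)$. We also have $0 < h_{\min} \le h \le h_{\max}$ and $|h'| \le C'$ on $J$. Write $\Sigma(\delta)$ for the oscillation sums of Lemma~\ref{lem:osc-calculus}. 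By the sandwich~\eqref{eq:box-sandwich}, a graph on a fixed interval has upper box dimension
\begin{equation*}
	\max\Big(1,\ \limsup_{\delta\to 0}\frac{\log\big(\Sigma(\delta)/\delta\big)}{\log(1/\delta)}\Big).
\end{equation*}
So it suffices to show that the oscillation sums of $G_{1}$ and $G_{2}$ are comparable up to constants, an additive $O(1)$, and a fixed rescaling of $\delta$. The hypothesis that one dimension exceeds one is exactly what makes the $O(\delta^{-1})$ error from the additive terms harmless: the box counts then grow like $\delta^{-d}$ with $d > 1$, which dominates.

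The first step removes the chart. Let $\tilde g := g\circ F$ on $J$. A $\delta$-cell of $J$ is mapped by $F$ into an interval of length at most $C\delta$, which meets at most $C+2$ cells of the $\delta$-grid on $F(J)$. Conversely, each $\delta$-cell of $F(J)$ pulls back under $Q$ to a set of length at most $\delta/c$, meeting at most $1/c + 2$ cells of $J$. Oscillation is subadditive over chains of overlapping cells, so
\begin{equation*}
	c_{1}\,\Sigma_{G_{1}}(\delta) \;\le\; \Sigma_{\tilde g}(\delta) \;\le\; C_{1}\,\Sigma_{G_{1}}(\delta)
\end{equation*}
for all small dyadic $\delta$. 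Each oscillation on the left is bounded by a bounded number of oscillations on the right, and each cell is counted a bounded number of times. This step is routine.

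The second step removes the multiplier, and it is where the care lies. On a cell $I$ of $J$ we have
\begin{equation*}
	(\tilde g h)(x) - (\tilde g h)(y) = h(x)\big(\tilde g(x) - \tilde g(y)\big) + \tilde g(y)\big(h(x) - h(y)\big).
\end{equation*}
Since $g$ is continuous near the compact set $F(J)$, $\tilde g$ is bounded, and this gives
\begin{equation*}
	h_{\min}\operatorname{osc}_{I}\tilde g - \|\tilde g\|_{\infty}C'\delta \;\le\; \operatorname{osc}_{I}(\tilde g h) \;\le\; h_{\max}\operatorname{osc}_{I}\tilde g + \|\tilde g\|_{\infty}C'\delta.
\end{equation*}
Summing over the $O(|J|/\delta)$ cells yields
\begin{equation*}
	\Sigma_{G_{2}}(\delta) = \Theta\big(\Sigma_{\tilde g}(\delta)\big) \pm O(1).
\end{equation*}
The main obstacle is that the additive $O(1)$ error must not swamp the comparison. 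This is handled by the hypothesis. If $\Sigma/\delta \gtrsim \delta^{-d}$ along a sequence with $d > 1$, then $\Sigma \gtrsim \delta^{1-d} \to \infty$, so the $O(1)$ error is negligible along that sequence, and the $\limsup$ exponents of $G_{2}$ and $\tilde g$ coincide. If instead both exponents are at most one, both dimensions equal one, which is excluded by hypothesis.

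Combining the two steps gives equality of the upper box dimensions of the graphs of $G_{1}$, $\tilde g$ and $G_{2}$ whenever any of them exceeds one. By the comparisons above, if one of them exceeds one then all of them do, so the hypothesis applies symmetrically and the lemma follows.
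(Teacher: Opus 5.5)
Your proof is correct and follows the paper's argument: a bi-Lipschitz comparison of the grid oscillation sums under the chart $F$, followed by the product-rule oscillation estimate for the $C^{1}$ multiplier, whose $O(\delta)$ error per cell sums to an $O(1)$ defect that box counts of order $\delta^{-\beta}$ with $\beta>1$ do not feel. The main difference is bookkeeping: you make the cell multiplicities and the dimension formula $\max\bigl(1,\limsup \log(\Sigma/\delta)/\log(1/\delta)\bigr)$ explicit, where the paper leaves them implicit, but the route is the same.
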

		
		\begin{proof}
			On $J$ the map $F$ is a bi-Lipschitz $C^{1}$ diffeomorphism onto
			$F(J)$, so grid oscillations of $g\circ F$ at scale $\delta$ are
			sandwiched between oscillation sums of $g$ at the comparable scales
			$c\delta$ and $C\delta$, changing the counts of the
			sandwich~\eqref{eq:box-sandwich} by bounded factors only. For the
			multiplier, with $h \ge c > 0$ and
			$\operatorname{osc}_{I}(h) \le C\delta$,
			\begin{equation*}
				\operatorname{osc}_{I}\big((g\circ F)h\big)
				\;\ge\; c\,\operatorname{osc}_{I}(g\circ F)
				- \Vert g\Vert_{\infty}\,C\delta,
			\end{equation*}
			and symmetrically above, so the oscillation sums differ by at most
			a bounded factor plus $O(1)$, which box counts of order
			$\delta^{-\beta}$ with $\beta > 1$ do not feel.
		\end{proof}
		
		\renewcommand{\restatename}{Lemma~\ref{lem:carpet-smoothing}}%
		\begin{restatement}[Generic smoothing]
			For almost every $s$: $u_{s}$ is continuous, and for every
			$\varepsilon > 0$ there is $C_{s,\varepsilon}$ with
			\begin{equation*}
				\Vert P_{N}u_{s}\Vert_{\infty}
				\le C_{s,\varepsilon}\,N^{-1/2+\varepsilon}
				\quad\text{for all dyadic } N,
			\end{equation*}
			so that $u_{s}$ and $w_{s}$ belong to
			$C^{1/2-\varepsilon}(\mathbb{T}_{2})$ for every $\varepsilon > 0$.
			At every rational $s$ the conclusion fails: $u_{s}$ is a step
			function.
		\end{restatement}
		
		\begin{proof}[Proof of Lemma~\ref{lem:carpet-smoothing}]
			Fix $\varepsilon > 0$. For a.e.\ $s$ the following holds for all
			large dyadic $N$: there is a reduced fraction $a/q$ with
			$|s - a/q| \le 1/(qN)$ and $N^{1-\varepsilon} \le q \le N$. Indeed
			Dirichlet's theorem always provides $q \le N$, while the set of
			$s$ admitting some $q \le N^{1-\varepsilon}$ has measure at most
			$\sum_{q \le N^{1-\varepsilon}} q \cdot 2/(qN)
			\le 2N^{-\varepsilon}$, which is summable along dyadic $N$, so by
			the Borel--Cantelli lemma small denominators eventually never
			occur. On this event the Gauss--Weyl
			estimate~\cite{montgomery1994} gives, uniformly in the linear
			phase $x$,
			\begin{equation*}
				\sup_{x}\Big|\sum_{l=1}^{L}
				e^{2\pi i(l^{2}s' + lx)}\Big|
				\;\le\; C\Big(\frac{L}{\sqrt q} + \sqrt{q\log q}\Big)
				\;\le\; C\,L^{1/2 + \varepsilon}
				\qquad (L \asymp N,\ \text{the only range used}),
			\end{equation*}
			for $s'$ in the same approximation class; the substitution
			$n = 2l+1$ writes the odd-frequency block of the
			series~\eqref{eq:carpet-seed} as such a sum with time parameter
			$4s$, and the map $s \mapsto 4s$ modulo one preserves null sets.
			Abel summation against the monotone coefficients
			$2/(\pi n) \asymp N^{-1}$ on the block then yields the
			bound~\eqref{eq:carpet-blocks}. Summing the blocks shows $u_{s}$
			is a uniform limit of continuous functions; the geometric block
			decay is the H\"older--Besov characterization of
			$C^{1/2-\varepsilon}$~\cite{katznelson2004}, and
			$w_{s} = |u_{s}|^{2}$ is a product of bounded
			$C^{1/2-\varepsilon}$ functions. At rational $s$ the
			revival~\eqref{eq:carpet-revival} is a step function.
		\end{proof}
		
		\renewcommand{\restatename}{Lemma~\ref{lem:block-equi}}%
		\begin{restatement}[Equidistribution of block energy]
			Let $I \subseteq (0,1)$ be an interval. For almost every $s$, along
			dyadic $N \to \infty$:
			\begin{equation*}
				N\,\big\Vert P_{N}u_{s}\big\Vert_{L^{2}(I)}^{2}
				\;\longrightarrow\; \frac{2}{\pi^{2}}\,|I|,
				\qquad
				N\int_{I}\big(\mathrm{Re}\,[e^{i\theta}P_{N}u_{s}]\big)^{2}dz
				\;\longrightarrow\; \frac{1}{\pi^{2}}\,|I|
				\ \text{ uniformly in }\theta,
			\end{equation*}
			while the blocks of the density obey the self-referential law
			\begin{equation*}
				N\,\big\Vert P_{2N}^{\mathbb{Z}}\,w_{s}\big\Vert_{L^{2}(I)}^{2}
				\;\longrightarrow\; \frac{2}{\pi^{2}}\int_{I}w_{s}(z)\,dz,
			\end{equation*}
			where $P_{2N}^{\mathbb{Z}}$ retains the spatial frequencies
			$|k| \in [2N, 4N)$ of $w_{s}$.
		\end{restatement}
		
		\begin{proof}[Proof of Lemma~\ref{lem:block-equi}]
			Throughout, $P_{N}$ may be taken as the sharp block or as a smooth
			dyadic block: every estimate below uses only the boundedness of the
			multiplier weights, and for a smooth block the limiting constants
			are multiplied by the positive weight integral, which is all the
			dimension argument uses; the localized oscillation
			inequality~\eqref{eq:l1-oscillation} is applied to the smooth
			version. It suffices to prove each statement for intervals with
			rational endpoints and every tolerance
			$\varepsilon = 1/\ell$, the general case following by approximation
			from inside and outside, and the almost-sure sets being intersected
			over the countable family at the end. Write
			$c_{n} = 2/(i\pi n)$ and, for a bounded test function $\varphi$,
			\begin{equation}
				X_{N}^{\varphi}(s)
				:= \big\langle \varphi,\ |P_{N}u_{s}|^{2}\big\rangle
				= \sum_{n, \tilde n}
				c_{n}\overline{c_{\tilde n}}\,
				e^{-2\pi i(n^{2} - \tilde n^{2})s}\,
				\overline{\hat\varphi}(n - \tilde n),
				\label{eq:test-expansion}
			\end{equation}
			the sums over odd frequencies in the block, with
			$\hat\varphi$ the coefficients on $\mathbb{T}_{2}$.
			
			First, the mean. Averaging over the period retains
			$\tilde n = \pm n$: the diagonal gives
			\begin{equation*}
				\Big(\sum_{\mathrm{block}}|c_{n}|^{2}\Big)\int\varphi\,dz
				= \frac{2}{\pi^{2}N}\,\big(1 + O(N^{-1})\big)\int\varphi\,dz,
			\end{equation*}
			by the block mass of the seed coefficients, and the antidiagonal
			$\tilde n = -n$ contributes at most
			$\sum_{\mathrm{block}}(4/\pi^{2}n^{2})\,|\hat\varphi(2n)|
			= O(N^{-2})$ for $\varphi$ of bounded variation.
			
			Second, the variance. The fluctuating part of the
			expansion~\eqref{eq:test-expansion} is supported on the time
			frequencies $\mu = n^{2} - \tilde n^{2} = k(n + \tilde n)$ with
			$k = n - \tilde n \neq 0$, and for fixed $\mu$ and $k \mid \mu$ the
			pair $(n, \tilde n)$ is determined. Hence, by Parseval in $s$,
			\begin{equation*}
				\operatorname{Var}_{s} X_{N}^{\varphi}
				\le \frac{C}{N^{4}}\sum_{\mu \neq 0}
				\Big(\sum_{k \mid \mu}|\hat\varphi(k)|\Big)^{2}
				= \frac{C}{N^{4}}\sum_{k, k'}
				|\hat\varphi(k)||\hat\varphi(k')|\,
				\#\{\mu\},
			\end{equation*}
			where $\#\{\mu\}$ counts the frequencies divisible by both $k$ and
			$k'$ that the block realizes; since the count is symmetric in $k$
			and $k'$, the larger denominator may be used, giving
			\begin{equation*}
				\#\{\mu\} \;\le\; \frac{2N\gcd(k,k')}{\max(|k|,|k'|)} + 1.
			\end{equation*}
			The test functions used below are
			$\varphi = \chi_{I}e^{i\pi jz}$ with $|j| \le 2\sqrt N$, whose
			coefficients obey
			$|\hat\varphi(k)| \le \min(1, C/\langle k - j\rangle)$. The
			resulting bound is uniform in $j$: the equal-frequency terms
			$k = k'$ carry $\gcd = |k|$ and contribute
			$CN^{-4}\cdot 2N\sum_{k}\langle k - j\rangle^{-2} \le CN^{-3}$,
			while for $k \neq k'$ the divisor $\gcd(k, k') \le |k - k'|$ and
			the shifted peaks give, after summing the harmonic tails, at most
			$CN^{-3}\log^{3}N$ regardless of the location of $j$; hence
			\begin{equation}
				\operatorname{Var}_{s} X_{N}^{\varphi}
				\;\le\; C\,N^{-3}\log^{3} N
				\qquad\text{uniformly in } |j| \le 2\sqrt N.
				\label{eq:test-variance}
			\end{equation}
			
			Third, the quadrature term. With
			$Y_{N}(s) := \int_{I}(P_{N}u_{s})^{2}\,dz
			= \sum c_{n}c_{\tilde n}e^{-2\pi i(n^{2} + \tilde n^{2})s}
			\hat\chi_{I}$-type coefficients, every time frequency
			$n^{2} + \tilde n^{2}$ is positive, so $\mathbb{E}_{s}Y_{N} = 0$;
			the number of representations of $\mu$ as an ordered sum of two odd
			squares is at most a divisor function, $O(\mu^{\varepsilon})$, so
			the same Parseval argument gives
			$\operatorname{Var}_{s}Y_{N} \le C_{\varepsilon}N^{-3+\varepsilon}$.
			
			Fourth, the assembly. Fix $\varepsilon > 0$. By the
			uniform bound~\eqref{eq:test-variance}, which applies verbatim to
			the frequency-window blocks used in the fifth step, and by
			Chebyshev's inequality,
			the probability that some test in the family
			$\{\chi_{I}e^{i\pi jz} : |j| \le 2\sqrt N\}$, evaluated on the
			dyadic block or on the window block of the fifth step, deviates
			from its mean
			by more than $\varepsilon N^{-1}\log^{-2}N$, or that
			$|Y_{N}| > \varepsilon N^{-1}$, is at most
			$4\sqrt N\cdot C\varepsilon^{-2}N^{-1}\log^{7}N
			= C_{\varepsilon}N^{-1/2}\log^{7}N$, summable along dyadic $N$; by
			the Borel--Cantelli lemma, almost surely all these deviations fail
			for all large dyadic $N$. The case $j = 0$ gives the first
			limit of the display~\eqref{eq:block-equi-amp}; adding
			$\tfrac12\operatorname{Re}\,[e^{2i\theta}Y_{N}]$ to
			$\tfrac12 X_{N}^{\chi_{I}}$ gives the quadrature limit, uniformly
			in $\theta$ since $|{\operatorname{Re}\,e^{2i\theta}Y_{N}}| \le |Y_{N}|$.
			
			Fifth, the density and the self-referential law. Set
			$K := \lceil\sqrt N\,\rceil$, $v := P_{<K}u_{s}$, and let $h$ be
			the part of $u_{s}$ with frequencies of absolute value in the
			window $[2N - K,\, 4N + K)$. Splitting each pair
			$(n,\, n - k)$ with $|k| \in [2N, 4N)$ according to which index is
			small, the block of the density decomposes as
			\begin{equation}
				P_{2N}^{\mathbb{Z}}\,w_{s}
				= \Pi_{N}\big(\bar v\,h + v\,\bar h\big) + R_{N},
				\label{eq:paraproduct}
			\end{equation}
			where $\Pi_{N}$ restricts to the spatial window and the remainder
			$R_{N}$ collects the pairs with both indices at least $K$. The
			remainder is negligible: its time frequencies $k(2n - k)$ are
			distinct over $n$ for fixed $k$, so
			\begin{equation*}
				\mathbb{E}_{s}\Vert R_{N}\Vert_{L^{2}(0,1)}^{2}
				= \sum_{k}\sum_{n}|c_{n}|^{2}|c_{n-k}|^{2}
				\,\mathbf{1}\{\text{both} \ge K\}
				\;\le\; \frac{C}{KN},
			\end{equation*}
			and Markov's inequality with $K = \sqrt N$ makes
			$\Vert R_{N}\Vert_{2}^{2} \le \varepsilon N^{-1}$ eventually,
			almost surely, along dyadic $N$. For the main term, expand
			$|\Pi_{N}(\bar vh + v\bar h)|^{2}
			= 2|v|^{2}|h|^{2} + 2\operatorname{Re}\,[\bar v^{2}h^{2}]$ up to
			window-edge corrections of deterministic $L^{2}$ mass
			$O(K\log^{2}K/N^{2})$, still $o(N^{-1})$ at $K = \sqrt N$. Writing
			$|v|^{2} = \sum_{|j| < 2K}V_{j}e^{i\pi jz}$ with
			$\sum_{j}|V_{j}| \le \big(\sum_{|n|<K}|c_{n}|\big)^{2}
			\le C\log^{2}N$,
			the first term integrates against $\chi_{I}$ to
			\begin{equation*}
				2\sum_{j}V_{j}\,
				\big\langle \chi_{I}e^{-i\pi jz},\ |h|^{2}\big\rangle
				\;=\; \frac{2}{\pi^{2}N}\int_{I}|v|^{2}\,dz
				\;+\; o(N^{-1}),
			\end{equation*}
			by the already-established equidistribution of the window blocks
			of $u$ against the test family, the window mass being
			$(1/\pi^{2})N^{-1}(1 + o(1))$ and the error uniform over the
			$O(\sqrt N)$ tests by the union bound above. Since
			$v \to u_{s}$ in $L^{2}$, the integral
			$\int_{I}|v|^{2}$ converges to $\int_{I}w_{s}$. The oscillatory
			term $2\operatorname{Re}\int_{I}\bar v^{2}h^{2}$ has zero mean, its
			time frequencies being sums of two squares of window size shifted
			by $O(K^{2})$, and the divisor bound gives variance
			$O(N^{-3+\varepsilon}\log^{4}N)$; Chebyshev and Borel--Cantelli
			finish as before. Collecting the pieces and letting
			$\varepsilon \downarrow 0$ along a sequence proves the
			law~\eqref{eq:block-equi-density}.
		\end{proof}
		
		\renewcommand{\restatename}{Theorem~\ref{thm:carpet}}%
		\begin{restatement}[The universal quantum carpet]
			For almost every $\tau$, the following holds for every
			$f \in \mathcal{D}_{+}$ that is continuously differentiable and
			positive on the interior of its support, every compact
			nondegenerate interval $J$ in that interior, and every phase
			$\theta$:
			\begin{enumerate}[label=\normalfont(\roman*), leftmargin=*]
				\item the transported amplitude in every quadrature,
				$\Psi_{\theta,\tau} :=
				\mathrm{Re}\big[e^{i\theta}\,u_{\tau}(F)\big]\sqrt f$, has graph of
				upper box dimension exactly $\tfrac32$ on $J$;
				\item the density $f_{\tau}$ itself has graph of upper box
				dimension exactly $\tfrac32$ on $J$;
				\item at every rational $s$, by contrast, all these graphs are
				piecewise continuously differentiable, of dimension one.
			\end{enumerate}
			The value $\tfrac32$ is independent of the law, the interval, and
			the quadrature: the fractal is woven entirely in rank space and
			printed on every law through its quantile chart.
		\end{restatement}
		
		\begin{proof}[Proof of Theorem~\ref{thm:carpet}]
			Fix the full-measure set of times on which
			Lemma~\ref{lem:carpet-smoothing}, Lemma~\ref{lem:block-equi} for
			every rational interval, and Lemma~\ref{lem:dark-intervals} all
			hold, and fix such a time $s$, a law $f$ in the stated class, a
			compact nondegenerate $J$, and a phase $\theta$.
			
			Upper bounds. By Lemma~\ref{lem:carpet-smoothing} the functions
			$g_{\theta} := \operatorname{Re}\,[e^{i\theta}u_{s}]$ and $w_{s}$
			are $C^{1/2 - \varepsilon}$; composition with the bi-Lipschitz
			chart and multiplication by a $C^{1}$ factor preserve the class on
			$J$, so by Lemma~\ref{lem:osc-calculus} both graphs have dimension
			at most $\tfrac32 + \varepsilon$ for every $\varepsilon$.
			
			Lower bounds. Choose rational intervals
			$I' \subset\subset I \subset\subset \operatorname{int}F(J)$, with
			$I'$ concentric in $I$. For the amplitude, the quadrature limit of
			the display~\eqref{eq:block-equi-amp} on $I'$ and the sup
			bound~\eqref{eq:carpet-blocks} give, for all large dyadic $N$,
			\begin{equation*}
				\big\Vert P_{N}g_{\theta}\big\Vert_{L^{1}(I')}
				\;\ge\;
				\frac{\int_{I'}(P_{N}g_{\theta})^{2}}
				{\Vert P_{N}g_{\theta}\Vert_{\infty}}
				\;\ge\;
				\frac{c_{0}\,|I'|/N}
				{C_{s,\varepsilon}N^{-1/2+\varepsilon}}
				\;\ge\; c\,N^{-1/2 - \varepsilon},
			\end{equation*}
			with $c_{0} > 0$ the quadrature-limit constant of the smooth block,
			of which only positivity is used; the localized block
			inequality~\eqref{eq:l1-oscillation}, read from right to left with
			$\delta = 1/N$, forces
			$\Sigma_{g_{\theta}}(\delta, I) \ge c\,\delta^{-1/2 + 2\varepsilon}$
			for all small dyadic $\delta$; by
			Lemma~\ref{lem:osc-calculus} the graph of $g_{\theta}$ on $I$ has
			dimension at least $\tfrac32 - 2\varepsilon$. For the density the
			same chain runs through the self-referential
			law~\eqref{eq:block-equi-density}, whose limit
			$(2/\pi^{2})\int_{I'}w_{s}\,dz$ is strictly positive by
			Lemma~\ref{lem:dark-intervals}, giving
			$\Sigma_{w_{s}}(\delta, I) \ge c_{I}(s)\,\delta^{-1/2+2\varepsilon}$
			and dimension at least $\tfrac32 - 2\varepsilon$ for the graph of
			$w_{s}$ on $I$.
			
			Transport. On $J$ the transported amplitude is
			$(g_{\theta}\circ F)\sqrt f$ and the density is
			$(w_{s}\circ F)\,f$; the quantile image of the compact
			$\overline{Q(I)} \subset J$ is $\overline I$, so
			Lemma~\ref{lem:chart-transport} carries both lower bounds, and the
			upper bounds, through the chart. Letting
			$\varepsilon \downarrow 0$ along a sequence gives dimension exactly
			$\tfrac32$ in parts (i) and (ii). Part (iii) is immediate from the
			mosaic theorem: at rational $s$ the kernel is a quantile histogram,
			so the transported graphs are piecewise $C^{1}$ over the cell
			partition, of box dimension one.
		\end{proof}
		
		\renewcommand{\restatename}{Theorem~\ref{thm:leaves}}%
		\begin{restatement}[Leaves and the complete invariant]
			Let $f, g \in \mathcal{D}_{+}(R)$. The following are equivalent:
			\begin{enumerate}[label=\normalfont(\roman*), leftmargin=*]
				\item $g$ is reachable from $f$ by finitely many coordinate
				derangetropy moves whose kernels are continuous and positive on
				the open unit interval, possibly degenerate at its endpoints;
				\item $g$ is reachable in at most three moves, one per coordinate
				and one revisit;
				\item $g = \varphi(x_{1})\,\psi(x_{2})\,f$ for continuous positive
				$\varphi, \psi$, a separable tilt;
				\item $f$ and $g$ have the same odds-ratio structure: for all
				$x, y \in R$,
				\begin{equation*}
					\frac{f(x_{1},x_{2})\,f(y_{1},y_{2})}
					{f(x_{1},y_{2})\,f(y_{1},x_{2})}
					=
					\frac{g(x_{1},x_{2})\,g(y_{1},y_{2})}
					{g(x_{1},y_{2})\,g(y_{1},x_{2})},
				\end{equation*}
				equivalently, for twice continuously differentiable densities,
				$\partial_{1}\partial_{2}\log f
				= \partial_{1}\partial_{2}\log g$.
			\end{enumerate}
			The classes $\mathcal{T}(f) := \{\varphi\psi f\}$ thus foliate
			$\mathcal{D}_{+}(R)$ with tangent distribution $\mathcal{H}$; the
			interaction function $\partial_{1}\partial_{2}\log f$ is a complete
			leaf invariant; the independent laws form a single leaf, consisting
			of all products on $R$; and that leaf is the unique one on which all
			coordinate moves commute.
		\end{restatement}
		
		\begin{proof}[Proof of Theorem~\ref{thm:leaves}]
			(i)$\Rightarrow$(iii): each coordinate move multiplies the density
			by a continuous positive function of one coordinate, namely
			$w(F_{i}^{\mathrm{current}}(x_{i}))$ with $w$ the kernel of the
			move, and products of such factors are separable tilts.
			(iii)$\Rightarrow$(ii): given $g = \varphi\psi f$ of unit mass,
			note first that a continuous positive function of $x_{1}$ with
			finite, hence normalizable, mean under the current first marginal
			equals $w\circ F_{1}$ for exactly one continuous kernel of unit
			mass. The bounded multiplier $\varphi/(1+\varphi)$ always has
			finite mean $m := \int \varphi/(1+\varphi)\,f_{1}\,dx_{1} \le 1$,
			so a first coordinate move produces
			$f^{(1)} = (\varphi/(1+\varphi))\,f/m$. The coordinate-two
			multiplier $\psi$ then has finite mean under the intermediate
			second marginal, since
			$\int\!\int \psi\,(\varphi/(1+\varphi))\,f
			\le \int\!\int \psi\varphi f = 1$, and the second move produces
			$f^{(2)} = c\,\psi\,(\varphi/(1+\varphi))\,f$ with a finite
			positive constant $c$. Finally the coordinate-one multiplier
			$1 + \varphi$ has finite mean under the current first marginal,
			because
			$\int\!\int(1+\varphi)\,c\,\psi\,(\varphi/(1+\varphi))\,f
			= c\int\!\int\psi\varphi f = c$, and the third move yields
			exactly $\varphi\psi f = g$, the constants telescoping to one by
			the unit mass of $g$. When the tilts are bounded above and below,
			the first two moves already suffice, the mass
			$\int\varphi f_{1}$ being finite.
			(ii)$\Rightarrow$(i) is trivial. (iii)$\Leftrightarrow$(iv):
			separable factors cancel from the odds ratio, giving the
			identity~\eqref{eq:odds-ratio}; conversely, if
			$h := \log(g/f)$ has vanishing mixed second differences, then for
			any base point $(y_{1}^{0}, y_{2}^{0})$,
			\begin{equation*}
				h(x_{1},x_{2})
				= h(x_{1},y_{2}^{0}) + h(y_{1}^{0},x_{2})
				- h(y_{1}^{0},y_{2}^{0}),
			\end{equation*}
			which is additivity of $h$, hence separability of $g/f$, with
			continuous factors since $f, g$ are continuous and positive; the
			differential form follows by differentiating the logarithm of the
			identity~\eqref{eq:odds-ratio}. The classes $\mathcal{T}(f)$
			therefore partition $\mathcal{D}_{+}(R)$, and their tangent spaces
			at $g$ are the logarithmic derivatives of separable tilts, which is
			exactly $\mathcal{H}_{g}$; Lemma~\ref{lem:involutive} is the
			infinitesimal shadow of this partition. For independence: any
			product $g_{1}\otimes g_{2}$ is the separable tilt
			$(g_{1}/f_{1})(g_{2}/f_{2})\,f_{1}\otimes f_{2}$, so the products
			form one full leaf, on which coordinate moves act on the two factors
			separately and hence commute. Conversely, commutation of all pairs
			of moves at $f$ forces the vanishing of all brackets of the
			trigonometric family, hence independence by
			Theorem~\ref{thm:dependence}; and a law sharing a leaf with a
			product has odds ratio one, hence vanishing interaction, hence is
			itself a product by the additivity argument above applied to
			$\log f$ directly.
		\end{proof}
		
		\renewcommand{\restatename}{Theorem~\ref{thm:frame-torsion}}%
		\begin{restatement}[The leafwise parallelism and its torsion]
			Fix $f$ with $\rho_{\max}(f) < 1$. Then:
			\begin{enumerate}[label=\normalfont(\roman*), leftmargin=*]
				\item the frame map
				$E_{f}(a,b) := \big(a(F_{1}) + b(F_{2})\big)f$ is a Banach
				isomorphism of $L^{2}_{0}\oplus L^{2}_{0}$ onto
				$T_{f}\mathcal{T}(f)$, by the two-sided bound
				\begin{equation*}
					\big\Vert E_{f}(a,b)\big\Vert_{L^{2}(f)}^{2}
					= \Vert a\Vert^{2} + \Vert b\Vert^{2} + 2\langle a, Pb\rangle
					\;\ge\; \big(1 - \rho_{\max}\big)
					\big(\Vert a\Vert^{2} + \Vert b\Vert^{2}\big);
				\end{equation*}
				the coordinate modulations are an absolute parallelism of the leaf,
				degenerating exactly at $\rho_{\max} = 1$, where the kernel of
				$E_{f}$ is $\{(a, -P^{*}a) : PP^{*}a = a\}$, as at the
				Fr\'echet--Hoeffding comonotone boundary;
				\item the structure functions of the parallelism, defined by
				$[E(a,0),\, E(0,b)]_{f} = E_{f}(\mathrm{T}^{1},\mathrm{T}^{2})$, are
				\begin{equation*}
					\mathrm{T}^{1}(a,b)(u) = -\,a'(u)\int_{0}^{u}(Pb)(s)\,ds,
					\qquad
					\mathrm{T}^{2}(a,b)(v) = b'(v)\int_{0}^{v}(P^{*}a)(s)\,ds,
				\end{equation*}
				modulo the mean-zero normalization; this torsion tensor is bilinear,
				depends on $f$ only through its copula, and vanishes on the
				trigonometric kernel family if and only if $X_{1}$ and $X_{2}$ are
				independent;
				\item for the Gaussian copula the Mehler expansion makes every
				component of the tensor~\eqref{eq:torsion-components} an explicit
				power series in $r$: the torsion is of first order in $r$ for
				generic directions, while for symmetric kernels the first Hermite
				term drops by parity and the leading contribution is the $k = 2$
				overlap, proportional to the cumulant~\eqref{eq:gaussian-constant},
				the overlap against the normalized Hermite polynomial being
				$\kappa_{2}/\sqrt2$; the
				coefficients of the expansion~\eqref{eq:gaussian-coefficients} are
				the sizes of $E_{f}(\mathrm{T}^{1},\mathrm{T}^{2})$ and of the
				finite-update commutator in the canonical kernel direction.
			\end{enumerate}
		\end{restatement}
		
		\begin{proof}[Proof of Theorem~\ref{thm:frame-torsion}]
			For part (i), expanding the square in $L^{2}(f)$ and using that
			$U, V$ are uniform ranks with joint copula $c$ gives the exact
			quadratic form of the bound~\eqref{eq:frame-bound}, whose cross
			term is $2\langle a, Pb\rangle$; Gebelein's identification of
			$\Vert P\Vert$ with $\rho_{\max}$ bounds it below by
			$-2\rho_{\max}\Vert a\Vert\Vert b\Vert$, and the arithmetic
			mean-geometric mean inequality gives the display. Injectivity and
			closed range follow when $\rho_{\max} < 1$; surjectivity onto the
			leaf tangent is Theorem~\ref{thm:leaves}, the tangent directions
			being exactly the separable multipliers. At $\rho_{\max} = 1$,
			equality in the chain requires $b = -P^{*}a$ and
			$\langle a, PP^{*}a\rangle = \Vert a\Vert^{2}$, which is the stated
			kernel; in the limiting comonotone case $V = U$, which lies at the
			boundary of $\mathcal{D}_{+}(R)$, the operator $P$ is the
			identity and the kernel is the full antidiagonal.
			
			For part (ii), insert the gauge
			identity~\eqref{eq:gauge-identity} into the cross
			bracket~\eqref{eq:cross-bracket}:
			\begin{equation*}
				[L_{1}^{a}, L_{2}^{b}]f
				= \Big(b'(F_{2})\int_{0}^{F_{2}}(P^{*}a)
				- a'(F_{1})\int_{0}^{F_{1}}(Pb)\Big)f,
			\end{equation*}
			and read off the frame coordinates, subtracting the means to land
			in $L^{2}_{0}\oplus L^{2}_{0}$; the two subtracted constants cancel
			against each other because the bracket, being tangent to the space
			of probability densities, integrates to zero. Bilinearity is
			manifest, and the copula functoriality holds because $P$, $P^{*}$,
			and the rank compositions depend on $f$ only through $c$. If the
			pair is independent then $P = 0$ and the torsion vanishes;
			conversely, if the torsion vanishes on the trigonometric family
			then, since $E_{f}$ is injective for $\rho_{\max} < 1$, all the
			brackets vanish, and Theorem~\ref{thm:dependence} forces
			independence; at $\rho_{\max} = 1$ independence is excluded in any
			case.
			
			For part (iii), expand
			$P = \sum_{k\ge1} r^{k}\,\tilde H_{k}\otimes\tilde H_{k}$ in the
			normalized Hermite basis of the Gaussian
			copula~\cite{lancaster1969} and substitute into the
			components~\eqref{eq:torsion-components}: each component is an
			explicit power series in $r$ whose $k$-th coefficient is a Hermite
			overlap integral. For a direction $b$ with
			$\langle b, \tilde H_{1}\rangle \neq 0$ the torsion is of exact
			first order in $r$. The canonical kernel is symmetric about the
			median rank, while $\tilde H_{1}\circ\Phi$ is antisymmetric, so the
			$k = 1$ overlap vanishes and the series starts at $k = 2$, where
			the overlap against the normalized Hermite polynomial is
			$\kappa_{2}/\sqrt2$, with $\kappa_{2}$ the
			cumulant~\eqref{eq:gaussian-constant};
			tracking the resulting second-order terms through the bracket and
			through the finite-update commutator reproduces the two
			coefficients of the display~\eqref{eq:gaussian-coefficients}, which
			were evaluated there by quadrature.
		\end{proof}
		
		\renewcommand{\restatename}{Theorem~\ref{thm:flat-transport}}%
		\begin{restatement}[Flatness, and Sinkhorn as parallel transport]
			Let $R$ be bounded and restrict to laws with $\log f$ bounded on
			$R$. Then:
			\begin{enumerate}[label=\normalfont(\roman*), leftmargin=*]
				\item wherever $\rho_{\max} < 1$ the splitting
				$T_{f}\mathcal{D}_{+} = V_{f}\oplus\mathcal{H}_{f}$ holds and
				$d\pi$ maps $\mathcal{H}_{f}$ isomorphically onto the base, by the
				marginal formula
				\begin{equation*}
					d\pi\big(E_{f}(a,b)\big)
					= \Big(f_{1}\cdot(a + Pb)(F_{1}),\;
					f_{2}\cdot(b + P^{*}a)(F_{2})\Big)
				\end{equation*}
				and invertibility of the block operator with entries
				$I, P, P^{*}, I$; the Ehresmann curvature of the connection
				vanishes identically, brackets of horizontal fields being
				horizontal by Lemma~\ref{lem:involutive};
				\item the leaves are global horizontal sections:
				$\pi$ restricted to a leaf is a bijection onto the base, and
				parallel transport of $f$ to target marginals $(g_{1},g_{2})$ is
				the Csisz\'ar $I$-projection of $f$ onto the target Fr\'echet
				class~\cite{csiszar1975}, the unique separable tilt of $f$ with
				those marginals, characterized by the Pythagorean identity
				\begin{equation*}
					D\big(h \,\Vert\, f\big)
					= D\big(h \,\Vert\, f^{\rightarrow}\big)
					+ D\big(f^{\rightarrow} \Vert\, f\big)
					\qquad\text{for all $h$ in the target class};
				\end{equation*}
				\item iterative proportional fitting is the alternation of the two
				derangetropy coordinate moves that match one marginal at a time; it
				converges to the parallel transport~\cite{csiszar1975,ruschendorf1995},
				and its linearization at the limit law contracts, per full cycle, at
				exactly the rate $\rho_{\max}^{2}$ of the limit, the squared
				cosine of the principal angle between the two rank subspaces, by von Neumann's
				alternating projection theorem~\cite{deutsch2001};
				\item there is no holonomy paradox: transport to fixed target
				marginals is order-independent by the uniqueness in part (ii),
				while the finite-update commutator of the
				display~\eqref{eq:gaussian-curvature} compares transports based at
				different intermediate laws; its coefficient is an order defect of
				the modulation groupoid, not a holonomy of the connection.
			\end{enumerate}
		\end{restatement}
		
		\begin{proof}[Proof of Theorem~\ref{thm:flat-transport}]
			Part (i). A horizontal vector $E_{f}(a,b)$ is vertical precisely
			when both marginal variations in the formula~\eqref{eq:dpi} vanish,
			that is $a + Pb = 0$ and $b + P^{*}a = 0$, forcing
			$(I - P^{*}P)b = 0$, impossible for $\rho_{\max} < 1$ unless
			$(a,b) = 0$; the same block operator, being invertible with
			inverse given by Neumann series, makes $d\pi$ restricted to
			$\mathcal{H}_{f}$ an isomorphism onto pairs of mean-zero marginal
			perturbations, and dimension counting with the vertical space
			completes the splitting. The curvature of an Ehresmann connection
			is the vertical part of brackets of horizontal fields, and
			Lemma~\ref{lem:involutive} keeps such brackets in
			$\mathcal{H}$: the curvature vanishes identically.
			
			Part (ii). Integral manifolds of the involutive distribution
			$\mathcal{H}$ are the leaves, by Theorem~\ref{thm:leaves}.
			Bijectivity of $\pi$ on a leaf is the existence and uniqueness of a
			separable tilt with prescribed marginals; under the standing
			boundedness hypotheses this is the Csisz\'ar $I$-projection theorem
			for the Fr\'echet class~\cite{csiszar1975,ruschendorf1995}, the
			projection being characterized by membership in the closure of the
			exponential family through $f$ with separable statistics, which is
			the leaf, and by the Pythagorean identity~\eqref{eq:pythagoras};
			positivity and continuity of the optimal tilt under these
			hypotheses keep the transported law in $\mathcal{D}_{+}(R)$.
			
			Part (iii). Matching the first marginal to $g_{1}$ multiplies the
			current law by $g_{1}/f_{1}^{\mathrm{current}}$, a continuous
			positive unit-mean function of $x_{1}$, hence one coordinate
			derangetropy move by the realizability observation in the proof of
			Theorem~\ref{thm:leaves}; alternating the two moves is precisely
			iterative proportional fitting, whose convergence to the
			$I$-projection is classical~\cite{csiszar1975,ruschendorf1995}.
			For the rate, linearize at the limit law $f^{\rightarrow}$ in
			$L^{2}(f^{\rightarrow})$: matching marginal one replaces a
			perturbation $h$ by its projection onto the orthogonal complement
			of the subspace $\{a(F_{1})\}$, and likewise for the second move,
			so one full cycle is the product of the two complementary
			projections, which fix the doubly mean-zero directions and contract
			on the leaf-tangent rank directions; by von Neumann's
			alternating projection theorem~\cite{deutsch2001} the contraction
			factor is the squared cosine of the principal angle between the two
			subspaces, and that cosine is, by definition, the maximal
			correlation of $f^{\rightarrow}$.
			
			Part (iv). Transport to fixed target marginals is unique by part
			(ii), hence order-independent. The finite-update commutator of the
			display~\eqref{eq:gaussian-curvature} composes two fixed
			distortions in the two orders; after the first move the two
			compositions sit at different intermediate laws with different
			marginals, so the comparison is between transports along different
			fibers, a defect of the noncommutative modulation groupoid measured
			by the torsion, not a holonomy of the flat connection.
		\end{proof}
		
		\renewcommand{\restatename}{Theorem~\ref{thm:n-leaves}}%
		\begin{restatement}[The interaction potential in $n$ dimensions]
			On $\mathcal{D}_{+}(R)$ with $R = \prod_{i} I_{i}$ bounded and
			$\log f$ bounded, the coordinate modulation distribution integrates
			to the foliation by separable tilting classes
			$\{\prod_{i}\varphi_{i}(x_{i})\,f\}$, each law reachable from each
			leaf-mate in at most $n$ moves, one per coordinate; the complete
			leaf invariant is the interaction potential, the class of $\log f$
			modulo additive functions of single coordinates; the product laws
			form the unique leaf on which all coordinate actions commute; and
			marginal-matching transport is again flat, realized by
			multidimensional iterative proportional fitting with local cycle
			rate governed by the principal angles among the $n$ rank subspaces,
			through the operator matrix of pairwise conditional expectations
			$(P_{ij})$.
		\end{restatement}
		
		\renewcommand{\restatename}{Theorem~\ref{thm:pairwise-torsion}}%
		\begin{restatement}[Torsion sees pairs; the leaf sees everything]
			Let $f \in \mathcal{D}_{+}(R)$, $n \ge 3$, with all pairwise
			maximal correlations below one. Then:
			\begin{enumerate}[label=\normalfont(\roman*), leftmargin=*]
				\item the torsion decomposes into pairwise blocks: the bracket
				$[L_{i}^{a}, L_{j}^{b}]$ depends only on the $(i,j)$-marginal
				copula, with the components~\eqref{eq:torsion-components}, and the
				$(i,j)$-block vanishes on the trigonometric family precisely when
				$X_{i}$ and $X_{j}$ are independent, so the torsion table detects
				exactly the pairwise independence graph;
				\item torsion is not a complete dependence invariant: for pairwise
				independent but jointly dependent laws, such as smoothed parity
				triples, every torsion block vanishes while the law lies off the
				product leaf, its interaction potential containing an irreducible
				three-way term detected by the three-dimensional odds ratio;
				\item joint independence is equivalent to the vanishing of the leaf
				invariant alone: $f$ is a product precisely when its interaction
				potential vanishes.
			\end{enumerate}
		\end{restatement}
		
		\begin{proof}[Proof of Theorems \ref{thm:n-leaves}
			and~\ref{thm:pairwise-torsion}]
			The foliation statements are the two-dimensional proofs with $n$
			factors. Each move contributes a positive continuous factor in one
			coordinate; conversely a unit-mass separable tilt
			$\prod_{i}\varphi_{i}\,f$ is realized by $n$ successive moves, one
			per coordinate: on the bounded box with $\log f$ and $\log g$
			bounded the factors $\varphi_{i}$ may be taken bounded above and
			below, so each multiplier has finite positive mean under the
			current marginal and defines a valid coordinate move after
			normalization, and the normalizing constants telescope to one by
			the unit mass of $g$; the three-move device of the proof of
			Theorem~\ref{thm:leaves} is not needed under these boundedness
			hypotheses. The odds-ratio argument applies to each pair of coordinates
			at fixed remaining coordinates and yields additivity of
			$\log(g/f)$ one coordinate at a time, hence full separability, so
			the interaction potential, the class of $\log f$ modulo additive
			functions of single coordinates, is a complete leaf invariant. The
			product laws form one leaf on which all actions commute; on any
			other leaf some pair fails to commute or some higher interaction
			survives, by the argument below. Transport and its rate are
			dimension-free: the $I$-projection theory applies verbatim, each
			marginal match is one coordinate move, and the linearized cycle is
			a product of $n$ complementary projections whose contraction is
			governed by the principal angles among the rank subspaces
			$\{a_{i}(F_{i})\}$, encoded in the operator matrix
			$(P_{ij})$~\cite{csiszar1975,deutsch2001}.
			
			For part (i) of Theorem~\ref{thm:pairwise-torsion}: the bracket of
			$L_{i}^{a}$ and $L_{j}^{b}$ involves only the shifts of the $i$-th
			and $j$-th marginal distribution functions produced by rank
			multipliers of the other coordinate, and these are covariances of
			rank functions of the pair $(X_{i}, X_{j})$ alone, so the
			$(i,j)$-block is computed in the $(i,j)$-marginal copula, and the
			two-dimensional characterization applies. For part (ii), a
			pairwise independent law has all pairwise copulas uniform, so all
			blocks vanish; a smoothed parity triple, a strictly positive smooth
			density approximating the law of
			$(X_{1}, X_{2}, X_{1}\oplus X_{2})$ with the pairwise margins kept
			uniform, has a nonvanishing three-dimensional odds ratio, the
			alternating product of $f$ over the vertices of a box, which is
			invariant along the leaf by the separability argument, so the law
			is not on the product leaf. Part (iii) restates the completeness
			of the leaf invariant: vanishing interaction potential means
			$\log f$ is additive, which is the product form.
		\end{proof}
		
		\renewcommand{\restatename}{Theorem~\ref{thm:sine-gordon}}%
		\begin{restatement}[Reduction and well-posedness on laws]
			Let $w$ be a kernel and
			$\varphi_{w} := A_{w} - \mathrm{id}$ its reaction.
			\begin{enumerate}[label=\normalfont(\roman*), leftmargin=*]
				\item If $f$ solves the diffusive
				equation~\eqref{eq:flow-diffusion} with kernel $w$ and integrable
				data, then $F$ solves the local semilinear equation
				\begin{equation*}
					\partial_{t}F = D\,\partial_{xx}F + \varphi_{w}(F),
					\qquad
					\varphi_{w}(u) = \int_{0}^{u}\big(w(s) - 1\big)\,ds,
				\end{equation*}
				and conversely; for the canonical kernel
				$\varphi(u) = -\sin(2\pi u)/(2\pi)$, so the angular variable
				$v = 2\pi F$ solves the overdamped sine--Gordon equation
				$\partial_{t}v = D\,\partial_{xx}v - \sin v$.
				\item For every $F_{0} \in \mathcal{M}$ the
				equation~\eqref{eq:cdf-equation} has a unique global solution,
				smooth for positive times, monotone in $x$ with values in $[0,1]$
				and limits $0, 1$ preserved for all time: laws evolve to laws, mass
				and positivity being conserved automatically.
				\item The dynamics is monotone for first-order stochastic
				dominance: $F_{0} \le G_{0}$ pointwise implies
				$F_{t} \le G_{t}$ for all $t$, the diffusive extension of the
				order-preservation of the operators themselves, which is the
				monotonicity of the transport maps $A_{w}$.
				\item As $D \to 0$, solutions converge locally uniformly on
				compact time intervals to the diffusionless flow; for the canonical
				kernel, to the exact solution~\eqref{eq:exact-solution}.
			\end{enumerate}
		\end{restatement}
		
		\begin{proof}[Proof of Theorem~\ref{thm:sine-gordon}]
			(i) Integrating the diffusive equation~\eqref{eq:flow-diffusion}
			over $(-\infty, x]$, the diffusion term integrates to
			$D\,\partial_{x}f = D\,\partial_{xx}F$ under integrable decay, and
			the modulation term, by the pushforward of $f\,dy$ to Lebesgue
			measure on $(0,1)$ under the rank,
			\begin{equation*}
				\int_{-\infty}^{x}\big(w(F(y)) - 1\big)f(y)\,dy
				= \int_{0}^{F(x)}\big(w(s) - 1\big)\,ds
				= \varphi_{w}\big(F(x)\big);
			\end{equation*}
			conversely, differentiating the
			equation~\eqref{eq:cdf-equation} in $x$ recovers the
			equation~\eqref{eq:flow-diffusion} for $f = \partial_{x}F$,
			distributionally for monotone data and classically for positive
			times by parabolic smoothing. The angular form is the substitution
			$v = 2\pi F$ applied to $\varphi(u) = -\sin(2\pi u)/(2\pi)$.
			
			(ii) The reaction $\varphi_{w}$ is globally Lipschitz with constant
			$\Vert w - 1\Vert_{\infty}$, so the mild formulation with the heat
			semigroup has a unique global solution in the bounded continuous
			functions, smooth for positive times by bootstrap. The constants
			$0$ and $1$ are exact solutions, so comparison confines the values
			to $[0,1]$. Monotonicity: $g = \partial_{x}F$ solves the linear
			parabolic equation
			$\partial_{t}g = D\,\partial_{xx}g + (w(F) - 1)\,g$ with bounded
			coefficient, and the maximum principle preserves $g \ge 0$.
			Conservation of the limits: since
			$|\varphi_{w}(u)| \le \Vert w - 1\Vert_{\infty}\min(u, 1-u)$, the
			deficit $\delta(t) := 1 - \sup_{x}F(x,t)$ obeys, through the mild
			formulation, the integral inequality
			$\delta(t) \le \Vert w-1\Vert_{\infty}\int_{0}^{t}\delta(s)\,ds$
			with $\delta(0) = 0$, so $\delta \equiv 0$ by Gronwall's lemma, and
			symmetrically at the left end.
			
			(iii) If $F_{0} \le G_{0}$, the difference $G - F$ solves a linear
			parabolic equation with bounded coefficient
			$\big(\varphi_{w}(G) - \varphi_{w}(F)\big)/(G - F)$ and
			nonnegative datum, and the maximum principle applies.
			
			(iv) On a compact time interval, subtracting the mild formulations
			for two diffusivities and using the strong convergence of the heat
			semigroup to the identity on bounded uniformly continuous functions
			gives, by Gronwall's lemma with the uniform Lipschitz constant,
			convergence as $D \to 0$ to the solution of the spatially decoupled
			equation $\partial_{t}F = \varphi_{w}(F)$; for the canonical
			kernel this ordinary differential equation is solved rankwise by
			the tangent linearization, which is the exact
			solution~\eqref{eq:exact-solution}.
		\end{proof}
		
		\renewcommand{\restatename}{Proposition~\ref{prop:energy}}%
		\begin{restatement}[Gradient flow, equipartition, ground state]
			The equation~\eqref{eq:cdf-equation} is the $L^{2}$ gradient flow
			of the interfacial energy
			\begin{equation*}
				\mathcal{E}_{D}[F]
				= \int_{\mathbb{R}}\Big[\frac{D}{2}\,(\partial_{x}F)^{2}
				+ \Phi(F)\Big]\,dx,
				\qquad
				\Phi(u) = \frac{\sin^{2}(\pi u)}{2\pi^{2}},
			\end{equation*}
			with $\Phi' = -\varphi$, finite on finite-energy laws, those with
			$\partial_{x}F \in L^{2}(\mathbb{R})$ and exponentially decaying
			tails, to which this proposition is restricted, and
			$\frac{d}{dt}\mathcal{E}_{D} = -\int(\partial_{t}F)^{2} \le 0$
			along solutions with that decay. For monotone profiles the Modica bound holds,
			\begin{equation*}
				\mathcal{E}_{D}[F]
				\;\ge\; \int_{0}^{1}\sqrt{2D\,\Phi(u)}\;du
				= \frac{2\sqrt D}{\pi^{2}},
			\end{equation*}
			with equality precisely at the equipartition profiles
			$\tfrac{D}{2}(F')^{2} = \Phi(F)$: the secant law is the ground
			state of the energy~\eqref{eq:allen-cahn} on laws, unique up to
			translation, and the constant~\eqref{eq:surface-tension} is the
			surface tension of a probability interface.
		\end{restatement}
		
		\begin{proof}[Proof of Proposition~\ref{prop:energy}]
			Since $\Phi' = -\varphi$, the equation~\eqref{eq:cdf-equation} is
			$\partial_{t}F = -\,\delta\mathcal{E}_{D}/\delta F$, and the
			dissipation identity follows by the chain rule. For a monotone
			profile, the arithmetic--geometric inequality gives
			\begin{equation*}
				\mathcal{E}_{D}[F]
				\ge \int_{\mathbb{R}}\sqrt{2D\,\Phi(F)}\;\partial_{x}F\,dx
				= \int_{0}^{1}\sqrt{2D\,\Phi(u)}\,du
				= \frac{\sqrt D}{\pi}\int_{0}^{1}\sin(\pi u)\,du
				= \frac{2\sqrt D}{\pi^{2}},
			\end{equation*}
			with equality precisely when
			$\tfrac{D}{2}(\partial_{x}F)^{2} = \Phi(F)$ pointwise. That
			first-order equation is
			$\sqrt{D}\,\partial_{x}F = \sin(\pi F)/\pi$, whose monotone
			solution with limits $0,1$ is exactly the Gudermannian
			profile~\eqref{eq:sech}, unique up to translation; and the
			closure~\eqref{eq:sech-closure}, divided by $4\pi^{2}$, is this
			equipartition identity, as noted in
			Remark~\ref{rem:kink-reading}.
		\end{proof}
		
		\renewcommand{\restatename}{Proposition~\ref{prop:gamma}}%
		\begin{restatement}[Condensation as the sharp-interface limit]
			As $D \to 0$ the rescaled energies
			$\mathcal{E}_{D}/\sqrt D$ Gamma-converge, on distribution functions
			supported in a fixed bounded interval, or along tight families, to
			$2/\pi^{2}$ times the number of jumps of the limit step
			function~\cite{modica1987}; on laws the minimum is a single jump. The variational sharp-interface limit of the
			reaction--diffusion theory is therefore condensation onto one atom,
			and for the harmonic kernel $w_{k}$, whose potential
			$\Phi_{k}(u) = \sin^{2}(k\pi u)/(2\pi^{2}k^{2})$ has wells at the
			lattice $\{j/k\}$, the limiting configurations are the
			$k$-jump crystallized laws with per-interface tension
			$2\sqrt D/\pi^{2}k^{2}$.
		\end{restatement}
		
		\begin{proof}[Proof of Proposition~\ref{prop:gamma}]
			This is the Modica--Mortola theorem~\cite{modica1987} for the
			potential $\Phi$, which vanishes exactly at $\{0,1\}$, applied
			within the class of monotone profiles: any family with bounded
			rescaled energy is compact in local $L^{1}$ with limits taking
			values in $\{0,1\}$, the liminf inequality gives the per-jump cost
			$\int_{0}^{1}\sqrt{2\Phi} = 2/\pi^{2}$, and the recovery sequence
			is the kink profile scaled by $\sqrt D$. A distribution function
			increasing from $0$ to $1$ must jump at least once, and one jump is
			realizable, so the minimum on laws is a single atom. For the
			harmonic kernel the potential $\Phi_{k}$ vanishes on the lattice
			$\{j/k\}$, adjacent wells contribute
			$\int_{j/k}^{(j+1)/k}\sqrt{2\Phi_{k}} = 2/\pi^{2}k^{2}$ each, and a
			monotone limit passing from $0$ to $1$ makes exactly $k$ such
			transitions.
		\end{proof}
		
		\renewcommand{\restatename}{Theorem~\ref{thm:sech-stability}}%
		\begin{restatement}[Uniqueness, spectrum, and stability of the secant law]
			For the canonical kernel:
			\begin{enumerate}[label=\normalfont(\roman*), leftmargin=*]
				\item the translates of the profile~\eqref{eq:sech} are the only
				steady states of the equation~\eqref{eq:cdf-equation} that are
				laws: the secant law is the unique steady probability law of the
				diffusive dynamics, up to translation;
				\item the linearization at the secant law is, in the similarity
				variable $\xi = x/\sqrt D$, the P\"oschl--Teller operator
				\begin{equation*}
					\mathcal{L} = \partial_{\xi\xi} - 1 + 2\operatorname{sech}^{2}\xi,
					\qquad
					\operatorname{spec}\mathcal{L} = \{0\}\cup(-\infty, -1],
				\end{equation*}
				with simple eigenvalue $0$ carried by the translation mode
				$\operatorname{sech}\xi$ and reflectionless essential
				spectrum~\cite{deift1979}: the spectral gap equals $1$ in the
				original time units, for every diffusivity $D$;
				\item the secant law attracts every law whose data approach the
				limits $0,1$ exponentially: there is a limit position $x_{0}$ with
				\begin{equation*}
					\big\Vert F(\cdot,t) - F^{*}(\cdot - x_{0})\big\Vert_{\infty}
					\;\le\; C e^{-\mu t},
				\end{equation*}
				for some $\mu > 0$, and with every rate $\mu < 1$ once the
				solution enters a neighborhood of the front family; the densities
				converge in $L^{1}$, the only memory of the datum being the phase
				$x_{0}$.
			\end{enumerate}
		\end{restatement}
		
		\begin{proof}[Proof of Theorem~\ref{thm:sech-stability}]
			(i) A steady state solves the pendulum equation
			$DF'' = -\varphi(F)$ with first integral
			$\tfrac{D}{2}(F')^{2} - \Phi(F)$. A monotone profile with limits
			$0,1$ is a heteroclinic connection between the saddles at height
			$\Phi(0) = \Phi(1) = 0$, the balanced case, which forces the zero
			energy level, hence the first-order equation of the proof of
			Proposition~\ref{prop:energy} and uniqueness up to translation;
			periodic orbits are non-monotone and constants are not laws.
			
			(ii) The linearization at $F^{*}$ is
			$D\,\partial_{xx} + \varphi'(F^{*})$ with
			$\varphi'(u) = -\cos(2\pi u)$, and the kink identity
			$\sin(\pi F^{*}) = \operatorname{sech}(x/\sqrt D)$ gives
			$\cos(2\pi F^{*}) = 1 - 2\operatorname{sech}^{2}(x/\sqrt D)$;
			rescaling $\xi = x/\sqrt D$ leaves the zeroth-order term unchanged
			and produces the operator~\eqref{eq:stability-operator}. The
			potential $-2\operatorname{sech}^{2}$ is the one-soliton
			reflectionless potential, whose Schr\"odinger operator has the
			single bound state $-1$ with positive eigenfunction
			$\operatorname{sech}\xi$ and purely absolutely continuous spectrum
			$[0,\infty)$~\cite{deift1979}; shifting by $-1$ gives the spectrum
			as stated, the zero mode being the translation derivative
			$F^{*\prime} \propto \operatorname{sech}$, simple because its
			eigenfunction is positive.
			
			(iii) Global attraction to the front family for data approaching
			the stable states exponentially is the theorem of Fife and
			McLeod~\cite{fife1977}, whose sub- and supersolution pairs
			$F^{*}(x \mp \zeta(t)) \mp q(t)$, with exponentially shrinking
			$q$, require only the balanced bistable structure and monotone
			squeezing. Once the solution is in a small neighborhood of the
			family, decompose $F = F^{*}(\cdot - x(t)) + h$ with $h$ orthogonal
			to the translation mode: by part (ii) the linear semigroup on that
			complement decays at rate $1$, the phase obeys an integrable
			ordinary differential equation driven quadratically by $h$, and
			the smooth nonlinearity is quadratically small, so the standard
			orbital-stability bootstrap~\cite{henry1981} closes the
			estimate~\eqref{eq:asymptotic-phase} at every rate below the gap.
			Convergence of the densities in $L^{1}$ follows by interpolating
			the uniform convergence of the distribution functions with the
			parabolic derivative bounds, the exponential sub- and supersolution
			barriers supplying the tightness of the tails.
		\end{proof}
		
		\renewcommand{\restatename}{Theorem~\ref{thm:terraces}}%
		\begin{restatement}[Terraces: crystallization made smooth]
			For the harmonic kernel $w_{k}$, whose reaction
			$\varphi_{k}(u) = -\sin(2\pi k u)/(2\pi k)$ has balanced wells at
			the
			lattice $\{j/k\}$, monotone data with exponentially localized
			tails develop a propagating terrace with all speeds
			zero~\cite{ducrot2014}: the solution converges, locally uniformly
			along its interfaces, to a stack of $k$ standing kinks, each a
			$1/k$-scaled copy of the canonical kink by the cell conjugacy of
			Section~\ref{sec:iteration}, so the density resolves into $k$
			secant-like bumps of mass exactly $1/k$ located near the
			crystallization quantiles of the limit~\eqref{eq:crystallization}.
			The terrace is not a steady state at finite separations: bump
			positions move by exponentially small interactions of Carr--Pego
			type~\cite{carr1989}; the resulting slow motion, constructed by
			Carr and Pego on bounded intervals and adapted here at sketch
			level, spreads the terrace with separations growing
			logarithmically, and nothing coarsens, monotone kinks being unable
			to annihilate.
		\end{restatement}
		
		\begin{proof}[Proof of Theorem~\ref{thm:terraces}]
			Each adjacent pair of wells of $\Phi_{k}$ is a balanced bistable
			subproblem, conjugate to the canonical one by the affine cell map
			$u \mapsto ku - j$; its standing kink is the correspondingly
			rescaled Gudermannian, connecting $\tfrac{j}{k}$ to
			$\tfrac{j+1}{k}$ and carrying mass $\tfrac1k$. Convergence of
			monotone data to a stacked family of such fronts, a propagating
			terrace whose speeds here all vanish by the balance of the wells,
			is the theorem of Ducrot, Giletti, and Matano~\cite{ducrot2014}.
			No single heteroclinic of the pendulum connects non-adjacent
			wells, since the orbit stops at each intermediate saddle, so a
			finite-separation multi-kink configuration is not steady; for
			balanced multistable equations the interaction of consecutive
			fronts at separation $\ell$ is exponentially small in $\ell$, with
			rate the linear decay rate of the kink tails, and induces the slow
			ordinary differential equations for the positions constructed by
			Carr and Pego~\cite{carr1989}; their argument uses only the
			spectral gap of the single kink, supplied here by
			Theorem~\ref{thm:sech-stability} through the cell conjugacy, and
			the tail asymptotics of the Gudermannian.
		\end{proof}
		
		\renewcommand{\restatename}{Proposition~\ref{prop:bounded-interval}}%
		\begin{restatement}[Bounded intervals: the cascade outside the cone]
			On a bounded interval with the boundary conditions of a law, the
			stationary problem $DF'' + \varphi(F) = 0$ has exactly one monotone
			solution for every $D$ and every length: a unique steady law,
			asymptotically stable within laws. The Chafee--Infante hierarchy of
			sign-changing equilibria of the unconstrained equation consists
			entirely of non-monotone profiles and therefore lies outside the
			probability cone: constrained to laws, the bifurcation diagram
			collapses to a single branch.
		\end{restatement}
		
		\begin{proof}[Proof of Proposition~\ref{prop:bounded-interval}]
			Normalize the interval to $[-L, L]$ with $F(-L) = 0$,
			$F(L) = 1$. Shooting from the left endpoint with slope $p > 0$,
			the pendulum orbit has energy $\tfrac{D}{2}p^{2}$ and traverses
			from $0$ to $1$ in the length
			\begin{equation*}
				T(p) = \int_{0}^{1}
				\frac{du}{\sqrt{p^{2} + \tfrac{2}{D}\,\Phi(u)}},
			\end{equation*}
			which is strictly decreasing in $p$, tends to $0$ as
			$p \to \infty$, and diverges as $p \downarrow 0$ because
			$\Phi(u)$ vanishes quadratically at the endpoints, making the
			integral logarithmically divergent at the heteroclinic level.
			Hence exactly one $p$ solves $T(p) = 2L$: one monotone stationary
			profile, the finite-volume secant law. The remaining equilibria of
			the pendulum boundary-value problem oscillate, so their profiles
			are non-monotone and are not laws. Stability: the dynamics
			preserves monotonicity as in Theorem~\ref{thm:sine-gordon}, the
			energy~\eqref{eq:allen-cahn} restricted to the interval is a
			Lyapunov function whose flow-invariant sublevels are compact in
			the uniform topology, so every monotone solution converges to a
			monotone equilibrium, of which there is exactly one.
		\end{proof}
		
		\renewcommand{\restatename}{Theorem~\ref{thm:strong-law}}%
		\begin{restatement}[Strong law for the critical mass]
			Let $g$ be real-valued and of bounded variation on the torus with
			at least one jump. Then, with the average taken over a period in
			time,
			\begin{equation*}
				\mathbb{E}_{t}\,Y_{M} = \big(\kappa_{g} + o(1)\big)\log M,
				\qquad
				\operatorname{Var}_{t}(Y_{M}) = O(\log M),
				\qquad
				\kappa_{g}
				= \frac{\Vert g\Vert_{L^{2}}^{2}\,\sum_{j}|J_{j}|^{2}}{2\pi^{2}},
			\end{equation*}
			and consequently, for almost every $t$,
			\begin{equation*}
				\frac{Y_{M}(t)}{\log M}\;\longrightarrow\;\kappa_{g};
			\end{equation*}
			at almost every time the density lies outside $H^{1/2}$, its
			critical mass diverging at the universal logarithmic rate
			$\kappa_{g}$; moreover, for almost every $t$ the density lies in
			$C^{1/2-\varepsilon}$, in no Besov space
			$B^{\sigma}_{1,\infty}$ with $\sigma > \tfrac12$, and its graph has
			upper box dimension exactly $\tfrac32$.
		\end{restatement}
		
		\begin{proof}[Proof of Theorem~\ref{thm:strong-law}]
			Write $y_{n} := \hat g(n)\overline{\hat g(n-m)}$, so that
			$\hat w(m,t) = \sum_{n}y_{n}e^{-2\pi im(2n-m)t}$, with
			$|\hat g(n)| \le C/\langle n\rangle$ for bounded variation. The
			frequencies $m(2n-m)$ are strictly increasing in $n$ for
			$m \ge 1$, so Parseval in time gives the exact, phase-blind mean
			\begin{equation*}
				\sigma_{m}^{2} := \mathbb{E}_{t}\big|\hat w(m,t)\big|^{2}
				= \sum_{n}|\hat g(n)|^{2}\,|\hat g(n-m)|^{2}.
			\end{equation*}
			The mean of the mass~\eqref{eq:critical-mass} is
			$\sum_{m\le M}m\,\sigma_{m}^{2}$, and as $m$ grows the sum
			defining $\sigma_{m}^{2}$ localizes at its two ends: the terms
			with $n = O(1)$, where $|\hat g(n-m)|^{2}$ carries the Wiener
			content, and the terms with $n - m = O(1)$, where
			$|\hat g(n)|^{2}$ does, the middle range contributing
			$O(m^{-3})$. Since $\widehat{dg}(n) = 2\pi in\,\hat g(n)$, Wiener's
			theorem~\cite{wiener1924,zygmund2002} gives the Ces\`aro asymptotics
			\begin{equation*}
				\frac{1}{2N}\sum_{|n|\le N}\big|2\pi n\,\hat g(n)\big|^{2}
				\;\longrightarrow\;\sum_{j}|J_{j}|^{2},
			\end{equation*}
			and inserting it into each of the two ends, against the weight
			$m$ and the summable factor
			$\sum_{n}|\hat g(n)|^{2} = \Vert g\Vert_{2}^{2}$, yields
			\begin{equation*}
				\sum_{m \le M}m\,\sigma_{m}^{2}
				= 2\cdot\frac{\Vert g\Vert_{2}^{2}\sum_{j}|J_{j}|^{2}}{4\pi^{2}}
				\,\log M + o(\log M)
				= \big(\kappa_{g} + o(1)\big)\log M,
			\end{equation*}
			the factor two counting the two ends; the error is $o(\log M)$ and
			not better in general, Wiener's theorem carrying no rate for
			countably many jumps or a continuous singular part, and it improves
			to $O(1)$ for finitely many jumps with an absolutely continuous
			remainder. This is the mean of
			the display~\eqref{eq:kappa}, and positivity of every term is what
			makes it impervious to the jump positions and to the phases.
			
			For the variance, expand the fluctuation
			$|\hat w(m,t)|^{2} - \sigma_{m}^{2}
			= \sum_{\Delta\neq0}R_{m}(\Delta)\,e^{-4\pi im\Delta t}$ with
			$R_{m}(\Delta) = \sum_{n}y_{n}\bar y_{n-\Delta}$, the
			autocorrelation of the coefficient sequence. Three bounds on the
			autocorrelation are needed, all from the localization of the sum at
			the four points $0, m, \Delta, \Delta + m$ where a factor peaks.
			Generically,
			\begin{equation*}
				|R_{m}(\Delta)| \;\le\;
				\frac{C\log^{2}(2 + |m\Delta|)}
				{\langle m\rangle\langle\Delta\rangle
					\langle\max(|m|,|\Delta|)\rangle}
				\qquad
				\big(\,\big||\Delta| - m\big| > \tfrac{m}{2}\,\big);
			\end{equation*}
			in the resonant windows two of the four points merge, and
			localizing at $n = m + j$ gives the refined bound
			\begin{equation*}
				\big|R_{m}(\pm m + r)\big|
				\;\le\; \frac{C}{m^{2}}\sum_{j}
				\frac{1}{\langle j\rangle\langle j - r\rangle}
				\;\le\; \frac{C\log(2 + |r|)}{m^{2}\,\langle r\rangle},
			\end{equation*}
			which decays in the offset $r$; and for the single-mode variance
			the sharpest route is through the fourth power of the trigonometric
			polynomial $Y_{m}(\theta) = \sum_{n}y_{n}e^{2\pi in\theta}$,
			\begin{equation*}
				\operatorname{Var}\big(|\hat w(m)|^{2}\big)
				= \sum_{\Delta \neq 0}|R_{m}(\Delta)|^{2}
				\le \int_{0}^{1}|Y_{m}|^{4}
				\le \Vert y\Vert_{1}^{2}\,\Vert y\Vert_{2}^{2}
				\le \frac{C\log^{2}m}{m^{2}}\,\sigma_{m}^{2}
				\le \frac{C\log^{2}m}{m^{4}},
			\end{equation*}
			by $\Vert y\Vert_{1} \le C\log m/m$. Now decompose the variance,
			\begin{equation*}
				\operatorname{Var}_{t}(Y_{M})
				= \sum_{m,m'\le M}mm'\,\operatorname{Cov}
				\big(|\hat w(m)|^{2},\, |\hat w(m')|^{2}\big),
			\end{equation*}
			into the diagonal, the separated pairs, and the near-diagonal band.
			The diagonal is
			\begin{equation*}
				\sum_{m\le M}m^{2}\operatorname{Var}\big(|\hat w(m)|^{2}\big)
				\le C\sum_{m}\frac{\log^{2}m}{m^{2}} = O(1).
			\end{equation*}
			For $m \neq m'$ the
			covariance is supported on the collisions
			$m\Delta = m'\Delta'$: writing $d = \gcd(m, m')$, $m = d\mu$,
			$m' = d\mu'$ with $\mu, \mu'$ coprime, they are
			$\Delta = \mu'\tau$, $\Delta' = \mu\tau$, $\tau \neq 0$. If
			neither factor is resonant, the generic bound gives, per pair,
			$\operatorname{Cov} \le C\log^{4}M\,d^{2}/(m^{3}m'^{3})$ after
			summing the geometric $\tau$-series, and
			\begin{equation*}
				\sum_{m\le M}\sum_{m'\le M}mm'\cdot
				\frac{d^{2}}{m^{3}m'^{3}}
				\;\le\;
				\sum_{m\le M}\frac{1}{m^{2}}\sum_{d \mid m}
				d^{2}\sum_{\mu'}\frac{1}{(d\mu')^{2}}
				\;\le\; C\sum_{m\le M}\frac{\sigma_{0}(m)}{m^{2}}
				= O(1),
			\end{equation*}
			with $\sigma_{0}$ the divisor count, so the separated and mixed
			regimes contribute $O(\log^{4}M)\cdot O(1)$-summable terms, in
			fact $O(1)$ after distributing the logarithms into the convergent
			sums. If one factor is resonant, say $\Delta = \mu'\tau$ within
			distance $m/2$ of $\pm m$, the window contains $O(m/\mu')$ values
			of $\tau$, and the refined resonant bound applies with offset
			$r = \mu'\tau \mp m$, whose $\langle r\rangle$-decay summed over
			the window costs only a logarithm; if the partner is not resonant
			its generic bound carries the factor $1/(m'\Delta'^{2})$-type
			decay and the double sum converges as above. The only delicate
			case is
			the doubly resonant one, $\Delta$ near $\pm m$ and $\Delta'$ near
			$\pm m'$ simultaneously. Writing $e = m' - m$, $r = \Delta \mp m$
			and $r' = \Delta' \mp m'$, the collision $m\Delta = m'\Delta'$
			reads $m'r' - mr = \pm(m^{2} - m'^{2}) = \mp e\,(m + m')$, which
			first forces $|e| \le C(\langle r\rangle + \langle r'\rangle)$ and
			then determines the partner offset exactly,
			\begin{equation*}
				r' = \frac{m\,r \mp e\,(m + m')}{m'},
			\end{equation*}
			a single value pinned near $r \mp 2e$ for each admissible triple
			$(m, e, r)$; as $e$ varies at fixed $(m, r)$ these values are
			distinct up to bounded multiplicity, the derivative of $r'$ in $e$
			lying in $\pm[\tfrac32, \tfrac52]$ on the band, so
			$\sum_{e}\langle r'\rangle^{-1} \le C\log m$. Inserting the
			refined resonant bound for both factors, the band contributes
			\begin{equation*}
				\sum_{m\le M}\ \sum_{e, r}\ mm'\cdot
				\frac{\log(2+|r|)}{m^{2}\langle r\rangle}\cdot
				\frac{\log(2+|r'|)}{m'^{2}\langle r'\rangle}
				\;\le\; C\sum_{m \le M}\frac{\log^{4}m}{m^{2}}
				\;=\; O(1).
			\end{equation*}
			Altogether
			$\operatorname{Var}_{t}(Y_{M}) = O(\log M)$, which is the second
			claim of the display~\eqref{eq:kappa}.
			
			For the strong law, set $M_{k} = \exp(k^{2})$. Chebyshev's
			inequality gives
			\begin{equation*}
				\operatorname{Leb}\Big\{t :
				\Big|\frac{Y_{M_{k}}(t)}{\mathbb{E}_{t}Y_{M_{k}}} - 1\Big|
				> \varepsilon\Big\}
				\;\le\;
				\frac{C}{\varepsilon^{2}\,k^{2}},
			\end{equation*}
			summable in $k$; by the Borel--Cantelli lemma
			$Y_{M_{k}}(t) = (1+o(1))\,\kappa_{g}\log M_{k}$ almost surely, and
			since $Y_{M}$ is nondecreasing in $M$ while
			$\log M_{k+1}/\log M_{k} \to 1$, sandwiching $M$ between
			consecutive scales gives the full limit~\eqref{eq:mass-strong-law}.
			
			For the regularity and the dimension, fix a time in the
			intersection of the strong-law set with the smoothing set: the
			bound~\eqref{eq:carpet-blocks} holds for every seed of bounded
			variation, because the block of the solution is the Stieltjes
			convolution
			\begin{equation*}
				P_{N}u(x,t)
				= \int \Big(\sum_{n \in \mathrm{block}}
				\frac{e^{2\pi i(n(x-\theta) - n^{2}t)}}{2\pi in}\Big)\,dg(\theta),
			\end{equation*}
			and the bracketed kernel is bounded by
			$C_{t,\varepsilon}N^{-1/2+\varepsilon}$ uniformly in the argument,
			by the same Gauss--Weyl and Abel-summation argument as in the proof
			of Lemma~\ref{lem:carpet-smoothing}; hence
			$u, w \in C^{1/2-\varepsilon}$ and
			$\Vert P_{N}w\Vert_{\infty} \le C N^{-1/2+\varepsilon}$. If $w$
			belonged to $B^{\sigma}_{1,\infty}$ for some $\sigma > \tfrac12$,
			then on each dyadic block
			$\Vert P_{N}w\Vert_{2}^{2}
			\le \Vert P_{N}w\Vert_{1}\Vert P_{N}w\Vert_{\infty}
			\le CN^{-\sigma - 1/2 + \varepsilon}$, and summing blocks would
			bound the critical mass, contradicting the divergence in the
			law~\eqref{eq:mass-strong-law}; so
			$\Vert P_{N}w\Vert_{1} \ge N^{-\sigma}$ along dyadic subsequences
			for every $\sigma > \tfrac12$, and Lemma~\ref{lem:osc-calculus}
			converts this, with the matching upper bound from
			$C^{1/2-\varepsilon}$, into graph dimension exactly $\tfrac32$.
		\end{proof}
		
		\renewcommand{\restatename}{Corollary~\ref{cor:rough-seeds}}%
		\begin{restatement}[Universality over seeds and laws]
			Let the seed $g$ on rank space be of bounded variation with a jump,
			in the sense that its odd extension to the rank circle has a jump.
			Then for almost every $\tau$ the deformation
			$\rho_{|e^{-i\tau H_{0}}g|^{2}}[f]$ of any law $f$ whose density is
			continuously differentiable and bounded above and below on its
			support has graph dimension exactly $\tfrac32$ there, the
			differentiability being necessary, since a rough multiplier $f$ can
			dominate the oscillation of the product; for the canonical seed the constant of the
			law~\eqref{eq:mass-strong-law} on the rank circle is
			\begin{equation*}
				\kappa_{\mathrm{can}} = \frac{4}{\pi^{2}},
			\end{equation*}
			independent of the law. In physical terms, the intensity carpet of
			Talbot optics is fractal of dimension $\tfrac32$ at almost every
			time for \emph{arbitrary} grating profiles of bounded variation
			with a jump, at arbitrary, possibly irrational, positions; for
			rational step gratings this is the theorem of Chousionis,
			Erdo\u{g}an, and Tzirakis~\cite{chousionis2015}, and the general
			case had remained open.
		\end{restatement}
		
		\renewcommand{\restatename}{Proposition~\ref{prop:dispersion}}%
		\begin{restatement}[Collision arithmetic of the dispersion]
			Let $g$ be of bounded variation with a jump.
			In both parts the cross-mode variance combinatorics is carried at
			sketch level in the appendix; Theorem~\ref{thm:strong-law} and
			Corollary~\ref{cor:rough-seeds} do not depend on it.
			\begin{enumerate}[label=\normalfont(\roman*), leftmargin=*]
				\item \textup{(Temporal halving)} For the temporal critical mass
				$Z_{K}(x) := \sum_{k=1}^{K}k^{1/2}\,
				|\hat w^{\,\mathrm{t}}(k;x)|^{2}$ of the time trace at a point,
				one-sided since the density is real and the negative temporal
				frequencies are conjugate, the representations $k = m(2n-m)$ carry
				distinct spatial characters, and
				\begin{equation*}
					\frac{Z_{K}(x)}{\log K}\;\longrightarrow\;\frac{\kappa_{g}}{2}
					\qquad\text{for a.e. } x.
				\end{equation*}
				The time trace of the density lies outside $H^{1/4}$ for almost
				every $x$, and wherever the field obeys the temporal
				$C^{1/4-\varepsilon}$ block bound, as it does for data of bounded
				variation~\cite{erdogan2019}, the graph of the trace has dimension
				exactly $\tfrac74$.
				\item \textup{(Airy doubling)} For the Airy evolution, with
				dispersion $n^{3}$ and a single jump, the within-mode collision
				classes are the pairs $\{n, m-n\}$, their two leading contributions
				carry equal phase and reinforce, and the strong
				law~\eqref{eq:mass-strong-law} holds with constant exactly
				$2\kappa_{g}$.
			\end{enumerate}
		\end{restatement}
		
		\begin{proof}[Proof of Corollary~\ref{cor:rough-seeds} and
			Proposition~\ref{prop:dispersion}]
			The rank-space evolution of a seed is the torus evolution of its
			odd extension, so Theorem~\ref{thm:strong-law} applies whenever the
			extension retains a jump; a density bounded above and below on its
			support makes the quantile chart globally bi-Lipschitz, and
			Lemma~\ref{lem:chart-transport} moves the dimension onto the law.
			For the canonical constant, the odd square wave on the rank circle
			rescales to a torus datum of unit norm with two jumps of size two,
			so the formula~\eqref{eq:kappa} gives
			$\kappa = (1\cdot8)/(2\pi^{2}) = 4/\pi^{2}$, and no property of the
			law enters.
			
			Temporal halving. If two representations $k = m(2n-m)$ share the
			same $m$ they coincide, so distinct representations of a temporal
			frequency, including the conjugate pairs $(m,n)$ and $(-m,-n)$,
			carry distinct spatial characters $e^{2\pi imx}$, and the spatial
			average of $|\hat w^{\,\mathrm{t}}(k;x)|^{2}$ is again a
			phase-blind positive sum; the one-sided normalization of $Z_{K}$
			halves the two-sided count, since the density is real. Splitting the representations by the
			size of $2n - m$, the dominant regime has $n = O(1)$ and
			$|k| = m^{2}(1 + O(1/m))$, so the constraint $|k| \le K$ reads
			$m \le \sqrt K\,(1 + o(1))$ and
			\begin{equation*}
				\mathbb{E}_{x}Z_{K}
				= \sum_{m \le \sqrt K}m\,\sigma_{m}^{2} + O(1)
				= \big(\kappa_{g} + o(1)\big)\log\sqrt K
				= \Big(\frac{\kappa_{g}}{2} + o(1)\Big)\log K,
			\end{equation*}
			the remaining regimes contributing $O(1)$. The variance analysis
			now runs over the spatial fluctuation frequencies: the covariance
			of the masses at temporal frequencies $k$ and $k'$ is supported on
			quadruples of representations sharing a spatial frequency $m - m'$,
			of which there are at most $d(k)\,d(k') = O((kk')^{\varepsilon})$
			by the divisor bound on representation counts, each obeying the
			same generic and resonant estimates as before, so the covariance
			sum is absorbed by the $m^{-2}$ decay exactly as in the spatial
			case; we carry this combinatorial step at sketch level, as flagged
			in the statement, and the monotone Chebyshev--Borel--Cantelli
			argument along $K_{j} = \exp(j^{2})$ then gives the
			law~\eqref{eq:temporal-halving}. Divergence of the temporal
			critical mass places the trace outside $H^{1/4}$. For the
			dimension, the field trace lies in $C^{1/4-\varepsilon}$ for data
			of bounded variation~\cite{erdogan2019}, and the density trace
			inherits the same exponent, since
			\begin{equation*}
				\big||u(x,t)|^{2} - |u(x,s)|^{2}\big|
				\;\le\;
				\big(|u(x,t)| + |u(x,s)|\big)\,\big|u(x,t) - u(x,s)\big|
			\end{equation*}
			with the field trace bounded; a $C^{1/4-\varepsilon}$ function has
			smooth temporal blocks of size $O(K^{-1/4+\varepsilon})$, so the
			interpolation and oscillation steps run with the exponent pair
			$(\tfrac14, \sigma > \tfrac14)$ and give dimension $\tfrac74$.
			
			Airy doubling. For the dispersion $n^{3}$ the within-mode
			frequency $Q_{m}(n) = n^{3} - (n-m)^{3} = m^{3} - 3mn(m-n)$ is
			symmetric under $n \mapsto m - n$, so the collision classes are
			the pairs $\{n, m-n\}$. For a single jump $J$ at $\theta$,
			$\hat g(n) = Je^{-2\pi in\theta}/(2\pi in) + \varrho(n)$, where
			the remainder $\varrho$, the coefficient sequence of the jump-free
			part, is small in the Ces\`aro mean square of Wiener's theorem,
			\begin{equation*}
				\frac{1}{N}\sum_{|n| \le N}n^{2}\,|\varrho(n)|^{2}
				\;\longrightarrow\; 0,
			\end{equation*}
			and the
			two leading class contributions are both equal to
			$|J|^{2}e^{-2\pi im\theta}/(4\pi^{2}n(n-m))$: they reinforce, the
			class squares double the diagonal to leading order, the cross and
			remainder terms contributing $o(\log M)$ to the mean by the
			Cauchy--Schwarz inequality against the same Ces\`aro averages, so
			the mean computation goes through with $2\kappa_{g}$. The variance
			analysis runs as before with the quadratic collision lattice
			replaced by the cubic one: completing the square in the
			within-mode frequency gives
			\begin{equation*}
				4\,Q_{m}(n) = m\,\big(3\Delta^{2} + m^{2}\big),
				\qquad \Delta = 2n - m,
			\end{equation*}
			so cross-mode collisions lie on the conic
			$3m\Delta^{2} - 3m'\Delta'^{2} = m'^{3} - m^{3}$, whose per-block
			integer solution counts are divisor-bounded in the real quadratic
			order of discriminant $12mm'$, a combinatorial step we again carry
			at sketch level, as flagged in the statement, and the monotone
			Chebyshev argument is unchanged.
		\end{proof}
		
	\end{appendices}
	

\end{document}